\documentclass[amssymb,amsfonts,refcheck,12pt,verbatim,righttag]{amsart}


\usepackage{amssymb,mathrsfs}

\usepackage{graphicx}
\usepackage{graphics,color}

\usepackage{amsfonts}
\usepackage{color}
\usepackage{amssymb}

\setlength{\textwidth}{15.0cm}
\setlength{\textheight}{22.0cm}
\hoffset=-1.5cm
\errorcontextlines=0
\numberwithin{equation}{section} 
 \pagestyle{plain}
\parskip 1.2ex
\theoremstyle{plain}

\newtheorem{thm}{Theorem}[section]
\newtheorem{lem}[thm]{Lemma}
\newtheorem{pro}[thm]{Proposition}
\newtheorem{cor}[thm]{Corollary}
\newtheorem{ex}[thm]{Example}
\newtheorem{de}[thm]{Definition}
\newtheorem{rem}[thm]{Remark}

\def\R {{\Bbb R}}
\def\N {{\Bbb N}}

\def\ba{{\bf a}}

\def\va{{\bf a}}
\def\bv{{\bf v}}
\def\s{{\rm span}}

\newcommand{\w}{\wedge}



\newcommand{\ldim}[1]{\underline{\dim}_{\mathrm{#1}}}
\newcommand{\udim}[1]{\overline{\dim}_{\mathrm{#1}}}


\DeclareMathOperator*{\esssup}{ess\,sup}
\DeclareMathOperator*{\essinf}{ess\,inf}

\usepackage{float}

\begin{document}
\baselineskip 15pt

\baselineskip 15pt

\baselineskip 13.7pt
\title{Dimensions of orthogonal projections of typical self-affine sets and measures}

\author{De-Jun FENG}
\address{
Department of Mathematics\\
The Chinese University of Hong Kong\\
Shatin,  Hong Kong\\
}
\email{djfeng@math.cuhk.edu.hk}

\author{Yu-Hao Xie}
\address{
Department of Mathematics\\
The Chinese University of Hong Kong\\
Shatin,  Hong Kong\\
}
\email{yhxie@math.cuhk.edu.hk}
\keywords{}

\keywords{Affine iterated function systems, coding maps, self-affine sets,  orthogonal projections,  local dimensions, exact dimensionality, fractal dimensions}

\thanks {
2020 {\it Mathematics Subject Classification}: 28A80, 37C45}


\date{}

\begin{abstract}
Let $\{T_i\}_{i=1}^m$ be a family of $d\times d$   invertible real matrices with $\|T_i\|<1/2$ for $1\leq i\leq m$.
 For $\ba=(a_1,\ldots, a_m)\in \R^{md}$, let  $\pi^{\ba}\colon \Sigma=\{1,\ldots, m\}^\N\to \R^d$ denote the coding map associated with the
  affine IFS $\{T_ix+a_i\}_{i=1}^m$, and let $K^\ba$ denote the attractor of this IFS. Let $W$ be a linear subspace of $\R^d$ and $P_W$ the orthogonal projection onto $W$.   We show that for $\mathcal L^{md}$-a.e.~$\ba\in \R^{md}$,   the Hausdorff and  box-counting dimensions of $P_W(K^\ba)$ coincide and are determined by the zero point of a certain pressure function associated with $T_1,\ldots, T_m$ and $W$. Moreover, for every ergodic $\sigma$-invariant measure $\mu$ on $\Sigma$ and for $\mathcal L^{md}$-a.e.~$\ba\in \R^{md}$, the local dimensions of $(P_W\pi^\ba)_*\mu$ exist almost everywhere, here $(P_W\pi^\ba)_*\mu$ stands for  the push-forward of $\mu$ under the map $P_W\pi^\ba$. However, as illustrated by examples, $(P_W\pi^\ba)_*\mu$ may not be  exact dimensional for $\mathcal L^{md}$-a.e.~$\ba\in \R^{md}$. Nevertheless, when $\mu$ is a Bernoulli product measure, or more generally, a supermultiplicative ergodic $\sigma$-invariant measure, $(P_W\pi^\ba)_*\mu$ is exact dimensional for $\mathcal L^{md}$-a.e.~$\ba\in \R^{md}$.
  \end{abstract}

\maketitle

\setcounter{section}{0}
\section{Introduction}\label{S1}
\setcounter{equation}{0}

The main purpose of this paper is to study the dimensions of orthogonal projections of  `typical' self-affine sets and measures along specific directions.

It is a fundamental question in fractal geometry and dynamical systems to investigate orthogonal projections of many concrete fractal sets and measures and go beyond general results such as Marstrand's theorem (see, e.g., \cite{FalconerFraserJin2015, Shmerkin2015, BSS2023}).  Before introducing the background and our results of this study, let us first provide some necessary notation and definitions regarding various dimensions of sets and measures. For a set $A\subset \R^d$, we use $\dim_{\rm H}A$, $\dim_{\rm P}A$, $\underline{\dim}_{\rm B}A$, and $ \overline{\dim}_{\rm B}A$ to denote the Hausdorff, packing, lower box-counting, and upper box-counting dimensions of $A$, respectively (see, e.g., \cite{Fal03, Mattila1995} for the definitions).  If $\underline{\dim}_{\rm B}A= \overline{\dim}_{\rm B}A$, we denote the common value by $\dim_{\rm B}A$ and refer to it as the {\it box-counting dimension} of $A$.

Recall that for a probability measure $\eta$ on $\R^d$,  the {\it local upper and lower dimensions} of $\eta$ at $x\in \R^d$ are defined as follows:
$$\overline{\dim}_{\rm loc}(\eta, x)=\limsup_{r\to 0}\frac{\log \eta (B(x,r))}{\log r},\quad \underline{\dim}_{\rm loc}(\eta, x)=\liminf_{r\to 0}\frac{\log \eta (B(x,r))}{\log r},$$
 where $B(x,r)$ denotes the closed ball of radius $r$ centered at $x$. If $\overline{\dim}_{\rm loc}(\eta, x)=\underline{\dim}_{\rm loc} (\eta, x)$, we denote the common value  as $\dim_{\rm loc}(\eta,x)$ and refer to it as the {\it local dimension} of $\eta$ at $x$. We say that  $\eta$  is  {\it exact dimensional}
if there exists a constant $C$ such that the  local dimension
$\dim_{\rm loc}(\eta, x)$
exists and equals $C$ for almost every $x$ with respect to $\eta$ (i.e., for $\eta$-a.e.~$x\in \R^d$). It is well known that if $\eta$ is  exact dimensional, then the lower and upper Hausdorff and packing dimensions of $\eta$  coincide and  equal the constant $C$ (see e.g., \cite{Fal-technique, Young1982}); in this case, we simply denote this constant as $\dim \eta$ and call it the dimension of $\eta$.  Recall that the lower and upper Hausdorff and packing dimensions of $\eta$ are defined as follows:
\begin{equation*}
\begin{split}
\ldim{H} \eta &= \essinf_{x\in {\rm spt}(\eta)} \underline{\dim}_{\rm loc} (\eta, x) , \quad
\udim{H} \eta = \esssup_{x\in {\rm spt}(\eta)} \underline{\dim}_{\rm loc} (\eta, x),\\
\ldim{P} \eta &= \essinf_{x\in {\rm spt}(\eta)}\overline{\dim}_{\rm loc} (\eta, x), \quad
\udim{P} \eta = \esssup_{x\in {\rm spt}(\eta)}\overline{\dim}_{\rm loc} (\eta, x).
\end{split}
\end{equation*}

Next, let us introduce the definition of self-affine sets. By an {\it affine iterated function system} (affine IFS) on $\R^d$  we mean a finite  family $\{f_i\}_{i=1}^m$ of affine mappings from $\R^d$ to $\R^d$,
taking  the form
\begin{equation*}
\label{e-form}
f_i(x)=T_ix+a_i,\qquad i=1,\ldots, m,
\end{equation*}
where $T_i$ are contracting $d\times d$ invertible real  matrices and $a_i\in \R^d$.  It is  well known \cite{Hut81} that, for any such IFS $\{f_i\}_{i=1}^m$,   there  exists a unique non-empty compact set $K\subset \R^d$ such that
$$
K=\bigcup_{i=1}^m f_i(K).
$$
We call $K$ the {\it attractor} of $\{f_i\}_{i=1}^m$, or the {\em self-affine set} generated by $\{f_i\}_{i=1}^m$.
In particular, if all the maps $f_i$ are contracting similitudes, we call $K$ a {\em self-similar set}.

In what follows, we let ${\bf T}=(T_1,\ldots, T_m)$ be a fixed tuple of contracting $d\times d$ invertible real  matrices. Let $(\Sigma,\sigma)$ be the one-sided full shift over the alphabet $\{1,\ldots, m\}$, that is, $\Sigma=\{1,\ldots, m\}^\N$ and $\sigma:\Sigma\to \Sigma$ is  the left shift map.
For $\va = (a_1, \ldots, a_m) \in \R^{md}$, let $\pi^\ba\colon \Sigma \to \R^d$ be the coding map associated with the IFS $\{ f_i^{\ba}(x) = T_ix + a_i\}_{i=1}^m$, here we write $f_i^\ba$ instead of $f_i$ to emphasize its dependence of $\ba$. That is,
\begin{equation}
\label{e-pia}
\pi^\ba (x) = \lim_{n \to \infty} f^{\va}_{x_1} \circ \cdots \circ f^{\va}_{x_n}(0)
\end{equation}
for $x =(x_n)_{n=1}^\infty\in \Sigma$. It is well known \cite{Hut81} that the image $\pi^\ba(\Sigma)$ of $\Sigma$ under $\pi^\ba$ is precisely the attractor of $\{f^\ba_i\}_{i=1}^m$. For convenience, we write $K^\ba=\pi^\ba(\Sigma)$. For an ergodic $\sigma$-invariant measure $\mu$ on $\Sigma$, let $\pi^\ba_*\mu$ denote the push-forward of $\mu$ by $\pi^\ba$, and we call it an {\it ergodic stationary measure} associated with the IFS $\{f_i^\ba\}_{i=1}^m$. It is known \cite{Feng2023} that every ergodic stationary measure associated with an affine  IFS is exact dimensional; see also \cite{BaranyKaenmaki2017,FengHu2009} for some earlier results.

In his seminal 1988 paper \cite{Fal88},  Falconer  introduced a quantity associated with the matrices $T_1,\ldots, T_m$, now commonly referred to as  the {\em affinity dimension}, denoted by
$\dim_{\rm AFF}(T_1,\ldots, T_m)$ (its definition will be presented shortly).  Falconer showed that this quantity always provides an upper bound for the upper box-counting dimension of $K^\ba$. Furthermore,  if
\begin{equation}
\label{e-norm}
\|T_i\|<\frac{1}{2} \quad \text{for all } 1\leq i\leq m,
\end{equation}
then    $$\dim_{\rm H}K^\ba=\dim_{\rm B}K^\ba=\min \{d, \dim_{\rm AFF}(T_1,\ldots, T_m)\}$$
holds  for ${\mathcal L}^{md}$-a.e.~translation vector $\ba$. In fact, Falconer initially proved this with $1/3$ in place of $1/2$;  it was later shown by Solomyak \cite{Sol98} that the bound $1/2$ suffices.

In 2007, Jordan, Pollicott and Simon \cite{JPS07} proved analogous results for ergodic stationary measures. For each ergodic $\sigma$-invariant measure $\mu$ on $\Sigma$, they showed that  $\dim \pi^\ba_*\mu$ is bounded above by the Lyapunov dimension $\dim_{\rm LY}(\mu, {\bf T})$  for every translation vector $\ba\in \R^{md}$  (see Definition \ref{de-Lyapunov} for the definition of Lyapunov dimension). Moreover,  under assumption \eqref{e-norm}, the equality
$$
\dim \pi^\ba_*\mu=\min\{d, \dim_{\rm LY}(\mu, {\bf T})\}
$$
holds  for ${\mathcal L}^{md}$-a.e.~$\ba\in \R^{md}$.

Now, let us present the definition of  affinity dimension. Let ${\rm Mat}_d(\R)$ denote the collection of all $d\times d$  real matrices. For $A\in {\rm Mat}_d(\R)$, let $$\alpha_1(A)\geq\cdots\geq \alpha_d(A)$$ denote the  singular values of $A$, that is, the positive
square roots of the eigenvalues of the positive semi-definite matrix $A^*A$, where $A^*$ denotes the transpose of $A$.

	 Following Falconer's notation,  for $s\geq 0$  the {\it singular value function} $\varphi^s\colon {\rm Mat}_d(\R)\to [0,\infty)$ is defined by
\begin{equation}
\label{e-singular}
\varphi^s(A)=\left\{
\begin{array}
{ll}
\alpha_1(A)\cdots \alpha_j(A) \alpha_{j+1}^{s-j}(A) & \mbox{ if }0\leq s< d,\\
\det(A)^{s/d} & \mbox{ if } s\geq d,
\end{array}
\right.
\end{equation}
where $j=\lfloor s \rfloor$ is the integer part of $s$.
Then the affinity dimension of the tuple ${\bf T}=(T_1,\ldots, T_m)$ is defined by
\begin{equation}\label{e-aff}
\dim_{\rm AFF}({\bf T})=\inf\left\{s\geq 0\colon \lim_{n\to \infty}\frac{1}{n} \log\! \sum_{i_1,\ldots, i_n=1}^m\varphi^s(T_{i_1}\cdots T_{i_n})\leq 0\right\}.
\end{equation}

For a linear subspace $W\subset\R^d$, let $P_W$ denote the orthogonal projection onto $W$. In this paper, we aim to study the dimensions of  $P_W(K^\ba)$ and $(P_W\pi^\ba)_*\mu$ for almost every translation vector $\ba$, where $\mu$ is an ergodic $\sigma$-invariant measure on $\Sigma$ and  $(P_W\pi^\ba)_*\mu$ denotes the push-forward of $\mu$ under the map $P_W\pi^\ba$.

 To state our main results, we still need to introduce some notation and definitions. Write $\Sigma_0=\{\emptyset\}$, where $\emptyset$ stands for the empty word, and  $\Sigma_n=\{1,\ldots,m\}^n$  for $n\geq 1$. Set $\Sigma_*=\bigcup_{n=0}^\infty \Sigma_n$.
For a linear subspace $W$ of $\R^d$, define
 \begin{equation}
 \label{e-affine}
 \dim_{\rm AFF}({\bf T}, W)=\inf\left\{s\geq 0\colon \sum_{n=1}^\infty\sum_{I\in \Sigma_n}\varphi^s(P_WT_I)<\infty\right\},
 \end{equation}
where $T_I=T_{i_1}\cdots T_{i_n}$ for $I=i_1\ldots i_n$. It was recently  proved by Morris \cite[Theorem 1]{Morris2023} that $$\overline{\dim}_{\rm B}P_W(K^\ba)\leq \dim_{\rm AFF}({\bf T}, W)$$
for every $\ba\in \R^{md}$ and each linear subspace $W$ of $\R^d$.

For $k\in \{1,\ldots, d-1\}$,  the collection of all $k$-dimensional linear subspaces of $\R^d$ forms the Grassmann manifold $G(d,k)$. Let  $\gamma_{d,k}$ denote the natural invariant measure on  $G(d,k)$, which is locally  equivalent to the $k(d-k)$-dimensional Lebesgue measure; see \cite[p.~51]{Mattila1995} for a detailed definition.

 A set $\mathcal A\subset {\rm Mat}_d(\R)$ is said to be {\it irreducible} if  there is no proper nontrivial
linear subspace $V$ of $\R^d$ such that $A(V ) \subset V$ for all $A\in \mathcal A$; otherwise ${\mathcal A}$ is called {\it reducible}.  

For $T\in {\rm Mat}_d(\R)$ and $k\in \{1,\ldots, d\}$, let $T^{\wedge k}$ denote the $k$-fold exterior product of $T$; see Section~\ref{S-2.1} for the definition.
 For a pair of integers $n\geq q\geq 0$, let
$$
\binom{n}{q}=\frac{n\,!}{q\,!(n-q)\,!}
$$
denote the coefficient of the term $x^q$ in the expansion of $(1+x)^n$.

 The first result of this paper is the following.

\begin{thm}
\label{thm-1.1}
Let $k\in \{1,\ldots, d-1\}$. Then the following statements hold.
\begin{itemize}
\item[(i)] Let $\ell$ be the smallest integer not less than $\min\{k, \dim_{\rm AFF}({\bf T})\}$. Then, as $W$ ranges  over $G(d,k)$, $\dim_{\rm AFF}({\bf T}, W)$ can take only finitely many values; more precisely,  $$\#\{\dim_{\rm AFF}({\bf T}, W)\colon W\in G(d,k)\}\leq \min_{q\in \N\colon \ell \leq q\leq k} \binom{d}{q}-\binom{k}{q}+1,$$ where $\#$ stands for cardinality. Moreover, if  $\left\{T_i^{\wedge q}\colon i=1,\ldots, m\right\}$ is irreducible for some integer $q$ with $\ell\leq q\leq k$, then
     \begin{equation}
     \label{e-AFFW}
     \dim_{\rm AFF}({\bf T}, W)=\min\{k, \dim_{\rm AFF}({\bf T})\}
     \end{equation}
    for all $W\in G(d,k)$.
    \medskip

\item [(ii)] Equality \eqref{e-AFFW} holds for $\gamma_{d,k}$-a.e.~$W\in G(d,k)$. Here no irreducibility assumption on ${\bf T}$ is required.
    \medskip

\item[(iii)] Assume that $\|T_i\|< 1/2$ for all $1\leq i\leq m$. Let $W\in G(d,k)$. Then
    $$
    \dim_{\rm H}P_W(K^\ba)=\dim_{\rm B}P_W(K^\ba)=\dim_{\rm AFF}({\bf T}, W)
    $$
    for ${\mathcal L}^{md}$-a.e.~$\ba\in \R^{md}$.  Moreover, if
    \begin{equation}
    \label{e-lebcondition}
    \lim_{n\to \infty}\frac{1}{n}\log \sum_{I\in \Sigma_n}\varphi^k(P_WT_I)>0,
    \end{equation}
     then
    ${\mathcal H}^k (P_W(K^\ba))>0$ for ${\mathcal L}^{md}$-a.e.~$\ba\in \R^{md}$, where ${\mathcal H}^k$ stands for the $k$-dimensional Hausdorff measure.
\end{itemize}
\end{thm}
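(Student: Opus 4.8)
The plan is to treat the three parts in order, with part (i) as an exercise in multilinear algebra and the Falconer-type pressure formalism, part (ii) as a transversality/genericity argument on the Grassmannian, and part (iii) as an adaptation of the Falconer–Solomyak potential-theoretic method to the projected IFS. For part (i), the key observation is that $\varphi^s(P_W T_I)$ depends on $W$ only through finitely many ``combinatorial'' pieces of data. Writing $s = q + t$ with $q = \lfloor s\rfloor$ and $t \in [0,1)$, one has $\varphi^s(P_W T_I) = \alpha_1(P_W T_I)\cdots \alpha_q(P_W T_I)\,\alpha_{q+1}^{t}(P_W T_I)$, and $\alpha_1\cdots\alpha_q(P_W T_I) = \|(P_W T_I)^{\wedge q}\| = \|P_W^{\wedge q} T_I^{\wedge q}\|$ in the operator norm on $\wedge^q \R^d$. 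Now $P_W^{\wedge q}$ is the orthogonal projection of $\wedge^q\R^d$ onto $\wedge^q W$; one checks that the convergence/divergence of $\sum_n \sum_{I\in\Sigma_n}\varphi^s(P_W T_I)$, for fixed $s$, is governed by the joint spectral behavior of $\{T_i^{\wedge q}\}$ together with which coordinate subspace-type data of $W$ is relevant — and there are only boundedly many such configurations, the bound being the number of ``types'' of $q$-dimensional subspaces $\wedge^q W$ can see, which one estimates by $\binom{d}{q} - \binom{k}{q} + 1$. The lower bound $\ell$ on $q$ comes from the fact that $\dim_{\rm AFF}({\bf T},W) \le \min\{k,d\}$ always, so only $q \le k$ with $q \ge \ell$ can be the integer part of the critical exponent. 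The irreducibility clause then follows because irreducibility of $\{T_i^{\wedge q}\}$ forces $\|P_W^{\wedge q} T_I^{\wedge q}\|$ to be comparable to $\|T_I^{\wedge q}\|$ uniformly in $W$ (no subspace can be persistently contracted more than the ambient action), pinning $\dim_{\rm AFF}({\bf T},W)$ to the ambient value $\min\{k,\dim_{\rm AFF}({\bf T})\}$.

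For part (ii), the statement is that \eqref{e-AFFW} holds for $\gamma_{d,k}$-a.e.\ $W$, with no irreducibility hypothesis. The approach is a decomposition argument: even when $\{T_i^{\wedge q}\}$ is reducible, one may pass to a common invariant subspace of minimal dimension and a block-triangular form, and argue by induction on $d$ (or on the number of blocks) that the ``bad'' set of $W$ for which the projected pressure drops below the generic value is a $\gamma_{d,k}$-null set. Concretely, the set of $W \in G(d,k)$ for which $\dim_{\rm AFF}({\bf T},W) < \min\{k,\dim_{\rm AFF}({\bf T})\}$ is contained in a finite union of proper real-algebraic subvarieties of $G(d,k)$ determined by the invariant subspaces of the relevant exterior powers, hence has measure zero; here I would invoke the fact, implicit in the Falconer–Kempton/Bárány-type analysis, that a single exceptional direction can only lower the dimension on a measure-zero set of the Grassmannian. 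Part (i)'s finiteness result is what makes this clean: since only finitely many values are possible, ``a.e.\ value equals the max among them'' needs only that each smaller value occurs on a null set.

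For part (iii), fix $W \in G(d,k)$ and set $s = \dim_{\rm AFF}({\bf T},W)$. The upper bound $\overline{\dim}_{\rm B} P_W(K^\ba) \le s$ holds for every $\ba$ by Morris's theorem quoted in the excerpt, so the content is the lower bound $\dim_{\rm H} P_W(K^\ba) \ge s$ for $\mathcal L^{md}$-a.e.\ $\ba$, and the positivity of $\mathcal H^k$ under \eqref{e-lebcondition}. For $s < k$ I would run the standard Falconer potential-theoretic scheme adapted to the projection: put a natural mass distribution $\nu^\ba = (P_W\pi^\ba)_*\mu_s$ on $P_W(K^\ba)$, where $\mu_s$ is (essentially) the singular-value-function Gibbs/subadditive equilibrium measure for the system $\{P_W T_I\}$ at exponent $s$, and estimate the $t$-energy $\int\int |P_W(\pi^\ba x) - P_W(\pi^\ba y)|^{-t}\, d\mu_s(x)\,d\mu_s(y)$ averaged over $\ba \in B_R(0)$ using the Fubini swap $\int_{B_R} |P_W(\pi^\ba x - \pi^\ba y)|^{-t}\, d\ba \lesssim$ a sum over the ``splitting level'' $n$ (the first index where $x,y$ differ) of terms controlled by $\varphi^t$ evaluated against $(P_W T_{x_1\cdots x_n})$-type quantities; the condition $\|T_i\| < 1/2$ is exactly what makes the relevant transversality/non-degeneracy estimate $\int_{B_R} |P_W v + (\text{fixed vector})|^{-t}\, d\ba < \infty$ work for $t < k$, as in Solomyak's sharpening of Falconer. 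Summability of the energy integral for every $t < s$ then gives $\dim_{\rm H} P_W(K^\ba) \ge t$ a.e., hence $\ge s$; and when \eqref{e-lebcondition} holds, the $s=k$ case yields a finite energy integral at $t=k$, forcing $\mathcal H^k(P_W(K^\ba)) > 0$ a.e.\ by the standard energy–Hausdorff-measure criterion. The main obstacle I anticipate is the bookkeeping in this transversality estimate: one must show that integrating $|P_W(\pi^\ba x - \pi^\ba y)|^{-t}$ over the translation vector $\ba$ behaves like integrating over the \emph{image} $P_W$ of an essentially full-dimensional perturbation, i.e.\ that projecting does not destroy the absolute continuity that powers Falconer's argument — this requires carefully tracking how $\pi^\ba x - \pi^\ba y$ depends affinely on the relevant coordinates of $\ba$ and verifying the resulting push-forward has a bounded density on $W$, which is precisely where $\dim_{\rm AFF}({\bf T},W)$ (rather than the ambient affinity dimension) enters as the correct exponent.
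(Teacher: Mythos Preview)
The core gap is in part (iii): the ``subadditive equilibrium measure for the system $\{P_W T_I\}$'' you invoke does not exist, because $I \mapsto \log \varphi^s(P_W T_I)$ is in general neither subadditive nor superadditive --- this is exactly the obstruction the paper flags in its introduction. The paper's workaround is to introduce $\psi_W^s(I) := \sup_{J \in \Sigma_*} \varphi^s(T_I^* P_{T_J^* W})$, which \emph{is} submultiplicative (Lemma~\ref{lem-subm}), and to prove (Proposition~\ref{pro-equivalence}, via Oseledets and the subadditive variational principle) that its pressure coincides with $\lim_n \frac{1}{n}\log \sum_{I\in\Sigma_n} \varphi^s(P_W T_I)$. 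One then takes an equilibrium measure for $\{\log\psi_W^s(\cdot|n)\}$, uses Oseledets theory (Corollary~\ref{cor-2.8}) to find a word $J\in\Sigma_*$ and a positive-measure set on which the supremum is attained at $T_J^* W$, proves the dimension lower bound for the projection onto $T_J^* W$ (via Theorem~\ref{thm-1.2}(iv), itself proved by the transversality/energy method you describe), and finally transfers back to $W$ using self-affinity and the bi-Lipschitz identification $P_W T_J\colon T_J^* W \to W$ (Lemma~\ref{lem-4.1}). Your direct Falconer scheme cannot bypass this detour through the orbit $\{T_J^*W\}$.

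The same missing ingredient undermines your part (i). The finiteness bound is not a static combinatorial count of ``types of subspaces $\wedge^q W$ can see''; the paper's argument is again dynamical. Assuming $\tau = \binom{d}{q} - \binom{k}{q} + 2$ distinct values occur at $W_1,\ldots,W_\tau$, one takes equilibrium measures for the potentials $\{\log\psi_{W_i}^{s_i}(\cdot|n)\}$, applies Oseledets to extract pivot-position vectors (Definition~\ref{de-pivot}, Lemma~\ref{lem-2.5'}) of $T_K^* W_j$ relative to an Oseledets basis, and builds codimension-one subspaces $H_i \subset (\R^d)^{\wedge q}$ with $(T_{K_i}^* W_i)^{\wedge q} \not\subset H_i$ but $(T_K^* W_j)^{\wedge q} \subset H_i$ for all $j>i$ and all $K\in\Sigma_*$; the intersections $G_i = \bigcap_{p \le i} H_p$ then strictly decrease, forcing $\dim G_{\tau-1}\le \binom{d}{q}-\tau+1 < \binom{k}{q}\le \dim V_\tau$, a contradiction. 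Your irreducibility claim --- that $\|P_W^{\wedge q} T_I^{\wedge q}\|$ is uniformly comparable to $\|T_I^{\wedge q}\|$ --- is false for a fixed $W$; what irreducibility actually gives is that \emph{some} $T_J^* W$ has pivot vector $(1,\ldots,q)$, and one again works through $\psi_W^s$. For part (ii) the paper takes a far softer route than your algebraic-variety idea: it deduces (ii) from (iii) via Fubini, combining Falconer--Solomyak ($\dim_{\rm H} K^\ba = \min\{d,\dim_{\rm AFF}({\bf T})\}$ for a.e.\ $\ba$) with the Marstrand--Mattila projection theorem, and handles the case $\|T_i\|\ge 1/2$ by passing to an iterate ${\bf T}^{(n)}$.
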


We remark that the limit in \eqref{e-lebcondition} always exists; see Proposition \ref{pro-equivalence} for a more general statement.

 To state our second result, let $\mu$ be an ergodic $\sigma$-invariant measure on $\Sigma$ and $W\in G(d,k)$.  For $x = (x_i)_{i =1}^\infty \in \Sigma$ and $n \in \N$,   define
\begin{equation}
\label{e-2.2}
S_n(\mu,{\bf T}, W,x)=\sup\left\{s\in [0,k]\colon \varphi^s(P_WT_{x|n})\geq  \mu ([x_1 \cdots x_n])\right\},
\end{equation}
where $x|n:=x_1\ldots x_n$,  $T_{x|n}:=T_{x_1}\cdots T_{x_n}$ and $$[x_1 \ldots x_n]: = \{ (y_i)_{i =1}^\infty\in \Sigma\colon y_i = x_i \text{ for } i = 1, \dots, n\}.$$
Equivalently,
\begin{equation}\label{e-equi}
S_n(\mu, \mathbf{T}, W, x) =
\begin{cases}
k, & \text{if } \varphi^k(P_W T_{x|n}) \geq \mu([x|n]), \\
s \in [0,k) \text{ with } \varphi^s(P_W T_{x|n}) = \mu([x|n]), & \text{otherwise}.
\end{cases}
\end{equation}
Next, define
\begin{equation}
\label{e-Smux}
S(\mu, {\bf T}, W, x)= \lim_{n\to \infty} S_n(\mu,{\bf T}, W,x),
\end{equation}
if the limit exists.

A probability measure $\eta$ on $\Sigma$ is said to be {\em supermultiplicative} if there exists $C>0$ such that
$$
\eta([IJ])\geq C\eta([I])\eta([J]) \qquad \text{for all }I,J\in \Sigma^*.
$$

Now we are ready to formulate our second result.

\begin{thm}
\label{thm-1.2}
 Let $\mu$ be an ergodic $\sigma$-invariant measure on $\Sigma$.  Then there exists a Borel subset $\Sigma'$ of $\Sigma$ with $\mu(\Sigma')=1$ such that the following properties hold.
\begin{itemize}
\item[(i)]  The limit in \eqref{e-Smux}  defining
$S(\mu, {\bf T}, W, x)$ exists for every $x\in \Sigma'$ and $W\in G(d,k)$. Moreover,
$$\#\{S(\mu, {\bf T}, W, x)\colon W\in G(d,k),\; x\in \Sigma'\}\leq \binom{d+\ell'-k}{\ell'}, $$
 where  $\ell'$ is the smallest integer not less than $\min\{k, \dim_{\rm LY}(\mu, {\bf T})\}$. Here  $\dim_{\rm LY}(\mu, {\bf T})$ denotes the Lyapunov dimension of $\mu$  with respect to ${\bf T}$;  see Definition \ref{de-Lyapunov}.
\medskip

 \item[(ii)] For every  $W\in G(d,k)$ and
  $\ba\in \R^{md}$,
\begin{align*}
    \overline{\dim}_{\rm loc}((P_W\pi^\ba)_*\mu, P_W\pi^\ba x)&\leq S(\mu, {\bf T}, W,x) \quad \mbox{for $\mu$-a.e.~$x\in \Sigma'$};
      \end{align*}
    and consequently,
      \begin{align*}
   \ldim{H} (P_W\pi^\ba)_*\mu &\leq   \underline{S}(\mu,{\bf T}, W),\quad
\udim{H} (P_W\pi^\ba)_*\mu \leq  \overline{S}(\mu,{\bf T}, W),\\
\ldim{P}(P_W\pi^\ba)_*\mu &\leq  \underline{S}(\mu, {\bf T},W),\quad
\udim{P} (P_W\pi^\ba)_*\mu \leq \overline{S}(\mu,{\bf T}, W),
\end{align*}
where
\begin{equation}
\label{e-Gammamu}
\underline{S}(\mu,{\bf T}, W)  := \essinf_{x\in {\rm spt}\mu} S(\mu, {\bf T}, W, x), \quad
\overline{S}(\mu,{\bf T},W)  := \esssup_{x\in {\rm spt}\mu} S(\mu, {\bf T}, W, x).
\end{equation}
\item[(iii)] Assume  additionally that $\mu$ is fully supported and supermultiplicative. Then $\underline{S}(\mu,{\bf T}, W)=\overline{S}(\mu, {\bf T},W)$ for all $W\in G(d,k)$.  If furthermore $\{T_i^{\wedge q}\}_{i=1}^m$ is irreducible for some integer $q$ such that $\ell'\leq q\leq k$, where  $\ell'$ is the smallest integer not less than $\min\{k, \dim_{\rm LY}(\mu, {\bf T})\}$, then
\begin{equation}
\label{e-smuw}
\underline{S}(\mu, {\bf T},W)=\overline{S}(\mu, {\bf T},W)=\min\{k,\dim_{\rm LY}(\mu, {\bf T})\}
\end{equation}
 for all $W\in G(d,k)$.
\medskip
\item[(iv)] Assume that $\|T_i\|<1/2$ for $1\leq i\leq m$. Let $W\in G(d,k)$. Then for $\mathcal L^{md}$-a.e.~$\ba\in \R^{md}$,
    \begin{align*}
    {\dim}_{\rm loc}((P_W\pi^\ba)_*\mu, P_W\pi^\ba x)&= S(\mu,{\bf T},W, x)
    \end{align*}
for $\mu$-a.e.~$x\in \Sigma$. Consequently, for $\mathcal L^{md}$-a.e.~$\ba\in \R^{md}$,
   \begin{align*}
\ldim{H} (P_W\pi^\ba)_*\mu &=\ldim{P} (P_W\pi^\ba)_*\mu=   \underline{S}(\mu,{\bf T}, W),\\
\udim{H} (P_W\pi^\ba)_*\mu &= \udim{P}(P_W\pi^\ba)_*\mu= \overline{S}(\mu,{\bf T}, W).
\end{align*}
Moreover, \eqref{e-smuw} holds for $\gamma_{d,k}$-a.e.~$W\in G(d,k)$.
\end{itemize}
\end{thm}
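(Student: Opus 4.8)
The backbone is an exterior-algebra description of $S(\mu,{\bf T},W,x)$. Since $P_W$ is an orthogonal projection, its $q$-th exterior power $P_W^{\wedge q}$ is the orthogonal projection of $\wedge^q\R^d$ onto $\wedge^qW$, whence for $1\le q\le k$,
$$
\alpha_1(P_WT_{x|n})\cdots\alpha_q(P_WT_{x|n})=\|(P_WT_{x|n})^{\wedge q}\|=\|P_W^{\wedge q}\,T_{x|n}^{\wedge q}\|.
$$
To prove (i) I would fix $x$ in the $\mu$-full set on which $\frac1n\log\mu([x|n])\to -h$ (Shannon--McMillan--Breiman, $h$ the entropy of $\mu$) and on which the Oseledets filtration of the cocycle generated by ${\bf T}$ and of all of its exterior powers are defined, with Lyapunov exponents $\chi_1\ge\cdots\ge\chi_d$ (all negative by \eqref{e-norm}). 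On that set I would identify $\Lambda_q(W,x):=\lim_n\frac1n\log\|P_W^{\wedge q}T_{x|n}^{\wedge q}\|$ for every $W\in G(d,k)$: it equals $\chi_1+\cdots+\chi_q$ unless $\wedge^qW$ meets, in a prescribed dimension, a certain proper subspace of $\wedge^q\R^d$ attached to the Oseledets flag of $T_{x|n}^{\wedge q}$, in which case it equals a strictly smaller, explicitly given partial sum of Lyapunov exponents. This is exactly the mechanism behind Theorem \ref{thm-1.1}(i), and it delivers both the existence of $\Lambda_q(W,x)$ for \emph{all} $W$ off a single exceptional set and the finiteness of the set of attainable profiles $(\Lambda_1(W,x),\dots,\Lambda_k(W,x))$. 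Because $q\mapsto\Lambda_q(W,x)$ is concave (its increments $\Lambda_q-\Lambda_{q-1}=\lim_n\frac1n\log\alpha_q(P_WT_{x|n})\le\log\max_i\|T_i\|<0$ are non-increasing), $S_n(\mu,{\bf T},W,x)$ converges to the unique $s\in[0,k]$ at which the piecewise-linear interpolant of $\{(q,\Lambda_q(W,x))\}_{q=0}^{k}$ (with $\Lambda_0:=0$) takes the value $-h$, and to $k$ if that interpolant is $\ge -h$ at $s=k$. Matching the finitely many cases against $\min\{k,\dim_{\rm LY}(\mu,{\bf T})\}$ yields the displayed bound $\binom{d+\ell'-k}{\ell'}$, completing (i).

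Part (ii) is a purely combinatorial covering estimate, valid for every $\ba$ and every $W$; transversality is not used. Write $\nu=(P_W\pi^\ba)_*\mu$, $y=P_W\pi^\ba x$, and for large $n$ set $j=\lfloor S_n(\mu,{\bf T},W,x)\rfloor$, $r_n=\alpha_{j+1}(P_WT_{x|n})$. The set $P_W\pi^\ba[x|n]=P_WT_{x|n}(K^\ba)+\mathrm{const}$ lies in an ellipsoid whose semi-axes are the $\alpha_i(P_WT_{x|n})$ up to the fixed factor $\diam K^\ba$, hence is covered by at most $C\,\alpha_1(P_WT_{x|n})\cdots\alpha_j(P_WT_{x|n})\,r_n^{-j}$ balls of radius $r_n$ from a fixed grid. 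If the grid ball containing $y$ had $\nu$-mass less than $r_n^{\,S_n-2\varepsilon}$, then, since the $P_W\pi^\ba$-preimage of a grid ball has $\mu$-mass at most its $\nu$-mass, summing over grid balls and over $I\in\Sigma_n$ bounds the $\mu$-measure of the set of offending $x$ by $C\sum_{I\in\Sigma_n}\varphi^{\,S_n(\mu,{\bf T},W,I)-2\varepsilon}(P_WT_I)$. Restricting to the $\mu$-typical level-$n$ cylinders, on which $S_n(\mu,{\bf T},W,I)\ge S-\varepsilon$ and hence $\varphi^{\,S_n-2\varepsilon}(P_WT_I)\le(\max_i\|T_i\|)^{\varepsilon n}\mu([I])$, makes the sum summable in $n$, while the non-typical cylinders carry exponentially small total mass. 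Borel--Cantelli then gives $\nu(B(y,2r_n))\ge r_n^{\,S_n-2\varepsilon}$ for all large $n$ and $\mu$-a.e.\ $x$; since consecutive $r_n$ are comparable ($T_i$ invertible), the bound passes to all radii and yields $\overline{\dim}_{\rm loc}(\nu,y)\le S(\mu,{\bf T},W,x)$. The asserted inequalities for $\ldim{H},\udim{H},\ldim{P},\udim{P}$ of $(P_W\pi^\ba)_*\mu$ then follow from the variational formulas recalled in Section \ref{S1}.

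The heart of the theorem is the matching lower bound in (iv), which is where $\|T_i\|<1/2$ enters. The key input is a Falconer--Solomyak-type transversality estimate for the projected IFS: for any bounded cube $U\subset\R^{md}$, any non-integer $s\in(0,k)$ and any distinct $x,y\in\Sigma$,
$$
\int_U\frac{d\ba}{|P_W\pi^\ba x-P_W\pi^\ba y|^{\,s}}\le\frac{C(U,{\bf T},W,s)}{\varphi^s(P_WT_{x\wedge y})},
$$
where $x\wedge y$ is the longest common prefix; I would obtain this by inserting the translation average into Morris's singular-value computation underlying $\dim_{\rm AFF}({\bf T},W)$, with $\|T_i\|<1/2$ keeping the relevant Jacobians bounded. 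Granting it, fix $\varepsilon>0$ and a non-integer $s<\underline{S}(\mu,{\bf T},W)-\varepsilon$; integrating the $s$-energy of $\nu$ in $\ba$ over $U$ and applying the estimate bounds it by $C\sum_n\sum_{I\in\Sigma_n}\mu([I])^2\varphi^s(P_WT_I)^{-1}$, the off-diagonal term being handled by a scale-by-scale refinement (or, in the Bernoulli and supermultiplicative cases, directly by supermultiplicativity). Restricting once more to the $\mu$-typical cylinders, on which $\varphi^s(P_WT_I)\ge\mu([I])(\max_i\|T_i\|)^{-\delta n}$ for a fixed $\delta>0$, makes the series converge; hence for $\mathcal L^{md}$-a.e.\ $\ba\in U$ the measure $\nu$ has finite $s$-energy, so $\underline{\dim}_{\rm loc}(\nu,\cdot)\ge s$ at $\nu$-a.e.\ point. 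Letting $s\uparrow\underline{S}(\mu,{\bf T},W)$ through a countable set and exhausting $\R^{md}$ by cubes gives $\underline{\dim}_{\rm loc}((P_W\pi^\ba)_*\mu,P_W\pi^\ba x)\ge\underline{S}(\mu,{\bf T},W)$ for $\mathcal L^{md}$-a.e.\ $\ba$ and $\mu$-a.e.\ $x$. To sharpen this essential-infimum bound to the pointwise bound $\ge S(\mu,{\bf T},W,x)$ matching (ii), I would invoke (i): $\Sigma'$ splits into finitely many Borel sets $A_v=\{x\in\Sigma':S(\mu,{\bf T},W,x)=v\}$, and running the energy argument with $\mu$ replaced by the normalized restriction of $\mu$ to (an exhausting sequence of subsets of) each $A_v$ — on which the relevant typical cylinders satisfy $S_n(\mu,{\bf T},W,I)\to v$ — yields $\underline{\dim}_{\rm loc}(\nu,P_W\pi^\ba x)\ge v$ for $\mu|_{A_v}$-a.e.\ $x$ and $\mathcal L^{md}$-a.e.\ $\ba$. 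Combined with (ii), $\dim_{\rm loc}((P_W\pi^\ba)_*\mu,P_W\pi^\ba x)=S(\mu,{\bf T},W,x)$ for $\mu$-a.e.\ $x$ and $\mathcal L^{md}$-a.e.\ $\ba$, and the stated dimension equalities follow as before. I expect this projected transversality estimate, together with the passage from the essential-infimum bound to the pointwise one, to be the main obstacle: the single-matrix estimate is classical, but carrying $P_W$ through it and then reconciling with the generally non-constant exponent $S(\mu,{\bf T},W,x)$ is delicate.

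Finally, (iii) and the last assertion of (iv) are corollaries of the structure theory. For (iii): when $\mu$ is fully supported and supermultiplicative one has $\mu([x|n])\asymp\mu([(\sigma x)|(n-1)])$ uniformly, and combining this with the superadditivity of $-\log\varphi^s(\cdot)$ in a sub/super-additive ergodic argument — in the spirit of the exact-dimensionality proofs for self-affine measures — shows $S(\mu,{\bf T},W,\cdot)$ to be $\mu$-a.e.\ constant, i.e.\ $\underline{S}(\mu,{\bf T},W)=\overline{S}(\mu,{\bf T},W)$. If moreover $\{T_i^{\wedge q}\}_{i=1}^m$ is irreducible for some $\ell'\le q\le k$, then, exactly as in Theorem \ref{thm-1.1}(i), there is no exceptional direction, so $\Lambda_q(W,x)=\chi_1+\cdots+\chi_q$ for all $W$ and $\mu$-a.e.\ $x$; feeding this into the interpolation formula for $S$ gives $\min\{k,\dim_{\rm LY}(\mu,{\bf T})\}$, which is \eqref{e-smuw}. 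For the last line of (iv), a Fubini plus Borel--Cantelli argument over $G(d,k)$ — using that for any fixed unit vector $u\in\wedge^q\R^d$ the set of $W\in G(d,k)$ with $\wedge^qW\perp u$ is a proper subvariety, so that $\int_{G(d,k)}\|P_W^{\wedge q}u\|^{-t}\,d\gamma_{d,k}(W)<\infty$ for small $t>0$ — shows that for $\gamma_{d,k}$-a.e.\ $W$ one has $\Lambda_q(W,x)=\chi_1+\cdots+\chi_q$ for $\mu$-a.e.\ $x$ and every $q$ (the measure analogue of Theorem \ref{thm-1.1}(ii)), whence $\underline{S}(\mu,{\bf T},W)=\overline{S}(\mu,{\bf T},W)=\min\{k,\dim_{\rm LY}(\mu,{\bf T})\}$ for $\gamma_{d,k}$-a.e.\ $W$.
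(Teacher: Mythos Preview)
Your outline for parts (i) and (ii) is broadly aligned with the paper: Oseledets applied to the exterior-power cocycles yields the limits $\Lambda_q(W,x)$ and the finite list of profiles (the paper packages this via ``pivot position vectors''), and the upper bound in (ii) is indeed a covering/Borel--Cantelli argument (the paper's Lemma~\ref{LemJ}). The counting for $\binom{d+\ell'-k}{\ell'}$ needs the observation that when $\dim_{\rm LY}<k$ only the first $\ell'$ pivot positions matter and the $\ell'$-th one is at most $d-(k-\ell')$; you gesture at this but do not carry it out.

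There are, however, two genuine gaps.

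\textbf{Part (iv), the pointwise lower bound.} Your ``sharpening'' step---replacing $\mu$ by $\mu|_{A_v}$ in the energy argument---bounds the $s$-energy of $(P_W\pi^\ba)_*(\mu|_{A_v})$, which controls the local dimension of \emph{that restricted projected measure}, not of $\nu=(P_W\pi^\ba)_*\mu$. Since $(P_W\pi^\ba)_*(\mu|_{A_v})\le\nu$, a lower bound on the local dimension of the former says nothing about the latter at the same point: mass from $A_{v'}$ with $v'<v$ could accumulate near $P_W\pi^\ba x$. The paper avoids this by never restricting the inner ($y$-)integral. Its key step is the pointwise characterization (Lemma~\ref{lem-Smu})
\[
S(\mu,{\bf T},W,x)=\sup\Bigl\{s\ge 0:\int_\Sigma \varphi^s(P_WT_{x\wedge y})^{-1}\,d\mu(y)<\infty\Bigr\},
\]
with the integral over \emph{all} of $\Sigma$. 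One then restricts only the $x$-variable to $\Omega_N=\{x:\int_\Sigma\varphi^s(P_WT_{x\wedge y})^{-1}d\mu(y)<N\}$, computes $\int_{B_\rho}\int_{\Omega_N}\int_{\R^d}|P_W\pi^\ba x-z|^{-s}\,d\nu(z)\,d\mu(x)\,d\ba\le cN$ via transversality and Fubini, and concludes $\underline{\dim}_{\rm loc}(\nu,P_W\pi^\ba x)\ge s$ for $\mu$-a.e.\ $x\in\Omega_N$. Since $\{S>s\}\subset\bigcup_N\Omega_N$, this gives the pointwise bound directly (this is the Hunt--Kaloshin device). Your plan is missing precisely this observation.

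\textbf{Part (iii).} The sequence $n\mapsto\log\varphi^s(P_WT_{x|n})$ is neither sub- nor super-additive: $P_WT_{x|(m+n)}\ne(P_WT_{x|m})(P_WT_{\sigma^mx|n})$, so a ``sub/super-additive ergodic argument'' does not apply and cannot yield $\mu$-a.e.\ constancy of $S(\mu,{\bf T},W,\cdot)$. The paper's argument is substantially different: for each integer $q\le k$ it sets $X=\mathrm{span}\bigl(\bigcup_J(T_J^*)^{\wedge q}(W^{\wedge q})\bigr)$, applies Oseledets to the cocycle $x\mapsto (T_{x_1}^*)^{\wedge q}|_X$, and shows $W^{\wedge q}\not\subset\widetilde V_1(x)$ for $\mu$-a.e.\ $x$ by a contradiction that hinges on a density lemma (Lemma~\ref{lem-density}) specific to fully supported supermultiplicative measures. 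This forces $\lim_n\frac1n\log\|(T_{x|n}^*)^{\wedge q}P_{W^{\wedge q}}\|$ to equal the top Lyapunov exponent $\widetilde\lambda_1$ of the restricted cocycle for $\mu$-a.e.\ $x$, hence constancy. Your sketch does not contain this mechanism.
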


According to Theorem \ref{thm-1.2}(iv), under the assumption of \eqref{e-norm}, for every ergodic $\sigma$-invariant measure $\mu$ on $\Sigma$ and every linear subspace $W$ of $\R^d$,  the local dimensions of   $(P_W\pi^\ba)_*\mu$ exist almost everywhere for $\mathcal L^{md}$-a.e.~$\ba\in \R^{md}$. However,  as  demonstrated by Example \ref{not exact dimensional}, $(P_W\pi^\ba)_*\mu$ may  fail to be exact dimensional for almost all $\ba$. Furthermore, if ${\bf T}$ is a tuple of $2\times 2$ antidiagonal matrices, then under a mild assumption on ${\bf T}$, we can construct a compact $\sigma$-invariant set $X\subset \Sigma$ with positive topological entropy such that $$\overline{S}(\mu,{\bf T},W)\neq \underline{S}(\mu,{\bf T},W)$$ for every ergodic $\sigma$-invariant measure $\mu$ supported on $X$ with $h_\mu(\sigma)>0$, where $W$ is either the $x$-axis or the $y$-axis in $\R^2$; see Proposition \ref{pro-antidiagonal} and its proof.           This phenomenon is both intriguing and unexpected.  Nevertheless, by Theorem \ref{thm-1.2}(iii)-(iv), for any tuple ${\bf T}$ satisfying \eqref{e-norm}, if $\mu$ is fully supported and supermultiplicative (e.g., when $\mu$ is a Bernoulli product measure),  then $(P_W\pi^\ba)_*\mu$ is exact dimensional for almost all $\ba$.  We note that  the assumption of $\mu$ being fully supported can be dropped; see Remark \ref{rem-5.8}.

In addition to  Theorems \ref{thm-1.1}-\ref{thm-1.2},  we  present further results concerning the quantities $\dim_{\rm AFF}({\bf T}, W)$,  $\overline{S}(\mu,{\bf T}, W)$  and $\underline{S}(\mu,{\bf T}, W)$ in  specific cases in Section \ref{S-7}.  For instance, when $d=2$, we provide
\begin{itemize}
\item
a simple verifiable criterion for $\dim_{\rm AFF}({\bf T}, W)$ to be strictly less than \\ $\min\{1, \dim_{\rm AFF}({\bf T})\}$,
\item a verifiable criterion for $\overline{S}(\mu,{\bf T}, W)$ to be strictly less than \\
$\min\{1, \dim_{\rm LY}(\mu, {\bf T})\}$, and
    \item
    a necessary and sufficient condition for which $\overline{S}(\mu,{\bf T}, W)>\underline{S}(\mu,{\bf T}, W)$;
\end{itemize}
 see Propositions \ref{pro-planar case}-\ref{pro-planar case-measure}.

It is worth noting that when $T_i=\rho_iO_i$ for all $1\leq i\leq m$, with $0<\rho_i<1$ and $O_i$ being orthogonal, both \eqref{e-AFFW} and \eqref{e-smuw} hold for all $W\in G(d,k)$;
see Section \ref{S-9}. Therefore, in this case, there is no exceptional dimension drop for orthogonal projections of $K^\ba$ and $\pi^\ba_*\mu$ for almost all $\ba$, provided that $\rho_i<1/2$ for all $i$.

We emphasize that our results can be readily extended to general linear projections, rather than being restricted to orthogonal projections.  To illustrate this, let $L\colon \R^d \to \R^d$ be a singular linear transformation of rank $k$, and let $W=L^*(\R^d)$, where $L^*$ denotes the transpose of $L$. Then, there exists an invertible transformation $M$ on $\R^d$ such that $L=MP_W$; see Lemma~\ref{lem-simple}(ii).  Thus,  the dimensional properties of projected sets and measures under the linear projection $L$ are the same as those under the orthogonal projection $P_W$.

Rather than focusing on typical self-affine sets and measures, there are existing results in the literature concerning  orthogonal projections of specific self-similar and self-affine sets and measures along all or particular directions (see, e.g., the surveys \cite{FalconerFraserJin2015, Shmerkin2015}, and the book \cite{BSS2023}). Notably, the following result was established by Peres and Shmerkin \cite{PeresShmerkin2009}  in the plane and by Hochman and Shmerkin \cite{HochmanShmerkin2012} in  higher dimensions: Let $K\subset \R^d$ be a self-similar set generated by an IFS $\{f_i(x)=\rho_iO_ix+a_i\}_{i=1}^m$ of similitudes with dense rotations (i.e., the rotation group generated by  $O_1,\ldots, O_m$ is dense in the full rotation group $SO(d,\R)$). Suppose that  the IFS $\{f_i\}_{i=1}^m$ additionally satisfies the strong separation condition (i.e., $f_i(K)$ are pairwise disjoint).\, Then
\begin{equation}
\label{e-1.2''}
\dim_{\rm H} P_W K= \min\{\dim_{\rm H} K, k\} \quad \mbox{for all } W\in G(d,k),
\end{equation}
and the above equality also holds if $K$ is replaced by any self-similar measure associated with the IFS. Later, Farkas \cite{Farkas2016} and Falconer and Jin \cite{FalconerJin2014} showed that \eqref{e-1.2''} and its variant for self-similar measures still hold without any separation condition on the IFS. Regarding projections of self-similar measures,  Algom and Shmerkin \cite{AlgomShmerkin2024} further weakened the denseness assumption on the rotation group. Specifically,  they obtained the exact sharp condition on the rotation group under which the measure variant of \eqref{e-1.2''} holds  when $k=1$ or $d-1$. It is worth noting that for any planar self-similar set or measure, the set of  exceptional directions $W\in G(2,1)$ for which \eqref{e-1.2''} or its measure variant fails to hold is at most countable. This result was recently established by Wu \cite{Wu2025}, improving a previous result of Hochman \cite{Hochman2014} that the set of such exceptional directions has zero packing dimension.


In \cite{FergusonJordanShmerkin2010}, Ferguson, Jordan and Shmerkin proved that under a suitable irrationality assumption,  for several classes of self-affine carpets  $K$ in the plane (including Bedford-McMullen, Gatzouras-Lalley, and Baranski carpets), it holds that
\begin{equation}
\label{e-e1'}
\dim_{\rm H} P_W K =\min\{\dim_{\rm H} K, 1\},
\end{equation}
for all $W\in G(2,1)$ except for the $x$-axis and the $y$-axis. This extends the earlier result of Peres and Shmerkin \cite{PeresShmerkin2009} on sums of Cantor sets.  Recently, for general planar diagonal affine IFSs under a suitable irrationality assumption,  a version of \eqref{e-e1'},  where $K$ is replaced by any self-affine measure, was proved for all $W\in G(2,1)$ except for the $x$-axis and the $y$-axis by B\'{a}r\'{a}ny {\it et al.}~\cite[Theorem 1.6]{BaranyKPW2023} and Py\"{o}r\"{a}l\"{a} \cite{Pyorala2024}; see also \cite{FergusonFraserSahlsten2015} for an earlier result. We note that this also holds for a special class of ergodic measures on product-like planar self-affine sets--more precisely, products of two ergodic stationary measures supported respectively on two homogeneous self-similar sets--see Hochman and Shmerkin \cite{HochmanShmerkin2012} and Bruce and Jin \cite{BruceJin2023}.   In \cite{FalconerKempton2017}, Falconer and Kempton studied  planar affine IFSs consisting of maps whose linear parts are given by matrices with strictly positive entries. They showed that the dimension of a self-affine measure $\eta$ for such an IFS is preserved for all projections with a possible exception of one direction, provided that the dimension of $\eta$ is equal to its Lyapunov dimension.   In 2019, Barany, Hochman and Rapaport \cite{BaranyHochmanRapaport2019} achieved breakthrough results on planar self-affine sets and measures.  They  proved that for a planar self-affine set $K$, if the generating IFS of $K$ satisfies the strong open set condition, and the linear parts of the IFS are strong irreducible and proximal, then the Hausdorff dimension of $K$ equals its affinity dimension, and moreover, \eqref{e-e1'} and its version for self-affine measures hold for all subspaces $W$. More recently, Rapaport \cite{Rapaport2024} obtained analogous results for every affine IFS $\{T_ix+a_i\}_{i=1}^m$ in $\R^3$ that satisfies the same assumptions as in \cite{BaranyHochmanRapaport2019}.

Let us briefly outline some strategies employed in our proofs of Theorem~\ref{thm-1.1}(iii) and Theorem~\ref{thm-1.2}(iii)--(iv). By adapting an idea of Falconer \cite{Fal88}, one can show that, under the assumption \eqref{e-norm} and for a given $W \in G(d,k)$,
\[
\dim_{\mathrm H} P_W(K^\ba) = t_0(W)
\quad\text{for }\mathcal L^{md}\text{-a.e. }\ba \in \mathbb{R}^{md},
\]
where
\[
t_0(W) = \inf \{ s \ge 0 :\ \mathcal M^s_{W,\infty} = 0 \},
\]
and
\[
\mathcal M^s_{W,\infty}
   = \inf \sum_{n=1}^\infty \varphi^s(P_W T_{I_n}),
\]
with the infimum taken over all countable covers $\{[I_n]\}_n$ of $\Sigma$ consisting of cylinder sets.

However, this approach only establishes the constancy of $\dim_{\mathrm H} P_W(K^\ba)$ for almost all $\ba$. Since the mapping
\[
I \mapsto \varphi^s(P_W T_I)
\]
from $\Sigma_*$ to $(0,\infty)$ is in general neither submultiplicative nor supermultiplicative for $s \in (0,k]$, it is difficult to mimic Falconer’s argument (see the proof of \cite[Theorem~5.4]{Fal88}) to establish the coincidence of $\dim_{\mathrm B} P_W(K^\ba)$ and $\dim_{\mathrm H} P_W(K^\ba)$, or to establish possible connections between $\mathcal M^s_{W,\infty}$ and other dynamically defined pressure functions.

To overcome this difficulty, we adopt a different approach. We begin by studying the limiting behavior of
$$\frac1n\log \varphi^s(T_{x|n}^*P_W) \quad \mbox{and}\quad  \frac{1}{n}\log \sup_{J\in \Sigma_*} \varphi^s(T_{x|n}^*P_{T_J^*W})$$
for  $x\in \Sigma$, $W\in G(d,k)$ and $s\in [0,k]$. Applying Oseledets's multiplicative ergodic theorem, we prove a key result (see Proposition~\ref{pro-key}) stating that  for each ergodic $\sigma$-invariant measure $\mu$ on $\Sigma$, there exists a measurable subset $\Sigma' \subset \Sigma$ with $\mu(\Sigma') = 1$ such that, for every $x \in \Sigma'$, the limits
\[
\lim_{n\to\infty} \frac{1}{n}\log \varphi^{s}(T_{x|n}^* P_W)
\quad \text{and} \quad
\lim_{n\to\infty} \frac{1}{n}\log \sup_{J\in\Sigma_*} \varphi^{s}\!\left(T_{x|n}^* P_{T_J^* W}\right)
\]
exist simultaneously for all $W \in G(d,k)$ and all $s \in [0,k]$. Moreover, their values depend on $W$, $s$, and $x$, and  are equal to suitable weighted sums of the Lyapunov exponents of the matrix cocycle $\Sigma \to \mathrm{Mat}_d(\mathbb{R})$, $y \mapsto T_{y_1}^*$, with respect to $\mu$; see Proposition~\ref{pro-key} for details.  In particular,  for every $x\in \Sigma'$, $W \in G(d,k)$ and  $s \in [0,k]$,
\begin{equation}
\label{e-vartxn}
\sup_{J\in \Sigma_*}\lim_{n\to\infty} \frac{1}{n}\log \varphi^{s}(T_{x|n}^* P_{T_J^*W})
=
\lim_{n\to\infty} \frac{1}{n}\log \sup_{J\in\Sigma_*} \varphi^{s}\!\left(T_{x|n}^* P_{T_J^* W}\right).
\end{equation}

Proposition~\ref{pro-key} plays a key role in the proofs of our main results. It enables us to prove Theorem \ref{thm-1.2}(iv) by using a projection analogue of \cite[Lemma 3.1]{Fal88} and  adapting the arguments in the proofs of \cite[Theorem 2.1(ii)]{FLM2023} and \cite[Theorem 4.1]{HuntKaloshin1997}.

To prove Theorem  \ref{thm-1.2}(iii), an essential step is to  show that if $\mu$ is fully supported and supermultiplicative, then for every  $s\in [0,k]$ and $W\in G(d,k)$, the first limit in \eqref{e-vartxn} is constant for $\mu$-a.e.~$x$. It turns out that it suffices to prove this  statement when $s$ is an integer. To this end, we establish a special property of such measures $\mu$ (see Lemma~\ref{lem-density}):  for every measurable set $A\subset \Sigma$  with $\mu(A)>0$, and every positive integer $j$, there exist $I\in \Sigma_*$ and a measurable subset $A'\subset A$ with $\mu(A')>0$  such that
$$
IJy\in A
$$
for all $J\in \Sigma_*$ with $|J|\leq j$ and all $y\in A'$. We then prove the desired statement by combining this special property with a certain invariance property of Oseledets filtrations, together with a contradictory argument.

 To prove Theorem \ref{thm-1.1}(iii), we need another key ingredient.  For $s\in [0,k]$ and $W\in G(d,k)$, define $\psi^s_W\colon \Sigma_*\to (0, \infty)$ by $$\psi_W^s(I)=\sup_{J\in \Sigma_*} \varphi^s(T_{I}^*P_{T_J^*W}).$$
It turns out that $\psi_W^s$ is submultiplicative (see Lemma \ref{lem-subm}). Moreover, using Proposition \ref{pro-key}, we demonstrate that the quantity $\dim_{\rm AFF}({\bf T}, W)$ corresponds to the zero point of the topological pressure function associated with the subadditive potential $\{\log \psi^s_W(\cdot|n)\}_{n=1}^\infty$ (see Proposition \ref{pro-equivalence} and Lemma \ref{lem-Affd}(ii)). Among other applications, this result allows us to construct  suitable measures $\widetilde{\mu}$ on $\Sigma$ from the equilibrium measures of these subadditive potentials, ensuring that the dimension of $(P_W\pi^{\ba})_*\widetilde{\mu}$  approximates $\dim_{\rm H}P_W(K^\ba)$ from below for almost all $\ba$.

We note that, simultaneously and independently of this work, Morris and Sert \cite{MorrisSert2025}
obtained  alternative proofs of variants of Theorems \ref{thm-1.1}(iii) and \ref{thm-1.2}(iv). Similar to Example \ref{not exact dimensional}, they also constructed examples showing that for an ergodic invariant measure $\mu$, the projection $(P_W\pi^\ba)_*\mu$ may fail to be exact dimensional for almost every $\ba$. We have recently become aware that in a paper \cite{AKPST2025+} in progress, Allen {\it et al}.~independently constructed a planar box-like self-affine set for which a certain ergodic $\sigma$-invariant measure has orthogonal projections which are not exact-dimensional.

The paper is organized as follows. In Section \ref{S-2}, we provide some preliminaries on linear algebra, subadditive thermodynamic formalism, singular value functions, and  Lyapunov dimensions. In Section \ref{S-3}, we prove Proposition \ref{pro-key} by applying Oseledets's multiplicative ergodic theorem.  In Section \ref{S-4}, we present some properties of $\psi_W^s$ and the corresponding topological pressures. In Section \ref{S-5}, we prove Theorem \ref{thm-1.2}. In Section \ref{S-6}, we prove Theorem \ref{thm-1.1}. In Section \ref{S-7}, we give some additional results  about $\dim_{\rm AFF}({\bf T}, W)$, $\overline{S}(\mu,{\bf T}, W)$ and $\underline{S}(\mu,{\bf T}, W)$ in specific  cases.
In Section \ref{S-8}, we construct a concrete example (see Example \ref{not exact dimensional}) for which $\overline{S}(\mu,{\bf T}, W)\neq \underline{S}(\mu,{\bf T}, W)$; we also give a simple criterion to check whether there exist such counter examples for a given tuple ${\bf T}$ of $2\times 2$ antidiagonal matrices. In Section \ref{S-9}, we give some final remarks.
\section{Preliminaries}
\label{S-2}

\subsection{Exterior algebra}
\label{S-2.1}

As usual in the study of matrix cocycles, we often make use of the exterior algebra generated by the $k$-alternating forms, which we denote $(\R^d)^{\wedge k}$. It is endowed with an inner product $(\cdot|\cdot)$, with the property that
\[
(v_1\wedge\cdots\wedge v_k|w_1\wedge\cdots\wedge w_k) = \det(\langle v_a,  w_b\rangle)_{1\le a,b\le k},
\]
where $\langle v_a, w_b\rangle$ is the usual inner product on $\R^d$. The norm of $v_1\wedge\cdots\wedge v_k$ is equal to the  $k$-dimensional  volume of the parallelotope formed by $v_1,\ldots, v_k$ (see, e.g.~ \cite[p.~220]{ShafarevichRemizov2013}). It follows that
\begin{equation}
\|v_1\wedge\cdots\wedge v_k\|\leq \prod_{i=1}^k \|v_k\|
\end{equation}
and
\begin{equation}
\label{e-F-3.2'}
\|v_1\wedge\cdots\wedge v_{k+p}\|\leq \|v_1\w \cdots \w v_k\|\cdot \|v_{k+1}\w \cdots\w v_{k+p}\| \mbox{ for }k, p\geq 1.
\end{equation}
Moreover, in the special case when $k=d$,
\begin{equation}
\label{e-F-3.2''}
\|v_1\wedge\cdots\wedge v_{d}\|=|\det(v_1,\ldots, v_d)|,
\end{equation}
where $(v_1,\ldots, v_d)$ stands for the $d\times d$ matrix with column vectors $v_1,\ldots, v_d$.

For a linear subspace $V$ of $\R^d$, let $V^\perp$ denote the orthogonal complement of $V$ in $\R^d$, and let $P_V:\R^d\to V$ be the orthogonal projection onto $V$. The following result also follows from the volume interpretation of the norm of exterior products
 (see, e.g.~ \cite[p.~220]{ShafarevichRemizov2013}).
\begin{lem}
\label{lem-F-3.0}
 Let $w, v_1,\ldots, v_k\in \R^d$ so that $v_1,\ldots, v_k$ are  linearly independent. Set $V=\s(v_1,\ldots, v_k)$. Then
$$
d(w,V)=\|P_{V^\perp}(w)\|=\frac{\|w \w v_1\wedge\cdots\wedge v_k\|}{\|v_1\wedge\cdots\wedge v_k\|},
$$
where $d(w, V):=\inf\{\|w-v\|\colon v\in V\}$.
\end{lem}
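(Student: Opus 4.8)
The final statement to prove is Lemma \ref{lem-F-3.0}, which asserts that the distance from a vector $w$ to the subspace $V = \s(v_1,\ldots,v_k)$ equals $\|w\wedge v_1\wedge\cdots\wedge v_k\|/\|v_1\wedge\cdots\wedge v_k\|$.

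\medskip

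The plan is to exploit the volume interpretation of the exterior product norm that was recalled just before the statement: $\|u_1\wedge\cdots\wedge u_p\|$ is the $p$-dimensional volume of the parallelotope spanned by $u_1,\ldots,u_p$. First I would observe that $d(w,V) = \|P_{V^\perp}(w)\|$ is a standard fact: the nearest point of $V$ to $w$ is $P_V(w)$, and $w - P_V(w) = P_{V^\perp}(w)$ is orthogonal to $V$, so $d(w,V)^2 = \|P_{V^\perp}(w)\|^2$ by Pythagoras. This disposes of the first equality in the displayed formula.

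\medskip

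For the second equality, I would decompose $w = P_V(w) + P_{V^\perp}(w)$ and use multilinearity and alternation of the wedge product: since $P_V(w)$ lies in $V = \s(v_1,\ldots,v_k)$, the term $P_V(w)\wedge v_1\wedge\cdots\wedge v_k$ vanishes, hence
\[
w\wedge v_1\wedge\cdots\wedge v_k = P_{V^\perp}(w)\wedge v_1\wedge\cdots\wedge v_k.
\]
Now the parallelotope spanned by $P_{V^\perp}(w), v_1,\ldots,v_k$ has a base (the parallelotope spanned by $v_1,\ldots,v_k$, lying inside $V$) and an apex vector $P_{V^\perp}(w)$ that is orthogonal to that base (being in $V^\perp$). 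By the elementary ``base times height'' formula for the volume of a parallelotope whose top edge is perpendicular to its base, its $(k+1)$-dimensional volume equals $\|P_{V^\perp}(w)\|\cdot\bigl(\text{$k$-volume of the base}\bigr) = \|P_{V^\perp}(w)\|\cdot\|v_1\wedge\cdots\wedge v_k\|$. Translating back into norms of exterior products,
\[
\|w\wedge v_1\wedge\cdots\wedge v_k\| = \|P_{V^\perp}(w)\wedge v_1\wedge\cdots\wedge v_k\| = \|P_{V^\perp}(w)\|\cdot\|v_1\wedge\cdots\wedge v_k\|,
\]
and dividing by $\|v_1\wedge\cdots\wedge v_k\|$ (which is nonzero since the $v_i$ are linearly independent) gives the claim. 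Combining with the first equality yields the full statement.

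\medskip

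There is no real obstacle here; the only point that requires a little care is the ``base times height'' step, i.e.\ justifying that when one of the spanning vectors is orthogonal to the span of the remaining ones, the parallelotope volume factors as the product of that vector's length and the volume of the lower-dimensional face. This can be seen either directly from the Gram determinant $\det(\langle u_a,u_b\rangle)$ — which becomes block-diagonal when $P_{V^\perp}(w)\perp v_i$ for all $i$, so its determinant is $\|P_{V^\perp}(w)\|^2\det(\langle v_a,v_b\rangle)$ — or simply cited from the volume interpretation in \cite[p.~220]{ShafarevichRemizov2013} as the statement already suggests. I would present the Gram-determinant computation as the cleanest self-contained argument.
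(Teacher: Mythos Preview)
Your proof is correct and follows exactly the approach the paper gestures at: the paper does not give a proof but simply cites the volume interpretation of the exterior-product norm from \cite[p.~220]{ShafarevichRemizov2013}, and your Gram-determinant computation is the standard way to make that interpretation precise. Nothing is missing.
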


If $\{ v_i\}_{i=1}^d$ is an orthonormal basis of $\R^d$, then $\{v_{i_1}\wedge\cdots\wedge v_{i_k}\colon 1\leq i_1<\ldots<i_k\leq d\}$ is an orthonormal basis of $(\R^d)^{\wedge k}$.  Let ${\rm Mat}_d(\R)$ denote the set of real $d\times d$ matrices. For $A\in{\rm Mat}_d(\R)$, we recall that the $k$-fold exterior product $A^{\wedge k}$ of $A$ is defined by the condition
\[
A^{\wedge k}(v_1\wedge\cdots\wedge v_k) = A v_1\wedge\cdots \wedge A v_k.
\]

The following properties are well known (see e.g.~\cite[Chap.~3.2]{Arnold1998} for parts (i)--(iv). Part (v) follows from \eqref{e-F-3.2''}).
\begin{lem}
\label{lem-F-3.1}
Let $A, B\in {\rm Mat}_d(\R)$ and $1\leq k<d$. Then the following properties hold.
\begin{itemize}
\item[(i)] $(AB)^{\w k}=A^{\w k} B^{\w k}$, and in particular, $\| (AB)^{\w k}\|\le \| A^{\w k}\| \| B^{\w k}\|$.
\item[(ii)] $\| A^{\w k}\|=\alpha_1(A)\cdots \alpha_k(A)$, where $\alpha_1(A)\geq \cdots\geq \alpha_d(A)$ are the singular values of $A$, and in particular, $\| A^{\w k}\|\le \| A\|^k$.
\item[(iii)] $(A^*)^{\w k}=(A^{\w k})^*$, where $A^*$ stands for the transpose of $A$.
\item[(iv)] $\det(A^{\w k})=\det(A)^{\binom{d-1}{k-1}}$.
\item[(v)] If $\{ v_i\}_{i=1}^d$ is a basis of $\R^d$, then
$$
\frac{\|A v_1\w \cdots \w A v_d\|}{\|v_1\w \cdots\w v_d\|}=|\det(A)|.
$$
\end{itemize}
\end{lem}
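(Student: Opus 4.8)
The plan is to derive all five assertions directly from two inputs: the defining relation $A^{\w k}(v_1\w\cdots\w v_k)=Av_1\w\cdots\w Av_k$ of the exterior power, and the inner product formula $(v_1\w\cdots\w v_k\mid w_1\w\cdots\w w_k)=\det(\langle v_a,w_b\rangle)_{1\le a,b\le k}$. Since the decomposable elements span $(\R^d)^{\wedge k}$, any identity between linear maps on $(\R^d)^{\wedge k}$ need only be checked on such elements.

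First, for (i), apply the defining relation twice: for a decomposable $\omega=v_1\w\cdots\w v_k$ one has $(AB)^{\w k}\omega=(AB)v_1\w\cdots\w(AB)v_k=A^{\w k}(Bv_1\w\cdots\w Bv_k)=A^{\w k}B^{\w k}\omega$, and linearity gives $(AB)^{\w k}=A^{\w k}B^{\w k}$ on all of $(\R^d)^{\wedge k}$; the inequality $\|(AB)^{\w k}\|\le\|A^{\w k}\|\,\|B^{\w k}\|$ is then submultiplicativity of the operator norm. For (iii), I would check on decomposable $\omega=u_1\w\cdots\w u_k$ and $\omega'=v_1\w\cdots\w v_k$ that $(A^{\w k}\omega\mid\omega')=\det(\langle Au_a,v_b\rangle)=\det(\langle u_a,A^*v_b\rangle)=(\omega\mid(A^*)^{\w k}\omega')$, which identifies $(A^{\w k})^*$ with $(A^*)^{\w k}$.

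Next, for (ii), take a singular value decomposition $A=U\Sigma V^*$ with $U,V$ orthogonal and $\Sigma=\mathrm{diag}(\alpha_1(A),\ldots,\alpha_d(A))$. By (i), $A^{\w k}=U^{\w k}\Sigma^{\w k}(V^*)^{\w k}$, and the inner product formula shows that the exterior power of an orthogonal matrix preserves $(\cdot\mid\cdot)$ on decomposables, hence is an isometry of $(\R^d)^{\wedge k}$; therefore $\|A^{\w k}\|=\|\Sigma^{\w k}\|$. In the orthonormal basis $\{e_{i_1}\w\cdots\w e_{i_k}\colon 1\le i_1<\cdots<i_k\le d\}$ the map $\Sigma^{\w k}$ is diagonal with entries $\alpha_{i_1}(A)\cdots\alpha_{i_k}(A)$, whose largest value is $\alpha_1(A)\cdots\alpha_k(A)$; and $\|A^{\w k}\|\le\|A\|^k$ follows from $\alpha_i(A)\le\alpha_1(A)=\|A\|$. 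For (iv), I would first handle an upper-triangular $A$, available over $\mathbb{C}$ by Schur triangularization, with diagonal entries $\lambda_1,\ldots,\lambda_d$: in the ordered basis above $A^{\w k}$ is triangular with diagonal entries the products $\lambda_{i_1}\cdots\lambda_{i_k}$, so $\det(A^{\w k})=\prod_{i_1<\cdots<i_k}\lambda_{i_1}\cdots\lambda_{i_k}$; since each index $j\in\{1,\ldots,d\}$ lies in exactly $\binom{d-1}{k-1}$ of the $k$-subsets, this equals $(\lambda_1\cdots\lambda_d)^{\binom{d-1}{k-1}}=\det(A)^{\binom{d-1}{k-1}}$. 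Finally, (v) is immediate from \eqref{e-F-3.2''}: with $(v_1,\ldots,v_d)$ denoting the matrix with columns $v_i$, we get $\|Av_1\w\cdots\w Av_d\|=|\det(A(v_1,\ldots,v_d))|=|\det A|\,|\det(v_1,\ldots,v_d)|=|\det A|\cdot\|v_1\w\cdots\w v_d\|$.

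None of these steps is technically difficult; the only place calling for a small extra argument is (iv), where one must pass from the triangular (complex Schur) case to an arbitrary real matrix. This is handled by observing that both sides of $\det(A^{\w k})=\det(A)^{\binom{d-1}{k-1}}$ are polynomial in the entries of $A$, so once the identity holds on the dense set of diagonalizable matrices (equivalently, by continuity from the Schur form) it holds for every $A\in{\rm Mat}_d(\R)$.
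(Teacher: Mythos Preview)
Your proposal is correct. The paper itself does not prove this lemma beyond citing \cite[Chap.~3.2]{Arnold1998} for (i)--(iv) and noting that (v) follows from \eqref{e-F-3.2''}; your arguments are precisely the standard ones (and your treatment of (v) matches the paper's hint exactly), so there is nothing to compare.
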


\subsection{Angles between linear subspaces}

Here we define the (minimal) angle between two linear subspaces $V, W$ of $\R^d$.  For $x,y\in \R^d\setminus \{0\}$, let $\measuredangle (x,y)$ denote the angle between the lines $\ell_x$ and $\ell_y$, where $\ell_x$ represents the line in $\R^d$ passing through the origin and the point $x$. In this definition,  we always have  $\measuredangle (x,y)\in [0,\pi/2]$ and
$$
\sin (\measuredangle (x,y))=\frac{(\|x\|^2\|y\|^2-\langle x, y\rangle^2)^{1/2}}{\|x\|\|y\|}.
$$
 Given two linear subspaces $V$ and $W$ of $\R^d$,  the angle $\measuredangle(V, W)$ ($0\leq \measuredangle(V, W)\leq \pi/2$) between  $V$ and $W$ is defined by
$$
\sin (\measuredangle(V, W))=\inf_{x\in V\setminus\{0\},\; y\in W\setminus\{0\}}\sin (\measuredangle (x,y)).
$$

It is known that for two nontrivial linear subspaces $V$ and $W$ of $\R^d$, $V\cap W=\{0\}$ if and only if $\measuredangle(V, W)>0$
(see e.g.~\cite[Proposition 13.2.1]{Gohberg2006}).

Below we give some useful results about the angles between linear subspaces.
\begin{de}
\label{de-2.1}
Let ${\bf v}=\{v_i\}_{i=1}^d$ be an ordered basis  of  $\R^d$. Define
\begin{equation}
\label{e-F-thetabv}
\alpha({\bf v})=\inf \measuredangle\Big({\rm span}\{v_i\colon i\in I\},\; {\rm span}\{v_j\colon j\in J\}\Big),
\end{equation}
where the infimum is taken over all disjoint nonempty subsets $I,J$ of $\{1,\ldots, d\}$. We call $\alpha({\bf v})$ the smallest angle generated by ${\bf v}$.
\end{de}

\begin{lem}
\label{lem-F-3.2}
Let $\bv=\{v_i\}_{i=1}^d$ be  an ordered basis  of  $\R^d$ and let $\alpha({\bf v})$ be smallest angle generated by ${\bf v}$.
Then the following properties hold:
\begin{itemize}
\item[(i)] For $a_1,\ldots, a_d\in \R$, $$\left\|\sum_{i=1}^d a_i v_i\right\|\geq |a_j|\|v_j\|\sin (\alpha(\bv))$$ for each $1\leq j\leq d$.
\item[(ii)] If $w$ is a unit vector in $\R^d$ with $w=\sum_{i=1}^d a_{i}v_i$. Then
$$|a_j|\leq \frac{1}{\|v_j\|\sin (\alpha(\bv))}$$ for each $1\leq j\leq d$.
\item[(iii)] For $w\in \R^d$ and $I\subset \{1,\ldots, d\}$,
$$
\sin \left( \measuredangle(\s\{w\},\; \s\{v_i\colon i\in I\})\right)=\frac{\|w\wedge (\bigwedge_{i\in I}v_i)\|}{\|w\|\|\bigwedge_{i\in I}v_i\|}.
$$
\item[(iv)] $\displaystyle (\sin (\alpha(\bv)))^{d-1}\leq \frac{\|\bigwedge_{i=1}^d v_i\|}{\prod_{i=1}^d\|v_i\|}\leq d \sin (\alpha(\bv))$.
\end{itemize}
\end{lem}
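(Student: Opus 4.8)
The plan is to reduce all four parts to two elementary facts about angles, which I would record first. (a) For a nonzero vector $u$ lying in a subspace $V$ and a nontrivial subspace $V'$ of $\R^d$,
\[
d(u,V')=\|P_{(V')^\perp}u\|=\|u\|\sin\bigl(\measuredangle(\s\{u\},V')\bigr),
\]
since the minimal angle $\measuredangle(\s\{u\},V')=\inf_{y\in V'\setminus\{0\}}\measuredangle(u,y)$ is attained at $y=P_{V'}u$ (or $u\perp V'$, when both sides equal $\|u\|$), so that $\cos\bigl(\measuredangle(\s\{u\},V')\bigr)=\|P_{V'}u\|/\|u\|$ and the identity follows from $\|u\|^2=\|P_{V'}u\|^2+\|P_{(V')^\perp}u\|^2$. (b) Since $\s\{u\}\subseteq V$ we have $\sin\bigl(\measuredangle(\s\{u\},V')\bigr)\ge\sin\bigl(\measuredangle(V,V')\bigr)$. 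I also note that $\alpha(\bv)>0$: as $\bv$ is a basis, every disjoint nonempty pair $I,J$ has $\s\{v_i\colon i\in I\}\cap\s\{v_j\colon j\in J\}=\{0\}$, hence a positive angle, and there are only finitely many such pairs.

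Part (i) is then immediate: write $\sum_ia_iv_i=u+w$ with $u=a_jv_j$ and $w=\sum_{i\ne j}a_iv_i\in V':=\s\{v_i\colon i\ne j\}$; since $0,w\in V'$ we get $\|u+w\|\ge d(u+w,V')=d(u,V')=\|u\|\sin\bigl(\measuredangle(\s\{u\},V')\bigr)\ge\|u\|\sin(\measuredangle(\s\{v_j\},V'))\ge|a_j|\,\|v_j\|\sin(\alpha(\bv))$ by (a) and (b), the last inequality because $\{j\}$ and $\{1,\dots,d\}\setminus\{j\}$ are disjoint and nonempty (the cases $a_j=0$ or $w=0$ being trivial). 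Part (ii) is (i) applied to a unit vector $w=\sum_ia_iv_i$. For part (iii) (where $w\ne0$ is implicit), the vectors $\{v_i\colon i\in I\}$ are linearly independent since $\bv$ is a basis, so Lemma~\ref{lem-F-3.0} gives $d(w,\s\{v_i\colon i\in I\})=\|w\w\bigwedge_{i\in I}v_i\|\big/\|\bigwedge_{i\in I}v_i\|$; dividing by $\|w\|$ and applying (a) identifies the left-hand side with $\sin\bigl(\measuredangle(\s\{w\},\s\{v_i\colon i\in I\})\bigr)$.

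For part (iv) I would first normalize so that $\|v_i\|=1$ for all $i$, which changes neither the angles nor the ratio $\|\bigwedge_iv_i\|/\prod_i\|v_i\|$. Iterating Lemma~\ref{lem-F-3.0} along any ordering of the basis yields the Gram--Schmidt volume identity
\[
\Bigl\|\textstyle\bigwedge_{i=1}^dv_i\Bigr\|=\prod_{k=2}^d d\bigl(v_{\pi(k)},\s\{v_{\pi(1)},\dots,v_{\pi(k-1)}\}\bigr)=\prod_{k=2}^d\sin\Bigl(\measuredangle\bigl(\s\{v_{\pi(k)}\},\s\{v_{\pi(1)},\dots,v_{\pi(k-1)}\}\bigr)\Bigr)
\]
for every permutation $\pi$. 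Each factor lies in $(0,1]$ and is $\ge\sin(\alpha(\bv))$, giving the lower bound $\|\bigwedge_iv_i\|\ge\sin(\alpha(\bv))^{d-1}$ at once. For the upper bound it suffices to produce one index $j^\ast$ with $\sin\bigl(\measuredangle(\s\{v_{j^\ast}\},\s\{v_i\colon i\ne j^\ast\})\bigr)\le d\sin(\alpha(\bv))$, because ordering the basis with $v_{j^\ast}$ last then makes the last factor above $\le d\sin(\alpha(\bv))$ and all others $\le1$. To find $j^\ast$: the infimum defining $\alpha(\bv)$ is attained by disjoint nonempty $I,J$, and replacing $I$ by $\{1,\dots,d\}\setminus J$ only lowers the angle, but not below $\alpha(\bv)$, so we may assume $I\sqcup J=\{1,\dots,d\}$ and $\measuredangle(V_I,V_J)=\alpha(\bv)=:\beta$, where $V_I=\s\{v_i\colon i\in I\}$ and $V_J=\s\{v_j\colon j\in J\}$. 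By compactness of the unit sphere of $V_J$, there is a unit $y_0\in V_J$ with $\measuredangle(\s\{y_0\},V_I)=\beta$, hence $\|P_{V_I^\perp}y_0\|=\sin\beta$ by (a). Writing $y_0=\sum_{j\in J}c_jv_j$ and using $1=\|y_0\|\le\sum_{j\in J}|c_j|$, pick $j^\ast\in J$ with $|c_{j^\ast}|\ge1/|J|\ge1/d$, and let $U$ be the line orthogonal to $V':=\s\{v_i\colon i\ne j^\ast\}$ (which has dimension $d-1$, as $\bv$ is a basis). Since $V_I\subseteq V'=U^\perp$ we have $P_UP_{V_I}y_0=0$, so $P_UP_{V_I^\perp}y_0=P_Uy_0$; since $y_0-c_{j^\ast}v_{j^\ast}\in\s\{v_j\colon j\in J,\ j\ne j^\ast\}\subseteq U^\perp$ we get $P_Uy_0=c_{j^\ast}P_Uv_{j^\ast}$. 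Therefore $\sin\beta=\|P_{V_I^\perp}y_0\|\ge\|P_UP_{V_I^\perp}y_0\|=|c_{j^\ast}|\,\|P_Uv_{j^\ast}\|=|c_{j^\ast}|\,d(v_{j^\ast},V')=|c_{j^\ast}|\sin\bigl(\measuredangle(\s\{v_{j^\ast}\},V')\bigr)$ by (a), and hence $\sin\bigl(\measuredangle(\s\{v_{j^\ast}\},V')\bigr)\le\sin\beta/|c_{j^\ast}|\le d\sin(\alpha(\bv))$, as required.

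The step I expect to be the main obstacle is exactly this last extraction in (iv). A crude triangle-inequality bound $\|P_{V_I^\perp}y_0\|=\|\sum_{j\in J}c_jP_{V_I^\perp}v_j\|\le\sum_{j\in J}|c_j|$ discards the cancellation among the $P_{V_I^\perp}v_j$ and produces only the vacuous inequality $\sin(\alpha(\bv))^2\le d$. The resolution is to project onto the one-dimensional complement $U$ of $\s\{v_i\colon i\ne j^\ast\}$: this annihilates every summand except the $v_{j^\ast}$ term and simultaneously identifies $\|P_Uv_{j^\ast}\|$ with the very sine being bounded, while the pigeonhole estimate $|c_{j^\ast}|\ge1/d$ (legitimate after the normalization $\|v_i\|=1$) supplies the constant $d$. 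Parts (i)--(iii), by contrast, should be routine once facts (a) and (b) and Lemma~\ref{lem-F-3.0} are in place.
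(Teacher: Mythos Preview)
Your proof is correct. Parts (i)--(iii) and the lower bound in (iv) match the paper's approach essentially verbatim (distance to a hyperplane via Lemma~\ref{lem-F-3.0}, then compare to $\alpha(\bv)$).

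For the upper bound in (iv) your argument is genuinely different. The paper also picks a unit vector realizing the minimal angle (their $u\in V_I$) and a large coefficient via pigeonhole, but then works with exterior algebra: it substitutes $u$ for $v_{i_1}$ using the multilinear identity $\bigwedge_{i\in I}v_i=\pm t_{i_1}^{-1}\,u\wedge\bigl(\bigwedge_{i\in I\setminus\{i_1\}}v_i\bigr)$, applies the wedge submultiplicativity \eqref{e-F-3.2'} to peel off the extra factors, and bounds $\|u\wedge\bigwedge_{j\in J}v_j\|$ by part (iii). Your route is purely in terms of orthogonal projections: you first enlarge $I$ to $\{1,\dots,d\}\setminus J$ (harmless, since this can only decrease the angle, not below $\alpha(\bv)$), then project onto the one-dimensional $U=(V')^\perp$ with $V'=\s\{v_i\colon i\ne j^\ast\}$, which kills all contributions except the $v_{j^\ast}$ term. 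This yields the intermediate fact that some single basis vector makes an angle $\le\arcsin\bigl(d\sin\alpha(\bv)\bigr)$ with the span of the others, which then bounds one factor in the Gram--Schmidt product. Your argument avoids the wedge substitution trick entirely and is arguably more elementary; the paper's avoids the need to assume $I\sqcup J=\{1,\dots,d\}$ and stays closer to the exterior-algebra machinery used elsewhere in Section~\ref{S-3}.
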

\begin{proof}
We first prove (i). Without loss of generality we show that $$\left\|\sum_{i=1}^d a_i v_i\right\|\geq |a_1|\|v_1\|\sin (\alpha(\bv)).$$
 To see this, set $W=\s\{v_2,\ldots, v_d\}$. By Lemma \ref{lem-F-3.0},
$$
\left\|\sum_{i=1}^d a_i v_i\right\|\geq d(a_1v_1, W) =\|P_{W^\perp}(a_1v_1)\|=\|a_1v_1\|\sin (\alpha)=|a_1|\|v_1\|\sin(\alpha),
$$
where $\alpha$ is the angle between ${\rm span}\{v_1\}$ and $W$. Since $\alpha\geq \alpha(\bv)$,  it follows that
$\|\sum_{i=1}^d a_i v_i\|\geq |a_1|\|v_1\|\sin(\alpha(\bv))$. This proves (i).

Next we prove (ii). By (i),  we have $$1=\|w\|=\left\|\sum_{i=1}^d a_{i}v_i\right\|\geq |a_j|\|v_j\|\sin(\alpha(\bv))$$ for each $1\leq j\leq d$, from which (ii) follows.

Part (iii) is a direct consequence of Lemma \ref{lem-F-3.0}.
To see (iv), we may assume that $v_1,\ldots, v_d$ are all unit vectors. Applying (iii) repeatedly yields
$$
\|v_1\w \cdots\w v_d\|\geq \sin (\alpha(\bv))\|v_2\w\cdots\w v_d\|\geq \ldots\geq (\sin (\alpha(\bv)))^{d-1}.
$$
This proves the first inequality in (iv). To see the other inequality in (iv), notice that there exist nonempty $I, J\subset \{1,\ldots, d\}$ with $I\cap J= \emptyset$ so that
$$
\measuredangle(\s\{v_i\colon i\in I\},\; \s\{v_j\colon j\in J\})=\alpha(\bv).
$$
Hence there exists a unit vector $u\in \s\{v_i\colon i\in I\}$ so that $$\measuredangle(\s\{u\},\; \s\{v_j\colon j\in J\})=\alpha(\bv).$$
Notice that $u=\sum_{i\in I} t_i v_i$ for some $t_i\in \R$. Since $1=\|u\|\leq \sum_{i\in I}|t_i|$, there exists an element in $I$, say $i_1$, such that $|t_{i_1}|\geq \frac{1}{\#(I)}\geq \frac{1}{d}$. Note that
\begin{equation}
\label{e-F-wedge}
\bigwedge_{i\in I} v_i=\pm \frac{1}{t_{i_1}} u \w \left(\bigwedge_{i\in I\setminus \{i_1\}} v_i\right).
\end{equation}
Hence
we have
\begin{align*}
\|v_1\w \cdots \w v_d\|&\leq \left\| \left(\bigwedge_{i\in I} v_i\right)\w  \left(\bigwedge_{j\in J} v_j\right) \right\|
 \qquad (\mbox{by \eqref{e-F-3.2'}})\\
&= \frac{1}{|t_{i_1}|} \left\| u \w \left(\bigwedge_{i\in I\backslash \{i_1\}} v_i\right) \w  \left(\bigwedge_{j\in J} v_j\right) \right\|\qquad (\mbox{by \eqref{e-F-wedge}})\\
&\leq  d \left\| u \w   \left(\bigwedge_{j\in J} v_j\right) \right\|\qquad (\mbox{by \eqref{e-F-3.2'}})\\
&=d \sin (\alpha(\bv)) \|u\| \left\|\bigwedge_{j\in J} v_j \right\| \qquad \mbox{(by (iii))}\\
&\leq d \sin (\alpha(\bv)).
\end{align*}
This proves the second inequality in (iv).
\end{proof}


\subsection{Pivot position vectors of  linear subspaces with respect to an ordered basis}
\label{S-LS}
In this subsection, we will introduce the definition of pivot position vector of a linear subspace of $\R^d$ with respect to an ordered basis of $\R^d$.   For this purpose, let us first recall the concept of row-reduced echelon matrix. The reader is referred to \cite[Chap.~1]{HoffmanKunze1961} for more details.
\begin{de}
A $k\times d$  matrix $M$ is called a row-reduced echelon matrix  if:
\begin{itemize}
\item[(a)]  the first non-zero entry in each non-zero row of $M$ is equal to $1$;
\item[(b)] each column of $M$ which contains the leading non-zero entry of some
row has all its other entries $0$.
\item[(c)] every row of $M$  which has all its entries 0 occurs below every row which has a non-zero entry;
\item[(d)]  if rows $1, \ldots, r$ are the non-zero rows of $M$, and if the leading nonzero
entry of row $i$ occurs in column $p_i$, $i = 1,\ldots, r$, then $p_1<p_2 <\cdots<p_r$.
\end{itemize}
\end{de}

One can also describe a $k\times d$  row-reduced echelon matrix $M$ as
follows. Either every entry in $M$ is 0, or there exists a positive integer $r$,
$1\leq r\leq k$,  and $r$ positive integers $p_1,\ldots, p_r$  with $1\leq p_1<\cdots <p_r\leq d$ and
\begin{itemize}
\item[(a)] $M_{ij}=0$ for $i > r$, and $M_{ij} = 0$ if $j < p_i$.
\item[(b)] $M_{ip_j} =\delta_{ij}$ for $1\leq i\leq r$ and $1\leq j\leq r$, where $\delta_{ij}=1$ if $i=j$ and $0$ otherwise.
\end{itemize}
We call the vector $(p_1,\ldots, p_r)$ the {\it pivot position vector} of $M$.

Now let $W$ be a linear subspace of $\R^d$ with $\dim W=k$, where $1\leq k\leq d$. Let ${\bf v}=\{v_1,\ldots, v_d\}$ be an ordered basis of $\R^d$.  It is well known (see e.g. \cite[Chap.~2, Theorem~11]{HoffmanKunze1961}) that  there is a  precise one $k\times d$ row-reduced echelon matrix $M=(M_{ij})$ with rank $k$ such that
\begin{equation}
\label{e-Wspan}
W={\rm span} \left\{\sum_{j=1}^d M_{ij}v_j\colon i=1,\ldots, k\right\}.
\end{equation}
Let $(p_1,\ldots, p_k)$ be the  pivot position vector of $M$.
\begin{de}
\label{de-pivot}
Let $W$ be a linear subspace of $\R^d$ with $\dim W=k$ and let ${\bf v}=\{v_i\}_{i=1}^d$ be an ordered basis of $\R^d$. Let  $(p_1,\ldots, p_k)$  be defined as above. We call $(p_1,\ldots, p_k)$ the  pivot position vector of $W$ with respect to ${\bf v}$, and denote it as ${\bf p}(W,{\bf v})$.
\end{de}

\begin{rem}
\label{rem-pivot}
From the standard procedure of finding row-reduced echelon form, it is readily checked that the mapping $(W,{\bf v})\mapsto {\bf p}(W,{\bf v})$ is Borel measurable with respect to the natural topology on $G(d,k)\times \left\{{\bf v}=\{v_i\}_{i=1}^d\colon \det(v_1,\ldots, v_d)\neq 0\right\}$.
\end{rem}
In the following we give a useful lemma which will be used in the proofs of Proposition \ref{pro-key} and Theorem \ref{thm-1.1}(i).

\begin{lem}
\label{lem-2.5'}
Let $(p_1,\ldots, p_k)$ be the pivot position vector of $W$ with respect to ${\bf v}$, where $W$ is a  $k$-dimensional linear subspace  of $\R^d$, and ${\bf v}=\{v_i\}_{i=1}^d$ is an ordered basis  of $\R^d$.  Let $\ell\in \{1,\ldots, k\}$. Then the following properties hold.
\begin{itemize}
\item[(i)]  The linear subspace $W^{\wedge \ell}$ of $(\R^d)^{\wedge \ell}$ satisfies
\begin{equation}
\label{e-eWl}
\begin{split}
W^{\wedge \ell}\subset\; & {\rm span}\left\{v_{i_1}\wedge \cdots \wedge v_{i_\ell}\colon 1\leq i_1<\cdots<i_\ell\leq d \mbox{ and }\right.\\
&\qquad \qquad  \qquad \qquad \qquad \left.  i_m\geq p_m \mbox{ for all }1\leq m\leq \ell \right\},
\end{split}
\end{equation}
and $W^{\wedge \ell}\not\subset H$, where $H$ is a linear subspace of $(\R^d)^{\wedge \ell}$ defined by
\begin{equation}
\label{e-H*}
\begin{split}
H=\; & {\rm span}\left\{v_{i_1}\wedge \cdots \wedge v_{i_\ell}\colon 1\leq i_1<\cdots<i_\ell\leq d \mbox{ and }\right.\\
&\qquad \qquad  \qquad \qquad \qquad \left. (i_1,\ldots, i_\ell)\neq (p_1,\ldots, p_\ell) \right\}.
\end{split}
\end{equation}
\item[(ii)] Let $V\in G(d,k)$.  Suppose that   $V^{\wedge \ell}\not\subset H$.
Let $(q_1,\ldots, q_k)$ be the pivot position vector of $V$ with respect to ${\bf v}$. Then $q_i\leq p_i$ for all $1\leq i\leq \ell$.
\end{itemize}
\end{lem}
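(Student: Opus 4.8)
The plan is to normalize coordinates and then deduce both parts from a single fact about minors of a row-reduced echelon matrix. Without loss of generality we may assume $\mathbf{v}=\{e_1,\ldots,e_d\}$ is the standard basis of $\R^d$: applying the linear isomorphism $\phi$ with $\phi(v_i)=e_i$ sends $W$, $V$, $H$, $W^{\wedge\ell}$, $V^{\wedge\ell}$ to $\phi(W)$, $\phi(V)$, $\phi^{\wedge\ell}(H)$, $(\phi W)^{\wedge\ell}$, $(\phi V)^{\wedge\ell}$, and leaves the pivot position vectors and all the assertions to be proved invariant. Then $W$ is the row space of the unique rank-$k$ row-reduced echelon matrix $M=(M_{ij})_{1\le i\le k,\,1\le j\le d}$ with pivot columns $p_1<\cdots<p_k$; let $w_i=\sum_j M_{ij}e_j$ be its rows, so $M_{i,p_j}=\delta_{ij}$ and $M_{ij}=0$ for $j<p_i$. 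Since $W^{\wedge\ell}=\s\{w_{j_1}\wedge\cdots\wedge w_{j_\ell}\colon 1\le j_1<\cdots<j_\ell\le k\}$ and, in the orthonormal basis $\{e_{i_1}\wedge\cdots\wedge e_{i_\ell}\colon 1\le i_1<\cdots<i_\ell\le d\}$,
$$
w_{j_1}\wedge\cdots\wedge w_{j_\ell}=\sum_{i_1<\cdots<i_\ell}\det\!\big(M[\{j_1,\ldots,j_\ell\},\{i_1,\ldots,i_\ell\}]\big)\,e_{i_1}\wedge\cdots\wedge e_{i_\ell},
$$
where $M[R,C]$ denotes the submatrix of $M$ with row set $R$ and column set $C$, everything reduces to understanding which $\ell\times\ell$ minors of $M$ are nonzero.

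The key step is the following claim: if $1\le j_1<\cdots<j_\ell\le k$, $1\le i_1<\cdots<i_\ell\le d$, and $\det\!\big(M[\{j_1,\ldots,j_\ell\},\{i_1,\ldots,i_\ell\}]\big)\neq 0$, then $i_m\ge p_{j_m}$ for all $1\le m\le\ell$. I would prove this by contradiction: if $i_{m_0}<p_{j_{m_0}}$ for some $m_0$, then rows $j_{m_0},\ldots,j_\ell$ of $M$ have all their nonzero entries in columns $\ge p_{j_{m_0}}$ (using $p_{j_r}\ge p_{j_{m_0}}$ for $r\ge m_0$ and that row $j_r$ vanishes to the left of column $p_{j_r}$), whereas among the columns $i_1<\cdots<i_\ell$ only $i_{m_0+1},\ldots,i_\ell$ — that is, at most $\ell-m_0$ of them — can be $\ge p_{j_{m_0}}$; hence the $\ell-m_0+1$ rows $j_{m_0},\ldots,j_\ell$ of $M[\{j_1,\ldots,j_\ell\},\{i_1,\ldots,i_\ell\}]$ have nonzero entries in at most $\ell-m_0$ columns, so they are linearly dependent and the minor vanishes, a contradiction. (This is a concrete instance of Hall's marriage condition.)

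Part (i) then follows from the displayed expansion and the claim: any $e_{i_1}\wedge\cdots\wedge e_{i_\ell}$ occurring with nonzero coefficient in some $w_{j_1}\wedge\cdots\wedge w_{j_\ell}$ satisfies $i_m\ge p_{j_m}\ge p_m$ for every $m$, using $j_m\ge m$ and $p_1<\cdots<p_k$; this is exactly the inclusion \eqref{e-eWl}. For $W^{\wedge\ell}\not\subset H$, observe that $M[\{1,\ldots,\ell\},\{p_1,\ldots,p_\ell\}]$ is the identity matrix (by $M_{i,p_j}=\delta_{ij}$), so the generator $w_1\wedge\cdots\wedge w_\ell$ has coefficient $1$ on $e_{p_1}\wedge\cdots\wedge e_{p_\ell}$ and hence lies outside $H$.

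Part (ii) is now immediate: since the basis vectors spanning $H$ are precisely those $e_{i_1}\wedge\cdots\wedge e_{i_\ell}$ with $(i_1,\ldots,i_\ell)\ne(p_1,\ldots,p_\ell)$, a vector of $(\R^d)^{\wedge\ell}$ lies in $H$ if and only if its $(e_{p_1}\wedge\cdots\wedge e_{p_\ell})$-coordinate vanishes. Applying part (i) to $V$ with pivot vector $(q_1,\ldots,q_k)$ gives $V^{\wedge\ell}\subset\s\{e_{i_1}\wedge\cdots\wedge e_{i_\ell}\colon i_1<\cdots<i_\ell,\ i_m\ge q_m\ \forall\, m\le\ell\}$; since $V^{\wedge\ell}\not\subset H$, some element of $V^{\wedge\ell}$ has nonzero $(e_{p_1}\wedge\cdots\wedge e_{p_\ell})$-coordinate, so $(p_1,\ldots,p_\ell)$ satisfies the constraint $p_m\ge q_m$ for all $1\le m\le\ell$, as required. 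The only genuinely nontrivial point is the minor claim of the second paragraph; the remainder is bookkeeping with the exterior-product/minor correspondence and the structure of echelon matrices.
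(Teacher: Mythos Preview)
Your proof is correct and follows essentially the same route as the paper: both arguments take the rows $w_i$ of the row-reduced echelon matrix, expand $w_{j_1}\wedge\cdots\wedge w_{j_\ell}$ in the basis $\{v_{i_1}\wedge\cdots\wedge v_{i_\ell}\}$, and use the pivot structure of $M$ to constrain which basis elements can occur; part~(ii) is then deduced identically in both by applying part~(i) to $V$. Your minor/rank argument for the key combinatorial step (that a nonzero $\ell\times\ell$ minor forces $i_m\ge p_{j_m}$) is a clean and slightly more explicit version of what the paper leaves implicit in its sorting step, but this is a presentational rather than a substantive difference.
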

\begin{proof}
Let $M=(M_{ij})$ be the unique $k\times d$ row-reduced echelon matrix with rank $k$ such that \eqref{e-Wspan} holds. Write
$$
w_i=\sum_{j=1}^d M_{ij}v_j,\qquad i=1,\ldots, k.
$$
Since $(p_1,\ldots, p_k)$ is the pivot position vector of $M$,
\begin{equation}
\label{e-wi}
w_i=v_{p_i}+\sum_{j=p_i+1}^d M_{ij}v_j,\qquad i=1,\ldots, k.
\end{equation}
Recall that $W=\s\{w_1,\ldots, w_k\}$. It follows that
$$
W^{\w \ell}={\rm span}\left\{w_{j_1}\wedge \cdots \wedge w_{j_\ell}\colon 1\leq j_1<\cdots<j_\ell\leq k\right\}.
$$

By \eqref{e-wi}, for each $1\leq j_1<\cdots<j_\ell\leq k$,
\begin{align*}w_{j_1}\wedge \cdots \wedge w_{j_\ell}&\in {\rm span}\left\{v_{i_1}\wedge \cdots \wedge v_{i_\ell}\colon p_{j_m}\leq  i_m\leq d \mbox{ for all }1\leq m\leq \ell \right\}\\
&\subset {\rm span}\left\{v_{i_1}\wedge \cdots \wedge v_{i_\ell}\colon 1\leq i_1<\cdots<i_\ell\leq d \mbox{ and }\right.\\
&\qquad \qquad  \qquad \qquad \qquad \left.  i_m\geq p_{m} \mbox{ for all }1\leq m\leq \ell \right\},
\end{align*}
where in the last inclusion we use the fact that $j_m\geq m$ and so $p_{j_m}\geq p_m$ for each $1\leq m\leq \ell$.
This proves \eqref{e-eWl}. Again by \eqref{e-wi}, we see that the term $v_{p_1}\wedge \cdots \w v_{p_\ell}$ appears in the linear expansion of
$w_1\w \cdots \w w_\ell$ relative to the basis
$$
{\bv}^{(\ell)}:=\left\{v_{i_1}\wedge \cdots \wedge v_{i_\ell}\colon 1\leq i_1<\cdots<i_\ell\leq d\right\}
$$
of $(\R^d)^{\w \ell}$.
 It follows that $w_1\w \cdots \w w_\ell\not\in H$, where $H$ is defined as in \eqref{e-H*}.
Hence $W^{\w \ell}\not\subset H$. This completes the proof of part (i).

To see part (ii), applying part (i) to $V^{\w \ell}$ gives
\begin{equation}
\label{e-eWl*}
\begin{split}
V^{\wedge \ell}\subset\; & {\rm span}\left\{v_{i_1}\wedge \cdots \wedge v_{i_\ell}\colon 1\leq i_1<\cdots<i_\ell\leq d \mbox{ and }\right.\\
&\qquad \qquad  \qquad \qquad \qquad \left.  i_m\geq q_m \mbox{ for all }1\leq m\leq \ell \right\}.
\end{split}
\end{equation}
Meanwhile since $V^{\wedge \ell}\not\subset H$, it follows that there exists $u\in V^{\w \ell}$ such that a nonzero scalar multiple of
$v_{p_1}\w \cdots \w v_{p_\ell}$ appears in the linear expansion of $u$ relative to the basis
${\bv}^{(\ell)}$.
Therefore, by \eqref{e-eWl*}, we have $p_m\geq q_m$ for all $1\leq m\leq \ell$.
\end{proof}
\subsection{Variational principle for subadditive pressure}
\label{S-subadditive}

 Let $(\Sigma, \sigma)$ be the full shift over a finite alphabet $\{1,\ldots, m\}$.    A sequence $\mathcal F=\{\log f_n\}_{n=1}^\infty$ of functions on $\Sigma$ is said to be a {\em subadditive potential} if
$$0\leq f_{n+m}(x)\le f_n(x)f_m(\sigma^n x)$$ for all $x\in \Sigma$ and $n,m\in\N$. The {\em topological pressure} of a subadditive potential $\mathcal F$ is defined as
$$P(\sigma, \mathcal F)= \lim_{n\to\infty} \frac{1}{n}\log\left( \sum_{I\in \Sigma_n} \sup_{y\in [I]} f_n(y) \right).
$$
The limit exists by using a standard subadditivity argument.

 For a $\sigma$-invariant measure $\mu$ on $\Sigma$,  let $h_\mu(\sigma)$ denote the measure-theoretic entropy of $\mu$ (cf.~\cite{Walters1982}). Let $\mathcal E(\Sigma, \sigma)$ denote the collection of ergodic $\sigma$-invariant measures on $\Sigma$. Our proof of  Theorem \ref{thm-1.1}  depends on the following  variational principle for subadditive potentials. Although in \cite{CFH2008} this was proved for potentials on an arbitrary continuous dynamical system on a compact space, we state it only for fullshifts.
\begin{thm}[{\cite[Theorem 1.1]{CFH2008}}] \label{thm:subadditive-VP}
Let  $\mathcal{F}=\{\log f_n\}$ be a subadditive potential on $\Sigma$. Assume that  $f_n$ is continuous on $\Sigma$ for each $n$. Then
\begin{equation}
\label{variational-principle}
P(\sigma, \mathcal{F}) = \sup\left\{ h_\mu(\sigma) + \lim_{n\to\infty} \frac{1}{n} \int \log(f_n(x)) d\mu(x)\colon \mu\in \mathcal E(\Sigma, \sigma) \right\}.
\end{equation}
\end{thm}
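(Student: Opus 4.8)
This is the subadditive variational principle of Cao--Feng--Huang, so strictly speaking it is quoted rather than proved in this paper; nevertheless, here is how one would establish it. The plan is to prove the two inequalities in \eqref{variational-principle} separately, the lower bound being a routine adaptation of the classical argument and the upper bound requiring a Misiurewicz-type construction tailored to the subadditive setting. \emph{Lower bound.} First I would fix an arbitrary $\sigma$-invariant measure $\mu$, let $\xi$ be the partition of $\Sigma$ into $1$-cylinders, and apply the elementary inequality $\sum_i p_i\log(a_i/p_i)\le\log\sum_i a_i$ with $p_I=\mu([I])$ and $a_I=\sup_{y\in[I]}f_n(y)$ for $I\in\Sigma_n$. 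Since $H_\mu(\bigvee_{i=0}^{n-1}\sigma^{-i}\xi)=-\sum_I\mu([I])\log\mu([I])$ and $\int\log f_n\,d\mu\le\sum_I\mu([I])\log\sup_{[I]}f_n$, this yields
\[
H_\mu\Big(\bigvee_{i=0}^{n-1}\sigma^{-i}\xi\Big)+\int\log f_n\,d\mu\ \le\ \log\sum_{I\in\Sigma_n}\sup_{y\in[I]}f_n(y).
\]
Dividing by $n$ and letting $n\to\infty$: the right side tends to $P(\sigma,\mathcal{F})$ by definition; $\frac1nH_\mu(\bigvee_0^{n-1}\sigma^{-i}\xi)\to h_\mu(\sigma)$ since $\xi$ generates; and $\frac1n\int\log f_n\,d\mu\to\inf_n\frac1n\int\log f_n\,d\mu$ by Fekete's lemma, because $n\mapsto\int\log f_n\,d\mu$ is subadditive (by subadditivity of $\mathcal{F}$ and $\sigma$-invariance of $\mu$). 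This gives $h_\mu(\sigma)+\lim_n\frac1n\int\log f_n\,d\mu\le P(\sigma,\mathcal{F})$ for every invariant $\mu$, in particular for every ergodic $\mu$.

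\emph{Upper bound.} For the reverse inequality, for each large $n$ I would choose $x_I\in[I]$ with $f_n(x_I)=\sup_{[I]}f_n$ (possible by continuity of $f_n$ and compactness of $[I]$), put $Z_n=\sum_{I\in\Sigma_n}f_n(x_I)$, and form $\nu_n=Z_n^{-1}\sum_I f_n(x_I)\,\delta_{x_I}$ and $\mu_n=\frac1n\sum_{k=0}^{n-1}\sigma^k_*\nu_n$; along a subsequence $\mu_n$ converges weak-$*$ to a $\sigma$-invariant $\mu$, while $\frac1n\log Z_n\to P(\sigma,\mathcal{F})$. The starting identity is $\log Z_n=H_{\nu_n}(\bigvee_0^{n-1}\sigma^{-i}\xi)+\int\log f_n\,d\nu_n$, immediate from $\nu_n([I])=f_n(x_I)/Z_n$. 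Fixing $q\in\N$, a Misiurewicz-type block decomposition of $\bigvee_0^{n-1}\sigma^{-i}\xi$ into translates of $\xi_q:=\bigvee_0^{q-1}\sigma^{-i}\xi$, averaged over the $q$ starting offsets and combined with concavity (indeed continuity, as $\xi_q$ is a clopen partition) of $\nu\mapsto H_\nu(\xi_q)$, gives $\frac{q}{n}H_{\nu_n}(\bigvee_0^{n-1}\sigma^{-i}\xi)\le H_{\mu_n}(\xi_q)+O(q^2/n)$; in parallel, iterating the subadditivity of $\mathcal{F}$ to write $\log f_n(x)$ as a sum of $\lfloor n/q\rfloor$ terms $\log f_q(\sigma^k x)$ (for suitable $k$) plus $O(q)$ boundary terms, averaging over the $q$ offsets (the boundary terms being uniformly $O(q)$ since $\log f_1$ is bounded above on $\Sigma$), and integrating against $\nu_n$ gives $\frac{q}{n}\int\log f_n\,d\nu_n\le\int\log f_q\,d\mu_n+O(q^2/n)$. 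Adding these and using the identity,
\[
\frac{q}{n}\log Z_n\ \le\ H_{\mu_n}(\xi_q)+\int\log f_q\,d\mu_n+O(q^2/n).
\]
Letting $n\to\infty$ along the subsequence, using $H_{\mu_n}(\xi_q)\to H_\mu(\xi_q)$ and $\limsup_n\int\log f_q\,d\mu_n\le\int\log f_q\,d\mu$ (upper semicontinuity of $\log f_q$), gives $q\,P(\sigma,\mathcal{F})\le H_\mu(\xi_q)+\int\log f_q\,d\mu$; dividing by $q$ and letting $q\to\infty$ yields $P(\sigma,\mathcal{F})\le h_\mu(\sigma)+\lim_q\frac1q\int\log f_q\,d\mu$ for this invariant $\mu$.

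\emph{Reduction to ergodic measures; main difficulty.} To conclude, I would pass from $\mu$ to an ergodic measure via its ergodic decomposition $\mu=\int\mu_\omega\,d\omega$: measure-theoretic entropy is affine (Jacobs' theorem), and $\nu\mapsto\lim_n\frac1n\int\log f_n\,d\nu$ is affine because, by Kingman's subadditive ergodic theorem, $\frac1n\log f_n$ converges $\nu$-a.e.\ to a $\sigma$-invariant function whose $\nu$-integral equals that limit and which is constant on each ergodic component. Hence $h_\mu(\sigma)+\lim_n\frac1n\int\log f_n\,d\mu=\int\big(h_{\mu_\omega}(\sigma)+\lim_n\frac1n\int\log f_n\,d\mu_\omega\big)\,d\omega$, so some ergodic component has value $\ge P(\sigma,\mathcal{F})$; together with the lower bound this proves \eqref{variational-principle}. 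The main obstacle is the upper bound: making the Misiurewicz averaging interact correctly with a potential that is only subadditive (controlling the $O(q)$ boundary contributions uniformly, and arranging that the relevant entropies survive the weak-$*$ limit), together with the careful use of Kingman's theorem needed to justify passing from the invariant limit measure $\mu$ to an ergodic one.
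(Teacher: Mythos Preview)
The paper does not prove this theorem; it is quoted directly from \cite{CFH2008} with no proof supplied (note the sentence immediately following the statement, which only remarks on earlier partial results and on the existence of equilibrium measures). You correctly anticipated this. Your sketch faithfully follows the strategy of the original Cao--Feng--Huang argument: the lower bound via the log-sum inequality applied to cylinder weights, and the upper bound via a Misiurewicz-type construction---atomic measures on maximizers, Ces\`aro averages, weak-$*$ limit, block decomposition at scale $q$, then $q\to\infty$---followed by ergodic decomposition using affinity of both entropy and the subadditive limit. The outline is sound and matches \cite{CFH2008}; there is nothing to compare against in the present paper.
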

Particular cases of the above, under stronger assumptions on the potentials, were previously obtained by many authors, see for example \cite{Falconer1988, Barreira1996, Kaenmaki2004, Mummert2006} and references therein.

Measures that achieve the supremum in \eqref{variational-principle}  are called {\em ergodic equilibrium measures} for the potential $\mathcal F$. The existence of ergodic equilibrium measures follows from the upper semi-continuity of the entropy map $\mu\mapsto h_\mu(\sigma)$ for fullshifts (see, e.g., \cite[Propostion 3.5]{Feng2011} and the remark therein).

\subsection{Singular value functions}

 Recall that for $A\in {\rm Mat}_d(\R)$, $\alpha_1(A)\geq\cdots \geq \alpha_d(A)$ stand for the singular values of $A$. It is well known that $\alpha_i(A)=\alpha_i(A^*)$ for each $i$. For $s\geq 0$, let $\varphi^s$ denote the singular value function; see  \eqref{e-singular} for the definition. Here we collect several lemmas about $\varphi^s$.

\begin{lem}[\cite{Fal88}]\
\label{lem-singinequality}
\begin{itemize}
\item[(i)]
$\varphi^s (AB) \leq \varphi^s (A) \varphi^s (B)$ for all $A, B \in {\rm Mat}_d(\R)$ and $s \geq 0$.
\item[(ii)] $\varphi^s (A) (\alpha_d(A))^t \leq \varphi^{s+t} (A) \leq \varphi^s (A) \| A \|^t$ for all $A \in {\rm Mat}_d(\R)$ and $s, t \geq 0$.
\end{itemize}
\end{lem}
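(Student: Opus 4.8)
The plan is to deduce both inequalities from the description of $\varphi^s(A)$ as a geometric interpolation between norms of exterior powers of $A$, using only the elementary facts recorded in Lemma~\ref{lem-F-3.1}: that $A\mapsto\|A^{\wedge p}\|$ is submultiplicative and that $\|A^{\wedge p}\|=\alpha_1(A)\cdots\alpha_p(A)$ (with the conventions $\|A^{\wedge 0}\|=1$ and $\|A^{\wedge d}\|=|\det A|$).

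For part (i), the key step I would carry out first is the identity: if $0\le s<d$, $j=\lfloor s\rfloor$ and $t:=s-j\in[0,1)$, then by \eqref{e-singular} and Lemma~\ref{lem-F-3.1}(ii),
\[
\varphi^s(A)=\bigl(\alpha_1(A)\cdots\alpha_j(A)\bigr)\,\alpha_{j+1}(A)^{t}=\|A^{\wedge j}\|^{\,1-t}\,\|A^{\wedge(j+1)}\|^{\,t},
\]
where for the last equality one uses $\|A^{\wedge(j+1)}\|/\|A^{\wedge j}\|=\alpha_{j+1}(A)$ when $\|A^{\wedge j}\|>0$ and checks the degenerate case $\|A^{\wedge j}\|=0$ directly (then $\varphi^s(A)=0$ and both sides vanish). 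Since $A\mapsto\|A^{\wedge j}\|$ and $A\mapsto\|A^{\wedge(j+1)}\|$ are submultiplicative (Lemma~\ref{lem-F-3.1}(i), using multiplicativity of $\det$ when $j+1=d$), and $x\mapsto x^{c}$ is multiplicative on $[0,\infty)$ for every $c\ge0$, I obtain
\[
\varphi^s(AB)=\|(AB)^{\wedge j}\|^{\,1-t}\|(AB)^{\wedge(j+1)}\|^{\,t}\le\bigl(\|A^{\wedge j}\|\|B^{\wedge j}\|\bigr)^{1-t}\bigl(\|A^{\wedge(j+1)}\|\|B^{\wedge(j+1)}\|\bigr)^{t}=\varphi^s(A)\varphi^s(B).
\]
When $s\ge d$ one has $\varphi^s(AB)=|\det(AB)|^{s/d}=|\det A|^{s/d}|\det B|^{s/d}=\varphi^s(A)\varphi^s(B)$, with equality; this completes (i).

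For part (ii), I would fix $A$ and analyse $g(u):=\log\varphi^u(A)$ on $[0,\infty)$ (reading $\log0=-\infty$; the invertible case is all that is needed below, and the rank-deficient case is either handled by the same bookkeeping or is trivial, since then $\alpha_d(A)=0$ makes the lower bound vacuous). From \eqref{e-singular} and Lemma~\ref{lem-F-3.1}(ii), $g$ is continuous and piecewise affine: restricted to $[j,j+1]$ with $0\le j\le d-1$ it is affine with slope $\log\alpha_{j+1}(A)$, and restricted to $[d,\infty)$ it is affine with slope $\tfrac1d\sum_{i=1}^{d}\log\alpha_i(A)$ (the matching of one-sided values at the break points $u=1,\dots,d$ is a direct check). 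Each such slope is either some $\log\alpha_{j+1}(A)$ or an average of $\log\alpha_1(A),\dots,\log\alpha_d(A)$, hence lies in $[\log\alpha_d(A),\log\alpha_1(A)]=[\log\alpha_d(A),\log\|A\|]$. Subdividing $[s,s+t]$ at the integers it contains, say $s=u_0<u_1<\dots<u_N=s+t$, and summing the affine increments gives
\[
g(s+t)-g(s)=\sum_{i=1}^{N}g_i\,(u_i-u_{i-1}),\qquad g_i\in[\log\alpha_d(A),\log\|A\|],
\]
so, since $\sum_i(u_i-u_{i-1})=t$, we obtain $t\log\alpha_d(A)\le g(s+t)-g(s)\le t\log\|A\|$; exponentiating yields $\varphi^s(A)\,\alpha_d(A)^{t}\le\varphi^{s+t}(A)\le\varphi^s(A)\,\|A\|^{t}$, which is (ii).

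I do not expect a serious obstacle here; everything is routine once the exterior-power description of $\varphi^s$ is in hand. The points that require the most care are the bookkeeping at the integer exponents — verifying that $g$ is continuous across $u=1,\dots,d$ (so that the telescoping over the subdivision in (ii) is legitimate), and matching the two formulas at $s=d$ — together with the degenerate cases where some singular value vanishes, which must be read with the usual $0^{c}$ conventions.
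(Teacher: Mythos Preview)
Your argument is correct; both parts are handled by exactly the standard route, and part~(i) is in fact the content of Lemma~\ref{lem-phis} combined with the submultiplicativity in Lemma~\ref{lem-F-3.1}(i). The paper itself does not supply a proof of this lemma at all --- it is quoted from \cite{Fal88} --- so there is nothing further to compare.
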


\begin{lem}
\label{lem-phis}
Let $A\in {\rm Mat}_d(\R)$ and $s\in [0,d]$. Set $k=\lfloor s\rfloor$. Then
$$
\varphi^s(A)=\|A^{\w k}\|^{k+1-s}\|A^{\w (k+1)}\|^{s-k}.
$$
Moreover, $\varphi^s(A)=\varphi^s(A^*)$.

\end{lem}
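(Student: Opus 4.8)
The statement is essentially a bookkeeping identity relating the singular value function to norms of exterior powers, so the plan is a direct computation, split according to where $s$ lies, together with the adoption of the natural conventions $\|A^{\wedge 0}\|:=1$ (the empty product, as $A^{\wedge 0}$ acts on the one-dimensional space $(\R^d)^{\wedge 0}$ as the scalar $1$) and $0^0:=1$ for the degenerate boundary term.

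The first ingredient is the identity $\|A^{\wedge j}\|=\alpha_1(A)\cdots\alpha_j(A)$ for $1\le j\le d$: for $1\le j<d$ this is Lemma \ref{lem-F-3.1}(ii), and for $j=d$ it follows because $A^{\wedge d}$ acts on the one-dimensional space $(\R^d)^{\wedge d}$ as multiplication by $\det A$, whence $\|A^{\wedge d}\|=|\det A|=\alpha_1(A)\cdots\alpha_d(A)$ (alternatively one may invoke \eqref{e-F-3.2''}). With $k=\lfloor s\rfloor$ I would then argue as follows. If $s\in[0,d)$, then $0\le k\le d-1$, so both $\|A^{\wedge k}\|$ and $\|A^{\wedge(k+1)}\|$ are covered by the identity above (using the convention for $k=0$), and
\[
\|A^{\wedge k}\|^{k+1-s}\|A^{\wedge(k+1)}\|^{s-k}
=\bigl(\alpha_1(A)\cdots\alpha_k(A)\bigr)^{k+1-s}\bigl(\alpha_1(A)\cdots\alpha_{k+1}(A)\bigr)^{s-k}
=\alpha_1(A)\cdots\alpha_k(A)\,\alpha_{k+1}(A)^{s-k},
\]
which is exactly $\varphi^s(A)$ by its definition \eqref{e-singular} with $j=k$. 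If $s=d$, then $k=d$, the exponent $s-k$ equals $0$, so the term $\|A^{\wedge(d+1)}\|^{s-k}=1$ by the convention $0^0=1$, and the right-hand side collapses to $\|A^{\wedge d}\|^{1}=|\det A|=\det(A)^{d/d}=\varphi^d(A)$, matching \eqref{e-singular} in the regime $s\ge d$. This disposes of the main identity.

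For the ``moreover'' clause, I would use the remark recorded just before the lemma that $\alpha_i(A)=\alpha_i(A^*)$ for every $i$; since $\varphi^s$ depends on $A$ only through its singular values (for $s<d$ via the displayed product, and for $s\ge d$ via $|\det A|=\alpha_1(A)\cdots\alpha_d(A)$), it follows at once that $\varphi^s(A)=\varphi^s(A^*)$. Equivalently, one can combine the identity already proved with $\|(A^{\wedge j})^*\|=\|A^{\wedge j}\|$ and Lemma \ref{lem-F-3.1}(iii), which gives $\|(A^*)^{\wedge j}\|=\|A^{\wedge j}\|$ for all $j$, and hence the same representation for $A^*$.

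\textbf{Anticipated difficulty.} There is no substantial obstacle here; the only points requiring care are the two endpoint conventions ($k=0$, where $\|A^{\wedge 0}\|=1$, and $s=d$, where one must read $\|A^{\wedge(d+1)}\|^{0}$ as $1$ even though $(\R^d)^{\wedge(d+1)}=\{0\}$), and the compatibility of the two branches of \eqref{e-singular} at $s=d$, which is the content of the $|\det A|=\alpha_1(A)\cdots\alpha_d(A)$ identity.
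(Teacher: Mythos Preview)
Your proposal is correct and follows exactly the same route as the paper, which simply states that the identity ``follows directly from the definition of $\varphi^s(A)$ and Lemma~\ref{lem-F-3.1}(ii)'' without spelling out the details. Your careful treatment of the endpoint conventions ($k=0$ and $s=d$) only makes explicit what the paper leaves implicit.
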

\begin{proof}
It follows directly from the definition of $\varphi^s(A)$ (see \eqref{e-singular}) and Lemma \ref{lem-F-3.1}(ii).
\end{proof}

\begin{lem}
\label{lem-singular1}
Let $A\in {\rm GL}_d(\R)$, $W\in G(d,k)$ and $s\geq 0$. Then
\begin{itemize}
\item[(i)] $\alpha_k(AP_W)\geq \alpha_d(A)$ and $\alpha_{k+1}(AP_W)=0$.
\item[(ii)] If $s>k$, then $\varphi^s(AP_W)=0$.
\item[(iii)] If $s\in [0,k]$, then $(\alpha_d(A))^s\leq \varphi^s(AP_W)\leq \varphi^s(A)$.
\end{itemize}
\end{lem}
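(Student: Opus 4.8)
The plan is to exploit the factorization structure of $AP_W$ together with the characterization of singular values via norms of exterior powers (Lemma \ref{lem-F-3.1}(ii) and Lemma \ref{lem-phis}). First I would deal with part (i). Since $P_W$ is the orthogonal projection onto the $k$-dimensional subspace $W$, its rank is exactly $k$; hence $\mathrm{rank}(AP_W)=k$ because $A$ is invertible. This immediately gives $\alpha_{k+1}(AP_W)=\cdots=\alpha_d(AP_W)=0$, which is the second assertion of (i). For the lower bound $\alpha_k(AP_W)\ge \alpha_d(A)$, I would use the min-max (Courant--Fischer) characterization of singular values: $\alpha_k(AP_W)=\min_{\dim U=k}\max_{x\in U,\ \|x\|=1}\|AP_Wx\|$, but it is cleaner to take $U=W$ itself, on which $P_W$ acts as the identity, so that $\alpha_k(AP_W)\ge \min_{x\in W,\ \|x\|=1}\|Ax\|\ge \min_{x\in\R^d,\ \|x\|=1}\|Ax\|=\alpha_d(A)$, where the last equality uses invertibility of $A$. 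Alternatively one can phrase this through $\|(AP_W)^{\wedge k}\|$: restricting to the decomposable $k$-vector $v_1\wedge\cdots\wedge v_k$ built from an orthonormal basis of $W$ gives $\|(AP_W)^{\wedge k}\|\ge \|Av_1\wedge\cdots\wedge Av_k\|\ge (\alpha_d(A))^k$ by repeatedly applying Lemma \ref{lem-F-3.2}(iv) or directly by the volume interpretation, and combined with $\alpha_{k+1}(AP_W)=0$ this yields $\alpha_1(AP_W)\cdots\alpha_k(AP_W)\ge(\alpha_d(A))^k$; but the min-max argument is the most direct route and I would use that.

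Part (ii) is then immediate: if $s>k$, write $j=\lfloor s\rfloor\ge k$. If $j>k$ then $\varphi^s(AP_W)$ contains the factor $\alpha_{k+1}(AP_W)=0$ among $\alpha_1\cdots\alpha_j$ (or, when $s\ge d$, it equals $\det(AP_W)^{s/d}=0$ since $AP_W$ is singular). If $j=k$ (so $k<s<k+1$), then $\varphi^s(AP_W)=\alpha_1(AP_W)\cdots\alpha_k(AP_W)\,\alpha_{k+1}^{\,s-k}(AP_W)=0$ again because $\alpha_{k+1}(AP_W)=0$ and $s-k>0$. Either way $\varphi^s(AP_W)=0$.

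For part (iii), assume $s\in[0,k]$. The upper bound $\varphi^s(AP_W)\le\varphi^s(A)$ follows from Lemma \ref{lem-singinequality}(i) applied to the product $A\cdot P_W$, together with the observation that $\varphi^s(P_W)\le 1$: indeed $P_W$ has singular values $1$ (with multiplicity $k$) and $0$ (with multiplicity $d-k$), so for $s\le k$ one gets $\varphi^s(P_W)=1^{\lfloor s\rfloor}\cdot 1^{\,s-\lfloor s\rfloor}=1$. For the lower bound $(\alpha_d(A))^s\le\varphi^s(AP_W)$, I would use Lemma \ref{lem-phis} to write, with $j=\lfloor s\rfloor\le k$,
\[
\varphi^s(AP_W)=\|(AP_W)^{\wedge j}\|^{\,j+1-s}\,\|(AP_W)^{\wedge(j+1)}\|^{\,s-j}
=\bigl(\alpha_1(AP_W)\cdots\alpha_j(AP_W)\bigr)^{\,j+1-s}\bigl(\alpha_1(AP_W)\cdots\alpha_{j+1}(AP_W)\bigr)^{\,s-j},
\]
and then bound each factor $\alpha_i(AP_W)$ for $i\le k$ from below by $\alpha_d(A)$. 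This lower bound on $\alpha_i(AP_W)$ for all $i\le k$ follows from the same min-max argument as in part (i): $\alpha_i(AP_W)\ge\alpha_i(AP_W\!\restriction_W)=\alpha_i(A\!\restriction_W)\ge\alpha_d(A)$, since the singular values of $A$ restricted to the $k$-dimensional subspace $W$ interlace (dominate) those of $A$ from below by Cauchy interlacing for singular values. Collecting the exponents $j(j+1-s)+(j+1)(s-j)=j+s-j=s$ (when $j<k$; when $j=k$ the term with $\alpha_{k+1}=0$ disappears and one only uses $s-j=s-k\le 0$, i.e. $s=k$, handled separately as $\varphi^k(AP_W)=\|(AP_W)^{\wedge k}\|\ge(\alpha_d(A))^k$) gives exactly $\varphi^s(AP_W)\ge(\alpha_d(A))^s$.

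The main obstacle, such as it is, is making the Cauchy-interlacing-type inequality $\alpha_i(AP_W)\ge\alpha_d(A)$ for $1\le i\le k$ precise and not merely the $i=k$ case; the clean way is to note that $AP_W$ and the restriction $A|_W:W\to\R^d$ have the same nonzero singular values, and that a linear map between inner-product spaces restricted to a subspace cannot have smaller bottom singular value than the full map, which is just $\min_{x\in W,\|x\|=1}\|Ax\|\ge\min_{x\in\R^d,\|x\|=1}\|Ax\|=\alpha_d(A)$ applied dimension by dimension via min-max. Everything else is bookkeeping with the definition of $\varphi^s$ and the exterior-algebra identities already recorded in Lemmas \ref{lem-F-3.1} and \ref{lem-phis}.
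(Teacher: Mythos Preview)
Your proposal is correct and follows essentially the same route as the paper: Courant--Fischer for part (i), then (ii) and (iii) as direct consequences of (i) and the definition of $\varphi^s$. Two minor remarks. First, the formula you wrote, $\alpha_k(AP_W)=\min_{\dim U=k}\max_{x\in U,\|x\|=1}\|AP_Wx\|$, is the wrong version of Courant--Fischer (it would yield $\alpha_{d-k+1}$); the correct one is the max--min form $\alpha_k(M)=\max_{\dim V=k}\min_{x\in V,\|x\|=1}\|Mx\|$, which is exactly what the paper quotes and what you in fact use in the next sentence when you plug in $V=W$. Second, your ``main obstacle'' is no obstacle: once $\alpha_k(AP_W)\ge\alpha_d(A)$ is established, monotonicity $\alpha_1(AP_W)\ge\cdots\ge\alpha_k(AP_W)$ gives $\alpha_i(AP_W)\ge\alpha_d(A)$ for all $i\le k$ immediately, with no interlacing needed.
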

\begin{proof}
Clearly (ii) and (iii) follow from (i) and the definition of $\varphi^s$.

To prove (i), we need the following analog of the Courant-Fisher theorem for singular values:  for every $M\in {\rm Mat}_{d}(\R)$ and $i\in \{1,\ldots, d\}$,
 \begin{equation}
 \label{e-alpha*}
 \alpha_i(M)=\max_{\dim(V)=i}\min_{x\in V \atop{\|x\|=1}} \|Mx\|,
 \end{equation}
 where in this expression, $V$ is a subspace of $\R^d$; see e.g.~\cite[Theorem 8.6.1]{Golub2013} or \cite[Theorem 3.1.2]{HoJo91}.

 Taking $i=k$, $M=AP_W$ and $V=W$ in \eqref{e-alpha*} gives
 $$
 \alpha_k(AP_W)\geq \min_{x\in W \atop{\|x\|=1}} \|AP_Wx\|=\min_{x\in W \atop{\|x\|=1}} \|Ax\|\geq \min_{x\in \R^d \atop{\|x\|=1}}\|Ax\|=\alpha_d(A).
 $$

 Meanwhile if $V$ is a subspace with $\dim (V)=k+1$, then $\dim(V)+\dim(W^\perp)>d$, and consequently, $V\cap W^{\perp}\neq \{0\}$.
 Hence for each subspace $V$  with $\dim (V)=k+1$,
  $$
  \min_{x\in V \atop{\|x\|=1}} \|AP_Wx\|\leq \min_{x\in V\cap W^\perp \atop{\|x\|=1}} \|AP_Wx\|=0.
  $$
 Then taking   $i=k+1$ and $M=AP_W$  in \eqref{e-alpha*} gives $\alpha_{k+1}(AP_W)=0$.
\end{proof}

\subsection{Lyapunov dimension}
\label{S-Lyapunov}
Let ${\bf T}=(T_1,\ldots, T_m)$ be a tuple of $d\times d$ invertible real matrices with $\|T_i\|<1$ for $1\leq i\leq m$, and let $\mu$ be an ergodic $\sigma$-invariant measure on $\Sigma=\{1,\ldots, m\}^\N$.
\begin{de}
\label{e-Lambda}
For $i\in \{1,\ldots,d\}$, the $i$-th Lyapunov exponent of $\mu$ with respect to ${\bf T}$ is defined by
\begin{equation}
\label{e-Lambdai}
\Lambda_i=\lim_{n\to \infty} \frac{1}{n} \int \log(\alpha_i(T_{x|n})) \; d\mu(x),
\end{equation}
where $\alpha_i(A)$ stands for the $i$-th singular value of $A$.
\end{de}
The existence of the limit in defining $\Lambda_i$ follows from \cite[Theorem 3.3.3]{Arnold1998}. Following \cite{JPS07}, below we present the definition of Lyapunov dimension of $\mu$ with respect to ${\bf T}$.
\begin{de}
\label{de-Lyapunov}
The Lyapunov dimension of $\mu$ with respect to ${\bf T}$, written as $\dim_{\rm LY} (\mu, {\bf T})$, is the unique non-negative value $s$ for which
$$
h_\mu(\sigma)+\mathcal G^s_*(\mu)=0,
$$
where ${\mathcal G}^s_*(\mu):=\lim_{n\to \infty} (1/n)\int \log (\varphi^s(T_{x|n}))\; d\mu(x)$, and $\varphi^s$ is the singular value function defined as in \eqref{e-singular}.
\end{de}

It follows from the definition of $\varphi^s$ and Definition \ref{e-Lambda} that
\begin{equation}
\label{e-Gs}
\mathcal G^s_*(\mu)=\left\{\begin{array}{ll}
\Lambda_1+\cdots+\Lambda_{\lfloor s\rfloor}+(s-\lfloor s \rfloor) \Lambda_{\lfloor s \rfloor +1} &\mbox{ if } s< d,\\
[5pt]
\frac{s}{d} (\Lambda_1+\cdots+\Lambda_d)&\mbox{ if } s\geq  d.
\end{array}
\right.
\end{equation}

\section{Oseledets's multiplicative ergodic theorem and a key proposition}
\label{S-3}

Throughout this section, let ${\bf T}=(T_1,\ldots, T_m)$ be a fixed tuple of $d\times d$ invertible real matrices, and let  $\mu$ be an ergodic $\sigma$-invariant measure on $\Sigma$. The main result of this section is Proposition \ref{pro-key}, which describes the asymptotic properties of
$$\frac1n\log \varphi^s(T_{x|n}^*P_W) \quad \mbox{and}\quad  \frac{1}{n}\log \sup_{J\in \Sigma_*} \varphi^s(T_{x|n}^*P_{T_J^*W})$$ for $\mu$-a.e.~$x\in \Sigma$, uniformly in $W$ and $s$.  It plays a key role in the proofs of Theorems \ref{thm-1.1} and \ref{thm-1.2}.

In order to state and prove this result, we require the following theorem, in which part (6) is due to the Shannon-McMillan-Breiman theorem (see e.g. \cite[p.~261]{Petersen1989}), while the other parts  are due to  Oseledets's multiplicative ergodic theorem (see e.g.~ \cite[Theorem 4.1]{FLQ2010} and \cite[Theorem 5.3.1]{Arnold1998}).

\begin{thm} \label{thm:oseledets}
 There exists a measurable set $\Sigma'\subset \Sigma$ with $\sigma(\Sigma')\subset \Sigma'$ and $\mu(\Sigma')=1$, such that there are an integer $r\in \{1,\ldots, d\}$, real numbers $\lambda_1>\cdots>\lambda_r$, and positive integers  $d_1,\ldots, d_r$ with $\sum_{i=1}^r d_i=d$ so that for every $x=(x_n)_{n=1}^\infty\in \Sigma'$, there is a splitting $\R^d=\bigoplus_{i=1}^r E_i(x)$ which satisfies  the following properties.
\begin{enumerate}
\item $\dim E_i(x)=d_i$;
\item $T_{x_1}^*E_i(x)= E_i(\sigma x)$;
\item For all $v\in E_i(x)\setminus\{0\}$,
\[
\lim_{n\to\infty} \frac{\log \|T_{x|n}^*v\|}{n}=\lambda_i.
\]
\item $\displaystyle \lim_{n\to\infty} \frac{1}{n}\log |\det(T_{x|n}^*)|=\sum_{i=1}^rd_i\lambda_i$;
\item The mappings $x\mapsto E_i(x)$ are measurable on $\Sigma'$.
\item $\lim_{n\to \infty} \frac{1}{n}\log \mu([x|n])=-h_\mu(\sigma)$.
\end{enumerate}
Moreover, let $$\Lambda_1\geq \Lambda_2\geq \cdots\geq \Lambda_d$$ be the list of the $\lambda_i$ where $\lambda_i$ appears $d_i$ times.  Choose a measurable ordered basis ${\bf v}(x)=\{v_1(x),\ldots, v_d(x)\}$ adapted to the splitting $\bigoplus_{i=1}^r E_i(x)$, $x \in \Sigma'$, i.e.~such that the first $d_1$ vectors are in $E_1(x)$,\ldots, the last $d_r$ vectors in $E_r(x)$. Then for each $\ell\in \{1,\ldots, d-1\}$,  $1\leq i_1<\cdots<i_\ell\leq d$ and $x\in \Sigma'$, the following properties hold.
\begin{itemize}
\item[(7)] $
\lim_{n\to \infty}\frac{1}{n}\log  \left\|(T_{x|n}^*)^{\wedge \ell}(v_{i_1}(x)\wedge\cdots\wedge v_{i_\ell}(x))\right\|=\Lambda_{i_1}+\cdots+\Lambda_{i_\ell}.
$
\item[(8)]
$
\lim_{n\to \infty}\frac{1}{n}\log \sin(\alpha_n(x))=0,
$
where $\alpha_n(x)$ denotes the smallest angle generated by the basis
$$\left\{(T_{x|n}^*)^{\wedge \ell}(v_{i_1}(x)\wedge\cdots\wedge v_{i_\ell}(x))\colon 1\leq i_1<\cdots<i_\ell\leq d\right\}
$$
of $(\R^d)^{\wedge \ell}$; see Definition \ref{de-2.1}.
\end{itemize}
\end{thm}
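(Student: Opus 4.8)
The plan is to extract parts (1)--(6) from classical theorems and to deduce the ``moreover'' statements (7)--(8) from parts (3)--(5) by exterior-algebra manipulations. For (1)--(5), I would apply Oseledets's multiplicative ergodic theorem to the linear cocycle over $(\Sigma,\sigma,\mu)$ generated by $x\mapsto T_{x_1}^*$, whose $n$-step product is exactly $T_{x|n}^*=T_{x_n}^*\cdots T_{x_1}^*$; since the $T_i$ are finitely many invertible matrices the cocycle is bounded and invertible, the integrability hypotheses are automatic, and the theorem yields a forward $\sigma$-invariant full-measure set carrying a measurable equivariant splitting $\R^d=\bigoplus_{i=1}^rE_i(x)$ with exponents $\lambda_1>\cdots>\lambda_r$ and multiplicities $d_i=\dim E_i(x)$. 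Part (4) is either part of the same theorem or follows from $\frac1n\log|\det T_{x|n}^*|=\frac1n\sum_{j=1}^n\log|\det T_{x_j}|\to\int\log|\det T_{x_1}|\,d\mu=\sum_{i=1}^rd_i\lambda_i$ by Birkhoff's ergodic theorem, and (6) is the Shannon--McMillan--Breiman theorem for the generating partition into $1$-cylinders. Intersecting these full-measure forward-invariant sets gives $\Sigma'$. Finally I would fix a measurable ordered basis $\mathbf v(x)=\{v_1(x),\dots,v_d(x)\}$ adapted to the splitting: since $x\mapsto P_{E_i(x)}$ is measurable and $\dim E_i(x)$ is constant, one can measurably extract a basis of each $E_i(x)$ (e.g.\ by Gram--Schmidt applied to $P_{E_i(x)}e_1,P_{E_i(x)}e_2,\dots$) and concatenate, so that $v_i(x)$ lies in the block whose Lyapunov exponent is $\Lambda_i$.

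For (7), fix $\ell$, a multi-index $1\le i_1<\cdots<i_\ell\le d$, and $x\in\Sigma'$. The upper bound is immediate from $\|(T_{x|n}^*)^{\wedge\ell}(v_{i_1}(x)\wedge\cdots\wedge v_{i_\ell}(x))\|=\|(T_{x|n}^*v_{i_1}(x))\wedge\cdots\wedge(T_{x|n}^*v_{i_\ell}(x))\|\le\prod_{m=1}^\ell\|T_{x|n}^*v_{i_m}(x)\|$ and (3), which give $\limsup_n\frac1n\log\|(T_{x|n}^*)^{\wedge\ell}(v_{i_1}(x)\wedge\cdots\wedge v_{i_\ell}(x))\|\le\sum_{m=1}^\ell\Lambda_{i_m}$; the same bound holds for every multi-index. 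The matching lower bound is the step that needs an idea, since a parallelotope spanned by vectors from distinct Oseledets blocks could a priori collapse faster than the product of the norms. I would handle it by complementation: put $\{i_{\ell+1}<\cdots<i_d\}=\{1,\dots,d\}\setminus\{i_1,\dots,i_\ell\}$, so $v_{i_1}(x)\wedge\cdots\wedge v_{i_d}(x)=\pm v_1(x)\wedge\cdots\wedge v_d(x)$ and $c:=\|v_1(x)\wedge\cdots\wedge v_d(x)\|>0$ is independent of $n$; applying $(T_{x|n}^*)^{\wedge d}$ and using Lemma~\ref{lem-F-3.1}(v) on the left and \eqref{e-F-3.2'} on the right gives
\[
c\,|\det T_{x|n}^*|\le\big\|(T_{x|n}^*)^{\wedge\ell}(v_{i_1}(x)\wedge\cdots\wedge v_{i_\ell}(x))\big\|\cdot\big\|(T_{x|n}^*)^{\wedge(d-\ell)}(v_{i_{\ell+1}}(x)\wedge\cdots\wedge v_{i_d}(x))\big\|.
\]
Dividing, taking $\frac1n\log$, and invoking (4) together with the upper bound already proved for $(i_{\ell+1},\dots,i_d)$ yields $\liminf_n\frac1n\log\|(T_{x|n}^*)^{\wedge\ell}(v_{i_1}(x)\wedge\cdots\wedge v_{i_\ell}(x))\|\ge\sum_{i=1}^d\Lambda_i-\sum_{j=\ell+1}^d\Lambda_{i_j}=\sum_{m=1}^\ell\Lambda_{i_m}$, proving (7). (Alternatively, one can apply Oseledets directly to the exterior-power cocycle $(T_{x_1}^*)^{\wedge\ell}$, whose Oseledets subspaces are spanned by the wedges $v_{i_1}(x)\wedge\cdots\wedge v_{i_\ell}(x)$ grouped by the value $\sum_m\Lambda_{i_m}$.)

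Part (8) then follows from (7) by a volume computation. Fix $\ell$ and $x\in\Sigma'$ and set $D=\binom{d}{\ell}=\dim(\R^d)^{\wedge\ell}$. Since $(T_{x|n}^*)^{\wedge\ell}$ is invertible, $\mathbf w_n:=\{(T_{x|n}^*)^{\wedge\ell}(v_{i_1}(x)\wedge\cdots\wedge v_{i_\ell}(x)):1\le i_1<\cdots<i_\ell\le d\}$ is a basis of $(\R^d)^{\wedge\ell}$, so $\alpha_n(x)=\alpha(\mathbf w_n)>0$, and Lemma~\ref{lem-F-3.2}(iv) applied to $\mathbf w_n$ gives
\[
(\sin\alpha_n(x))^{D-1}\le\frac{\big\|\bigwedge_{w\in\mathbf w_n}w\big\|}{\prod_{w\in\mathbf w_n}\|w\|}\le D\,\sin\alpha_n(x).
\]
By Lemma~\ref{lem-F-3.1}(v) the numerator equals $|\det((T_{x|n}^*)^{\wedge\ell})|$ times the nonzero, $n$-independent constant $\|\bigwedge_{1\le i_1<\cdots<i_\ell\le d}(v_{i_1}(x)\wedge\cdots\wedge v_{i_\ell}(x))\|$, and by Lemma~\ref{lem-F-3.1}(iv) $\det((T_{x|n}^*)^{\wedge\ell})=(\det T_{x|n}^*)^{\binom{d-1}{\ell-1}}$; hence by (4) $\frac1n\log\|\bigwedge_{w\in\mathbf w_n}w\|\to\binom{d-1}{\ell-1}\sum_{i=1}^d\Lambda_i$, while by (7) $\frac1n\log\prod_{w\in\mathbf w_n}\|w\|\to\sum_{1\le i_1<\cdots<i_\ell\le d}(\Lambda_{i_1}+\cdots+\Lambda_{i_\ell})=\binom{d-1}{\ell-1}\sum_{i=1}^d\Lambda_i$ (each index lies in $\binom{d-1}{\ell-1}$ of the multi-indices). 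Hence $\frac1n\log$ of the middle ratio above tends to $0$; the right inequality then gives $\liminf_n\frac1n\log\sin\alpha_n(x)\ge0$, and $\sin\alpha_n(x)\le1$ gives $\limsup_n\frac1n\log\sin\alpha_n(x)\le0$, so $\frac1n\log\sin\alpha_n(x)\to0$. The substantive input is Oseledets's theorem; beyond it, the only non-routine step is the lower bound in (7), and making $\Sigma'$ forward $\sigma$-invariant and the adapted basis genuinely measurable is standard but requires some care.
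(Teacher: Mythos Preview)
Your proposal is correct and, for part~(8), essentially identical to the paper's proof: both apply Lemma~\ref{lem-F-3.2}(iv) to the basis $\mathbf w_n$, compute the numerator via Lemma~\ref{lem-F-3.1}(iv)--(v) together with (4), compute the denominator via (7) and the combinatorial identity $\sum_{i_1<\cdots<i_\ell}(\Lambda_{i_1}+\cdots+\Lambda_{i_\ell})=\binom{d-1}{\ell-1}\sum_i\Lambda_i$, and conclude from $\sin\alpha_n(x)\le1$.

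The one genuine difference is part~(7): the paper simply cites \cite[Theorem~5.3.1]{Arnold1998} for it, whereas you supply a self-contained argument---the upper bound from $\|u_1\wedge\cdots\wedge u_\ell\|\le\prod\|u_m\|$ and the lower bound from complementation, i.e.\ splitting $v_1\wedge\cdots\wedge v_d$ into the chosen $\ell$-wedge and its complementary $(d-\ell)$-wedge and using $|\det T_{x|n}^*|$ together with the already-proved upper bound on the complementary factor. This complementation trick is a clean, elementary substitute for invoking the exterior-power Oseledets theorem and makes the whole proof of (7)--(8) depend only on parts (3)--(4) of the base Oseledets theorem plus the wedge-product lemmas already in the paper.
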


\begin{rem}
\label{rem-3.2}
\begin{itemize}
\item[(i)]
  The numbers $\Lambda_1,\ldots, \Lambda_d$ are called the Lyapunov exponents (counting multiplicity) of the matrix cocycle $x\mapsto T_{x_1}^*$ with respect to $\mu$. They can be alternatively defined by \eqref{e-Lambda}; see e.g.~\cite[Theorem 3.3.3]{Arnold1998} for a proof.
\item[(ii)] Set $V_i(x)=\bigoplus_{j=i+1}^r E_j(x)$ for $x\in \Sigma'$ and $i=0,1,\ldots, r$. Then
$$
\R^d=V_0(x)\supsetneqq V_1(x)\supsetneqq \cdots \supsetneqq  V_r(x)=\{0\},
$$
which is called the associated Oseledets filtration with the matrix cocycle $x\mapsto T_{x_1}^*$  and $\mu$.  Moreover,
$$\lim_{n\to \infty}\frac1n\log \|T_{x|n}^*v\|=\lambda_{i+1}$$ for all $i\in \{0,1,\ldots, r-1\}$ and $v\in V_{i}(x)\backslash V_{i+1}(x)$. By Theorem \ref{thm:oseledets}(2),  $T_{x_1}^* V_i(x)=V_i(\sigma x)$ for every $x\in \Sigma'$.
\item[(iii)] The Lyapunov exponents $\widetilde{\Lambda}_i$, for $1\leq i\leq \binom{d}{\ell}$,  of  the  cocycle $x\mapsto (T_{x_1}^*)^{\w \ell}$ with respect to $\mu$, are simply the rearrangement (in  decreasing order) of  $\Lambda_{i_1}+\cdots+\Lambda_{i_\ell}$, where $1\leq i_1<\cdots <i_\ell\leq d$. See e.g.~\cite[Theorem 5.3.1]{Arnold1998}.
\end{itemize}
\end{rem}

It appears that part (8) of Theorem \ref{thm:oseledets} is not explicitly stated in Oseledets's multiplicative ergodic theorem; therefore  we provide a proof below.

\begin{proof}[Proof of Theorem \ref{thm:oseledets}(8)] Let $x\in \Sigma'$ and $\ell\in \{1,\ldots, d-1\}$. Set
$$
{\mathcal I}_\ell:=\left\{(i_1,\ldots, i_\ell)\in {\Bbb N}^\ell\colon 1\leq i_1<\cdots<i_\ell\leq d\right\}.
$$
For  $(i_1,\ldots, i_\ell)\in \mathcal I_\ell$, write
\begin{align*}
v^{(0)}_{i_1,\ldots,i_\ell}&:=v_{i_1}(x)\wedge\ldots\wedge v_{i_\ell}(x)\quad \mbox{ and }\\
v^{(n)}_{i_1,\ldots,i_\ell}&:=(T_{x|n}^*)^{\wedge \ell}(v_{i_1}(x)\wedge\cdots\wedge v_{i_\ell}(x))\quad \mbox{ for } n\geq 1.
\end{align*}
Clearly, for each $n\geq 0$,
$
\left\{
v^{(n)}_{i_1,\ldots,i_\ell}
\right\}_{(i_1,\ldots, i_\ell)\in {\mathcal I}_{\ell}}$ is a basis of $(\R^d)^{\w \ell}$. By Lemma \ref{lem-F-3.2}(iv),
\begin{equation}
\label{e-t3.1}
\sin(\alpha_n(x))\geq \binom{d}{\ell}^{-1}\cdot \frac{\left\| \bigwedge_{(i_1,\ldots, i_\ell)\in {\mathcal I}_{\ell}}v^{(n)}_{i_1,\ldots,i_\ell} \right\|}{\prod_{(i_1,\ldots, i_\ell)\in {\mathcal I}_{\ell}} \left\|v^{(n)}_{i_1,\ldots,i_\ell}\right\|} \qquad \mbox{ for }n\geq 0.
\end{equation}
Observe that
\begin{equation*}
\begin{split}
\left\| \bigwedge_{(i_1,\ldots, i_\ell)\in {\mathcal I}_{\ell}}v^{(n)}_{i_1,\ldots,i_\ell} \right\|&=\left\| \bigwedge_{(i_1,\ldots, i_\ell)\in {\mathcal I}_{\ell}} (T_{x|n}^*)^{\w \ell} v^{(0)}_{i_1,\ldots,i_\ell} \right\|\\
&=\left|\det\left((T_{x|n}^*)^{\w \ell}\right)\right|\left\| \bigwedge_{(i_1,\ldots, i_\ell)\in {\mathcal I}_{\ell}}  v^{(0)}_{i_1,\ldots,i_\ell} \right\|\quad \mbox{(by Lemma \ref{lem-F-3.1}(v))}\\
&=\left|\det\left(T_{x|n}^*\right)\right|^{\binom{d-1}{\ell-1}}c(x) \quad\qquad \qquad \qquad  \mbox{(by Lemma \ref{lem-F-3.1}(iv))},
\end{split}
\end{equation*}
where $c(x):=\left\| \bigwedge_{(i_1,\ldots, i_\ell)\in {\mathcal I}_{\ell}}  v^{(0)}_{i_1,\ldots,i_\ell} \right\|$ is positive and independent of $n$. It follows from (4) that
\begin{equation}
\label{e-t3.2}
\begin{split}
\lim_{n\to \infty} \frac1n\log \left\| \bigwedge_{(i_1,\ldots, i_\ell)\in {\mathcal I}_{\ell}}v^{(n)}_{i_1,\ldots,i_\ell} \right\|&=\binom{d-1}{\ell-1}\lim_{n\to \infty} \frac1n \log \left|\det\left(T_{x|n}^*\right)\right|\\
&=\binom{d-1}{\ell-1}\sum_{i=1}^rd_i\lambda_i\\
&=\binom{d-1}{\ell-1}\sum_{i=1}^d\Lambda_i.
\end{split}
\end{equation}
Meanwhile, by (7),
\begin{equation}
\label{e-t3.3}
\begin{split}
\lim_{n\to \infty} \frac{1}{n} \log \prod_{(i_1,\ldots, i_\ell)\in {\mathcal I}_{\ell}} \left\|v^{(n)}_{i_1,\ldots,i_\ell}\right\|&=\sum_{(i_1,\ldots, i_\ell)\in {\mathcal I}_{\ell}}\lim_{n\to \infty} \frac{1}{n}\log\left\|v^{(n)}_{i_1,\ldots,i_\ell}\right\|\\
&=\sum_{(i_1,\ldots, i_\ell)\in {\mathcal I}_{\ell}}(\Lambda_{i_1}+\cdots+\Lambda_{i_\ell})\\
&=\binom{d-1}{\ell-1}\sum_{i=1}^d\Lambda_i,
\end{split}
\end{equation}
where in the last equality we use the simple fact that for each $j\in \{1,\ldots, d\}$,
$$
\#\left\{(i_1,\ldots, i_\ell)\in {\mathcal I}_\ell\colon j\in \{i_1,\ldots, i_\ell\} \right\}=\binom{d-1}{\ell-1}.
$$
Combining \eqref{e-t3.1}, \eqref{e-t3.2} and \eqref{e-t3.3} yields that
$$
\liminf_{n\to \infty}\frac1n \log \sin(\alpha_n(x))\geq 0.
$$
Since $\alpha_n(x)\in (0,\pi/2]$, this implies (8).
\end{proof}

In the remaining part of this subsection, let $\Sigma'$, $r$, $\Lambda_1,\ldots, \Lambda_d$, $\bigoplus_{i=1}^r E_i(x)$ ($x\in \Sigma'$) be given as in Theorem \ref{thm:oseledets}, and also let  ${\bf v}(x)=\{v_1(x),\ldots, v_d(x)\}$ be a measurable ordered basis adapted to the splitting $\bigoplus_{i=1}^r E_i(x)$, $x \in \Sigma'$.

Recall the definition of the pivot position vector for a linear subspace with respect to an ordered basis; see  Definition \ref{de-pivot}.  Now we are ready to state the main result of this section.
\begin{pro}
\label{pro-key}
Let $k\in \{1,\ldots, d-1\}$, $W\in G(d,k)$ and $s\in [0,k]$. For $x\in \Sigma'$, let $\left(p_1(W,x),\ldots, p_k(W,x)\right)$ denote the pivot position vector of $W$ with respect to the ordered basis ${\bf v}(x)=\{v_i(x)\}_{i=1}^d$. Then for every $x\in \Sigma'$,
 \begin{equation}
 \label{e-sw}
 \lim_{n\to \infty}\frac{1}{n}\log \varphi^s(T_{x|n}^*P_W)=\sum_{j=1}^{\lfloor s \rfloor} \Lambda_{p_j(W,x)}+(s-\lfloor s \rfloor)\Lambda_{p_{\lfloor s \rfloor+1}(W,x)},
 \end{equation}
 and
 \begin{equation}
 \label{e-supw}
 \lim_{n\to \infty}\frac{1}{n}\log \sup_{J\in \Sigma_*} \varphi^s(T_{x|n}^*P_{T_J^*W})=\sup_{J\in \Sigma_*}\sum_{j=1}^{\lfloor s \rfloor} \Lambda_{p_j(T^*_JW,x)}+(s-\lfloor s \rfloor)\Lambda_{p_{\lfloor s \rfloor+1}(T^*_JW,x)}.
 \end{equation}
  \end{pro}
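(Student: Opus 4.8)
The plan is to reduce both \eqref{e-sw} and \eqref{e-supw} to the asymptotics of exterior-power cocycles recorded in Theorem \ref{thm:oseledets}, with Lemma \ref{lem-2.5'} supplying the link between pivot position vectors and the relevant coordinate subspaces. First, I would handle \eqref{e-sw}. Fix $x\in \Sigma'$ and write $\ell=\lfloor s\rfloor$. By Lemma \ref{lem-phis} applied to $A=T_{x|n}^*P_W$, we have $\varphi^s(T_{x|n}^*P_W)=\|(T_{x|n}^*P_W)^{\wedge \ell}\|^{\ell+1-s}\,\|(T_{x|n}^*P_W)^{\wedge(\ell+1)}\|^{s-\ell}$, and $(T_{x|n}^*P_W)^{\wedge j}$ acts on $(\R^d)^{\wedge j}$ by first projecting onto the line/plane $W^{\wedge j}\subset (\R^d)^{\wedge j}$ (up to a bounded factor depending on the conditioning of $P_W$ restricted to $W$, which is harmless) and then applying $(T_{x|n}^*)^{\wedge j}$. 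So it suffices to compute $\lim_n \frac1n\log \|(T_{x|n}^*)^{\wedge j}|_{W^{\wedge j}}\|$ for $j=\ell$ and $j=\ell+1$. The key point is that, by Lemma \ref{lem-2.5'}(i), $W^{\wedge j}$ is contained in the span of those $v_{i_1}(x)\wedge\cdots\wedge v_{i_j}(x)$ with $i_m\ge p_m(W,x)$ for all $m$, and is \emph{not} contained in the subspace $H$ missing the single basis element $v_{p_1(W,x)}\wedge\cdots\wedge v_{p_j(W,x)}$. Using Theorem \ref{thm:oseledets}(7), each basis vector $v_{i_1}(x)\wedge\cdots\wedge v_{i_j}(x)$ grows at rate $\Lambda_{i_1}+\cdots+\Lambda_{i_j}$; since $\Lambda_1\ge\cdots\ge\Lambda_d$ and every admissible index tuple dominates $(p_1(W,x),\ldots,p_j(W,x))$ coordinatewise, the fastest-growing direction in $W^{\wedge j}$ is exactly the $v_{p_1}\wedge\cdots\wedge v_{p_j}$ direction, which is genuinely present because $W^{\wedge j}\not\subset H$. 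Theorem \ref{thm:oseledets}(8) controls the (sub-exponential) angle distortion so that no cancellation destroys this leading term. Hence $\lim_n \frac1n\log\|(T_{x|n}^*)^{\wedge j}|_{W^{\wedge j}}\| = \sum_{m=1}^j \Lambda_{p_m(W,x)}$, and plugging $j=\ell,\ell+1$ into the Lemma \ref{lem-phis} identity yields \eqref{e-sw}.

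For the precise argument of the previous step I would expand an arbitrary unit vector $u\in W^{\wedge j}$ in the basis $\{v_I(x)\}_{|I|=j}$, use Lemma \ref{lem-F-3.2}(ii) (with the smallest angle generated by $\{(T_{x|n}^*)^{\wedge j}v_I(x)\}_I$, which by (8) has sub-exponential sine) to bound the coefficients of $(T_{x|n}^*)^{\wedge j}u$ in the transformed basis, giving the upper bound $\limsup_n\frac1n\log\|(T_{x|n}^*)^{\wedge j}u\|\le \max_{I\in\mathcal I}(\Lambda_{i_1}+\cdots+\Lambda_{i_j})$ where $\mathcal I$ is the admissible index set from Lemma \ref{lem-2.5'}(i), and this maximum equals $\sum_m \Lambda_{p_m(W,x)}$. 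For the matching lower bound, pick $u\in W^{\wedge j}$ whose expansion has a nonzero coefficient on $v_{p_1}\wedge\cdots\wedge v_{p_j}$ (possible since $W^{\wedge j}\not\subset H$); then again by Lemma \ref{lem-F-3.2}(i) and (8) the norm of $(T_{x|n}^*)^{\wedge j}u$ is at least a sub-exponential factor times $|{\rm coeff}|\cdot\|(T_{x|n}^*)^{\wedge j}(v_{p_1}\wedge\cdots\wedge v_{p_j})\|$, which grows at rate $\sum_m\Lambda_{p_m}$ by (7). Combining the two bounds pins down the rate. The passage from $\|(T_{x|n}^*P_W)^{\wedge j}\|$ to $\|(T_{x|n}^*)^{\wedge j}|_{W^{\wedge j}}\|$ is a uniform comparison: $(P_W)^{\wedge j}$ maps $(\R^d)^{\wedge j}$ onto $W^{\wedge j}$ and restricted to $W^{\wedge j}$ it is a fixed invertible (indeed bounded-below) map, so the two norms differ by a multiplicative constant independent of $n$.

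Finally, \eqref{e-supw} follows by applying \eqref{e-sw} with $W$ replaced by $T_J^*W$ for each $J\in\Sigma_*$ — note $\varphi^s(T_{x|n}^*P_{T_J^*W})$ is exactly the quantity in \eqref{e-sw} for the subspace $T_J^*W$ — and then taking the supremum over $J$; the content is that the supremum and the limit can be interchanged. The ``$\ge$'' direction is immediate since for each fixed $J$ the limit in \eqref{e-sw} is a lower bound for the $\liminf_n$ of the sup. The ``$\le$'' direction requires a uniform-in-$J$ upper estimate for $\frac1n\log\varphi^s(T_{x|n}^*P_{T_J^*W})$: here I would use submultiplicativity, writing $\varphi^s(T_{x|n}^*P_{T_J^*W})\le \varphi^s(T_{x|n}^*)\cdot(\text{bounded})$ or, more carefully, split $T_{x|n}^*P_{T_J^*W}$ and use Lemma \ref{lem-singinequality} together with the fact that over all $k$-subspaces $V$, $\sum_{m=1}^j\Lambda_{p_m(V,x)}$ takes only finitely many values (the pivot vector lies in a finite set), so the supremum over $J$ is attained along some sequence and, via a diagonal/compactness argument on the finitely many possible pivot vectors of $T_J^*W$, the $\limsup_n$ of the sup is controlled by the sup of the limits. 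The main obstacle is exactly this interchange: making the convergence in \eqref{e-sw} sufficiently uniform as $W$ ranges over the $G(d,k)$-orbit $\{T_J^*W\}_{J}$ that the supremum does not ``leak'' extra exponential growth. I expect the resolution to rest on the finiteness of the set of possible pivot position vectors and on Theorem \ref{thm:oseledets}(8) providing angle control that is uniform because it is a statement about the single fixed basis $\mathbf v(x)$ and the single cocycle; the dependence on $J$ enters only through which coordinate subspace $W^{\wedge j}$ (equivalently $(T_J^*W)^{\wedge j}$) one lands in, and there are only finitely many ``types''.
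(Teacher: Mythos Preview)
Your approach to \eqref{e-sw} is essentially the paper's: reduce via Lemma \ref{lem-phis} to the growth of $\|(T_{x|n}^*)^{\wedge j}|_{W^{\wedge j}}\|$, use Lemma \ref{lem-2.5'}(i) to locate $W^{\wedge j}$ inside the appropriate coordinate span, and invoke parts (7)--(8) of Theorem \ref{thm:oseledets}. One correction: for the \emph{upper} bound you should apply Lemma \ref{lem-F-3.2}(ii) in the \emph{original} basis $\{v_I^{(0)}\}$ (with angle $\alpha_0^{(j)}$), not the transformed one. The coefficients of $(T_{x|n}^*)^{\wedge j}u$ in $\{v_I^{(n)}\}$ equal those of $u$ in $\{v_I^{(0)}\}$, and it is $u$ that is the unit vector; the bound $|a_I|\le 1/(\|v_I^{(0)}\|\sin\alpha_0^{(j)})$ is then a fixed constant independent of $n$ and of $W$. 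Part (8) enters only in the \emph{lower} bound, via Lemma \ref{lem-F-3.2}(i) applied to the transformed basis.

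This correction is precisely what closes the gap you flag for \eqref{e-supw}. The paper does not run a compactness or ``finitely many types'' argument for the upper direction. Rather, because the upper-bound constant $C_\ell$ depends only on $\alpha_0^{(\ell)}$ and the norms $\|v_I^{(0)}\|$, it is \emph{uniform over all} $W\in G(d,k)$: for every $n\ge N$ and every $V\in G(d,k)$ simultaneously,
\[
\varphi^s(T_{x|n}^*P_V)\le C\exp\Bigl(n\bigl(\epsilon+\textstyle\sum_{m\le\lfloor s\rfloor}\Lambda_{p_m(V,x)}+(s-\lfloor s\rfloor)\Lambda_{p_{\lfloor s\rfloor+1}(V,x)}\bigr)\Bigr).
\]
Taking $V=T_J^*W$ and supremizing over $J$ gives the upper half of \eqref{e-supw} directly. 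The finiteness of pivot vectors is used only afterward, to observe that the right-hand supremum in \eqref{e-supw} is attained at some $J_0\in\Sigma_*$; the lower half then follows from the (non-uniform) lower bound at the single subspace $T_{J_0}^*W$. Your diagonal/compactness route is unnecessary, and would be delicate on its own because the lower-bound constant $D_{\ell,W}=1/\|w_1\wedge\cdots\wedge w_\ell\|$ genuinely depends on the subspace $W$, not merely on its pivot vector.
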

\begin{proof}
Fix $x\in \Sigma'$. To simplify our notation, we write ${\bf v}=\{v_i\}_{i=1}^d$ for the ordered basis $\{v_1(x),\ldots, v_d(x)\}$ of $\R^d$. For $W\in G(d,k)$, let $(p_1(W),\ldots, p_k(W))$ denote the pivot position vector of $W$ with respect to the ordered basis ${\bf v}$.

Let $\epsilon>0$. We will show that for each $\ell \in \{1,\ldots, k\}$, there exist $N_\ell\in \N$ and $C_\ell>0$ such that for all $n\geq N_\ell$,
\begin{equation}
\label{e-upper1}
 \left\|(T_{x|n}^*P_W)^{\wedge \ell}\right\| \leq C_\ell \exp\Big(n\Big(\epsilon+\sum_{j=1}^\ell \Lambda_{p_j(W)}\Big)\Big) \quad \mbox{ for all }W\in G(d,k),
 \end{equation}
and
\begin{equation}
\label{e-lower1}
 \left\|(T_{x|n}^*P_W)^{\wedge \ell}\right\| \geq D_{\ell,W} \exp\Big(n\Big(-2\epsilon+\sum_{j=1}^\ell \Lambda_{p_j(W)}\Big)\Big) \quad \mbox{ for all }W\in G(d,k),
\end{equation}
where $D_{\ell,W}>0$ depends on $\ell$ and $W$, and is independent of $n$.
To prove the above inequalities, fix $\ell \in \{1,\ldots, k\}$. For $n\in \N$, let $\alpha_{n}^{(\ell)}$ denote the smallest angle generated by the basis
$$\left\{(T_{x|n}^*)^{\wedge \ell}(v_{i_1}\wedge\cdots\wedge v_{i_\ell})\colon 1\leq i_1<\cdots<i_\ell\leq d\right\}
$$
of $(\R^d)^{\wedge \ell}$; see Definition \ref{de-2.1}.  Set
$$
{\mathcal I}_\ell:=\left\{(i_1,\ldots, i_\ell)\in {\Bbb N}^\ell\colon 1\leq i_1<\cdots<i_\ell\leq d\right\}.
$$
For $n\geq 0$ and $(i_1,\ldots, i_\ell)\in \mathcal I_\ell$, write
$$
v^{(n)}_{i_1,\ldots,i_\ell}:=(T_{x|n}^*)^{\wedge \ell}(v_{i_1}\wedge\cdots\wedge v_{i_\ell}).
$$
By Theorem \ref{thm:oseledets}, there exists $N_\ell\in \N$ such that for all $n\geq N_\ell$ and $(i_1,\ldots, i_\ell)\in \mathcal I_\ell$,
\begin{equation}
\label{e-3.1a}
\left|\frac{1}{n}\log \|v^{(n)}_{i_1,\ldots,i_\ell}\|-\sum_{j=1}^\ell \Lambda_{i_j}\right|<\epsilon \quad\mbox{ and }\quad \sin \left(\alpha_{n}^{(\ell)}\right)>e^{-n\epsilon}.
\end{equation}

 For $W\in G(d,k)$, write
 $$
 \mathcal I_{\ell,W}:=\left\{(i_1,\ldots, i_\ell)\in \mathcal I_\ell\colon  i_j\geq p_j(W) \mbox{ for }1\leq j\leq \ell\right\}.
 $$
 By Lemma \ref{lem-2.5'}(i),
 \begin{equation}
 \label{e-3.2b}
 W^{\wedge \ell}\subset {\rm span}\left(\left\{v_{i_1}\wedge \cdots\wedge v_{i_\ell}\colon (i_1,\ldots,i_\ell)\in \mathcal I_{\ell,W}\right\}\right)
 \end{equation}
 for each $W\in G(d,k)$.  In what follows we estimate the growth rate of $\|(T_{x|n}^*P_W)^{\wedge \ell}\|$. It is readily checked that
 \begin{equation}
 \label{e-txn}
 (T_{x|n}^*P_W)^{\wedge \ell}=(T_{x|n}^*)^{\wedge \ell}(P_W)^{\wedge \ell}=(T_{x|n}^*)^{\wedge \ell} P_{W^{\wedge \ell}}.
 \end{equation}

 Now let $W\in G(d,k)$ and let $u$ be a unit vector in $W^{\wedge \ell}$. By  \eqref{e-3.2b}, $u$ can be expanded as
 $$
 u=\sum_{(i_1,\ldots, i_\ell)\in \mathcal I_{\ell, W}} a_{i_1,\ldots, i_\ell}\; v_{i_1}\wedge \cdots\wedge v_{i_\ell}
 $$
 with $a_{i_1,\ldots, i_\ell}\in \R$.
 By Lemma \ref{lem-F-3.2}(ii),
\begin{equation}
\label{e-aa}
 |a_{i_1,\ldots, i_\ell}|\leq \frac{1}{\|v_{i_1}\wedge \cdots\wedge v_{i_\ell}\|\sin\left(\alpha_{0}^{(\ell)}\right)}
 \end{equation}
 for each $(i_1,\ldots, i_\ell)\in \mathcal I_{\ell,W}$. It follows that for all $n\geq N_\ell$,
 \begin{align*}
 \|(T_{x|n}^*)^{\wedge \ell}u\|&=\Big\|\sum_{(i_1,\ldots, i_\ell)\in \mathcal I_{\ell,W}} a_{i_1,\ldots, i_\ell}\; v_{i_1,\ldots,i_\ell}^{(n)}\Big\|\\
 &\leq \sum_{(i_1,\ldots, i_\ell)\in \mathcal I_{\ell,W}} |a_{i_1,\ldots, i_\ell}|\; \left\|v_{i_1,\ldots,i_\ell}^{(n)}\right\|\\
 &\leq \sum_{(i_1,\ldots, i_\ell)\in \mathcal I_{\ell,W}} \frac{ \exp\Big(n\Big(\epsilon+\sum_{j=1}^\ell \Lambda_{i_j}\Big)\Big)}{\|v_{i_1}\wedge \cdots\wedge v_{i_\ell}\|\sin\left(\alpha_{0}^{(\ell)}\right)}\qquad \mbox{(by \eqref{e-3.1a} and \eqref{e-aa})}\\
 &\leq C_\ell\exp\left(n\left(\epsilon+\sum_{j=1}^\ell \Lambda_{p_j(W)}\right)\right),
 \end{align*}
 where
 $$
 C_\ell:=
 \binom{d}{\ell}
 \max_{ (i_1,\ldots, i_\ell)\in {\mathcal I_\ell} } \frac{1}{ \| v_{i_1}\wedge \cdots\wedge v_{i_\ell}\| \sin\left(\alpha_0^{(\ell)}\right)}.
 $$
 Since $u$ is an arbitrarily taken unit vector in $W^{\wedge\ell}$,  this proves \eqref{e-upper1}.

 To obtain a lower bound of  $\|(T_{x|n}^*P_W)^{\wedge \ell}\|$, let $M=(M_{i,j})$ be the unique $k\times d$ row-reduced echelon matrix $M=(M_{ij})$ with rank $k$ such that
$$W={\rm span} \left\{\sum_{j=1}^d M_{ij}v_j\colon i=1,\ldots, k\right\};
$$
see Section \ref{S-LS} for the details. Set $$w_i=\sum_{j=1}^d M_{ij}v_j\qquad \mbox{  for }i=1,\ldots, \ell.$$
Notice that $w_1,\ldots, w_\ell\in W$ and they are linearly independent. Hence, $w_1\wedge\ldots\wedge w_\ell$ is a nonzero element of $W^{\wedge \ell}$.
Since the pivot position vector of $M$ is equal to $(p_1(W),\ldots, p_k(W))$, the vector $w_1\wedge\ldots\wedge w_\ell$ can be expanded as
  $$
  w_1\wedge\cdots\wedge w_\ell=\sum_{(i_1,\ldots, i_\ell)\in \mathcal I_{\ell, W}}b_{i_1,\ldots, i_\ell}\;v_{i_1}\wedge \cdots\wedge v_{i_\ell}
  $$
with   $b_{p_1(W),\ldots, p_\ell(W)}=1$. It follows that for $n\in \N$,
\begin{align*}
 (T_{x|n}^*P_W)^{\wedge \ell}(w_1\wedge\cdots\wedge w_\ell)&=(T_{x|n}^*)^{\wedge \ell}P_{W^{\w \ell}}(w_1\wedge\cdots\wedge w_\ell)\qquad\quad\mbox{ (by \eqref{e-txn})}\\
 &=(T_{x|n}^*)^{\wedge \ell}(w_1\wedge\cdots\wedge w_\ell)\\
 &=\sum_{(i_1,\ldots, i_\ell)\in \mathcal I_{\ell,W}}b_{i_1,\ldots, i_\ell}\;v_{i_1,\ldots, i_\ell}^{(n)}.
\end{align*}
By Lemma \ref{lem-F-3.2}(i), for $n\geq N_\ell$,
\begin{align*}
\left\|(T_{x|n}^*P_W)^{\wedge \ell}(w_1\wedge\cdots\wedge w_\ell)\right\|&\geq \left|b_{p_1(W),\ldots, p_\ell(W)}\right|\;\left\|v_{p_1(W),\ldots, p_\ell(W)}^{(n)}\right\|\sin\left(\alpha_n^{(\ell)}\right)\\
&=\left\|v_{p_1(W),\ldots, p_\ell(W)}^{(n)}\right\|\sin\left(\alpha_n^{(\ell)}\right)\\
& \geq \exp\Big(n\Big(-2\epsilon+\sum_{j=1}^\ell \Lambda_{p_j(W)}\Big)\Big)\qquad \mbox{(by \eqref{e-3.1a})}.
\end{align*}
Hence
\begin{align*}
\left\|(T_{x|n}^*P_W)^{\wedge \ell}\right\| &\geq \frac{\left\|(T_{x|n}^*P_W)^{\wedge \ell}(w_1\wedge\cdots\wedge w_\ell)\right\|}{\|w_1\wedge\cdots\wedge w_\ell\|}\nonumber \\
&\geq \frac{\exp\Big(n\Big(-2\epsilon+\sum_{j=1}^\ell \Lambda_{p_j(W)}\Big)\Big)}{\|w_1\wedge\cdots\wedge w_\ell\|}.
\label{e-lower}
\end{align*}
This proves \eqref{e-lower1}  by taking
$$
D_{\ell, W}:=\frac{1}{\|w_1\wedge\cdots\wedge w_\ell\|}.
$$

Next let $s\in [0,k]$. Take $$N:=\max_{1\leq \ell\leq k}N_\ell, \qquad C:=\max_{1\leq \ell\leq k}C_\ell,$$
 and
$$D_W:=\min_{1\leq \ell\leq k} D_{\ell, W} \quad \mbox{ for } W\in G(d,k).$$
 By  Lemma \ref{lem-phis}, \eqref{e-upper1} and \eqref{e-lower1}, we see that for all $n\geq N$ and $W\in G(d,k)$,
\begin{equation}
\label{e-upper2}
 \varphi^s(T_{x|n}^*P_W) \leq C \exp\left(n\left(\epsilon+\Big(\sum_{j=1}^{\lfloor s \rfloor} \Lambda_{p_j(W)}\Big)+(s-\lfloor s\rfloor)\Lambda_{p_{\lfloor s\rfloor +1}(W)}\right)\right),
 \end{equation}
and
\begin{equation}
\label{e-lower2}
\varphi^s(T_{x|n}^*P_W) \geq D_W \exp\left(n\left(-2\epsilon+\Big(\sum_{j=1}^{\lfloor s \rfloor} \Lambda_{p_j(W)}\Big)+(s-\lfloor s\rfloor)\Lambda_{p_{\lfloor s\rfloor +1}(W)}\right)\right).
\end{equation}

Clearly \eqref{e-sw} follows from \eqref{e-upper2} and \eqref{e-lower2} since $\epsilon$ is arbitrarily taken. To see \eqref{e-supw}, let $W\in G(d,k)$ and write
$$
\Gamma:=\sup_{J\in \Sigma_*} \sum_{j=1}^{\lfloor s \rfloor} \Lambda_{p_j(T^*_JW)}+(s-\lfloor s\rfloor)\Lambda_{p_{\lfloor s\rfloor +1}(T^*_JW)}.
$$
Since $p_j(\cdot)$, $1\leq j\leq k$, take values in $\{1,\ldots, d\}$, the above supremum in defining $\Gamma$ is attained at some $J_0\in \Sigma_*$.
  By \eqref{e-upper2} and \eqref{e-lower2} we see that for $n\geq N$,
$$
D_{T^*_{J_0}W}\exp(n(-2\epsilon+\Gamma))\leq \sup_{J\in \Sigma_*} \varphi^s(T_{x|n}^*P_{T_J^*W})\leq C \exp(n(\epsilon+\Gamma)).
$$
Since $\epsilon$ is arbitrarily taken, this proves \eqref{e-supw}.
\end{proof}

In the remaining part of this section,  we give two direct applications of Proposition \ref{pro-key}.

\begin{cor}
\label{cor-2.8}
Under the conditions of Proposition \ref{pro-key}, there exist a measurable $A\subset \Sigma'$ with $\mu(A)>0$, and $J\in \Sigma_*$ such that for each $x\in A$,
$$
\lim_{n\to \infty}\frac{1}{n}\log \varphi^s(T_{x|n}^*P_{T_J^*W})=\lim_{n\to \infty}\frac{1}{n}\log \psi_W^s(x|n)=\Theta,
$$
where $\psi_W^s(x|n):=\sup_{J\in \Sigma_*} \varphi^s(T_{x|n}^*P_{T_J^*W})$ and $$\Theta=\lim_{n\to \infty}\frac1n\int \log \psi_W^s(x|n)\; d\mu(x).$$
\end{cor}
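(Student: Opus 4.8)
The plan is to deduce Corollary \ref{cor-2.8} from Proposition \ref{pro-key} by combining the pointwise limit formula for $\varphi^s(T_{x|n}^*P_W)$ with a Fatou/dominated-type argument to handle the integral defining $\Theta$, and then using ergodicity to locate a single $J\in\Sigma_*$ that works on a set of positive measure. First I would record the pointwise statement: by \eqref{e-supw}, for every $x\in\Sigma'$ the quantity $\frac1n\log\psi_W^s(x|n)$ converges to
$$
g(x):=\sup_{J\in\Sigma_*}\ \sum_{j=1}^{\lfloor s\rfloor}\Lambda_{p_j(T^*_JW,x)}+(s-\lfloor s\rfloor)\Lambda_{p_{\lfloor s\rfloor+1}(T^*_JW,x)}.
$$
Since each $p_j(\cdot,x)$ takes values in $\{1,\dots,d\}$, the function $g$ is bounded (it lies between $\Lambda_d\cdot s$-type bounds), Borel measurable (using Remark \ref{rem-pivot} on measurability of the pivot map together with measurability of $x\mapsto\mathbf v(x)$), and manifestly $\sigma$-invariant: replacing $x$ by $\sigma x$ only reshuffles the splitting via $T_{x_1}^*$, which amounts to absorbing $x_1$ into the word $J$, so $g(\sigma x)=g(x)$. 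By ergodicity of $\mu$, $g$ is $\mu$-a.e.\ equal to a constant, which I will call $\Theta_0$.

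Next I would identify $\Theta_0$ with $\Theta$. The sequence $\frac1n\log\psi_W^s(x|n)$ is uniformly bounded: Lemma \ref{lem-singular1}(iii) gives $\varphi^s(T_{x|n}^*P_{T_J^*W})\le\varphi^s(T_{x|n}^*)\le\|T_{x|n}^*\|^s\le(\max_i\|T_i\|)^{ns}$, and a lower bound follows from $\varphi^s(T_{x|n}^*P_{T_J^*W})\ge(\alpha_d(T_{x|n}^*))^s\ge(\min_i\alpha_d(T_i))^{ns}$, so the functions $\frac1n\log\psi_W^s(x|n)$ are bounded above and below by constants independent of $n$ and $x$. Hence the bounded convergence theorem applies to $\frac1n\log\psi_W^s(x|n)\to g(x)=\Theta_0$ $\mu$-a.e., giving $\lim_{n\to\infty}\frac1n\int\log\psi_W^s(x|n)\,d\mu=\Theta_0$, i.e.\ $\Theta=\Theta_0$. (One should also remark that the limit defining $\Theta$ exists; this follows either from bounded convergence as just argued, or, if one prefers, from submultiplicativity of $\psi_W^s$ established later in Lemma \ref{lem-subm} combined with Kingman's subadditive ergodic theorem — but bounded convergence suffices here.)

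Finally I would produce the word $J$. Fix $x_0$ in the full-measure set where $g(x_0)=\Theta$; by definition of the supremum there is $J\in\Sigma_*$ achieving it, but in fact, as noted in the proof of Proposition \ref{pro-key}, the supremum over $J$ is attained because the pivot vectors range over a finite set, so the same finitely many candidate ``profiles'' occur. More carefully: the value of $\sum_{j\le\lfloor s\rfloor}\Lambda_{p_j(T^*_JW,x)}+(s-\lfloor s\rfloor)\Lambda_{p_{\lfloor s\rfloor+1}(T^*_JW,x)}$ depends on $(J,x)$ only through the pivot position vector $\mathbf p(T^*_JW,\mathbf v(x))$, which lies in a fixed finite set $\mathcal P$ of tuples. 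For each $\mathbf p\in\mathcal P$ let $A_{\mathbf p}=\{x\in\Sigma': \exists J\in\Sigma_* \text{ with } \mathbf p(T^*_JW,\mathbf v(x))=\mathbf p \text{ realizing the sup } g(x)\}$; each $A_{\mathbf p}$ is measurable and $\Sigma'=\bigcup_{\mathbf p\in\mathcal P}A_{\mathbf p}$ up to a null set, so some $A_{\mathbf p}$ has positive measure. For that $\mathbf p$, I would further decompose by the \emph{first} word $J$ (ordered, say, by length then lexicographically) attaining $\mathbf p$; since $\Sigma_*$ is countable, again one such subset, call it $A$, has $\mu(A)>0$, and on $A$ a single fixed $J$ realizes both $g(x)=\Theta$ and, via \eqref{e-sw} applied to the subspace $T_J^*W\in G(d,k)$, the limit $\lim_n\frac1n\log\varphi^s(T_{x|n}^*P_{T_J^*W})=\Theta$. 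This gives the claimed $A$ and $J$.

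The main obstacle is the bookkeeping in the last step: one must be careful that ``the $J$ attaining the sup'' can be chosen \emph{measurably in $x$} and then pinned to a \emph{single} $J$ on a positive-measure set. The clean way around this, which I would use, is exactly the finiteness observation — the objective depends on $(J,x)$ only through a finite-valued quantity (the pivot vector of $T_J^*W$ relative to $\mathbf v(x)$) — so that partitioning $\Sigma'$ into countably many measurable pieces indexed by $(\mathbf p, J)$ and invoking $\mu(\Sigma')=1$ forces one piece to have positive measure, with no genuine measurable-selection theorem needed. Everything else (measurability of $g$, its $\sigma$-invariance, the uniform two-sided bounds, bounded convergence, and ergodicity $\Rightarrow$ a.e.\ constant) is routine given Proposition \ref{pro-key}, Remark \ref{rem-pivot}, and Lemma \ref{lem-singular1}.
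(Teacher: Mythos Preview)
Your proposal is correct in outline but takes a somewhat different route from the paper, and there is one step that needs tightening.

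The paper's proof is shorter: it invokes the submultiplicativity of $\psi_W^s$ (Lemma~\ref{lem-subm}) and Kingman's subadditive ergodic theorem to obtain directly that $\frac{1}{n}\log\psi_W^s(x|n)\to\Theta$ for $\mu$-a.e.\ $x$, bypassing any analysis of the limit function $g$. It then observes that the supremum in \eqref{e-supw} is attained (pivot vectors are integer-valued), chooses a measurable selector $J'\colon\Sigma''\to\Sigma_*$, and uses countability of $\Sigma_*$ to find a single $J_0$ with $\mu\{J'=J_0\}>0$. Your final step (partitioning by pivot profile and then by the first admissible $J$) is logically equivalent to this; what differs is the identification of the pointwise limit with $\Theta$.

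The weak spot in your argument is the claim that $g$ is ``manifestly $\sigma$-invariant.'' Absorbing $x_1$ into $J$ works in one direction only: since $T_{x_1}^*E_i(x)=E_i(\sigma x)$, the pivot data of $T_J^*W$ relative to the splitting at $x$ coincide with those of $T_{x_1}^*T_J^*W=T_{Jx_1}^*W$ relative to the splitting at $\sigma x$, which yields $g(x)\le g(\sigma x)$; but $(T_{x_1}^*)^{-1}T_J^*W$ is in general \emph{not} of the form $T_{J'}^*W$, so the reverse inequality does not follow by the same reasoning. This is easily repaired: since $g$ is bounded and $\mu$ is $\sigma$-invariant, $\int(g\circ\sigma-g)\,d\mu=0$ together with $g\circ\sigma\ge g$ forces $g\circ\sigma=g$ $\mu$-a.e., and then ergodicity and bounded convergence give $g=\Theta$ a.e.\ as you wanted. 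Alternatively, note that the Kingman argument you mention parenthetically for the existence of $\Theta$ already delivers $g=\Theta$ a.e.\ in one stroke, which is exactly what the paper does.
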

\begin{proof}Since $\psi^s_W$ is submultiplicative by Lemma \ref{lem-subm}, it follows from the Kingman's subadditive ergodic theorem that there exists a measurable set $\Sigma''\subset \Sigma'$ with $\mu(\Sigma'')=\mu(\Sigma')=1$ such that
\begin{equation}
\label{e-psiTheta}
\lim_{n\to \infty}\frac{1}{n}\log \psi_W^s(x|n)=\Theta \quad\mbox{ for all }x\in \Sigma''.
\end{equation}
Meanwhile by Remark \ref{rem-pivot}, $p_j(T_J^*W,x)$ is measurable in $x$ and takes value in $\{1,\ldots, d\}$ for each $1\leq j\leq k$ and $J\in \Sigma_*$. By \eqref{e-supw}, there exists a measurable $J'\colon \Sigma''\to \Sigma_*$ such that for each $x\in \Sigma''$, the supermum in the righthand side of \eqref{e-supw} is attained at $J=J'(x)$. Since $\Sigma_*$ is countable, there exists $J_0\in \Sigma_*$ such that $A:=\{x\in \Sigma''\colon J'(x)=J_0\}$ has positive $\mu$ measure. Now for each $x\in A$,
\begin{align*}
\Theta&=\lim_{n\to \infty}\frac{1}{n}\log \psi_W^s(x|n)\qquad\qquad\qquad\qquad\qquad\qquad\mbox{(by \eqref{e-psiTheta})}\\
&=\sum_{j=1}^{\lfloor s \rfloor}\Lambda_{p_j(T^*_{J_0}W,x)}+(s-\lfloor s \rfloor)\Lambda_{p_{\lfloor s \rfloor+1}(T^*_{J_0}W,x)}\quad\qquad\qquad\mbox{(by \eqref{e-supw})}\\
&=\lim_{n\to \infty}\frac{1}{n}\log \varphi^s(T_{x|n}^*P_{T_{J_0}^*W}) \;\;\quad \qquad\qquad\qquad\qquad\mbox{(by \eqref{e-sw})}.
\end{align*}
This completes the proof.
\end{proof}

Recall the definitions of $S_n(\mu, {\bf T},W,x)$ and $S(\mu, {\bf T},W,x)$; see \eqref{e-2.2}, \eqref{e-equi} and \eqref{e-Smux}.
\begin{lem}
\label{lem-smuw}
Let $k\in \{1,\ldots, d-1\}$, $W\in G(d,k)$ and $x\in \Sigma'$. Let $(p_1,\ldots, p_k)$ denote the pivot position vector of $W$ with respect to the ordered basis ${\bf v}(x)$. Set
\begin{equation}
\label{e-Gammas} \Gamma(s)=\sum_{j=1}^{\lfloor s \rfloor} \Lambda_{p_j}+(s-\lfloor s \rfloor)\Lambda_{p_{\lfloor s \rfloor+1}}\quad \mbox{ for }s\in [0,k].
\end{equation}
Then the limit $\lim_{n\to \infty}S_n(\mu, {\bf T}, W, x)$ in defining $S(\mu, {\bf T}, W, x)$ exists. Moreover,
\begin{equation}
\label{e-eqsmu}
S(\mu, {\bf T}, W, x)=\left\{
\begin{array}{ll}
k, & \mbox{ if }h_\mu(\sigma)+\Gamma(k)\geq 0,\\
s\in [0,k) \mbox{ with  } h_\mu(\sigma)+\Gamma(s)=0,& \mbox{ otherwise}.
\end{array}
\right.
\end{equation}
\end{lem}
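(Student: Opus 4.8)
The plan is to combine Proposition \ref{pro-key} with the definition of $S_n$ in \eqref{e-equi} and the Shannon--McMillan--Breiman theorem (part (6) of Theorem \ref{thm:oseledets}). First I would recall that for $x\in\Sigma'$ we have from \eqref{e-sw} that $\frac1n\log\varphi^s(T_{x|n}^*P_W)\to\Gamma(s)$ as $n\to\infty$, where $\Gamma(s)$ is exactly the right-hand side of \eqref{e-sw} once we note $\varphi^s(T_{x|n}^*P_W)=\varphi^s(P_WT_{x|n})$ by Lemma \ref{lem-phis} (the singular value function is invariant under transpose, and $(P_WT_{x|n})^*=T_{x|n}^*P_W$ since $P_W$ is symmetric). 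Simultaneously, part (6) gives $\frac1n\log\mu([x|n])\to-h_\mu(\sigma)$. The key structural observation is that $\Gamma$ is a continuous, piecewise-linear, nonincreasing function of $s$ on $[0,k]$, since each $\Lambda_i\le 0$ (the matrices are contractions, so all Lyapunov exponents are negative) and $\Gamma$ interpolates linearly between the partial sums $\Lambda_{p_1}+\cdots+\Lambda_{p_j}$; in particular $\Gamma$ is strictly decreasing on the range where the relevant $\Lambda_{p_j}<0$, which is enough to make the level set $\{s:\Gamma(s)=-h_\mu(\sigma)\}$ either a single point or to force us into the boundary case.

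Next I would translate the defining relation \eqref{e-equi} of $S_n$. By definition $S_n(\mu,{\bf T},W,x)$ is either $k$ (when $\varphi^k(P_WT_{x|n})\ge\mu([x|n])$) or the unique $s\in[0,k)$ with $\varphi^s(P_WT_{x|n})=\mu([x|n])$; uniqueness here uses that $s\mapsto\varphi^s(P_WT_{x|n})$ is strictly decreasing on $[0,k]$ (as $\varphi^s(A)=\|A^{\wedge\lfloor s\rfloor}\|^{\lfloor s\rfloor+1-s}\|A^{\wedge(\lfloor s\rfloor+1)}\|^{s-\lfloor s\rfloor}$ and the relevant exterior norms of $P_WT_{x|n}$ are positive and decreasing for matrices of rank $\ge k$... actually one must be slightly careful since $P_WT_{x|n}$ has rank at most $k$, but on $[0,k]$ the function is still continuous and nonincreasing, and strictly decreasing wherever $\varphi^k>0$). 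Taking logs: in the interior case, $s=S_n$ satisfies $\frac1n\log\varphi^{S_n}(P_WT_{x|n})=\frac1n\log\mu([x|n])$, i.e. the ``empirical'' curves agree at $s=S_n$. Letting $n\to\infty$ and using the two convergence statements above, one expects $S_n\to s^*$ where $\Gamma(s^*)=-h_\mu(\sigma)$, i.e. $h_\mu(\sigma)+\Gamma(s^*)=0$; and if instead $h_\mu(\sigma)+\Gamma(k)\ge0$, then for large $n$ the inequality $\varphi^k(P_WT_{x|n})\ge\mu([x|n])$ holds (strictly, comparing growth rates) and $S_n=k$ eventually, giving $S(\mu,{\bf T},W,x)=k$. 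This is precisely \eqref{e-eqsmu}.

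The main obstacle, and the step requiring the most care, is the passage from the pointwise-in-$s$ convergence $\frac1n\log\varphi^s(P_WT_{x|n})\to\Gamma(s)$ to convergence of the \emph{solution} $S_n$ of the equation $\varphi^{S_n}(P_WT_{x|n})=\mu([x|n])$. This is not automatic: one needs some equicontinuity or monotonicity to rule out the solutions escaping. The clean way is to use monotonicity in $s$ on both sides. Fix $\delta>0$ small. If $s^*+\delta\le k$, then $\Gamma(s^*+\delta)<\Gamma(s^*)=-h_\mu(\sigma)$ (using strict monotonicity near $s^*$; in the degenerate flat case where $\Gamma$ is constant near $s^*$ one argues separately, but then the set of solutions is an interval and the statement \eqref{e-eqsmu} should be read accordingly, or such flatness is excluded by the hypotheses making $s^*$ well-defined), so for $n$ large $\frac1n\log\varphi^{s^*+\delta}(P_WT_{x|n})<\frac1n\log\mu([x|n])$, i.e. $\varphi^{s^*+\delta}(P_WT_{x|n})<\mu([x|n])$; since $\varphi^s(P_WT_{x|n})$ is nonincreasing in $s$, this forces $S_n<s^*+\delta$. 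Symmetrically, $\Gamma(s^*-\delta)>-h_\mu(\sigma)$ gives $\varphi^{s^*-\delta}(P_WT_{x|n})>\mu([x|n])$ for large $n$, hence $S_n>s^*-\delta$. Thus $|S_n-s^*|<\delta$ eventually, and since $\delta$ is arbitrary, $S_n\to s^*$. The analogous sandwich handles the boundary case $S_n\to k$. I would also note that the existence and uniqueness of $s^*$ solving $h_\mu(\sigma)+\Gamma(s)=0$ in the non-boundary case follows from the intermediate value theorem applied to the continuous function $s\mapsto h_\mu(\sigma)+\Gamma(s)$, which is nonnegative at $s$ small (indeed $\Gamma(0)=0$) wait --- one should check: at $s=0$, $\Gamma(0)=0$ so $h_\mu(\sigma)+\Gamma(0)=h_\mu(\sigma)\ge0$, and at $s=k$ it is $h_\mu(\sigma)+\Gamma(k)$; so when $h_\mu(\sigma)+\Gamma(k)<0$ a root exists in $[0,k)$, and monotonicity of $\Gamma$ gives essential uniqueness. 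This completes the plan; the write-up is mostly bookkeeping around these monotonicity sandwiches.
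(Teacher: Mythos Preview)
Your proposal is correct and follows essentially the same route as the paper: invoke Proposition \ref{pro-key} and Theorem \ref{thm:oseledets}(6) to get $\frac1n\log\varphi^s(P_WT_{x|n})\to\Gamma(s)$ and $\frac1n\log\mu([x|n])\to-h_\mu(\sigma)$, then run the $\epsilon$-sandwich using strict monotonicity of $\Gamma$. One small cleanup: your worry about a ``degenerate flat case'' is unnecessary, since $\|T_i\|<1$ forces every $\Lambda_i<0$ strictly, so $\Gamma$ is strictly decreasing (indeed bi-Lipschitz) on all of $[0,k]$ and the root $s^*$ is automatically unique.
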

\begin{proof}
Let $S$ be the  largest $s\in [0,k]$ such that
$h_\mu(\sigma)+ \Gamma(s)\geq 0$.
 Since $\Gamma$ is strictly decreasing and bi-Lipschitz on $[0,k]$, it follows that either $S=k$  and $h_\mu(\sigma)+\Gamma(S)\geq 0$, or $S\in [0,k)$ and $h_\mu(\sigma)+\Gamma(S)=0$. That is, $S$ is given by the righthand side of \eqref{e-eqsmu}.

Next we  prove that
 \begin{equation}
 \label{e-SnS}
\lim_{n\to \infty} S_n(\mu,{\bf T}, W,x)=S.
\end{equation}
 To this end, let $\epsilon>0$. We need to  show that
\begin{equation}
\label{e-Sn}
S-\epsilon\leq S_n(\mu, {\bf T}, W,x)\leq S+\epsilon
\end{equation}
for large enough $n$.

To prove the first inequality in  \eqref{e-Sn}, we may assume that $S-\epsilon\geq 0$. As $\Gamma$ is strictly decreasing,  by the definition of $S$, $h_\mu(\sigma)+\Gamma(S-\epsilon)>0$. Notice that by Theorem \ref{thm:oseledets}(6) and Proposition \ref{pro-key},
\begin{equation}
\label{e-entropy}
\lim_{n\to \infty}\frac{1}{n} \log \mu([x|n])=-h_\mu(\sigma),\quad \lim_{n\to \infty} \frac{1}{n}\log \varphi^s(P_WT_{x|n})=\Gamma(s) \mbox{ for } s\in [0,k].
\end{equation}
 Combining them with the inequality $h_\mu(\sigma)+\Gamma(S-\epsilon)>0$  yields that
$$\varphi^{S-\epsilon}(P_WT_{x|n})> \mu([x|n])\quad \mbox{ for large enough } n,$$
which implies the first inequality in \eqref{e-Sn} for large enough $n$.

To prove the second inequality in \eqref{e-Sn}, we may assume $S+\epsilon<k$; otherwise there is nothing to prove. Since $S<k$, by definition it follows that
$h_\mu(\sigma)+\Gamma(S)=0$, and thus $h_\mu(\sigma)+\Gamma(S+\epsilon)<0$.
Combining this with  \eqref{e-entropy} yields that
$$\varphi^{S+\epsilon}(P_WT_{x|n})< \mu([x|n])\quad \mbox{ for large enough } n,$$
which implies the second inequality in \eqref{e-Sn} for large enough $n$. This completes the proof of \eqref{e-Sn}.
\end{proof}
\section{A special family of  sub-additive pressures}
\label{S-4}

Throughout this section, let ${\bf T}=(T_1,\ldots, T_m)$ be a fixed tuple  of $d \times d$  invertible real matrices with $\|T_i\|<1$ for $1\leq i\leq m$. For each $W\in G(d,k)$, we are going to introduce a parametrized family of subadditive potentials and prove that  their topological pressures coincide with the following limit
$$\lim_{n\to \infty}\frac1n\log \sum_{I\in \Sigma_n}\varphi^s(T_I^*P_W);$$
see Proposition \ref{pro-equivalence}. This result plays an important  role in the proof of Theorem \ref{thm-1.1}.

Recall that for a linear subspace $V$ of $\R^d$, $P_V$ stands for the orthogonal projection from $\R^d$ onto $V$. For $s\geq 0$ and $W\in G(d,k)$, we define $\psi_W^s\colon \Sigma_*\to [0,\infty)$ by
\begin{equation}
\label{e-psiw}
\psi_W^s(I)=\sup_{K\in \Sigma_*}\varphi^s(T_I^*P_{T_K^*W}).
\end{equation}
The introduction of $\psi_W^s$ and the proof of the following lemma were inspired by \cite[Theorem 4]{BochiMorris2018}.

\begin{lem}
\label{lem-subm}
         For any $s \geq 0$ and $W\in G(d,k)$, $\psi^{s}_{W}$ is submultiplicative in the sense that $\psi^{s}_{W}(IJ)\leq \psi^{s}_{W}(I)\psi^{s}_{W}(J)$ for all $I, J\in \Sigma_*$.
    \end{lem}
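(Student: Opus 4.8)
The plan is to reduce the assertion to the submultiplicativity of the singular value function $\varphi^s$ (Lemma \ref{lem-singinequality}(i)) by factoring the operator $T_{IJ}^*P_{T_K^*W}$ at its ``middle'' so that the right factor has the form $T_I^*P_{T_K^*W}$ (bounded by $\psi_W^s(I)$) and the left factor has the form $T_J^*P_V$ with $V\in G(d,k)$ of the form $T_{K'}^*W$ (hence bounded by $\psi_W^s(J)$).

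First I would record the elementary bookkeeping. For words $I,J\in\Sigma_*$ one has $T_{IJ}=T_IT_J$, hence $T_{IJ}^*=T_J^*T_I^*$; likewise $T_I^*T_K^*=(T_KT_I)^*=T_{KI}^*$. Now fix $K\in\Sigma_*$ and set $A:=T_I^*P_{T_K^*W}$. Since $T_I$ and $T_K$ are invertible, the image of $A$ is $T_I^*(T_K^*W)=T_{KI}^*W$, a $k$-dimensional subspace of $\R^d$. The key (and only slightly delicate) point is that a linear operator is unchanged when we post-compose it with the orthogonal projection onto its image, i.e. $P_{T_{KI}^*W}A=A$. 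This gives the factorization
\[
T_{IJ}^*P_{T_K^*W}=T_J^*\bigl(T_I^*P_{T_K^*W}\bigr)=\bigl(T_J^*P_{T_{KI}^*W}\bigr)\bigl(T_I^*P_{T_K^*W}\bigr).
\]

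Next I would apply Lemma \ref{lem-singinequality}(i) to this product:
\[
\varphi^s\bigl(T_{IJ}^*P_{T_K^*W}\bigr)\le \varphi^s\bigl(T_J^*P_{T_{KI}^*W}\bigr)\,\varphi^s\bigl(T_I^*P_{T_K^*W}\bigr).
\]
Since $T_{KI}^*W$ is of the form $T_{K'}^*W$ with $K'=KI\in\Sigma_*$, the first factor is at most $\psi_W^s(J)$ by the definition \eqref{e-psiw}; the second factor is at most $\psi_W^s(I)$ for the same reason. Hence $\varphi^s(T_{IJ}^*P_{T_K^*W})\le \psi_W^s(I)\psi_W^s(J)$ for every $K\in\Sigma_*$, and taking the supremum over $K$ yields $\psi_W^s(IJ)\le \psi_W^s(I)\psi_W^s(J)$, as desired. (Finiteness of $\psi_W^s$, implicit in its definition, follows from Lemma \ref{lem-singular1}: $\varphi^s(T_I^*P_{T_K^*W})\le \varphi^s(T_I^*)$ when $s\in[0,k]$, and $\varphi^s(T_I^*P_{T_K^*W})=0$ when $s>k$, so the supremum over $K$ is always finite.)

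I do not expect a genuine obstacle here: the entire argument is a short computation. The only thing requiring care is the transpose/concatenation bookkeeping — in particular, verifying that the intermediate projection subspace produced by pulling $T_I^*$ across $P_{T_K^*W}$ is $T_{KI}^*W$ (and not $T_{IK}^*W$), so that it really is of the admissible form $T_{K'}^*W$ appearing in the definition of $\psi_W^s(J)$ — together with the elementary identity $P_VA=A$ for a rank-$k$ operator $A$ with $\mathrm{range}(A)=V$; after that, Lemma \ref{lem-singinequality}(i) does all the work.
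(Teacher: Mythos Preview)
Your proof is correct and follows essentially the same route as the paper's own argument: factor $T_{IJ}^*P_{T_K^*W}=(T_J^*P_{T_{KI}^*W})(T_I^*P_{T_K^*W})$ via the identity $P_{T_{KI}^*W}(T_I^*P_{T_K^*W})=T_I^*P_{T_K^*W}$, apply the submultiplicativity of $\varphi^s$, bound each factor by the corresponding $\psi_W^s$, and take the supremum over $K$. Your added parenthetical on finiteness is a small bonus the paper omits, but otherwise the proofs are the same.
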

    \begin{proof}
        Let $I, J, K\in \Sigma_*$. Notice that
       $
       T_I^* P_{T^*_KW}(\R^d)=  T_I^* T^*_KW=T_{KI}^*W.
       $
      It follows that
      $$
      T_I^* P_{T^*_KW}=P_{T_{KI}^*W}T_I^* P_{T^*_KW}.
      $$
     Hence
     $$
     T_{IJ}^*P_{T^*_KW}=T_{J}^*T_I^*P_{T^*_KW}=T_{J}^*P_{T_{KI}^*W}T_I^* P_{T^*_KW}=\left(T_{J}^* P_{T_{KI}^*W}\right)\left(T_I^* P_{T^*_KW}\right).
     $$
     Since $\varphi^s$ is submultiplicative, it follows that
     $$
     \varphi^s\left(T_{IJ}^*P_{T^*_KW}\right)\leq \varphi^s\left(T_{J}^* P_{T_{KI}^*W}\right)\varphi^s\left(T_I^* P_{T^*_KW}\right)\leq  \psi^{s}_{W}(J)\psi^{s}_{W}(I).
     $$
     Taking supremum over $K\in \Sigma_*$ gives    $\psi^{s}_{W}(IJ)\leq \psi^{s}_{W}(I)\psi^{s}_{W}(J)$.
        \end{proof}

Next we collect some elementary properties of $\psi^s_W$.
\begin{lem}
\label{lem-4.3}
Let $s\geq 0$ and $W\in G(d,k)$. Then the following statements hold.
\begin{itemize}
\item[(i)]  If $s>k$, then $\psi^s_W(I)=0$ for any $I\in \Sigma_*$.
\item[(ii)] If $s\in [0,k]$, then $$(\alpha_-)^{sn}\leq \psi_W^s(I)\leq (\alpha_+)^{sn}$$ for each $n\in \N$ and $I\in \Sigma_n$, where
$\alpha_-$ and $\alpha_+$ are defined by
\begin{equation}
\label{e-alpha}
\alpha_-=\min_{1\leq i\leq m}\alpha_d(T_i),\qquad \alpha_+=\max_{1\leq i\leq m}\|T_i\|.
\end{equation}
\item[(iii)] For $0\leq s<t\leq k$ and $I\in \Sigma_*$,  $$\psi^{s}_W(I)(\alpha_-)^{(t-s)|I|}\leq \psi^{t}_W(I)\leq  \psi^{s}_W(I)(\alpha_+)^{(t-s)|I|}.$$
\end{itemize}
\end{lem}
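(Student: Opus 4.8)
The plan is to reduce each of the three statements to the analogous statement for the single matrix $A_K:=T_I^*P_{T_K^*W}$, where $K\in\Sigma_*$ is arbitrary, and then to pass to the supremum over $K$ in the definition \eqref{e-psiw} of $\psi_W^s$. First I would record three elementary facts about $A_K$: it has rank exactly $k$ (since $T_I^*$ is invertible and $T_K^*W\in G(d,k)$); $\|A_K\|\leq\|T_I^*\|=\|T_I\|\leq(\alpha_+)^{|I|}$, by submultiplicativity of the operator norm; and $\alpha_k(A_K)\geq\alpha_d(T_I^*)=\alpha_d(T_I)\geq(\alpha_-)^{|I|}$, by Lemma \ref{lem-singular1}(i) together with the submultiplicativity of the smallest singular value (which follows from $\|(BC)^{-1}\|\leq\|B^{-1}\|\,\|C^{-1}\|$).

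Part (i) is then immediate: if $s>k$, Lemma \ref{lem-singular1}(ii), applied with $A=T_I^*$ and the subspace $T_K^*W$ in place of $W$, gives $\varphi^s(A_K)=0$ for every $K$, so $\psi_W^s(I)=\sup_K\varphi^s(A_K)=0$. For parts (ii) and (iii), the key observation is that, for fixed $K$, the function $g(u):=\log\varphi^u(A_K)$ (well defined on $[0,k]$ since $A_K$ has rank $k$) is concave and piecewise linear, with $g(0)=0$ and right-derivative $\log\alpha_{\lfloor u\rfloor+1}(A_K)$ at each $u\in[0,k)$; since $\lfloor u\rfloor+1\in\{1,\dots,k\}$, this derivative lies between $\log\alpha_k(A_K)$ and $\log\alpha_1(A_K)=\log\|A_K\|$. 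Consequently, for $0\leq s\leq t\leq k$,
\[
(t-s)\log\alpha_k(A_K)\ \leq\ g(t)-g(s)\ \leq\ (t-s)\log\|A_K\|,
\]
equivalently $\alpha_k(A_K)^{t-s}\varphi^s(A_K)\leq\varphi^t(A_K)\leq\|A_K\|^{t-s}\varphi^s(A_K)$. (This refinement of Lemma \ref{lem-singinequality}(ii) for rank-$k$ matrices can equally be read off directly from \eqref{e-singular}, or from Lemma \ref{lem-phis}.) Substituting the bounds $\|A_K\|\leq(\alpha_+)^{|I|}$ and $\alpha_k(A_K)\geq(\alpha_-)^{|I|}$ gives
\[
(\alpha_-)^{(t-s)|I|}\varphi^s(A_K)\ \leq\ \varphi^t(A_K)\ \leq\ (\alpha_+)^{(t-s)|I|}\varphi^s(A_K)
\]
for every $K\in\Sigma_*$. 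Taking the supremum over $K$ in the upper inequality gives $\psi_W^t(I)\leq(\alpha_+)^{(t-s)|I|}\psi_W^s(I)$; the lower inequality holds for each individual $K$, so $\psi_W^t(I)=\sup_K\varphi^t(A_K)\geq(\alpha_-)^{(t-s)|I|}\sup_K\varphi^s(A_K)=(\alpha_-)^{(t-s)|I|}\psi_W^s(I)$. This proves (iii); part (ii) is the case $s=0$, using $\psi_W^0(I)=\sup_K\varphi^0(A_K)=1$ and $|I|=n$ for $I\in\Sigma_n$.

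The only point requiring care is the lower bound in (iii): one cannot apply Lemma \ref{lem-singinequality}(ii) directly, because $A_K$ is singular (rank $k<d$), so $\alpha_d(A_K)=0$ and that lemma's lower estimate becomes vacuous. The remedy is to exploit that $A_K$ has rank exactly $k$, replacing $\alpha_d(A_K)$ by $\alpha_k(A_K)$, which Lemma \ref{lem-singular1}(i) bounds below by $\alpha_d(T_I^*)$. Beyond this, one only has to track the direction of the inequalities when passing to the supremum over $K$, but since both estimates hold for every single $K$ this causes no difficulty.
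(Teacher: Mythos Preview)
Your proof is correct and follows essentially the same route as the paper: both arguments reduce to bounding $\varphi^s(T_I^*P_{T_K^*W})$ via Lemma~\ref{lem-singular1}(i)--(ii), replace $\alpha_d$ by $\alpha_k$ in the lower bound of Lemma~\ref{lem-singinequality}(ii) (exploiting that $A_K$ has rank exactly $k$), and then pass to the supremum over $K$. Your framing of (ii)--(iii) via the piecewise-linear function $g(u)=\log\varphi^u(A_K)$ is a tidy unification, but the underlying inequalities and their justifications are identical to the paper's.
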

\begin{proof}
Parts (i) and (ii) follow directly  from the definition of $\psi^s_W$ and Lemma \ref{lem-singular1}(ii)-(iii). The second inequality in Part (iii) follows directly from Lemma \ref{lem-singinequality}(ii). For the first  inequality in Part (iii), since $s<t\leq k$, it follows that
\begin{align*}
\psi^{t}_W(I)&=\sup_{K\in \Sigma_*}\varphi^t(T_I^*P_{T_K^*W})\\
&\geq \sup_{K\in \Sigma_*}\varphi^s(T_I^*P_{T_K^*W})\alpha_k(T_I^*P_{T_K^*W})^{t-s}\\
&\geq  \sup_{K\in \Sigma_*}\varphi^s(T_I^*P_{T_K^*W})(\alpha_-)^{(t-s)|I|}\qquad \qquad (\mbox{by Lemma \ref{lem-singular1}(i)})\\
&=\psi^{s}_W(I)(\alpha_-)^{(t-s)|I|}.
\end{align*}
This completes the proof.
\end{proof}

Now for $s\geq 0$ and $W\in G(d,k)$, define
\begin{equation}
\label{e-ptws}
P({\bf T}, W,s)=\lim_{n\to \infty} \frac{1}{n}\log \sum_{I\in \Sigma_n}\psi_W^s(I).
\end{equation}
Due to Lemma \ref{lem-subm},  $P({\bf T}, W,s)$ is the topological pressure of the subadditive potential $\{\log \psi^s_W(\cdot|n)\}_{n=1}^\infty$; see Section \ref{S-subadditive} for the definition of the topological pressure of a subadditive potential. The following result is a direct consequence of Lemma \ref{lem-4.3}.

\begin{lem}
\label{lem-Pws}
\begin{itemize}
\item[(i)] $P({\bf T}, W,s)\in \R$ for all $0\leq s\leq k$, and $P({\bf T}, W,s)=-\infty$ if $s>k$.
\item[(ii)] For  all $0\leq t_1<t_2\leq k$,
$$(t_2-t_1)\log (1/\alpha_+)\leq P({\bf T}, W,t_1)-P({\bf T}, W,t_2)\leq (t_2-t_1)\log \left(1/\alpha_-\right),$$
where $\alpha_-,\alpha_+$ are defined as in \eqref{e-alpha} and are less than 1.
\end{itemize}
\end{lem}

The main result of this section is the following, which will  be derived from Corollary \ref{cor-2.8} and the subadditive variational principle.

\begin{pro}
\label{pro-equivalence}
For $s\geq 0$ and $W\in G(d,k)$,
\begin{equation}
\label{e-varpw}
P({\bf T}, W,s)=\lim_{n\to \infty} \frac{1}{n}\log \sum_{I\in \Sigma_n} \varphi^s(T_I^*P_W).
\end{equation}
\end{pro}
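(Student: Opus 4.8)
The plan is to prove the two inequalities
\[
P({\bf T}, W,s)\ \ge\ \limsup_{n\to\infty}\frac1n\log\sum_{I\in\Sigma_n}\varphi^s(T_I^*P_W)
\qquad\text{and}\qquad
P({\bf T}, W,s)\ \le\ \liminf_{n\to\infty}\frac1n\log\sum_{I\in\Sigma_n}\varphi^s(T_I^*P_W),
\]
which together give \eqref{e-varpw} (and incidentally show the limit on the right exists). The case $s>k$ is trivial: by Lemma \ref{lem-4.3}(i) and Lemma \ref{lem-singular1}(ii) both sides equal $-\infty$. So fix $s\in[0,k]$.

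The upper bound ``$\ge$'' is the easy direction. Since $\varphi^s(T_I^*P_W)\le \psi_W^s(I)$ directly from the definition \eqref{e-psiw} (take $K=\emptyset$), we get $\sum_{I\in\Sigma_n}\varphi^s(T_I^*P_W)\le \sum_{I\in\Sigma_n}\psi_W^s(I)$ for every $n$, and taking $\frac1n\log$ and letting $n\to\infty$ yields $\limsup_n \frac1n\log\sum_{I\in\Sigma_n}\varphi^s(T_I^*P_W)\le P({\bf T},W,s)$, the latter limit existing by Lemma \ref{lem-subm} and the standard subadditivity argument.

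The lower bound ``$\le$'' is the heart of the matter, and this is where Corollary \ref{cor-2.8} and the subadditive variational principle enter. By Theorem \ref{thm:subadditive-VP} applied to the subadditive potential $\mathcal F=\{\log\psi_W^s(\cdot|n)\}_{n\ge1}$ (continuity of $\psi_W^s(\cdot|n)$ on $\Sigma$ holds since it is constant on $n$-cylinders), there is an ergodic $\sigma$-invariant $\mu$ with
\[
P({\bf T},W,s)\ \le\ h_\mu(\sigma)+\lim_{n\to\infty}\frac1n\int\log\psi_W^s(x|n)\,d\mu(x)\ +\ \epsilon
\]
for any prescribed $\epsilon>0$ (one may even take an equilibrium measure, removing the $\epsilon$). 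Write $\Theta$ for that limit integral. Now invoke Corollary \ref{cor-2.8}: there is a set $A\subset\Sigma'$ with $\mu(A)>0$ and a word $J\in\Sigma_*$ such that for every $x\in A$,
\[
\lim_{n\to\infty}\frac1n\log\varphi^s(T_{x|n}^*P_{T_J^*W})=\Theta.
\]
The idea now is a standard ``many words carry the growth'' counting argument: for each large $n$, the set $\{I\in\Sigma_n : x|n=I \text{ for some }x\in A\}$ has cardinality roughly $e^{nh_\mu(\sigma)}$ (by the Shannon--McMillan--Breiman theorem, Theorem \ref{thm:oseledets}(6), after restricting to a suitable subset of $A$ of almost full measure), and on each such $I$ one has $\varphi^s(T_I^*P_{T_J^*W})\gtrsim e^{n(\Theta-\epsilon)}$. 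Summing the contributions of these words,
\[
\sum_{I\in\Sigma_n}\varphi^s(T_I^*P_{T_J^*W})\ \gtrsim\ e^{n(h_\mu(\sigma)+\Theta-2\epsilon)}\ \ge\ e^{n(P({\bf T},W,s)-3\epsilon)}.
\]
Finally, to replace the projection $P_{T_J^*W}$ by $P_W$: since $W\mapsto P_W$ only changes things by a bounded factor when one precomposes with the fixed matrix $T_J$, one shows $\sum_{I\in\Sigma_n}\varphi^s(T_I^*P_W)$ and $\sum_{I\in\Sigma_n}\varphi^s(T_I^*P_{T_J^*W})$ have the same exponential growth rate — concretely, from $T_{JI}^*P_W=(T_I^* P_{T_J^*W})T_J^*P_W$ (using $T_J^*P_W(\mathbb R^d)=T_J^*W$ as in the proof of Lemma \ref{lem-subm}) together with submultiplicativity of $\varphi^s$ and the fact that $\varphi^s$ of a fixed invertible matrix on $T_J^*W$ is bounded below, one gets $\varphi^s(T_I^*P_{T_J^*W})\le c_J^{-1}\varphi^s(T_{JI}^*P_W)$ for a constant $c_J>0$, hence $\sum_{I\in\Sigma_n}\varphi^s(T_I^*P_{T_J^*W})\le c_J^{-1}\sum_{I'\in\Sigma_{n+|J|}}\varphi^s(T_{I'}^*P_W)$, which preserves the growth rate. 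Combining, $\liminf_n\frac1n\log\sum_{I\in\Sigma_n}\varphi^s(T_I^*P_W)\ge P({\bf T},W,s)-3\epsilon$, and letting $\epsilon\to0$ finishes the proof.

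\textbf{Main obstacle.} The delicate point is the counting step: one needs that the words $x|n$ with $x$ ranging over a positive-measure subset of $A$ simultaneously (i) number about $e^{nh_\mu(\sigma)}$ and (ii) each satisfy $\varphi^s(T_{x|n}^*P_{T_J^*W})\ge e^{n(\Theta-\epsilon)}$, for the \emph{same} large $n$. This requires combining the SMB theorem with the a.e.\ convergence from Corollary \ref{cor-2.8} via an Egorov-type argument to pass from a.e.\ statements to a uniform one on a set of measure $\ge\mu(A)/2$, and then a straightforward pigeonhole/cardinality estimate; the ergodicity of $\mu$ and the $\sigma$-invariance of $\Sigma'$ are what make this bookkeeping go through cleanly. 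The replacement of $P_{T_J^*W}$ by $P_W$ at the end is routine but must be done carefully because $\varphi^s$ is neither sub- nor supermultiplicative in a way that respects the subspace, so one leans on the identity $T_{JI}^*P_W=(T_I^*P_{T_J^*W})(T_J^*P_W)$ and Lemma \ref{lem-singular1}(iii) to control the error by an $n$-independent constant.
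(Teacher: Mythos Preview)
Your overall approach matches the paper's proof essentially step for step: the upper bound is trivial from $\varphi^s(T_I^*P_W)\le\psi_W^s(I)$, and for the lower bound you take an equilibrium measure for $\{\log\psi_W^s(\cdot|n)\}$, invoke Corollary~\ref{cor-2.8} to find a word $J$ along which the $\varphi^s$-growth matches $\Theta$, combine with SMB to count words, and then pass from $P_{T_J^*W}$ back to $P_W$.

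There is, however, one genuine slip in the final replacement step. Your factorization $T_{JI}^*P_W=(T_I^*P_{T_J^*W})(T_J^*P_W)$ combined with submultiplicativity of $\varphi^s$ yields
\[
\varphi^s(T_{JI}^*P_W)\ \le\ \varphi^s(T_I^*P_{T_J^*W})\,\varphi^s(T_J^*P_W),
\]
which is the \emph{wrong direction}: it bounds $\varphi^s(T_I^*P_{T_J^*W})$ from below, not above, and so does not give $\sum_I\varphi^s(T_I^*P_{T_J^*W})\le c\sum_{I'}\varphi^s(T_{I'}^*P_W)$ as you claim. Lemma~\ref{lem-singular1}(iii) does not help here either. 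The paper handles this by factoring the other way: since $P_{T_J^*W}(\R^d)=T_J^*W=T_J^*P_W(\R^d)$, Lemma~\ref{lem-simple}(i) gives $P_{T_J^*W}=T_J^*P_WM$ for some $M\in{\rm GL}_d(\R)$, whence $T_I^*P_{T_J^*W}=T_{JI}^*P_WM$, and now submultiplicativity gives the correct bound $\varphi^s(T_I^*P_{T_J^*W})\le\varphi^s(M)\,\varphi^s(T_{JI}^*P_W)$. With this fix your argument goes through.
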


To prove this result, we need the following simple facts in linear algebra.

\begin{lem}
\label{lem-simple}
\begin{itemize}
\item[(i)] Let $A, B\in {\rm Mat}_d(\R)$ with $A(\R^d)=B(\R^d)$. Then $A=BD$ for some $D\in {\rm GL}_d(\R)$.
\item[(ii)] Let $L\in {\rm Mat}_d(\R)$ with rank $k$. Set $W=L^*(\R^d)$. Then there exists $M\in {\rm GL}_d(\R)$ such that
$L=MP_W$.
\end{itemize}
\end{lem}

\begin{proof}
Part (i) is standard; see e.g.~\cite[p.~56]{HoffmanKunze1961}. To see (ii), let $W=L^*(\R^d)$. By (i), there exists $D\in {\rm GL}_d(\R)$ such that $L^*=P_WD$. Taking transpose gives $L=D^*P_W$.
\end{proof}

\begin{proof}[Proof of Proposition \ref{pro-equivalence}] Let $s\geq 0$ and $W\in G(d,k)$.  By \eqref{e-psiw},  $\psi_W^s(I)\geq \varphi^s(T_I^*P_W)$ for every $I\in \Sigma_*$. It follows that
$$
P({\bf T}, W,s)\geq \limsup_{n\to \infty} \frac{1}{n}\log \sum_{I\in \Sigma_n} \varphi^s(T_I^*P_W).
$$
Below, we prove that
\begin{equation}
\label{e-reverse}
P({\bf T}, W,s)\leq \liminf_{n\to \infty} \frac{1}{n}\log \sum_{I\in \Sigma_n} \varphi^s(T_I^*P_W).
\end{equation}

Let $\mu$ be an ergodic equilibrium measure for the  potential $\{\log \psi^s_W(\cdot|n)\}_{n=1}^\infty$. Then
$$
P({\bf T}, W,s)=h_\mu(\sigma)+\Theta,
$$
where $\Theta:=\lim_{n\to \infty}(1/n)\int \log \psi^s_W(x|n)\; d\mu(x)$.
By the Shannon-McMillan-Breiman theorem (see e.g.~\cite[p.~261]{Petersen1989}),
\begin{equation}
\label{e-SMB}
\lim_{n\to \infty}\frac{1}{n}\log \mu([x|n])=-h_\mu(\sigma) \quad \mbox{ for $\mu$-a.e.~$x\in \Sigma$}.
\end{equation}
Meanwhile by Corollary \ref{cor-2.8}, there exist a measurable set $A\subset \Sigma'$ with $\mu(A)>0$ and a word $J\in \Sigma_*$ such that
\begin{equation}
\label{e-psiw1}
\lim_{n\to \infty}\frac{1}{n}\log \varphi^s(T_{x|n}^*P_{T_J^*W})=\lim_{n\to \infty}\frac{1}{n}\log \psi_W^s(x|n)=\Theta\quad \mbox{ for }x\in A.
\end{equation}

Let $\epsilon>0$. By \eqref{e-SMB} and \eqref{e-psiw1}, there exist $N\in \N$ and $A_1\subset A$ with $\mu(A_1)>0$ such that for all $x\in A_1$ and $n\geq N$,
\begin{equation}
\label{e-special'}
\varphi^s(T_{x|n}^*P_{T_J^*W})\geq e^{n(\Theta-\epsilon)},\qquad \mu([x|n])\leq e^{-n(h_\mu(\sigma)-\epsilon)}.
\end{equation}

Write for $n\in\N$, $$\mathcal A_n:=\{I\in \Sigma_n\colon [I]\cap A_1\neq \emptyset\}.$$
By  \eqref{e-special'}, for each $n\geq N$ and $I\in \mathcal A_n$,
$$
\varphi^s(T_{I}^*P_{T_J^*W})\geq e^{n(\Theta-\epsilon)}\quad\mbox{ and }\quad  \mu([I])\leq e^{-n(h_\mu(\sigma)-\epsilon)}.
$$
It follows that  for $n\geq N$,
$$
\mu(A_1)\leq \sum_{I\in \mathcal A_n}\mu([I])\leq \#(\mathcal A_n) \cdot e^{-n(h_\mu(\sigma)-\epsilon)},
$$
which implies that $\#(\mathcal A_n)\geq \mu(A_1)e^{n(h_\mu(\sigma)-\epsilon)}$. Hence for $n\geq N$,
\begin{equation}
\label{e-special1}
\sum_{I\in \Sigma_n}\varphi^s(T_{I}^*P_{T_J^*W})\geq \sum_{I\in \mathcal A_n}\varphi^s(T_{I}^*P_{T_J^*W})\geq \#(\mathcal A_n)e^{n(\Theta-\epsilon)}\geq \mu(A_1)e^{n(h_\mu(\sigma)+\Theta-2\epsilon)}.
\end{equation}

Finally, notice that $P_{T_J^*W}(\R^d)=T_J^*P_W(\R^d)$.  By Lemma \ref{lem-simple}(i), there exists $M\in {\rm GL}_d(\R)$ such that
$P_{T_J^*W}=T_J^*P_WM.$ It follows that
$$
\sum_{I\in \Sigma_n}\varphi^s(T_{I}^*P_{T_J^*W})=\sum_{I\in \Sigma_n}\varphi^s(T_{I}^*T_J^*P_{W}M)\leq \varphi^s(M)\sum_{I\in \Sigma_n}\varphi^s(T_{I}^*T_J^*P_{W}),
$$
where the last inequality follows from the submultiplicativity of $\varphi^s$. Combining this with \eqref{e-special1} gives
\begin{align*}
\sum_{I\in \Sigma_{n+|J|}} \varphi^s(T_I^*P_W)&\geq \sum_{I\in \Sigma_n}\varphi^s(T_{I}^*T_J^*P_{W})\\
&\geq (\varphi^s(M))^{-1}\sum_{I\in \Sigma_n}\varphi^s(T_{I}^*P_{T_J^*W})\\
&\geq(\varphi^s(M))^{-1} \mu(A_1)e^{n(h_\mu(\sigma)+\Theta-2\epsilon)}.
\end{align*}
It implies that
$$
\liminf_{n\to \infty} \frac{1}{n}\log \sum_{I\in \Sigma_n} \varphi^s(T_I^*P_W)\geq h_\mu(\sigma)+\Theta-2\epsilon=P({\bf T}, W,s)-2\epsilon.
$$
Letting $\epsilon\to 0$ yields \eqref{e-reverse}.
\end{proof}

Recall that  $\dim_{\rm AFF}({\bf T})$ and $\dim_{\rm AFF}({\bf T}, W)$ are defined as in \eqref{e-aff} and \eqref{e-affine}, respectively. Below we will illustrate the relations between $\dim_{\rm AFF}({\bf T}, W)$, $\dim_{\rm AFF}({\bf T})$ and $P({\bf T}, W,s)$.
\begin{lem}
\label{lem-Affd}
Let $W\in G(d,k)$. Then the following statements hold.
\begin{itemize}
\item[(i)]
 $\dim_{\rm AFF}({\bf T}, W)\leq \min\{k, \dim_{\rm AFF}({\bf T})\}$.
 \item[(ii)] $\dim_{\rm AFF}({\bf T}, W)=\sup\{s\in [0,k]\colon P({\bf T}, W,s)\geq 0\}$.
\item[(iii)] Setting $t=\dim_{\rm AFF}({\bf T}, W)$, we have
\begin{equation}
\label{e-Pwt}
\left\{
\begin{array}{ll} P({\bf T}, W,t)\geq 0 &\mbox{  if } t=k,\\
P({\bf T}, W,t)=0 &\mbox{  if }t<k.
\end{array}
\right.
\end{equation}
\end{itemize}
\end{lem}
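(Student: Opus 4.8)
The three parts are closely intertwined, so I would prove (ii) and (iii) together first and then deduce (i); the engine is Proposition~\ref{pro-equivalence}. Since $(P_WT_I)^*=T_I^*P_W$ and $\varphi^s(A)=\varphi^s(A^*)$ by Lemma~\ref{lem-phis}, that proposition reads $P({\bf T},W,s)=\lim_{n\to\infty}\frac1n\log\sum_{I\in\Sigma_n}\varphi^s(P_WT_I)$. Combined with Lemma~\ref{lem-Pws}, this shows that on $s\in[0,k]$ the function $s\mapsto P({\bf T},W,s)$ is finite, continuous and strictly decreasing, with $P({\bf T},W,0)=\log m>0$ (as $\varphi^0\equiv 1$), and $=-\infty$ for $s>k$. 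Everything else is the standard ``zero of the pressure equals the critical exponent of the defining series'' correspondence.

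Concretely, set $t^*:=\sup\{s\in[0,k]\colon P({\bf T},W,s)\ge 0\}$; by continuity and strict monotonicity this set is a closed interval $[0,t^*]$, so $P({\bf T},W,t^*)\ge 0$, and in fact $P({\bf T},W,t^*)=0$ whenever $t^*<k$. For the upper bound $\dim_{\rm AFF}({\bf T},W)\le t^*$: for any $s\in(t^*,k]$ we have $P({\bf T},W,s)<0$, and choosing $u\in(t^*,s)$ and invoking Lemma~\ref{lem-singinequality}(ii) gives $\varphi^s(P_WT_I)\le\|P_WT_I\|^{s-u}\varphi^u(P_WT_I)\le\alpha_+^{n(s-u)}\varphi^u(P_WT_I)$ for $I\in\Sigma_n$; since $\frac1n\log\sum_{I\in\Sigma_n}\varphi^u(P_WT_I)\to P({\bf T},W,u)<0$, the sum $\sum_{I\in\Sigma_n}\varphi^s(P_WT_I)$ decays geometrically, so the series in \eqref{e-affine} converges and $\dim_{\rm AFF}({\bf T},W)\le s$; let $s\downarrow t^*$. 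For the lower bound: for $s<t^*$ we have $P({\bf T},W,s)>0$, so $\sum_{I\in\Sigma_n}\varphi^s(P_WT_I)\to\infty$ and the series in \eqref{e-affine} diverges; since $s\mapsto\varphi^s(P_WT_I)$ is non-increasing (again Lemma~\ref{lem-singinequality}(ii), using $\|P_WT_I\|<1$), the set of $s$ for which \eqref{e-affine} converges is up-closed, whence $\dim_{\rm AFF}({\bf T},W)\ge s$; let $s\uparrow t^*$. This yields (ii), and (iii) is then immediate by reading off $P({\bf T},W,t)$ with $t=\dim_{\rm AFF}({\bf T},W)=t^*$.

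For (i): the bound $\dim_{\rm AFF}({\bf T},W)\le k$ holds because $\varphi^s(P_WT_I)=\varphi^s(T_I^*P_W)=0$ for $s>k$ by Lemma~\ref{lem-singular1}(ii), so \eqref{e-affine} converges trivially there. For $\dim_{\rm AFF}({\bf T},W)\le\dim_{\rm AFF}({\bf T})$ I would use the pointwise estimate $\varphi^s(P_WT_I)\le\varphi^s(T_I)$ — which follows from $\varphi^s(T_I^*P_W)\le\varphi^s(T_I^*)=\varphi^s(T_I)$ via Lemma~\ref{lem-singular1}(iii) and Lemma~\ref{lem-phis} for $s\in[0,k]$, and is trivial for $s>k$ — so that $\sum_{I\in\Sigma_n}\varphi^s(P_WT_I)\le\sum_{I\in\Sigma_n}\varphi^s(T_I)$. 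Given $u>\dim_{\rm AFF}({\bf T})$, pick $u'\in(\dim_{\rm AFF}({\bf T}),u)$; monotonicity of $s\mapsto\varphi^s(T_I)$ (here all singular values of $T_I$ are $<1$) makes $\{s\colon\lim_n\frac1n\log\sum_{I\in\Sigma_n}\varphi^s(T_I)\le 0\}$ up-closed, so $\lim_n\frac1n\log\sum_{I\in\Sigma_n}\varphi^{u'}(T_I)\le 0$; then $\varphi^u(T_I)\le\alpha_+^{n(u-u')}\varphi^{u'}(T_I)$ forces geometric decay of $\sum_{I\in\Sigma_n}\varphi^u(T_I)$, hence convergence of $\sum_n\sum_{I\in\Sigma_n}\varphi^u(P_WT_I)$, hence $\dim_{\rm AFF}({\bf T},W)\le u$; letting $u\downarrow\dim_{\rm AFF}({\bf T})$ finishes it.

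The one genuinely delicate point is the endpoint $s=k$ in (ii): when $t^*=k$ there is no $s\in(t^*,k]$ from which to approach $\dim_{\rm AFF}({\bf T},W)$ from above, so in that case the inequality $\dim_{\rm AFF}({\bf T},W)\le k=t^*$ must be borrowed from part (i). For this reason I would establish the elementary bound $\dim_{\rm AFF}({\bf T},W)\le k$ of (i) before completing (ii). A secondary point is that all the monotonicity and geometric-decay arguments rely only on $\|T_i\|<1$ (so $\|P_WT_I\|\le\|T_I\|<1$), which is the standing hypothesis of this section, not on the stronger $\|T_i\|<1/2$ used elsewhere.
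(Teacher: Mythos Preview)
Your proof is correct and follows essentially the same route as the paper's: both use Proposition~\ref{pro-equivalence} to identify $P({\bf T},W,s)$ with the exponential growth rate of $\sum_{I\in\Sigma_n}\varphi^s(P_WT_I)$, Lemma~\ref{lem-Pws} for strict monotonicity and continuity on $[0,k]$, and the pointwise bound $\varphi^s(P_WT_I)\le\varphi^s(T_I)$ for part~(i). The paper proves the parts in the order (i)--(ii)--(iii) and is terser in places (in particular, your intermediate $u$ in the upper-bound step of (ii) is unnecessary since $P({\bf T},W,s)<0$ already gives geometric decay of $\sum_{I}\varphi^s(P_WT_I)$ directly), but the substance is identical.
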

\begin{proof}
To prove (i), let $s>\min\{k, \dim_{\rm AFF}({\bf T})\}$. Then either $s>k$, or $s>\dim_{\rm AFF}({\bf T})$. In the case when $s>k$, by Lemma \ref{lem-singular1}(ii),  $\phi^s(P_WT_I)=0$ for each $I\in \Sigma_*$, and consequently,
$$
\sum_{n=1}^\infty\sum_{I\in \Sigma_n}\varphi^s(P_WT_I)=0.
$$
In the other case when $s>\dim_{\rm AFF}({\bf T})$,
$$
\sum_{n=1}^\infty\sum_{I\in \Sigma_n}\varphi^s(P_WT_I)\leq \sum_{n=1}^\infty\sum_{I\in \Sigma_n}\varphi^s(T_I)<\infty.
$$
From the definition of $\dim_{\rm AFF}({\bf T}, W)$ (see \eqref{e-affine}),  we conclude that in both cases $\dim_{\rm AFF}({\bf T}, W)\leq s$. This proves (i).

To see (ii), by Proposition \ref{pro-equivalence} and Lemma \ref{lem-Pws}(i), for all $0\leq s\leq k$,
$$
\lim_{n\to \infty} \frac{1}{n}\log \sum_{I\in \Sigma_n} \varphi^s(T_I^*P_W)=P({\bf T}, W,s)\in \R.
$$
Combining this with the definition of $\dim_{\rm AFF}({\bf T}, W)$ (see \eqref{e-affine}) yields (ii).

Since  $P({\bf T}, W,s)$ is monotone decreasing and continuous in $s$ on $[0,k]$ as stated in Lemma \ref{lem-Pws}(ii), we can conclude \eqref{e-Pwt} from (ii).
\end{proof}
\section{ Proof of Theorem \ref{thm-1.2}}
\label{S-5}

Throughout this section, let ${\bf T}=(T_1,\ldots, T_m)$ be a tuple  of $d \times d$  invertible real matrices with $\|T_i\|<1$ for $1\leq i\leq m$. For $\ba=(a_1,\ldots, a_m)\in \R^{md}$, let $\pi^\ba:\Sigma\to \R^d$ be the coding map associated with the IFS $\{f_i^\ba(x)=T_ix+a_i\}_{i=1}^m$ (see \eqref{e-pia}). For short we write $f_I^\ba:=f^\ba_{i_1}\circ \cdots\circ f^\ba_{i_n}$ and $T_I:=T_{i_1}\cdots T_{i_n}$ for $I=i_1\cdots i_n\in \Sigma_n:=\{1,\ldots, m\}^n$. Let $\mu$ be a fixed ergodic $\sigma$-invariant measure on $\Sigma$, and let $\Sigma'$, $\Lambda_1,\ldots, \Lambda_d$, ${\bf v}(x)=\{v_i(x)\}_{i=1}^d$ $(x\in \Sigma'$) be given as in Theorem \ref{thm:oseledets}.

\begin{proof}[Proof of Theorem \ref{thm-1.2}(i)]
By Lemma \ref{lem-smuw},  the limit $\lim_{n\to \infty} S_n(\mu, {\bf T},W,x)$ in defining
$S(\mu, {\bf T}, W, x)$ exists for every $W\in G(d,k)$ and $x\in \Sigma'$.

 Let $\ell'$ be the smallest integer not less than $\min\{k, \dim_{\rm LY}(\mu, {\bf T})\}$. We need to prove that
\begin{equation}
\label{e-num}
 \#\{S(\mu, {\bf T}, W, x)\colon W\in G(d,k),\; x\in \Sigma'\}\leq \binom{d+\ell'-k}{\ell'}.
\end{equation}

By Lemma \ref{lem-smuw}, for $x\in \Sigma'$ and $W\in G(d,k)$, $S(\mu, {\bf T}, W,x)$ only depends on the (integral) pivot position vector
$
(p_i=p_i(W,x))_{i=1}^k
$
of $W$ with respect to ${\bf v}(x)$.
Since
\begin{equation}
\label{e-pwx}
1\leq p_1<p_2<\cdots <p_k\leq d,
\end{equation}
this vector can take at most $\binom {d}{k}$ different values when $(W,x)$ runs over $G(d,k)\times \Sigma'$, so we get the upper bound
\begin{equation}
\label{e-card}
 \#\{S(\mu, {\bf T}, W, x)\colon W\in G(d,k),\; x\in \Sigma'\}\leq \binom{d}{k}.
\end{equation}
This proves \eqref{e-num} in the case when $\ell'= k$.

Next we assume that $\ell'<k$. In this case,  $\dim_{\rm LY}(\mu, {\bf T})< k$. Write $s_0:=\dim_{\rm LY}(\mu, {\bf T})$. By  Definition \ref{de-Lyapunov} and \eqref{e-Gs},
\begin{equation}
\label{e-hLY}
h_\mu(\sigma)+\sum_{i=1}^{\lfloor s_0\rfloor}\Lambda_{i}+(s_0-\lfloor s_0\rfloor)\Lambda_{{\lfloor s_0\rfloor +1}}=0.
\end{equation}
For $x\in \Sigma'$ and $W\in G(d,k)$,  we obtain from \eqref{e-pwx} that
$$
p_i:=p_i(W,x)\geq i,\quad i=1,\ldots, k.
$$
 It follows that
 \begin{align*}
 h_\mu&(\sigma)+\sum_{i=1}^{\lfloor s_0\rfloor}\Lambda_{p_i}+(s_0-\lfloor s_0\rfloor)\Lambda_{p_{\lfloor s_0\rfloor +1}}\\
 &\leq h_\mu(\sigma)+\sum_{i=1}^{\lfloor s_0\rfloor}\Lambda_{i}+(s_0-\lfloor s_0\rfloor)\Lambda_{{\lfloor s_0\rfloor +1}}\\
 &=0\qquad\qquad \mbox{(by \eqref{e-hLY})}.
 \end{align*}
 That is, $h_\mu(\sigma)+\Gamma(s_0)\leq 0$, where $\Gamma$ is defined as in \eqref{e-Gammas}.  Since $s_0<k$, by Lemma \ref{lem-smuw},  $S(\mu, {\bf T}, W,x)=s\leq s_0$, where $s$ is the unique number in $[0,k]$ such that
 $h_\mu(\sigma)+\Gamma(s)=0$. Since $s\leq s_0$ and $\ell'=\lceil s_0\rceil$,  $s$ is uniquely determined by $p_1, \ldots, p_{\ell'}$.
 Since $p_{\ell'}<p_{\ell'+1}<\cdots <p_k\leq d$, it follows that $$p_{\ell'}\leq d-(k-\ell')=d+\ell'-k.$$ Hence the vector
 $(p_i)_{i=1}^{\ell'}$
  can take at most $\binom {d+\ell'-k}{\ell'}$ different values when $(W,x)$ runs over $G(d,k)\times \Sigma'$, so we get the upper bound
$$
 \#\{S(\mu, {\bf T}, W, x)\colon W\in G(d,k),\; x\in \Sigma'\}\leq \binom{d+\ell'-k}{\ell'}.
$$
This completes the proof of \eqref{e-num}.
\end{proof}

Recall that for $z\in \R^d$ and $r>0$,  $B(z,r)$ stands for the closed ball of radius $r$ centred at $z$.  To prove part (ii) of Theorem \ref{thm-1.2}, we need the following result.
\begin{lem} \label{LemJ}
Let $\ba\in \R^{md}$ and $W\in G(d, k)$. Define $g:\Sigma\to \R^d$ by $g=P_W\pi^{\ba}$, and let $\eta=g_*\mu$ be the push-forward of $\mu$ by $g$.  Then there is a positive constant $c>0$ which depends on $\ba$ and ${\bf T}$ such that the following property holds.  For every $\epsilon\in (0,1)$ and  $\ell\in \{0,1,\ldots, k-1\}$,  we have for $\mu$-a.e.~$x \in \Sigma$,
\begin{equation}
\label{e-lemJ}
\eta \Big( B\big(g(x), c \alpha_{\ell+1}(P_WT_{x|n})\big) \Big) \ge (1-\epsilon)^n \dfrac{ \mu([x|n])}{N_{\ell}(x|n)}\quad  \mbox{ for large enough $n$},
\end{equation}
where
\begin{equation}
\label{e-Nxn}
N_{\ell}(x|n) := \alpha_1(P_WT_{x|n}) \cdots \alpha_{\ell}(P_WT_{x|n}) \alpha_{\ell+1}^{-\ell} (P_WT_{x|n}).
\end{equation}
\end{lem}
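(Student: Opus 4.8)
The plan is to exploit the geometric meaning of the singular values of $P_WT_{x|n}$ together with the transversality-type estimates coming from the Oseledets basis. Fix $\ba$ and $W$, write $g=P_W\pi^\ba$ and $\eta=g_*\mu$. The central observation is that $g([x|n])=P_Wf^\ba_{x|n}(K^\ba)$ is contained in a set which, up to an affine change of coordinates on $W$ given by the linear map $P_WT_{x|n}$ restricted to $W$, looks like a bounded set; and, crucially, when we push forward the measure $\mu$, the subset $[x|n]$ already carries mass $\mu([x|n])$. So the issue is purely metric: we must bound the diameter of the image $g([x|n])$ in terms of the singular values of $P_WT_{x|n}$, and then the ball of the stated radius around $g(x)$ will swallow the image of a cylinder of level $n$ that is ``not too degenerate'' in the relevant directions.

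First I would recall that $P_Wf^\ba_{x|n}(y)=P_WT_{x|n}y+P_Wf^\ba_{x|n}(0)$, so that $\diam g([x|n])=\diam\big((P_WT_{x|n})(K^\ba)\big)\le \|P_WT_{x|n}\|\cdot\diam(K^\ba)$. Taking $c_0:=\diam(K^\ba)$ (a constant depending only on $\ba$ and ${\bf T}$) gives $g([x|n])\subset B(g(x),c_0\alpha_1(P_WT_{x|n}))$, which already yields \eqref{e-lemJ} for $\ell=0$ with $N_0(x|n)=1$ and no loss of the $(1-\epsilon)^n$ factor. For general $\ell\in\{1,\dots,k-1\}$ the point is finer: we group the first $n$ symbols of $x$ into a block of length $n$ and then look one more step down, i.e.\ we consider all cylinders $[x|n\, j]$, $j\in\{1,\dots,m\}$, together with a controlled number of siblings. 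Concretely, I would iterate: cover the cylinder $[x|n]$ by the $(1-\epsilon)^{-n}$-many (roughly) level-$N$ subcylinders it contains for an appropriately chosen $N=N(n)$, estimate the image of each by $\|P_WT_{x|N}\|\cdot c_0$, and bundle together those subcylinders whose images all lie within $c\,\alpha_{\ell+1}(P_WT_{x|n})$ of $g(x)$. The quantity $N_\ell(x|n)$ in \eqref{e-Nxn} is exactly $\varphi^\ell(P_WT_{x|n})/\alpha_{\ell+1}^\ell(P_WT_{x|n})=\alpha_1\cdots\alpha_\ell\cdot\alpha_{\ell+1}^{-\ell}$, which is the volume distortion of an $\ell$-dimensional ``pancake'': the image $g([x|n])$ is contained in a neighbourhood of an $\ell$-dimensional affine piece of thickness $\sim\alpha_{\ell+1}$, and within that pancake it has diameter $\sim\alpha_1$; so it can be covered by $\sim N_\ell(x|n)$ balls of radius $\sim\alpha_{\ell+1}$. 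One of these balls must capture at least a $1/N_\ell(x|n)$ fraction of the $\eta$-mass $\mu([x|n])$ that $g$ deposits on $g([x|n])$, and by a pigeonhole/Borel--Cantelli argument (this is where the $(1-\epsilon)^n$ slack absorbs the error terms, using that $\alpha_j(P_WT_{x|n})$ have subexponential ratios along $\mu$-typical $x$, which is precisely Proposition \ref{pro-key} applied with $W$ replaced by the relevant subspaces) the ball centred at $g(x)$ can be taken to be that one, for a suitable $\mu$-full set of $x$.

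To make the pigeonhole step legitimate I would use that $\eta(g([x|n]))\ge\mu([x|n])$ always, and that the number of balls of radius $c\alpha_{\ell+1}(P_WT_{x|n})$ needed to cover $g([x|n])$ is at most a constant multiple of $N_\ell(x|n)$, the constant depending only on $d$ and $c_0$; this covering bound is the standard fact that a bounded set which is $\alpha_{\ell+1}$-close to an $\ell$-plane and has diameter $\le\alpha_1$ along that plane admits such a cover, and here it is applied to the ellipsoid-image $(P_WT_{x|n})(K^\ba)$ whose semi-axes are $\asymp\alpha_1(P_WT_{x|n}),\dots,\asymp\alpha_k(P_WT_{x|n})$. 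Then among the finitely many covering balls, the one of largest $\eta$-measure has measure $\ge N_\ell(x|n)^{-1}\mu([x|n])$; the remaining task is to replace ``the heaviest ball'' by ``the ball centred at $g(x)$'' for $\mu$-a.e.\ $x$ and all large $n$. This I would do exactly as in the proof of \cite[Theorem 4.1]{HuntKaloshin1997}: pass to the natural extension / use the ergodicity of $\sigma$ and a Borel--Cantelli argument on the event that $g(x)$ lies in a ``light'' ball, controlling probabilities via Proposition \ref{pro-key} to see that the radii decay with controlled subexponential corrections, which is what the factor $(1-\epsilon)^n$ is there to absorb.

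The main obstacle I expect is the covering estimate combined with the uniformity over the relevant subspaces: one needs the number of balls of radius $\asymp\alpha_{\ell+1}(P_WT_{x|n})$ covering $(P_WT_{x|n})(K^\ba)$ to be $\lesssim N_\ell(x|n)$ with a constant that does \emph{not} depend on $n$ (only on $d$ and $\diam K^\ba$), and more delicately, when we descend to subcylinders $[x|n\,I]$ we are implicitly comparing $\alpha_j(P_WT_{x|n})$ with $\alpha_j(P_{T_I^*W}T_{x|n})$-type quantities, so the asymptotics of $\frac1n\log\varphi^s(T_{x|n}^*P_W)$ must hold \emph{uniformly in the subspace} — which is exactly why Proposition \ref{pro-key} (with its supremum over $J\in\Sigma_*$) was proved. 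Once that uniformity is in hand, assembling \eqref{e-lemJ} is a matter of bookkeeping with the $\epsilon$'s, choosing $c$ to be $c_0$ times a dimensional constant, and invoking Borel--Cantelli.
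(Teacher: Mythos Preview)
Your covering estimate is correct and matches the paper: $g([I])$ sits in the ellipsoid $P_Wf^\ba_I(B(0,R))$, which is covered by at most $2^\ell N_\ell(I)$ balls of radius $2R\sqrt{d}\,\alpha_{\ell+1}(P_WT_I)$. The gap is in how you pass from ``some ball carries mass $\ge\mu([I])/N_\ell(I)$'' to ``the ball centred at $g(x)$ does, for $\mu$-a.e.\ $x$''. You propose to do this via Proposition~\ref{pro-key}, subexponential ratios of singular values, and a descent to subcylinders $[x|n\,I]$ comparing $\alpha_j(P_WT_{x|n})$ with $\alpha_j(P_{T_I^*W}T_{x|n})$. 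None of this is needed, and invoking it obscures the actual mechanism.

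The paper's argument is purely combinatorial and uses no Oseledets theory. Fix $n$ and set $\Lambda_n=\{x:\eqref{e-lemJ}\text{ fails at }n\}$. For $I\in\Sigma_n$, cover $g(\Lambda_n\cap[I])$ by balls $B_1,\dots,B_L$ of radius $\tfrac{c}{2}\alpha_{\ell+1}(P_WT_I)$ with $L\le 2^\ell N_\ell(I)$. Each $B_i$ meets $g(\Lambda_n\cap[I])$, so pick $x^{(i)}\in\Lambda_n\cap[I]$ with $g(x^{(i)})\in B_i$; then $B_i\subset B(g(x^{(i)}),c\,\alpha_{\ell+1}(P_WT_I))$, and since $x^{(i)}\in\Lambda_n$ this larger ball has $\eta$-mass $<(1-\epsilon)^n\mu([I])/N_\ell(I)$. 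Hence
\[
\mu(\Lambda_n\cap[I])\le \eta\Big(\bigcup_i B_i\Big)\le L\cdot(1-\epsilon)^n\frac{\mu([I])}{N_\ell(I)}\le 2^\ell(1-\epsilon)^n\mu([I]).
\]
Summing over $I$ gives $\mu(\Lambda_n)\le 2^\ell(1-\epsilon)^n$, so $\sum_n\mu(\Lambda_n)<\infty$ and Borel--Cantelli finishes. The point you missed is the doubling trick: the hypothesis ``$x\in\Lambda_n$'' directly bounds the $\eta$-mass of \emph{every} covering ball that touches $g(\Lambda_n\cap[I])$, not just the lightest one. The factor $(1-\epsilon)^n$ is there solely to make the Borel--Cantelli series summable; it has nothing to do with subexponential corrections from Lyapunov exponents. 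The lemma holds for arbitrary Borel probability measures $\mu$ on $\Sigma$, ergodic or not, and Proposition~\ref{pro-key} plays no role.
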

\begin{proof}
The statement of the lemma and its proof are  slightly  modified from an unpublished note \cite{Jor11} of Jordan; see  \cite[Lemma 2.2]{FLM2023} for Jordan's arguments. For the  reader's convenience, below we include a detailed proof of the lemma.

Let $\ba=(a_1,\ldots, a_m)\in \R^{md}$ and $W\in G(d,k)$. Take a large $R>0$ such that
$$
f^\ba_i(B(0,R))\subset B(0, R)  \qquad \mbox{ for } i=1,\ldots,m,
$$
where $f^\ba_i(x):=T_ix+a_i$.  Clearly, the attractor $\pi^{\ba}(\Sigma)$ of the IFS $\{f^\ba_i\}_{i=1}^m$ is contained in $B(0, R)$, which implies that $P_W\pi^{\ba}(\Sigma)\subset B(0, R)$.   Take $c=4 R\sqrt{d}$. Below, we show that the statement of the lemma holds for this choice of $c$.

Let $\epsilon\in (0,1)$ and $\ell\in \{0,\ldots, k-1\}$. For $n\in \N$, let $\Lambda_n$ denote the set of points $x\in \Sigma$ such that
$$
\eta \Big( B\big(g (x), c \alpha_{\ell+1}(P_WT_{x|n})\big) \Big) < (1-\epsilon)^n \dfrac{ \mu([x|n])}{N_{\ell}(x|n)}.
$$
To prove that \eqref{e-lemJ} holds for $\mu$-a.e.~$x\in \Sigma$,   by the Borel-Cantelli lemma it suffices to show that
\begin{equation}
\label{e-teta}
\sum_{n=1}^\infty \mu(\Lambda_n)<\infty.
\end{equation}

To prove \eqref{e-teta}, let $n\in \N$ and $I\in \Sigma_n$. Notice that  $P_Wf^\ba_I(B(0, R))$ is an ellipsoid of semi-axes
$$
R\alpha_1(P_WT_{I})\geq \cdots\geq R\alpha_d(P_WT_{I}),
$$
so it can be covered by $$2^\ell \prod_{i=1}^\ell \frac{\alpha_i(P_WT_{I})}{\alpha_{\ell+1}(P_WT_{I})}$$ balls of radius $2R\sqrt{d}\alpha_{\ell+1}(P_WT_{I})$. Since $g([I])=P_W\pi^\ba([I])$ is contained in $P_Wf^\ba_I(B(0, R))$,  it follows that there exists a nonnegative  integer $L$ satisfying
\begin{equation}
\label{e-JL}
L\leq 2^\ell \prod_{i=1}^\ell \frac{\alpha_i(P_WT_{I})}{\alpha_{\ell+1}(P_WT_{I})}=2^\ell N_{\ell}(I)
\end{equation}
 such that $g(\Lambda_n\cap [I])$ can be covered by $L$ balls of radius $2R\sqrt{d}\alpha_{\ell+1}(P_WT_{I})$, denoted as $B_1,\ldots, B_L$. We may assume that $g(\Lambda_n\cap [I])\cap B_i\neq \emptyset$ for each $1\leq i\leq L$. Hence for each $i$, we may pick $x^{(i)}\in \Lambda_n \cap [I]$ such that $g(x^{(i)}) \in B_i$. Clearly
\begin{equation}
\label{e-Bi}
B_i\subset B\left(g (x^{(i)}), 4R\sqrt{d}\alpha_{\ell+1}(P_WT_{I})\right)=B\left(g( x^{(i)}), c\alpha_{\ell+1}(P_WT_{I})\right).
\end{equation}
Since $x^{(i)}\in \Lambda_n \cap [I]$, by the definition of $\Lambda_n$ we obtain
\begin{equation}
\label{e-pi}
\eta \Big( B\big(g (x^{(i)}), c \alpha_{\ell+1}(P_WT_{I})\big) \Big)< (1-\epsilon)^n\frac{\mu([I])}{N_{\ell}(I)}.
\end{equation}
It follows that
\begin{eqnarray*}
\mu(\Lambda_n\cap [I])&\leq& \mu\circ g^{-1}(g(\Lambda_n\cap [I]))\\
&\leq & \eta\left(\bigcup_{i=1}^LB_i\right)\\
&\leq & \eta\left(\bigcup_{i=1}^LB\left(g( x^{(i)}), c\alpha_{\ell+1}(P_WT_{I})\right)\right)\qquad \mbox{(by \eqref{e-Bi})}\\
&\leq & L (1-\epsilon)^n\frac{\mu([I])}{N_{\ell}(I))}\qquad \mbox{(by \eqref{e-pi})}\\
&\leq & 2^\ell(1-\epsilon)^n\mu([I])\qquad \mbox{(by \eqref{e-JL})}.
\end{eqnarray*}
Summing over $I\in \Sigma_n$ yields that $\mu(\Lambda_n)\leq 2^\ell (1-\epsilon)^n$, which implies \eqref{e-teta}.
\end{proof}

  \begin{proof}[Proof of  Theorem \ref{thm-1.2}(ii)]

Let $\ba\in \R^{md}$ and $W\in G(d,k)$. We need to show that  for $\mu$-a.e.~$x\in \Sigma'$,
\begin{equation*}
\label{e-q1}
\overline{\dim}_{\rm loc}\big((P_W\pi^\ba)_*\mu, P_W\pi^\ba x\big)\leq S(\mu,{\bf T},W, x).
\end{equation*}
For this purpose, it is enough to show that for every $\delta>0$,
\begin{equation}
\label{e-q1'}
\overline{\dim}_{\rm loc}\big((P_W\pi^\ba)_*\mu, P_W\pi^\ba x\big)\leq S(\mu,{\bf T},W,x)+\delta \quad \mbox{ for $\mu$-a.e.~$x\in \Sigma'$}.
\end{equation}
To this end, let $\delta>0$. Pick $\epsilon \in (0,1)$ such that
\begin{equation}
\label{e-to2}
\frac{\log (1-\epsilon)}{\log \alpha_+}<\delta,
\end{equation} where $\alpha_+:=\max\{\|T_i\|\colon i=1,\ldots, m\}$.

Set $$A_p:=\{x\in \Sigma'\colon p\leq S(\mu,W, x)<p+1\},  \quad p=0,\ldots, k-1.
$$ Since $\overline{\dim}_{\rm loc}\big((P_W\pi^\ba)_*\mu, P_W\pi^\ba x\big)\leq k$ for $\mu$-a.e.~$x\in \Sigma'$,  it suffices to show that \eqref{e-q1'} holds for $\mu$-a.e.~$x\in \bigcup_{p=0}^{k-1}A_p$.

Fix $p\in \{0,\ldots, k-1\}$. By Lemma \ref{LemJ}, there exists $A'_p\subset A_p$ with $\mu(A_p\backslash A_p')=0$ such that for each $x\in A_p'$,
\begin{equation}
\label{e-lemJ'}
(P_W\pi^\ba)_*\mu \Big( B\big(P_W\pi^\ba x, c \alpha_{p+1}(P_WT_{x|n})\big) \Big) \ge (1-\epsilon)^n \dfrac{ \mu([x|n])}{N_{p}(x|n)}\quad  \mbox{ for large enough $n$},
\end{equation}
where $N_{p}(x|n)$ is defined as in \eqref{e-Nxn}.

Now let $x\in A_p'$. Let $\gamma\in (0, p+1-S(\mu, {\bf T}, W, x))$. Recall that $$\lim_{n\to \infty} S_{n}(\mu,{\bf T},W,x)=S(\mu,{\bf T},W,x),$$ as proved in part (i) of the theorem. Hence there exists $N\in \N$  such that
\begin{equation}
\label{e-to1}
p\leq S_{n}(\mu,{\bf T},W,x)+\gamma<p+1
\end{equation}
for all $n\geq N$.
Observe that
\begin{align*}
\dfrac{ \mu([x|n])}{N_{p}(x|n)}=\dfrac{ \varphi^{S_{n}(\mu,{\bf T},W,x)}(P_WT_{x|n})}{\varphi^p(P_WT_{x|n})\alpha_{p+1}^{-p}(P_WT_{x|n})}
&\geq \dfrac{ \varphi^{S_{n}(\mu,{\bf T},W,x)+\gamma}(P_WT_{x|n})}{\varphi^p(P_WT_{x|n})\alpha_{p+1}^{-p}(P_WT_{x|n})}\\
&=\alpha_{p+1}^{S_{n}(\mu,{\bf T},W,x)+\gamma}(P_WT_{x|n}),
\end{align*}
where in the last equality we have used \eqref{e-to1}. Hence by \eqref{e-lemJ'},
\begin{align*}
\overline{\dim}_{\rm loc}((P_W\pi^\ba)_*\mu, P_W\pi^\ba x)&\leq \limsup_{n\to \infty} \frac{\log (P_W\pi^\ba)_*\mu \Big( B\big(P_W\pi^\ba x, c \alpha_{p+1}(P_WT_{x|n})\big) \Big)}{\log  \alpha_{p+1}(P_WT_{x|n}) }\\
& \leq \limsup_{n\to \infty} \frac{\log \left((1-\epsilon)^{n}\dfrac{ \mu([x|n])}{N_{p}(x|n)}\right)}{\log  \alpha_{p+1}(P_WT_{x|n}) }\\
&\leq \limsup_{n\to \infty} \left(S_{n}(\mu,{\bf T},W,x)+\gamma+ \frac{n\log (1-\epsilon)}{\log  \alpha_{p+1}(P_WT_{x|n}) }\right)\\
&\leq S(\mu,{\bf T},W,x)+\gamma+\delta,
\end{align*}
where in the last inequality we use that $\alpha_{p+1}(P_WT_{x|n})\leq (\alpha_+)^{n}$ and \eqref{e-to2}.
Since $\gamma$ is arbitrarily taken in $(0, p+1-S(\mu,{\bf T},W,x))$,
$$\overline{\dim}_{\rm loc}\big((P_W\pi^\ba)_*\mu, P_W\pi^\ba x\big)\leq S(\mu,{\bf T},W,x)+\delta.$$
That is,  \eqref{e-q1'} holds for every $x\in A_p'$,  so it holds  for $\mu$-a.e.~$x\in A_p$, as desired.
\end{proof}

Next we turn to the proof of part (iv) of Theorem \ref{thm-1.2}. We need several lemmas.
\begin{lem}[\cite{SY97}]
\label{lem-SY}
Let $\nu$ be a Borel probability measure on $\R^d$ with compact support and $x\in \R^d$. Then
$$
\underline{\dim}_{\rm loc}(\nu,x)=\sup\left\{s\geq 0\colon \int \|x-y\|^{-s}\; d\nu(y)<\infty\right\}.
$$
\end{lem}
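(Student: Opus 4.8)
The plan is to derive the identity from the standard dictionary between the finiteness of the \emph{$s$-potential} $\phi(s):=\int\|x-y\|^{-s}\,d\nu(y)$ and the polynomial decay rate of $\nu(B(x,r))$ as $r\to0$, proving the two inequalities separately. Write $\beta:=\underline{\dim}_{\rm loc}(\nu,x)$ and $s^{*}:=\sup\{s\ge0\colon\phi(s)<\infty\}$; since $\phi(0)=\nu(\R^{d})=1<\infty$, the value $s^{*}\ge 0$ is well defined. It suffices to show: (a) $\phi(s)<\infty$ whenever $0\le s<\beta$, which forces $s^{*}\ge\beta$; and (b) $\phi(s)=\infty$ whenever $s>\beta$, which forces $s^{*}\le\beta$. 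Together these give $s^{*}=\beta$, which is the assertion.

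For (a) I would fix $s<\beta$, interpolate a value $t\in(s,\beta)$, and use that $\liminf_{r\to0}\log\nu(B(x,r))/\log r=\beta>t$ to produce $r_{0}\in(0,1)$ with $\nu(B(x,r))\le r^{t}$ for all $0<r\le r_{0}$ (so in particular $\nu(\{x\})=0$ since $t>0$). Then I split $\phi(s)$ into the part over $\{\|x-y\|>r_{0}\}$, which is at most $r_{0}^{-s}\nu(\R^{d})=r_{0}^{-s}<\infty$ because $\nu$ is a probability measure, and the part over the punctured ball $\{0<\|x-y\|\le r_{0}\}$, which I treat by the usual dyadic annular decomposition $A_{n}=\{2^{-n-1}r_{0}<\|x-y\|\le 2^{-n}r_{0}\}$, $n\ge 0$. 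On $A_{n}$ one has $\|x-y\|^{-s}\le(2^{-n-1}r_{0})^{-s}$ while $\nu(A_{n})\le\nu(B(x,2^{-n}r_{0}))\le(2^{-n}r_{0})^{t}$, so the contribution of $A_{n}$ is at most $2^{s}r_{0}^{t-s}\,2^{n(s-t)}$, and the resulting geometric series converges since $s-t<0$.

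For (b) I would fix $s>\beta$ and interpolate $t\in(\beta,s)$. By the definition of the $\liminf$ there is a sequence $r_{k}\downarrow0$ with $\nu(B(x,r_{k}))>r_{k}^{t}$ for all $k$. Restricting the integral to $B(x,r_{k})$, on which $\|x-y\|^{-s}\ge r_{k}^{-s}$, yields $\phi(s)\ge r_{k}^{-s}\nu(B(x,r_{k}))>r_{k}^{t-s}$, and since $t-s<0$ and $r_{k}\to0$ the right-hand side tends to $\infty$; as $\phi(s)$ does not depend on $k$, this forces $\phi(s)=\infty$.

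I do not expect a genuine obstacle here: this is the classical potential-theoretic characterization of the lower local dimension, and the two-sided argument above is elementary once the directions are separated. The only points needing care are the bookkeeping on signs when passing between $\log\nu(B(x,r))/\log r$ and the inequality $\nu(B(x,r))\le r^{t}$ (recalling that $\log r<0$ for $r<1$), and the degenerate cases $\beta=0$ and $\beta=\infty$, which are handled directly: when $\beta=0$, (a) is vacuous but $\phi(0)<\infty$ still gives $s^{*}\ge 0$ while (b) gives $s^{*}\le 0$; when $\beta=\infty$, (a) applies to every $s\ge 0$ and (b) is vacuous. Note also that compactness of $\mathrm{spt}(\nu)$ is used only through the finiteness of the total mass.
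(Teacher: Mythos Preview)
Your proof is correct and is precisely the standard potential-theoretic argument; the paper does not give its own proof but merely cites \cite{SY97} and \cite[Theorem 3.4.2]{BishopPeres17}, which contain essentially the same dyadic-annulus computation you carried out. Your observation that compact support is used only through finiteness of the total mass is also accurate.
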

\begin{proof}
The equality was first observed in \cite{SY97}. The reader is referred to \cite[Theorem 3.4.2]{BishopPeres17} for an implicit proof.
\end{proof}

For $x,y\in \Sigma$, let $x\wedge y$ denote the common initial segment of $x$ and $y$.
\begin{lem}
\label{lem-Smu}
Let $W\in G(d,k)$ and $x\in \Sigma'$. Then
\begin{equation}
\label{e-smu}
S(\mu,{\bf T},W,x)=\sup\left\{s\geq 0\colon \int \frac{1}{\varphi^s(P_WT_{x\wedge y})}\; d\mu(y)<\infty\right\}.
\end{equation}

\end{lem}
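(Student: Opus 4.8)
The plan is to turn the integral in \eqref{e-smu} into an infinite series indexed by the length of the common prefix $x\wedge y$, and then to read off its convergence from the exponential rates supplied by Proposition~\ref{pro-key}, Theorem~\ref{thm:oseledets}(6), and Lemma~\ref{lem-smuw}. Write $a_n=\mu([x|n])$ and $b_n=\varphi^s(P_WT_{x|n})$; by Theorem~\ref{thm:oseledets}(6) and the rank of $P_WT_{x|n}$ one has $a_n>0$ and, for $s\in[0,k]$, $b_n>0$ for all $n$. Partitioning $\Sigma\setminus\{x\}$ into the sets $[x|n]\setminus[x|(n+1)]$, $n\ge0$, on each of which $x\wedge y=x|n$, we obtain
\[
\int\frac{d\mu(y)}{\varphi^s(P_WT_{x\wedge y})}=\sum_{n=0}^\infty\frac{a_n-a_{n+1}}{b_n},
\]
where both sides may equal $+\infty$. (If $\mu(\{x\})>0$, then $\mu$ is the uniform measure on a periodic orbit, so $h_\mu(\sigma)=0$ and $S(\mu,{\bf T},W,x)=0$, and \eqref{e-smu} is then checked directly.) I also record that $b_n$ is non-increasing, since $b_{n+1}=\varphi^s(P_WT_{x|n}T_{x_{n+1}})\le b_n\varphi^s(T_{x_{n+1}})\le b_n$ by Lemma~\ref{lem-singinequality} and $\varphi^s(T_{x_{n+1}})\le\|T_{x_{n+1}}\|^s\le1$.

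Since $x\in\Sigma'$, Theorem~\ref{thm:oseledets}(6) gives $\frac1n\log a_n\to-h_\mu(\sigma)$, while \eqref{e-sw} of Proposition~\ref{pro-key}, combined with $\varphi^s(P_WT_{x|n})=\varphi^s(T_{x|n}^*P_W)$ (Lemma~\ref{lem-phis} and $P_W^*=P_W$), gives $\frac1n\log b_n\to\Gamma(s)$, where $\Gamma$ is the function defined in \eqref{e-Gammas} from the pivot position vector of $W$ with respect to ${\bf v}(x)$. Hence $\frac1n\log(a_n/b_n)\to-(h_\mu(\sigma)+\Gamma(s))$ for every $s\in[0,k]$. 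By Lemma~\ref{lem-smuw}, $S=S(\mu,{\bf T},W,x)$ is characterized by: $S=k$ when $h_\mu(\sigma)+\Gamma(k)\ge0$, and otherwise $S\in[0,k)$ is the unique value with $h_\mu(\sigma)+\Gamma(S)=0$; in all cases $\Gamma$ is strictly decreasing on $[0,k]$, so $h_\mu(\sigma)+\Gamma(S)\ge0$.

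For the inequality ``$\ge$'' in \eqref{e-smu}: if $s<S$, then $h_\mu(\sigma)+\Gamma(s)>h_\mu(\sigma)+\Gamma(S)\ge0$, so $\limsup_n(a_n/b_n)^{1/n}=e^{-(h_\mu(\sigma)+\Gamma(s))}<1$ and the root test gives $\sum_n a_n/b_n<\infty$; therefore $\sum_n(a_n-a_{n+1})/b_n\le\sum_n a_n/b_n<\infty$, i.e.\ the integral converges. For the reverse inequality: if $s>k$, then $\varphi^s(P_WT_{x\wedge y})\equiv0$ (Lemma~\ref{lem-singular1}(ii)) and the integral is $+\infty$; and if $S<k$ and $s\in(S,k]$, then $h_\mu(\sigma)+\Gamma(s)<h_\mu(\sigma)+\Gamma(S)=0$, so $a_n/b_n\to\infty$, and, using that $b_n$ is non-increasing and all summands are nonnegative,
\[
\sum_{m=0}^\infty\frac{a_m-a_{m+1}}{b_m}\ \ge\ \sum_{m\ge n}\frac{a_m-a_{m+1}}{b_m}\ \ge\ \frac{1}{b_n}\sum_{m\ge n}(a_m-a_{m+1})\ =\ \frac{a_n}{b_n}
\]
for every $n$, whence the integral is $+\infty$. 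Combining the two directions yields \eqref{e-smu}.

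I do not anticipate a substantial obstacle: the argument is essentially the routine interplay of the exponential asymptotics with the root test, together with the monotonicity of $b_n$. The only points needing a little care are cosmetic: the value of the integral at the borderline $s=S$ is immaterial, since \eqref{e-smu} concerns only a supremum, and the degenerate atomic case, in which the common prefix $x\wedge x$ is infinite and the integrand is formally undefined on the atom, is dealt with separately as indicated in the first paragraph.
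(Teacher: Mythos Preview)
Your proof is correct and follows essentially the same approach as the paper: both decompose the integral as a sum over the length of $x\wedge y$, establish that $\mu([x|n])/\varphi^s(P_WT_{x|n})$ decays (resp.\ blows up) exponentially when $s<S$ (resp.\ $s>S$), and conclude by comparison. The only cosmetic differences are that the paper obtains the exponential estimates directly from the definition of $S_n$ via \eqref{e-equi} rather than passing through $\Gamma$ and Lemma~\ref{lem-smuw}, and for the divergence direction the paper simply bounds the integral below by $\int_{[x|n]}\cdots\,d\mu=\mu([x|n])/\varphi^s(P_WT_{x|n})$ instead of using your telescoping argument with the monotonicity of $b_n$; both routes are equivalent.
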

\begin{proof}
Notice that  both sides of \eqref{e-smu} are not greater than $k$, and that $$\lim_{n\to \infty}S_n(\mu, {\bf T},W, x)=S(\mu,{\bf T}, W,x)$$
by Theorem \ref{thm-1.2}(i).

Now we first show that if $s>S(\mu,{\bf T},W,x)$, then $\int \frac{1}{\varphi^s(P_WT_{x\wedge y})}\; d\mu(y)=\infty$.  This conclusion holds trivially whenever $s>k$,  so we only need to consider the case when $s\leq k$.  Assume that $k\geq s> S(\mu,{\bf T},W,x)$. Choose $\delta>0$ so that $s-\delta>S(\mu,{\bf T},W,x)$. Then there exists $n_0$ such that  $S_{n}(\mu,{\bf T},W,x)<s-\delta$ for all $n\geq n_0$, which implies that for $n\geq n_0$,
$$
\mu([x|n])=\varphi^{S_{n}(\mu,{\bf T},W,x)}(P_WT_{x|n})\geq \varphi^{s-\delta}(P_WT_{x|n})\geq \varphi^{s}(P_WT_{x|n})(1/\alpha_+)^{n\delta},
$$
where $\alpha_+=\max\{\|T_i\|\colon i=1,\ldots, m\}$, and we use Lemma \ref{lem-singinequality}(ii) in the last inequality. It follows that for $n\geq n_0$,
$$\int \frac{1}{\varphi^s(P_WT_{x\wedge y})} \;d\mu(y)\geq \int_{[x|n]} \frac{1}{\varphi^s(P_WT_{x|n})} \;d\mu(y)= \frac{\mu([x|n])}{\varphi^s(P_WT_{x|n})} \geq (1/\alpha_+)^{n\delta}.
$$
Letting $n\to \infty$ gives $\int \frac{1}{\varphi^s(P_WT_{x\wedge y})}\; d\mu(y)=\infty$.

Next we show that $\int \frac{1}{\varphi^s(P_WT_{x\wedge y})} \;d\mu(y)<\infty$ for $0\leq s<S(\mu,{\bf T},W,x)$. Choose $\delta>0$ such that $s+\delta<S(\mu,{\bf T},W,x)$. Then there exists $n_1$ such that $S_n(\mu,{\bf T},W,x)>s+\delta$ for all $n\geq n_1$. It follows that for $n\geq n_1$,
$$\mu([x|n])\leq \varphi^{S_{n}(\mu,{\bf T},W,x)}(P_WT_{x|n})<\varphi^{s+\delta}(P_WT_{x|n})\leq \varphi^{s}(P_WT_{x|n}) (\alpha_+)^{n\delta},$$
which implies, in particular, that $\mu(\{x\})=0$. Hence
\begin{align*}
\int \frac{1}{\varphi^s(P_WT_{x\wedge y})}\; d\mu(y)&=\sum_{n=0}^\infty \frac{1}{\varphi^s(P_WT_{x|n})}(\mu([x|n])-\mu([x|n+1]))\\
&\leq \sum_{n=0}^\infty \frac{\mu([x|n])}{\varphi^s(P_WT_{x|n})}\\
&\leq \sum_{n=0}^{n_1-1} \frac{\mu([x|n])}{\varphi^s(P_WT_{x|n})}+\sum_{n=n_1}^\infty(\alpha_+)^{n\delta}<\infty.
\end{align*}
This completes the proof.
\end{proof}

\begin{lem}
\label{lem-2.5}
Let $\rho>0$. If $s$ is non-integral with $0<s<d$ and $\|T_i\|<1/2$ for $1\leq i\leq m$, then there exists a number $c=c(\rho, s,T_1,\ldots, T_m)>0$ such that for every non-zero linear subspace $W$ of $\R^d$,
\begin{equation}
\label{e-Falconer}
\int_{B_\rho}\frac{d\ba}{\|P_W\pi^\ba x-P_W\pi^\ba y\|^s}\leq \frac{c}{\varphi^s(P_WT_{x\wedge y})}
\end{equation}
 for all distinct $x,y\in \Sigma$, where $B_\rho$ denotes the closed ball in $\R^{md}$  of radius $\rho$ centred at the origin.
\end{lem}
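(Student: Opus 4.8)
The plan is to reduce the estimate, via the self-affine structure, to a single change of variables in $\ba$ followed by the classical Falconer integral estimate. First observe that if $\dim W\le\lfloor s\rfloor$, then $P_WT_{x\wedge y}$ has rank $\le\lfloor s\rfloor<s$, so $\varphi^s(P_WT_{x\wedge y})=0$, the right-hand side is $+\infty$, and there is nothing to prove; hence I may assume $\dim W=k$ with $k>s$, in which case $P_WT_I$ has rank $k$ since each $T_I$ is invertible. Fix distinct $x,y\in\Sigma$, put $I=x\wedge y$, $n=|I|$, $p=x_{n+1}\ne q=y_{n+1}$, $x'=\sigma^{n+1}x$, $y'=\sigma^{n+1}y$. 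From the coding relation $\pi^\ba z=a_{z_1}+T_{z_1}\pi^\ba(\sigma z)$ one gets
\begin{equation*}
\pi^\ba x-\pi^\ba y=T_I\,\Delta(\ba),\qquad \Delta(\ba):=(a_p-a_q)+\bigl(T_p\pi^\ba x'-T_q\pi^\ba y'\bigr),
\end{equation*}
so $P_W(\pi^\ba x-\pi^\ba y)=P_WT_I\,\Delta(\ba)$ and it suffices to bound $\int_{B_\rho}\|P_WT_I\Delta(\ba)\|^{-s}\,d\ba$. Writing $\beta:=\max_i\|T_i\|<1/2$, one has $K^\ba\subset B(0,\widetilde R)$ whenever $\max_i\|a_i\|\le\sqrt2\rho$, with $\widetilde R:=\sqrt2\rho/(1-\beta)$; hence $\Delta$ maps the range of $\ba$ occurring below into a fixed ball $B(0,R')$ with $R'$ depending only on $\rho$ and the $T_i$.

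The heart of the argument—and the only place where the hypothesis is $\|T_i\|<1/2$ rather than the easier $\|T_i\|<1/3$—is a Solomyak-type change of variables on the $(a_p,a_q)$-block. I would pass to the linear coordinates $b:=a_p-a_q$, $c:=a_p+a_q$ (constant Jacobian), freeze $c$ and all $a_i$ with $i\ne p,q$, and view $\Delta$ as a function $b\mapsto\Phi(b)$. Since every $\pi^\ba z$ is affine in $\ba$, $\Phi$ is affine with $D\Phi=\mathrm{Id}+T_p\partial_b(\pi^\ba x')-T_q\partial_b(\pi^\ba y')$. Because $a_p=(c+b)/2$ and $a_q=(c-b)/2$, the coefficient of $b$ in $\pi^\ba x'$ equals $\tfrac12$ times the bracket $\sum_{j:\,x'_j=p}T_{x'|j-1}-\sum_{j:\,x'_j=q}T_{x'|j-1}$ (with $T_{x'|0}=\mathrm{Id}$); the two index sets are disjoint, so the bracket has norm $\le 1/(1-\beta)$ and the coefficient has norm $\le 1/(2(1-\beta))$, and likewise for $\pi^\ba y'$. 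Consequently $\|D\Phi-\mathrm{Id}\|\le 2\cdot\beta\cdot\tfrac1{2(1-\beta)}=\tfrac{\beta}{1-\beta}<1$, so $\Phi$ is an affine bijection of $\R^d$ with $|\det D\Phi|\ge\bigl(\tfrac{1-2\beta}{1-\beta}\bigr)^d$ and $\Phi\bigl(\{b:\ba\in B_\rho\}\bigr)\subset B(0,R')$.

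Given this, I would apply Fubini: integrate in $b$, change variables $u=\Phi(b)$, and then integrate over the remaining coordinates, which range over a set of volume $O_{\rho,m,d}(1)$. This gives
\begin{equation*}
\int_{B_\rho}\frac{d\ba}{\|P_WT_I\Delta(\ba)\|^s}\ \le\ C(\rho,m,d)\Bigl(\tfrac{1-\beta}{1-2\beta}\Bigr)^{\!d}\int_{B(0,R')}\frac{du}{\|P_WT_Iu\|^s}.
\end{equation*}
It then remains to invoke the Falconer integral lemma: for $B\in\mathrm{Mat}_d(\R)$ and non-integral $s$ with $0<s<\mathrm{rank}(B)$, $\int_{B(0,R')}\|Bu\|^{-s}\,du\le c(R',s)\,\varphi^s(B)^{-1}$. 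This is proved by writing $B=UDV^*$ with $U,V$ orthogonal and $D=\mathrm{diag}(\alpha_1,\dots,\alpha_d)$, reducing to $\int_{B(0,R')}\bigl(\sum_i\alpha_i^2v_i^2\bigr)^{-s/2}\,dv$, and estimating the singularity at the origin by a dyadic decomposition into the shells $\{\alpha_{\ell+1}R'<\|v\|\le\alpha_\ell R'\}$; the dominant scale $\|v\|\asymp\alpha_{\lfloor s\rfloor+1}R'$ reconstitutes exactly $\varphi^s(B)^{-1}$, and this is where $s\notin\Z$ is essential (an integer $s$ produces a logarithmic divergence). Applying this with $B=P_WT_I$ (rank $k>s$) yields the asserted bound with $c=c(\rho,s,T_1,\dots,T_m)$, uniform in $W$, $x$ and $y$.

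The hard part is genuinely the second step under the weaker norm bound: integrating in a single coordinate $a_p$ only gives $\|D\Phi-\mathrm{Id}\|\le 2\beta/(1-\beta)$, which is $<1$ only for $\beta<1/3$, whereas the symmetric direction $b=a_p-a_q$ with $a_p+a_q$ frozen halves this to $\beta/(1-\beta)<1$. Everything else—the Falconer integral estimate, tracking the balls $B(0,\widetilde R)$ and $B(0,R')$, and checking that all constants depend only on $\rho$, $s$ and the $T_i$—is routine bookkeeping.
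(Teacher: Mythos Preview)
Your proposal is correct and is precisely the ``slight and trivial modification'' of the Falconer--Solomyak argument that the paper's one-line proof defers to: the self-affine factorisation $P_W(\pi^\ba x-\pi^\ba y)=P_WT_I\Delta(\ba)$, the change of variables in $b=a_p-a_q$ with $a_p+a_q$ frozen (your disjoint-index observation giving $\|D\Phi-\mathrm{Id}\|\le\beta/(1-\beta)<1$ is exactly the mechanism behind Solomyak's improvement from $1/3$ to $1/2$), and finally the Falconer integral lemma applied to the rank-$k$ matrix $P_WT_I$. The only comment is that your sketch of the last lemma via ``shells $\{\alpha_{\ell+1}R'<\|v\|\le\alpha_\ell R'\}$'' is slightly imprecise---the standard proof integrates out each coordinate in turn after diagonalising---but this is a well-known statement and your reduction to it is sound.
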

\begin{proof}
It is a slight and  trivial modification of  the proofs of \cite[Lemma 3.1]{Fal88} and \cite[Proposition 3.1]{Sol98}.
\end{proof}
Now we are ready to prove part (iv) of Theorem \ref{thm-1.2}.

\begin{proof}[Proof of Theorem \ref{thm-1.2}(iv)] Let $W\in G(d,k)$.  We first prove that for $\mathcal L^{md}$-a.e.~$\ba\in \R^{md}$,
    \begin{align}
    \label{e-A1}
    {\dim}_{\rm loc}\big((P_W\pi^\ba)_*\mu, P_W\pi^\ba x\big)&= S(\mu,{\bf T},W, x)\quad \mbox{ for $\mu$-a.e.~$x\in \Sigma$.}
    \end{align}

According to part (ii) of the theorem, we only need to show that for $\mathcal L^{md}$-a.e.~$\ba\in \R^{md}$,
$$
 \underline{\dim}_{\rm loc}\big((P_W\pi^\ba)_*\mu, P_W\pi^\ba x\big)\geq  S(\mu,{\bf T},W,x)\quad \mbox{ for $\mu$-a.e.~$x\in \Sigma'$}.
 $$

 To this end, we adapt the arguments in the proofs of \cite[Theorem 4.1]{HuntKaloshin1997} and \cite[Theorem 2.1(ii)]{FLM2023}.  Let $\rho>0$.  For a given non-integral $s\in (0,k)$ and a positive integer $N$, let $\Omega_N$ be the set of $x$ for which
 $$
 \int_\Sigma\frac{1}{\varphi^s(P_WT_{x\wedge y})}\; d\mu(y)<N.
 $$
 Notice that by Lemma \ref{lem-Smu}, the set of all $x\in \Sigma'$ for which $S(\mu,{\bf T},W,x)>s$ is contained in the union of $\Omega_N$ for $N\geq 1$.  Appying Fubini's theorem,
  \begin{align*}
 \int_{B_\rho}\int_{\Omega_N} \int_{\R^d} \frac{d(P_W\pi^\ba)_*\mu(z)}{\|P_W\pi^\ba x-z\|^s}\; d\mu(x) d\ba&= \int_{B_\rho}\int_{\Omega_N} \int_{\Sigma} \frac{1}{\|P_W\pi^\ba x-P_W\pi^\ba y\|^s}\; d\mu(y)d\mu(x) d\ba\\
&=\int_{\Omega_N}  \int_{\Sigma} \int_{B_\rho} \frac{1}{\|P_W\pi^\ba x-P_W\pi^\ba y\|^s}\;d\ba
 d\mu(y)d\mu(x) \\
 &\leq \int_{\Omega_N}  \int_{\Sigma}  \frac{c}{\varphi^s(P_WT_{x\wedge y})}\;d\mu(y)d\mu(x) \quad\mbox{(by \eqref{e-Falconer})}\\
 &\leq cN.
  \end{align*}
 It follows that for $\mathcal L^{md}$-a.e.~$\ba\in B_\rho$,
 $\displaystyle \int_{\Omega_N} \int_{\R^d} \frac{d(P_W\pi^\ba)_*\mu(z)}{\|P_W\pi^\ba x-z\|^s}\; d\mu(x)<\infty$ and hence
 $$
 \int_{\R^d} \frac{d(P_W\pi^\ba)_*\mu(z)}{\|P_W\pi^\ba x-z\|^s}<\infty \quad\mbox{ for $\mu$-a.e.~$x\in \Omega_N$}.
 $$
 Taking the union over $N$, we have for $\mathcal L^{md}$-a.e.~$\ba\in B_\rho$,
 $$
 \int_{\R^d} \frac{d(P_W\pi^\ba)_*\mu(z)}{\|P_W\pi^\ba x-z\|^s}<\infty \quad\mbox{ for $\mu$-a.e.~$x$ with $S(\mu,{\bf T},W,x)>s$}.
 $$
It follows from  Lemma \ref{lem-SY} that  for $\mathcal L^{md}$-a.e.~$\ba\in B_\rho$,
 $$
 \underline{\dim}_{\rm loc}\big((P_W\pi^\ba)_*\mu, P_W\pi^\ba x\big)\geq s \quad\mbox{ for $\mu$-a.e.~$x$ with $S(\mu,{\bf T},W,x)>s$}.
 $$
Thus we have shown that for all non-integral $s\in (0,k)$,
$$
\mu\left(\left\{x\in \Sigma'\colon S(\mu,{\bf T},W,x)>s> \underline{\dim}_{\rm loc}\big((P_W\pi^\ba)_*\mu, P_W\pi^\ba x\big)\right\}\right)=0
$$
for $\mathcal L^{md}$-a.e.~$\ba\in B_\rho$.  Taking the union over all non-integral rational $s$ in $(0,k)$, we conclude that
for $\mathcal L^{md}$-a.e.~$\ba\in B_\rho$,
$$\mu\left(\left\{x\in \Sigma'\colon S(\mu,{\bf T},W,x)> \underline{\dim}_{\rm loc}\big((P_W\pi^\ba)_*\mu, P_W\pi^\ba x\big)\right\}\right)=0.$$
Hence for $\mathcal L^{md}$-a.e.~$\ba\in \R^{md}$,
$$
 \underline{\dim}_{\rm loc}\big((P_W\pi^\ba)_*\mu, P_W\pi^\ba x\big)\geq  S(\mu,{\bf T},W,x)$$  for $\mu$-a.e.~$x\in \Sigma'$.

Next we prove that
\begin{equation}
     \label{e-AFFW**}
     \underline{S}(\mu,  {\bf T},W)=\overline{S}(\mu,{\bf T}, W)=\min\{k, \dim_{\rm LY}(\mu, {\bf T})\} \quad \mbox{ for $\gamma_{d,k}$-a.e.~$W\in G(d,k)$}.
     \end{equation}

 To see this, notice that $ \pi^\ba_*\mu$ is exact dimensional for each $\ba\in \R^{md}$; see \cite{Feng2023}.  By \cite[Theorem~1.7]{JPS07},
 \begin{equation}\label{e-A2}
 \dim_{\rm H} \pi^\ba_*\mu=\min\{d, \dim_{\rm LY}(\mu, {\bf T})\} \qquad \mbox{ for $\mathcal L^{md}$-a.e.~$\ba\in \R^{md}$}.
\end{equation}
Meanwhile it is known (see e.g.~\cite[Theorem 4.1]{HuntKaloshin1997}) that for a given exact dimensional Borel probability measure $\eta$ on $\R^d$, for $\gamma_{d,k}$-a.e.~$W\in G(d,k)$, $(P_W)_*\eta$ is exact dimensional with dimension given by
$$
\dim_{\rm H} (P_W)_*\eta=\min\{ k, \dim_{\rm H}\eta\}.
$$
Taking $\eta=\pi^\ba_*\mu$ and applying \eqref{e-A2}
 yield that for $\mathcal L^{md}$-a.e.~$\ba\in \R^{md}$ and  $\gamma_{d,k}$-a.e.~$W\in G(d,k)$,
$$
\dim_{\rm loc} \big((P_W\pi^\ba)_*\mu, P_W\pi^\ba x)\big)=\min\{k, \dim_{\rm LY}(\mu, {\bf T})\} \qquad \mbox{ for $\mu$-a.e.~$x\in \Sigma$}.
$$
Applying the Fubini theorem, we see that for $\gamma_{d,k}$-a.e.~$W\in G(d,k)$,
$$
\underline{\dim}_{\rm H} P_W(\pi^\ba_*\mu)=\overline{\dim}_{\rm H} P_W(\pi^\ba_*\mu)=\min\{k, \dim_{\rm LY}(\mu, {\bf T})\} \quad \mbox{ for $\mathcal L^{md}$-a.e.~$\ba\in \R^{md}$}.
$$
Combining this with \eqref{e-A1} yields  \eqref{e-AFFW**}.
\end{proof}

We  prove  Theorem \ref{thm-1.2}(iii) in the remaining part of this section. We first give several lemmas.

For $I\in \Sigma_n$ and $y=(y_i)_{i=1}^\infty\in \Sigma$, let $Iy$ denote the unique point $z=(z_i)_{i=1}^\infty \in \Sigma$ such that $z|n=I$ and $z_{n+i}=y_i$ for all $i\geq 1$.

\begin{lem}
\label{lem-density}
Assume that $\mu$ is a  fully supported and supermultiplicative Borel probability measure on $\Sigma=\{1,\ldots,m\}^\N$.  Let $A\subset \Sigma$ be measurable with $\mu(A)>0$. Then, for every positive integer $j$, there exist $I\in \Sigma_*$ and a measurable subset $A'$ of $A$ with $\mu(A')>0$  such that
$$
IJy\in A
$$
for all $J\in \Sigma_*$ with $|J|\leq j$ and $y\in A'$.
\end{lem}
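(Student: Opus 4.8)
The plan is to reformulate the conclusion in terms of the ``prepending'' maps and then to take $I$ to be a long prefix of a $\mu$-density point of $A$. For $K\in\Sigma_*$ let $\theta_K\colon\Sigma\to\Sigma$, $\theta_K(y)=Ky$, which is a homeomorphism of $\Sigma$ onto the cylinder $[K]$. The requirement ``$IJy\in A$ for all $|J|\le j$ and $y\in A'$'' is exactly $A'\subset\bigcap_{|J|\le j}\theta_{IJ}^{-1}(A)$, so together with $A'\subset A$ it suffices to find $I\in\Sigma_*$ for which the set
\[
B_I:=A\cap\bigcap_{J\in\Sigma_*,\,|J|\le j}\theta_{IJ}^{-1}(A)
\]
has positive $\mu$-measure, and then take $A'=B_I$. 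Since $\theta_{IJ}$ maps into $[IJ]$, one has $\Sigma\setminus\theta_{IJ}^{-1}(A)=\theta_{IJ}^{-1}([IJ]\setminus A)$, whence
\[
\mu(B_I)\ \ge\ \mu(A)-\sum_{J\in\Sigma_*,\,|J|\le j}\mu\bigl(\theta_{IJ}^{-1}([IJ]\setminus A)\bigr),
\]
so the problem reduces to making each term $\mu(\theta_{IJ}^{-1}([IJ]\setminus A))$ small, uniformly over the finitely many words $J$ with $|J|\le j$.

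The first step I would carry out is to upgrade supermultiplicativity from cylinders to arbitrary Borel sets: there is $C>0$ (the supermultiplicativity constant, which we may take $\le 1$) such that for every $K\in\Sigma_*$ and every Borel $E\subset\Sigma$,
\[
\mu(\theta_K(E))\ \ge\ C\,\mu([K])\,\mu(E).
\]
This follows by a routine approximation: the finite measure $E\mapsto\mu(\theta_K(E))$ and the finite measure $C\mu([K])\mu$ satisfy this inequality on every cylinder $[L]$ (because $\theta_K([L])=[KL]$ and $\mu([KL])\ge C\mu([K])\mu([L])$), and a finite signed measure that is nonnegative on every cylinder is nonnegative. Applying the displayed inequality with $E=\theta_K^{-1}([K]\setminus A)$, so that $\theta_K(E)=[K]\setminus A$, yields the crucial estimate
\[
\mu\bigl(\theta_K^{-1}([K]\setminus A)\bigr)\ \le\ \frac{\mu([K]\setminus A)}{C\,\mu([K])}\qquad\text{for every }K\in\Sigma_*.
\]

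The second step uses full support and a density point. Set $p_{\min}=\min_{1\le i\le m}\mu([i])>0$; then $\mu([J])\ge p_{\min}^{\,|J|}\ge p_{\min}^{\,j}$ for $|J|\le j$, hence by supermultiplicativity $\mu([IJ])\ge C\,\mu([I])\,p_{\min}^{\,j}$ whenever $|J|\le j$. By the martingale convergence theorem for the filtration by level-$n$ cylinders, $\mu(A\cap[x|n])/\mu([x|n])\to\mathbf{1}_A(x)$ for $\mu$-a.e.\ $x$; since $\mu(A)>0$ we may fix $z\in A$ with $\delta_N:=1-\mu(A\cap[z|N])/\mu([z|N])\to0$ as $N\to\infty$. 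Put $I=z|N$. For $|J|\le j$ we have $[IJ]\subset[I]$, so $\mu([IJ]\setminus A)\le\mu([I]\setminus A)=\delta_N\mu([I])$, and combining this with the lower bound on $\mu([IJ])$ and the crucial estimate gives
\[
\mu\bigl(\theta_{IJ}^{-1}([IJ]\setminus A)\bigr)\ \le\ \frac{\delta_N\,\mu([I])}{C\cdot C\,\mu([I])\,p_{\min}^{\,j}}\ =\ \frac{\delta_N}{C^{2}p_{\min}^{\,j}}.
\]
With $M_j:=\#\{J\in\Sigma_*:|J|\le j\}<\infty$, this gives $\mu(B_I)\ge\mu(A)-M_j\,\delta_N/(C^{2}p_{\min}^{\,j})$, which is positive as soon as $N$ is chosen large enough that $\delta_N<\mu(A)\,C^{2}p_{\min}^{\,j}/M_j$. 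Taking $A'=B_I$ then completes the proof.

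The main (and essentially only) obstacle is the passage from supermultiplicativity on cylinders to the inequality $\mu(\theta_K(E))\ge C\mu([K])\mu(E)$ for arbitrary Borel $E$, together with keeping the estimates uniform in $J$; it is precisely here that full support is used, via the uniform lower bound $\mu([IJ])\ge C\mu([I])p_{\min}^{\,j}$. Everything else is the standard ``prepend a long prefix of a density point'' device, and the finiteness of $\{J:|J|\le j\}$ makes the union bound harmless.
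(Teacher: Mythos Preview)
Your proof is correct and follows essentially the same approach as the paper's: extend supermultiplicativity from cylinders to Borel sets (your $\mu(\theta_K(E))\ge C\mu([K])\mu(E)$ is exactly the paper's $\mu([K]\cap\sigma^{-|K|}(E))\ge C\mu([K])\mu(E)$, since $\theta_K(E)=[K]\cap\sigma^{-|K|}(E)$), pick $I$ as a long prefix of a density point of $A$, and use a union bound over the finitely many $J$ with $|J|\le j$; the paper's sets $\Omega_J=\{z:IJz\in A\}$ are precisely your $\theta_{IJ}^{-1}(A)$. One tiny slip: from supermultiplicativity with constant $C\le1$ you only get $\mu([J])\ge C^{|J|-1}p_{\min}^{|J|}$, not $p_{\min}^{|J|}$, but all you actually need is $\min_{|J|\le j}\mu([J])>0$, which holds by full support, so the argument is unaffected.
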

\begin{proof}
Let $j\in \N$.  Since $\mu$ is supermultiplicative, there exists $C>0$ such that
$$
\mu([IJ])\geq C\mu([I])\mu([J])\qquad \mbox{ for all }I,J\in \Sigma_*.
$$
A standard approximation argument shows that for every $I\in \Sigma_*$ and every Borel measurable set $E\subset \Sigma$,
\begin{equation}
\label{e-ZZ2}
\mu\left([I]\cap \sigma^{-|I|}(E)\right)\geq C\mu([I])\mu(E).
\end{equation}

Set
\begin{equation}
\label{e-epsilon}
\epsilon=C^2\mu(A)(j+1)^{-1}m^{-j}\min_{J\in \Sigma_j}\mu([J]).
\end{equation}
Since $\mu$ is fully supported, we have $\epsilon>0$. By the Borel density lemma, for $\mu$-a.e.~$x\in A$,
$$
\lim_{n\to \infty}\frac{\mu([x|n]\cap A)}{\mu([x|n])}=1.
$$
Hence we can find  $x\in A$ and $n\in \N$ such that
$$
\frac{\mu([x|n]\cap A)}{\mu([x|n])}> 1-\epsilon,
$$
which implies that
\begin{equation}
\label{e-ZZ3}
\frac{\mu\big([x|n]\cap (\Sigma\backslash A)\big)}{\mu([x|n])}< \epsilon.
\end{equation}

Now set $I=x|n$. Write
$$
\Omega_J=\{z\in \Sigma\colon IJz\in A\} \quad \mbox{for  $J\in \bigcup_{p=0}^j\Sigma_p$}.
$$
Clearly, if $z\in \Sigma\backslash \Omega_J$, then $IJz\in \Sigma\backslash A$.
Hence for each $J\in \bigcup_{p=0}^j\Sigma_p$,
\begin{equation}
\label{e-ZZ4}
[IJ]\cap \sigma^{-n-|J|}(\Sigma\backslash \Omega_J)\subset \Sigma\backslash A.
\end{equation}
Now we claim that
\begin{equation}
\label{e-ZZ1}
\mu(\Sigma\backslash \Omega_J)< (j+1)^{-1}m^{-j}\mu(A)\quad \mbox{ for all } J\in \bigcup_{p=0}^j\Sigma_p.
\end{equation}
Suppose on the contrary that \eqref{e-ZZ1} does not hold, that is, there exists   $J\in \bigcup_{p=0}^j\Sigma_p$ such that $\mu(\Sigma\backslash \Omega_J)\geq  (j+1)^{-1}m^{-j}\mu(A)$. Then
\begin{align*}
\mu\big([I]\cap (\Sigma\backslash A)\big)&\geq \mu\left( [IJ]\cap \sigma^{-n-|J|}(\Sigma\backslash \Omega_J)\right)\;\quad\qquad\mbox{ (by \eqref{e-ZZ4})} \\
&\geq C\mu( [IJ])\mu(\Sigma\backslash \Omega_J)\;\;\qquad\qquad\qquad\mbox{ (by \eqref{e-ZZ2})} \\
&\geq C^2\mu( [I])\mu( [J])\mu(\Sigma\backslash \Omega_J)\;\qquad\qquad\mbox{ (by \eqref{e-ZZ2})} \\
&\geq C^2\mu( [I])\mu( [J])(j+1)^{-1}m^{-j}\mu(A)\\
&\geq \epsilon \mu( [I])\;\;\quad\qquad\qquad\qquad\qquad\qquad\mbox{ (by \eqref{e-epsilon})},
\end{align*}
which contradicts \eqref{e-ZZ3}. This proves \eqref{e-ZZ1}.

Notice that $\#\left(\bigcup_{p=0}^j\Sigma_p\right)< (j+1)m^j$. By \eqref{e-ZZ1},
\begin{align*}
\mu(\Sigma\backslash A)&+\sum_{J\in \Sigma_*\colon |J|\leq j}\mu(\Sigma\backslash \Omega_J)\\
&<1-\mu(A)+\#(\{J\in \Sigma_*\colon |J|\leq j\})\cdot(j+1)^{-1}m^{-j}\mu(A)\\
&< 1.
\end{align*}
It follows that
\begin{align*}
\mu\left(A\cap \left(\bigcap_{J\in \Sigma_*\colon |J|\leq j}\Omega_J\right)\right)&=1-\mu\left((\Sigma\backslash A)\cup \left(\bigcup_{J\in \Sigma_*\colon |J|\leq j}(\Sigma\backslash \Omega_J)\right)\right)\\
 &\geq 1-\mu(\Sigma\backslash A)-\sum_{J\in \Sigma_*\colon |J|\leq j}\mu(\Sigma\backslash \Omega_J)>0.
\end{align*}
Set $A'=A\cap \left(\bigcap_{J\in \Sigma_*\colon |J|\leq j}\Omega_J\right)$.  Then $\mu(A')>0$ and
$IJy\in A$ for all $J\in \Sigma_*$ with $|J|\leq j$ and $y\in A'$.
\end{proof}

For a set $E\subset \R^d$, let  ${\rm span}(E)$ denote the smallest linear subspace of $\R^d$ that contains
$E$.

\begin{lem}
\label{lem-finite}
Let $W$ be a nonzero linear subspace of $\R^d$. Let $M_1,\ldots, M_m$ be real $d\times d$ matrices.
Set $V={\rm span}\left(\bigcup_{I\in \Sigma_*}M_I(W)\right)$, where $M_I:=M_{i_1}\cdots M_{i_n}$ for $I=i_1\ldots i_n$, and we take the convention that $M_{\emptyset}={\rm Id}$, where $\emptyset$ stands for the empty word.    Then $$
V={\rm span}\left(\bigcup_{I\in \Sigma_*\colon |I|\leq j}M_I(W)\right)
$$
for all $j\geq d-1$.
Moreover, $V=\R^d$ if $\{M_i\}_{i=1}^m$ is irreducible.
\end{lem}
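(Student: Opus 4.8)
The plan is to analyze the increasing chain of linear subspaces
\[
V_j := {\rm span}\Big(\bigcup_{I\in\Sigma_*\colon |I|\le j}M_I(W)\Big),\qquad j\ge 0 .
\]
First I would record two basic facts. (a) We have $V_0\subseteq V_1\subseteq\cdots$ and $V=\bigcup_{j\ge 0}V_j$, because an increasing union of linear subspaces is again a linear subspace and it clearly contains every $M_I(W)$. (b) There is a recursion $V_{j+1}=V_j+\sum_{i=1}^m M_i(V_j)$ for every $j\ge 0$. The inclusion $\supseteq$ in (b) follows from $M_i(V_j)={\rm span}\big(\bigcup_{|I'|\le j}M_{iI'}(W)\big)\subseteq V_{j+1}$, using the linearity of $M_i$ to move it inside the span; the inclusion $\subseteq$ follows by writing a word $I$ with $|I|=j+1$ as $I=iI'$ with $|I'|=j$, so that $M_I(W)=M_i(M_{I'}(W))\subseteq M_i(V_j)$, while the words of length $\le j$ contribute $V_j$.

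Next I would prove a stabilization statement: if $V_{j_0}=V_{j_0+1}$ for some $j_0$, then $V_j=V_{j_0}$ for all $j\ge j_0$, and hence $V=V_{j_0}$. This is a short induction using (b): from $V_{j_0}=V_{j_0+1}$ and (b) we get $\sum_{i=1}^m M_i(V_{j_0})\subseteq V_{j_0}$, and then whenever $V_j=V_{j_0}$ we obtain $V_{j+1}=V_j+\sum_i M_i(V_j)=V_{j_0}$. Now, since $(\dim V_j)_{j\ge 0}$ is non-decreasing, integer-valued, bounded above by $d$, and $\dim V_0=\dim W\ge 1$, it is eventually constant; let $j_0$ be the least index with $\dim V_{j_0}=\dim V_{j_0+1}$. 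As the $V_j$ are nested, $\dim V_{j_0}=\dim V_{j_0+1}$ forces $V_{j_0}=V_{j_0+1}$, and because the dimension strictly increases on $\{0,1,\dots,j_0\}$ we get $1+j_0\le\dim V_0+j_0\le\dim V_{j_0}\le d$, i.e.\ $j_0\le d-1$. Combined with the stabilization statement this gives $V=V_{j_0}=V_j$ for all $j\ge d-1$.

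For the last assertion I would observe that $V$ is a nonzero linear subspace of $\R^d$ (it contains $W\ne\{0\}$) and that it is invariant under each $M_i$, since $M_i(V)={\rm span}\big(\bigcup_{I\in\Sigma_*}M_{iI}(W)\big)\subseteq V$. If $\{M_i\}_{i=1}^m$ is irreducible, then a nonzero invariant subspace cannot be proper, so $V=\R^d$. I do not expect any genuine obstacle; the only points that need a little care are the bookkeeping for the recursion in (b) — in particular passing $M_i$ through the span and handling the empty word $M_\emptyset={\rm Id}$ — and making sure the dimension count yields the bound $d-1$ rather than $d$, which is where the hypothesis $W\ne\{0\}$ (giving $\dim V_0\ge 1$) is used.
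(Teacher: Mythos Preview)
Your proof is correct and follows essentially the same approach as the paper's: the paper defines the same increasing chain $W_j$ (your $V_j$), establishes the same recursion $W_{j+1}=\mathrm{span}\big(W_j\cup\bigcup_i M_i(W_j)\big)$, uses the dimension count with $\dim W_0\ge 1$ to find a stabilization index $j_0\le d-1$, and then invokes $M_i$-invariance of $V$ for the irreducibility conclusion. Your write-up is, if anything, slightly more explicit about the recursion and the stabilization induction.
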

\begin{proof}
Define $W_0=W$ and $$W_j={\rm span}\left(\bigcup_{I\in \Sigma_*\colon |I|\leq j}M_I(W)\right)$$
for $j\geq 1$.  Clearly
\begin{equation}
\label{e-eww1}
W_{j+1}={\rm span} \left(W_j\cup\left(\bigcup_{i=1}^m M_i(W_j)\right)\right) \quad \mbox{ for }j\geq 0,
\end{equation}
and $W_0\subset W_1\subset W_2\subset \cdots\subset V$.
 Thus, $$\dim(W_0)\leq \dim (W_1)\leq  \dim (W_2)\leq  \cdots \leq \dim (V).$$
Since $\dim W_j\leq d$ for all $j\geq 0$, there exists  $j_0\in \{0,1,\ldots, d-1\}$ such that
\begin{equation}
\label{e-eww}
\dim(W_{j_0+1})=\dim (W_{j_0});
\end{equation}
otherwise, $\dim W_d>\dim W_{d-1}>\cdots >\dim W_0\geq 1$, and consequently,  $\dim W_d>d$, leading to a contradiction.  Since $W_{j_0+1}\supset W_{j_{0}}$, by \eqref{e-eww}, we have $W_{j_0+1}=W_{j_0}$. Then applying \eqref{e-eww1}, we see that $W_{j}=W_{j_0}$ for all $j\geq j_0$.
Since $$\bigcup_{I\in \Sigma_*}M_I(W)\subset \bigcup_{j= j_0}^\infty W_{j}=W_{j_0},$$
 it follows that  $W_{j_0}\subset V\subset {\rm span}(W_{j_0})=W_{j_0}$, and consequently, $V=W_{j_0}=W_{d-1}$.  From the definition of $V$, we see that $M_iV\subset V$ for all $1\leq i\leq m$. Hence $V=\R^d$ if  $\{M_i\}_{i=1}^m$ is irreducible.
\end{proof}
\bigskip

\begin{lem}
\label{lem-irreducible}
Let $A_1,\ldots, A_m\in {\rm Mat}_d(\R)$. Then
$$
\{A_i\}_{i=1}^m \mbox{ is reducible}\Longleftrightarrow  \{A_i^*\}_{i=1}^m \mbox{ is reducible}.
$$
\end{lem}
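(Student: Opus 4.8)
The plan is to prove this via the characterization of reducibility in terms of common invariant subspaces and the duality $V \mapsto V^\perp$ between subspaces. Recall that $\{A_i\}_{i=1}^m$ is reducible precisely when there is a proper nontrivial linear subspace $V \subset \R^d$ with $A_i(V) \subset V$ for every $i$. The key elementary fact is: for a matrix $A$ and a subspace $V$, one has $A(V) \subset V$ if and only if $A^*(V^\perp) \subset V^\perp$. Indeed, if $A(V) \subset V$, then for any $w \in V^\perp$ and any $v \in V$ we have $\langle A^*w, v\rangle = \langle w, Av\rangle = 0$ since $Av \in V$; hence $A^*w \in V^\perp$. The converse follows by symmetry (using $(A^*)^* = A$ and $(V^\perp)^\perp = V$).

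Given this, the proof is short. Suppose $\{A_i\}_{i=1}^m$ is reducible, witnessed by a proper nontrivial subspace $V$ with $A_i(V) \subset V$ for all $i$. Set $U = V^\perp$. Then $U$ is also proper and nontrivial (since $0 < \dim V < d$ forces $0 < \dim U < d$), and by the fact above $A_i^*(U) \subset U$ for all $i$. Hence $\{A_i^*\}_{i=1}^m$ is reducible. This proves the forward implication ($\Rightarrow$). For the reverse implication, apply the forward implication to the tuple $\{A_i^*\}_{i=1}^m$: if $\{A_i^*\}_{i=1}^m$ is reducible, then $\{(A_i^*)^*\}_{i=1}^m = \{A_i\}_{i=1}^m$ is reducible. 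This establishes the equivalence, and hence, by contraposition, $\{A_i\}_{i=1}^m$ is irreducible if and only if $\{A_i^*\}_{i=1}^m$ is irreducible.

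There is essentially no obstacle here; the only point requiring a line of verification is the orthogonal-complement invariance fact, which is entirely routine. I would present it inline rather than as a separate lemma. One could alternatively phrase the whole thing without reference to $V^\perp$ by noting that reducibility of $\{A_i\}$ is equivalent to the existence of a common nontrivial proper subspace and that the lattice of subspaces invariant under all $A_i^*$ is anti-isomorphic (via $V \mapsto V^\perp$) to the lattice of subspaces invariant under all $A_i$; but the direct argument above is cleaner and self-contained. I would therefore write the proof as: state the orthogonal-complement fact, deduce the $\Rightarrow$ direction, and then invoke symmetry together with $A^{**} = A$ for the $\Leftarrow$ direction.

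\begin{proof}
We use the following elementary fact: for any $A\in {\rm Mat}_d(\R)$ and any linear subspace $V$ of $\R^d$,
$$
A(V)\subset V \quad \Longleftrightarrow \quad A^*(V^\perp)\subset V^\perp.
$$
Indeed, if $A(V)\subset V$, then for every $w\in V^\perp$ and $v\in V$ we have $\langle A^*w, v\rangle=\langle w, Av\rangle=0$, since $Av\in V$; hence $A^*w\in V^\perp$, proving $A^*(V^\perp)\subset V^\perp$. The converse implication follows by applying the same argument to $A^*$ in place of $A$, using that $(A^*)^*=A$ and $(V^\perp)^\perp=V$.

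Now suppose $\{A_i\}_{i=1}^m$ is reducible. Then there exists a proper nontrivial linear subspace $V$ of $\R^d$ with $A_i(V)\subset V$ for all $1\leq i\leq m$. Set $U=V^\perp$. Since $0<\dim V<d$, we also have $0<\dim U<d$, so $U$ is a proper nontrivial linear subspace of $\R^d$. By the fact above, $A_i^*(U)\subset U$ for all $1\leq i\leq m$, so $\{A_i^*\}_{i=1}^m$ is reducible. This proves the implication ``$\Rightarrow$''.

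For the reverse implication, apply what we have just proved to the tuple $\{A_i^*\}_{i=1}^m$: if $\{A_i^*\}_{i=1}^m$ is reducible, then $\{(A_i^*)^*\}_{i=1}^m=\{A_i\}_{i=1}^m$ is reducible. This completes the proof.
\end{proof}
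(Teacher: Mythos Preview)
Your proof is correct and takes essentially the same approach as the paper: both establish the forward direction by showing that if $V$ is a common invariant subspace for the $A_i$, then $V^\perp$ is a common invariant subspace for the $A_i^*$ (via $\langle A_i^*w,v\rangle=\langle w,A_iv\rangle=0$), and both obtain the reverse direction by the symmetry $A_i^{**}=A_i$. Your version is slightly more explicit in isolating the subspace-complement fact as a standalone equivalence, but the argument is the same.
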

\begin{proof}
The result is standard. For the convenience of the reader, we include a proof.

Since $A_i^{**}=A_i$, by symmetry it is enough to prove the direction ``$\Longrightarrow$''. To this end, suppose that $\{A_i\}_{i=1}^m$ is reducible, i.e., there exists a proper nonzero subspace $W$ of $\R^d$  such that $A_i(W)\subset W$ for all $1\leq i\leq m$. Let $W^\perp$ denote the orthogonal complement of $W$. Clearly, $W^\perp$ is also a proper nonzero subspace of $\R^d$. Let $v\in W^\perp$ and $1\leq i\leq m$.  For any $u\in W$, since $A_iu\in W$, it follows that
$$
\langle u, A_i^*v\rangle=\langle A_iu, v\rangle=0.
$$
Hence $ A_i^*v\in W^\perp$.  This proves $A_i^*(W^\perp)\subset W^\perp$. So $\{A_i^*\}_{i=1}^m \mbox{ is reducible}$.
\end{proof}

Now we are ready to prove Theorem \ref{thm-1.2}(iii).

\begin{proof}[Proof of Theorem \ref{thm-1.2}(iii)]
Let $\mu$ be a fully supported, ergodic, and super-multiplicative measure on $\Sigma$. We first show that
\begin{equation}
\label{e-Z0}
\underline{S}(\mu, {\bf T},W)=\overline{S}(\mu,{\bf T}, W)\qquad \mbox{ for all }W\in G(d,k).
\end{equation}
To prove this, by definition, it is enough to show that for each $W\in G(d,k)$ and $s\in [0,k]$, the sequence
$$
\frac{1}{n}\log \varphi^s(T_{x|n}^*P_W)
$$
 converges pointwisely to a constant  for $\mu$-a.e.~$x$. Since
$$
\varphi^s(M)=\left(\varphi^{\lfloor s\rfloor}(M)\right)^{1+\lfloor s\rfloor -s}\left(\varphi^{\lfloor s\rfloor+1}(M)\right)^{s-\lfloor s\rfloor}
$$
for each $0\leq s\leq d$ (see Lemma \ref{lem-phis}),
it is sufficient to prove the aforementioned statement in the case where  $s$ is an integer.
For this purpose, fix $W\in G(d,k)$ and $q\in \{1,\ldots,k\}$. We will show that there exists $\lambda\in \R$ such that
\begin{equation}
\label{e-Z1}
\lim_{n\to\infty}\frac{1}{n} \log \|(T^*_{x|n})^{\wedge q}P_{W^{\wedge q}}\|=\lambda\qquad \mbox{ for $\mu$-a.e.~$x\in \Sigma$}.
\end{equation}

To see \eqref{e-Z1}, we define a linear subspace $X$ of $(\R^d)^{\wedge q}$ by
\begin{equation}
\label{e-X1}
X={\rm span}\left(\bigcup_{I\in \Sigma_*}(T_I^*)^{\wedge q}(W^{\wedge q})\right).
\end{equation}
It is easy to see that $(T_i^*)^{\wedge q}(X)=X$ for all $1\leq i\leq m$. Set
$$
\tau=\dim \left((\R^d)^{\w q}\right)=\binom{d}{q}.
$$
By Lemma \ref{lem-finite} (where we replace $\R^d$ with $(\R^d)^{\w q}$),
$$
X={\rm span}\left(\bigcup_{J\in \Sigma_*\colon |J|\leq \tau}(T_J^*)^{\wedge q}(W^{\wedge q})\right).
$$

For $i=1,\ldots, m$, let $M_i=(T_i^*)^{\wedge q}|_X$ be the restriction of $(T_i^*)^{\wedge q}$ on $X$.  Then $M_i$ are invertible linear transformations on $X$. Let $\widetilde{\lambda}_1>\widetilde{\lambda}_2>\cdots>\widetilde{\lambda}_{\widetilde{r}}$ be the Lyapunov exponents  of the matrix cocycle $x\mapsto M_{x_1}$ with respect to $\mu$, and let
\begin{equation}
\label{e-xfiltration}
X=\widetilde{V}_0(x)\supsetneqq\widetilde{V}_1(x)\supsetneqq \cdots \supsetneqq \widetilde{V}_{\widetilde{r}}(x)=\{0\},\quad x\in \Sigma'',
\end{equation}
be the corresponding Oseledets filtration, where $\Sigma''$ is a $\sigma$-invariant Borel subset of $\Sigma$ with $\mu(\Sigma'')=1$. By Remark \ref{rem-3.2}(ii),
\begin{equation}
\label{e-einvariant}
M_{x_1}\widetilde{V}_1(x)=\widetilde{V}_1(\sigma x) \quad \mbox{ for every }x\in \Sigma''.
\end{equation}
 We claim that
\begin{equation}
\label{e-Z2}
W^{\wedge q}\cap \left(\widetilde{V}_0(x)\backslash \widetilde{V}_1(x)\right)\neq \emptyset \qquad \mbox{ for $\mu$-a.e.~$x\in \Sigma''$},
\end{equation}
which implies \eqref{e-Z1} (in which we take $\lambda=\widetilde{\lambda}_1$).

Suppose on the contrary that \eqref{e-Z2} does not hold. Then there exists a measurable set $E\subset \Sigma''$ with $\mu(E)>0$ such that
\begin{equation}
\label{e-wwedge}
W^{\wedge q}\subset \widetilde{V}_1(x) \qquad \mbox{ for all } x\in E.
\end{equation}
Define
\begin{equation}
\ell=\sup\left\{\dim Y\colon Y \mbox{ is a subspace of $X$ such that }\mu\{x\in E\colon Y\subset \widetilde{V}_1(x)\}>0\right\}.
\end{equation}
Clearly, $\ell\geq \dim W^{\wedge q}$, and the supremum is attained at a subspace $Y_0$ of $X$. Set
$$
A=\left\{x\in E\colon Y_0\subset \widetilde{V}_1(x)\right\}.
$$
Since $\mu$ is fully supported and supermultiplicative, by Lemma \ref{lem-density},  there exist $I\in \Sigma_*$ and $A'\subset A$ with $\mu(A')>0$    such that
$$
IJy\in A
$$
for all $J\in \Sigma_*$ with $|J|\leq \tau$ and $y\in A'$.

Suppose that $|I|=n$ and $I=i_1\ldots i_n$. Let $y\in A'$.  Then, for every $J=j_1\ldots j_p\in \Sigma_p$ with $p\leq \tau$,  since $IJy\in A$, it follows that
$$
Y_0\subset \widetilde{V}_1(IJy)=\widetilde{V}_1(i_1\ldots i_n j_1\ldots j_py).
$$
 Since $M_{j_p\ldots j_1i_n\ldots i_1}\widetilde{V}_1(IJy)=\widetilde{V}_1(y)$ by \eqref{e-einvariant}, it follows that
$$
\widetilde{V}_1(y)\supset M_{j_p\ldots j_1i_n\ldots i_1}(Y_0)=M_{j_p\ldots j_1}(M_{i_n\ldots i_1}(Y_0)).
$$
Hence, let $Y_1:=M_{i_n\ldots i_1}(Y_0)$ and by taking the union over all $J\in \Sigma_*$ with $|J|\leq \tau$, we obtain that
$$
\widetilde{V}_1(y)\supset \bigcup_{J\in \Sigma_*\colon |J|\leq \tau}M_{J}(Y_1).
$$
Since $\widetilde{V}_1(y)$ is a  subspace of $X$, we have
$$
\widetilde{V}_1(y)\supset {\rm span}\left(\bigcup_{J\in \Sigma_*\colon |J|\leq \tau} M_J(Y_1)\right)=: Y_2,
$$
By Lemma  \ref{lem-finite}, $Y_2={\rm span}\left(\bigcup_{J\in \Sigma_* } M_J(Y_1)\right)$ and hence $M_i(Y_2)=Y_2$ for all $1\leq i\leq m$.
Meanwhile, since $y\in A'\subset A\subset E$, by \eqref{e-wwedge}, we have $\widetilde{V}_1(y)\supset W^{\w q}$ as well. Hence
$$
\widetilde{V}_1(y)\supset {\rm span}(Y_2 \cup W^{\w q}) \quad \mbox{ for all }y\in A'.
$$
Since $\mu(A')>0$, by the maximality of $\ell$, we have $\dim\left({\rm span}(Y_2 \cup W^{\w q})\right)\leq \ell$. However, as $Y_2\supset Y_1=M_{i_n\ldots i_1}(Y_0)$, we have
$$\dim Y_2\geq \dim Y_1=\dim Y_0=\ell.$$
 Hence
 $$
 \dim\left({\rm span}(Y_2 \cup W^{\w q})\right)= \dim Y_2.
 $$
Since $Y_2$ is a subspace of $X$, the above equality implies that $W^{\w q}\subset Y_2$. Recall that $Y_2$ is $M_i$-invariant for all $1\leq i\leq m$. It follows that
$$
Y_2\supset {\rm span}\left(\bigcup_{J\in \Sigma_*} M_J(W^{\w q})\right)=X.
$$
Hence for all $y\in A'$,
$$
\widetilde{V}_1(y)\supset Y_2\supset X=\widetilde{V}_0(y),
$$
leading to a contradiction.
 This proves \eqref{e-Z2}, and consequently, \eqref{e-Z1}.

 Next we assume that $\{T_i^{\wedge q}\}_{i=1}^m$ is irreducible for some integer $q$ with $\ell'\leq q\leq k$, where  $\ell'$ is the smallest integer not less than $\min\{k, \dim_{\rm LY}(\mu, {\bf T})\}$. By Lemmas \ref{lem-F-3.1}(iii) and  \ref{lem-irreducible},  $\{(T_i^*)^{\wedge q}\}_{i=1}^m$ is also irreducible.  Let $W\in G(d,k)$ and let $X$ be defined as in \eqref{e-X1}.
 Since $X$ is $(T_i^*)^{\wedge q}$-invariant, it follows that $X=(\R^d)^{\wedge q}$.  As was proved above, we have
 \begin{equation}
 \label{e-Y1}
 \lim_{n\to\infty}\frac{1}{n} \log \|(T^*_{x|n})^{\wedge q}P_{W^{\wedge q}}\|=\widetilde{\lambda}_1\qquad \mbox{ for $\mu$-a.e.~$x\in \Sigma$},
 \end{equation}
 where $\widetilde{\lambda}_1$ is the largest Lyapunov exponent of the matrix cocycle $x\mapsto (T_{x_1}^*)^{\wedge q}$ with respect to $\mu$.

 Recall that by Remark \ref{rem-3.2}(iii),
 \begin{equation}
 \label{e-Y2}
 \widetilde{\lambda}_1=\Lambda_1+\cdots+\Lambda_q,
 \end{equation}
 where $\Lambda_1\geq \cdots\geq \Lambda_m$ are the Lyapunov exponents (counting multiplicity) of the matrix cocycle $x\mapsto (T_{x_1}^*)$ with respect to $\mu$.
 Meanwhile by Proposition \ref{pro-key},
  \begin{equation}
  \label{e-Y3}
 \lim_{n\to\infty}\frac{1}{n} \log \|(T^*_{x|n})^{\wedge q}P_{W^{\wedge q}}\|=\Lambda_{p_1(x)}+\cdots+\Lambda_{p_q(x)},  \mbox{ for $\mu$-a.e.~$x\in \Sigma$},
 \end{equation}
 where $(p_1(x),\ldots, p_k(x))$ is the pivot position vector of $W$ with respect to the basis ${\bf v}(x)=\{v_i(x)\}_{i=1}^d$, where  ${\bf v}(x)$ is defined as in Theorem \ref{thm:oseledets}. Since $p_i(x)\geq i$ for $1\leq i\leq q$, combining \eqref{e-Y1}, \eqref{e-Y2} and \eqref{e-Y3} yields that for $\mu$-a.e.~$x\in \Sigma$,
 \begin{equation}
  \label{e-Y4}
  \Lambda_{p_i(x)}=\Lambda_i \qquad\mbox{ for }i=1,\ldots, q.
 \end{equation}


 Now set $s=\min\{k, \dim_{\rm LY}(\mu, {\bf T})\}$. Then $s\leq q\leq k$. By the definition of $\dim_{\rm LY}(\mu, {\bf T})$,
either
 $$
 s=k\quad \mbox{ and } \quad h_\mu(\sigma)+\sum_{i=1}^{k} \Lambda_i\geq 0,
 $$
or
 $$
 0\leq s<k\quad \mbox{ and } \quad h_\mu(\sigma)+\sum_{i=1}^{\lfloor s\rfloor} \Lambda_i + (s-\lfloor s\rfloor)\Lambda_{\lfloor s\rfloor+1}= 0.
 $$
 By \eqref{e-Y4} and Lemma \ref{lem-smuw}, we see that in both cases, $S(\mu,{\bf T}, W,x)=s$ for $\mu$-a.e.~$x\in \Sigma'$, and consequently,
 $$\underline{S}(\mu,{\bf T}, W)=\overline{S}(\mu, {\bf T},W)=s=\min\{k, \dim_{\rm LY}(\mu, {\bf T})\}.
 $$
 This completes the proof of Theorem \ref{thm-1.2}(iii).
\end{proof}

\begin{rem}
\label{rem-5.8}
It is worth pointing out that the assumption of $\mu$ being fully supported in Theorem \ref{thm-1.2}(iii) can be dropped. More precisely, part (iii) of Theorem \ref{thm-1.2} can be strengthened  as follows:
\begin{itemize}
\item[(iii)']
Assume  additionally that $\mu$ is  supermultiplicative. Then $$\underline{S}(\mu,{\bf T}, W)=\overline{S}(\mu, {\bf T},W)$$ for all $W\in G(d,k)$. Let ${\mathcal A}:=\{1\leq i\leq m\colon \mu([i])>0\}$. If furthermore $\{T_i^{\wedge q}\}_{i\in \mathcal A}$ is irreducible for some integer $q$ such that $\ell'\leq q\leq k$, where  $\ell'$ is the smallest integer not less than $\min\{k, \dim_{\rm LY}(\mu, {\bf T})\}$, then
\begin{equation*}
\underline{S}(\mu, {\bf T},W)=\overline{S}(\mu, {\bf T},W)=\min\{k,\dim_{\rm LY}(\mu, {\bf T})\}
\end{equation*}
 for all $W\in G(d,k)$.
 \end{itemize}
The proof remains unchanged if we consider the IFS $\{T_ix+a_i\}_{i\in \mathcal A}$ instead.
\end{rem}

\section{Proof of  Theorem \ref{thm-1.1}}
\label{S-6}

We prove parts (i), (iii) and (ii) of Theorem \ref{thm-1.1} separately.

\begin{proof}[Proof of  Theorem \ref{thm-1.1}(i)]
Let $\ell$ be the smallest integer not less than $\min\{k, \dim_{\rm AFF}({\bf T})\}$. Then $\ell\leq k$.   By Lemma \ref{lem-Affd}(i), $\dim_{\rm AFF} ({\bf T}, W)\leq \ell$ for all $W\in G(d,k)$.  Below we divide the remaining proof into two steps.

{\it Step 1. For every integer $q$ with $\ell\leq q\leq k$,
\begin{equation}
\label{e-afftw}
\#\{\dim_{\rm AFF}({\bf T}, W)\colon W\in G(d,k)\}\leq \binom{d}{q}-\binom{k}{q}+1.
\end{equation}
}

To prove this inequality, we fix an integer $q$ with $\ell\leq q\leq k$.  Suppose on the contrary that there exist $W_1,\ldots, W_\tau\in G(d,k)$, with $\tau=\binom{d}{q}-\binom{k}{q}+2$,  such that
$$
\dim_{\rm AFF}({\bf T}, W_1)>\dim_{\rm AFF}({\bf T}, W_2)>\cdots>\dim_{\rm AFF}({\bf T}, W_\tau).
$$
Write $s_i:=\dim_{\rm AFF}({\bf T}, W_i)$ for $1\leq i\leq \tau$. Then
\begin{equation}
k\geq \ell\geq s_1>s_2>\cdots>s_\tau.
\end{equation}

 By Lemma \ref{lem-Affd}(iii),
$$
P({\bf T}, W_1,s_1)\geq 0, \quad \mbox{ and } \quad P({\bf T}, W_i, s_i)=0  \quad \mbox{ for all }2\leq i\leq \tau.
$$
It follows from Lemma \ref{lem-Pws}(ii) that
\begin{equation}
\label{e-3.3a}
P({\bf T}, W_j, s_i)<P({\bf T}, W_j, s_j)= 0\leq P({\bf T}, W_i, s_i)\quad  \mbox{ for all }1\leq i<j\leq \tau.
\end{equation}
Below we first make the following claim.
\medskip

\noindent {\bf Claim}. {\it For each $i\in \{1,\ldots, \tau-1\}$, there exist a word $K_i\in \Sigma_*$, and a linear subspace $H_i$ of $(\R^d)^{\wedge q}$ with $\dim H_i=\binom{d}{q}-1$, such that
\begin{equation}
\label{e-3.4aa}
(T_{K_i}^*W_i)^{\wedge q} \not\subset H_i
\end{equation}
and
\begin{equation}
\label{e-3.5a}
(T^*_KW_j)^{\wedge q} \subset H_i\quad  \mbox{ for all } i<j\leq \tau \mbox{ and }K\in \Sigma_*.
\end{equation}
}

Before proving the above claim, we first use it to derive a contradiction. Write for brevity
$$V_i=(T_{K_i}^*W_i)^{\wedge q}, \quad i=1,\ldots, \tau,$$
and
$$G_i=\bigcap_{p=1}^iH_p, \quad i=1,\ldots, \tau-1.$$
 Clearly, $G_i$,  $i=1,\ldots, \tau-1$, are linear subspaces of $(\R^d)^{\wedge q}$ so that
\begin{equation}
\label{e-H1}
H_1=G_1\supset G_2\supset \cdots  \supset G_{\tau-1}.
\end{equation}
By \eqref{e-3.4aa}-\eqref{e-3.5a}, $V_i\not\subset H_i$ and $V_j\subset H_i$ for all $1\leq i<j\leq \tau$. It follows that for each $1\leq i\leq \tau-1$,
$$V_{i+1}\subset G_i\quad \mbox{   but }\quad  V_{i+1}\not\subset G_{i+1},$$
which implies that  $G_i\neq G_{i+1}$.   Combining this with \eqref{e-H1} yields that
$$
\dim H_1=\dim G_1>\dim G_2>\cdots >\dim G_{\tau-1},
$$
and thus $\dim G_{\tau-1}\leq \dim H_1-(\tau-2)=\binom{d}{q}-\tau+1$. However, since $ G_{\tau-1}\supset V_\tau$, one has
$$
\dim G_{\tau-1}\geq \dim V_\tau=\binom{k}{q}.
$$
It follows that $\binom{k}{q}\leq \binom{d}{q}-\tau+1$, that is, $\tau \leq \binom{d}{q}-\binom{k}{q}+1$. It leads to a contradiction.

Now we turn to the proof of the claim. Fix $i\in \{1,\ldots, \tau-1\}$, and   let $\mu$ be an ergodic equilibrium measure for the subadditive potential $\{\log \psi_{W_i}^{s_i}(\cdot|n)\}_{n=1}^\infty$. That is, $\mu$ is an ergodic $\sigma$-invariant measure such that
\begin{equation}
\label{e-3.4a}
h_{\mu}(\sigma)+\Theta(\psi_{W_i}^{s_i}, \mu)=P({\bf T}, W_i, s_i),
\end{equation}
where
$$
\Theta\left(\psi_{W_i}^{s_i},\mu\right)=\lim_{n\to \infty} \frac1n\int \log \psi_{W_i}^{s_i}(x|n)\; d\mu(x).
$$
For each $j$ with $i<j\leq \tau$, applying Theorem \ref{thm:subadditive-VP} to the subadditive potential $\{\log \psi_{W_j}^{s_i}(\cdot|n)\}_{n=1}^\infty$ gives
 $$P({\bf T}, W_j, s_i)\geq  h_{\mu}(\sigma)+\Theta(\psi_{W_j}^{s_i}, \mu),$$
where
$$
\Theta(\psi_{W_j}^{s_i},\mu)=\lim_{n\to \infty} \frac1n\int \log \psi_{W_j}^{s_i}(x|n)\; d\mu(x).
$$
This together with \eqref{e-3.3a} and \eqref{e-3.4a} yields that
\begin{equation}
\label{e-thetaij}
\Theta(\psi_{W_i}^{s_i}, \mu)>\Theta(\psi_{W_j}^{s_i}, \mu)\quad \mbox{ for all }j\in \{i+1,\ldots,  \tau\}.
\end{equation}

Let $\Sigma'$, $r$, $\Lambda_1,\ldots, \Lambda_d$, $\bigoplus_{j=1}^r E_j(x)$ ($x\in \Sigma'$) be given as in Theorem \ref{thm:oseledets}, and also let  ${\bf v}(x)=\{v_1(x),\ldots, v_d(x)\}$ be a measurable ordered basis adapted to the splitting $\bigoplus_{j=1}^r E_j(x)$, $x \in \Sigma'$.

By  Proposition \ref{pro-key},  Corollary \ref{cor-2.8} and \eqref{e-thetaij},  there exist a measurable $A\subset \Sigma'$ with $\mu(A)>0$ and $K_i\in \Sigma_*$ such that
for each $x\in A$,
\begin{align}
&\lim_{n\to \infty} \frac{1}{n}\log \varphi^{s_i}(T_{x|n}^*P_{T_{K_i}^*W_i})=\Theta(\psi_{W_i}^{s_i}, \mu), \label{e-3.7'}\\
&\lim_{n\to \infty} \frac{1}{n}\log \varphi^{s_i}(T_{x|n}^*P_{T_{K}^*W_j})\leq \Theta(\psi_{W_j}^{s_i}, \mu)<\Theta(\psi_{W_i}^{s_i}, \mu), \label{e-3.8'}
\end{align}
 for all $K\in \Sigma_*$ and   $j\in\{i+1,\ldots,  \tau\}$, and moreover, for each $W\in G(d,k)$,
\begin{equation}
\label{e-3.7}
\lim_{n\to \infty} \frac{1}{n}\log \varphi^{s_i}(T_{x|n}^*P_W)=\sum_{j=1}^{\lfloor s_i \rfloor} \Lambda_{p_j(W,x)}+(s_i-\lfloor s_i \rfloor)\Lambda_{p_{\lfloor s_i \rfloor+1}(W,x)},
\end{equation}
where $\left(p_1(W,x),\ldots, p_k(W,x)\right)$ is the pivot position vector of $W$ with respect to the ordered basis ${\bf v}(x)=\{v_i(x)\}_{i=1}^d$.

Fix $x\in A$. Write for brevity $v_j=v_j(x)$ and $p_j=  p_j(T_{K_i}^*W_i,x)$ for $j=1,\ldots, k$.  Define
\begin{align*}
H_i=\; & {\rm span}\left\{v_{j_1}\wedge \cdots \wedge v_{j_q}\colon 1\leq j_1<\cdots<j_q\leq d \mbox{ and }\right.\\
&\qquad \qquad  \qquad \qquad \qquad \left. (j_1,\ldots, j_q)\neq (p_1,\ldots, p_q) \right\}.
\end{align*}
Clearly, $\dim H_i=\binom{d}{q}-1$. By Lemma \ref{lem-2.5'}(i),  $(T_{K_i}^*W_i)^{\wedge q}\not\subset H_i$.  It remains to show that $(T_{K}^*W_j)^{\wedge q}\subset H_i$ for each $K\in \Sigma_*$ and  $j$ with $i<j\leq \tau$.  Suppose on the contrary that there exist $K\in \Sigma_*$ and  $j$ with $i<j\leq \tau$ such that $(T_{K}^*W_j)^{\wedge q}\not\subset H_i$.
Then by Lemma \ref{lem-2.5'}(ii),
$$p_1(T_{K}^*W_j,x)\leq p_1,\quad \ldots, \quad p_q(T_{K}^*W_j,x)\leq p_q.$$
Keep in mind that $q\geq \ell\geq s_i$. Applying \eqref{e-3.7},
\begin{align*}
\lim_{n\to \infty} \frac{1}{n}\log \varphi^{s_i}(T_{x|n}^* P_{T^*_KW_j})&=\sum_{u=1}^{\lfloor s_i \rfloor} \Lambda_{p_u(T^*_KW_j,x)}+(s_i-\lfloor s_i \rfloor)\Lambda_{p_{\lfloor s_i \rfloor+1}(T^*_KW_j,x)}\\
&\geq \sum_{u=1}^{\lfloor s_i \rfloor} \Lambda_{p_u}+(s_i-\lfloor s_i \rfloor)\Lambda_{p_{\lfloor s_i \rfloor+1}}\\
&=\lim_{n\to \infty} \frac{1}{n}\log \varphi^{s_i}(T_{x|n}^* P_{T^*_{K_i}W_i}).
\end{align*}
However, by  \eqref{e-3.7'} and \eqref{e-3.8'},
$$
\lim_{n\to \infty} \frac{1}{n}\log \varphi^{s_i}(T_{x|n}^* P_{T^*_KW_j})<\lim_{n\to \infty} \frac{1}{n}\log \varphi^{s_i}(T_{x|n}^* P_{T^*_{K_i}W_i}),
$$
leading to a contradiction. This proves  the claim, and consequently, inequality \eqref{e-afftw}.

{\it Step 2. Assume that  $\{T_i^{\wedge q}\colon i=1,\ldots, m\}$ is irreducible for some $\ell\leq q\leq k$.  Then
     $$
     \dim_{\rm AFF}({\bf T}, W)=\min\{k, \dim_{\rm AFF}({\bf T})\}
     $$
    for all $W\in G(d,k)$.
    }

Write $s=\min\{k, \dim_{\rm AFF}({\bf T})\}$. Then $k\geq q\geq s$.  Define
$$
P({\bf T}, s)=\lim_{n\to \infty} \frac1n\log \sum_{I\in \Sigma_n}\varphi^s(T_I^*).
$$
Clearly, $P({\bf T}, s)$ is the topological pressure of the subadditive potential $\{\log \varphi^s(T^*_{\cdot|n})\}_{n=1}^\infty$; see Lemma \ref{lem-singinequality}(i) and Section \ref{S-subadditive}.
Recall that $\dim_{\rm AFF}({\bf T})$ is defined as in \eqref{e-aff}. Since $\dim_{\rm AFF}({\bf T})\geq s$, it follows that
 $P({\bf T}, s)\geq 0$. Let $\mu$ be an ergodic equilibrium measure for the potential $\{\log \varphi^s(T^*_{\cdot|n})\}_{n=1}^\infty$.   Then
\begin{equation}
\label{e-hmu}
h_\mu(\sigma)+\theta=P({\bf T}, s)\geq 0,
\end{equation}
where $\theta=\lim_{n\to \infty}\frac{1}{n}\int \log \varphi^s(T^*_{x|n})\; d\mu(x)$.

Consider the matrix cocycle $x\mapsto T_{x_1}^*$ with respect to $\mu$ and let $\Sigma'$, $r$, $\Lambda_1,\ldots, \Lambda_d$, $\bigoplus_{j=1}^r E_j(x)$ ($x\in \Sigma'$) be given as in Theorem \ref{thm:oseledets}, and also let  ${\bf v}(x)=\{v_1(x),\ldots, v_d(x)\}$ be a measurable ordered basis adapted to the splitting $\bigoplus_{j=1}^r E_j(x)$, $x \in \Sigma'$. Since $s\leq k\leq d$,   by \eqref{e-Gs},
\begin{equation}
\label{e-theta}
\theta=\sum_{j=1}^{\lfloor s\rfloor}\Lambda_j +(s-\lfloor s\rfloor)\Lambda_{\lfloor s\rfloor+1}.
\end{equation}

Let $W\in G(d,k)$. Fix $x\in \Sigma'$. Define
\begin{align*}
H=\; & {\rm span}\left\{v_{j_1}(x)\wedge \cdots \wedge v_{j_q}(x)\colon 1\leq j_1<\cdots<j_q\leq d \mbox{ and }\right.\\
&\qquad \qquad  \qquad \qquad \qquad\qquad \qquad \left. (j_1,\ldots, j_q)\neq (1,\ldots, q) \right\}.
\end{align*}
Then $H$ is a proper linear subspace of $(\R^d)^{\wedge q}$. Since $\{T_i^{\wedge q}\colon i=1,\ldots, m\}$ is irreducible, by Lemmas \ref{lem-F-3.1}(iii) and  \ref{lem-irreducible},
$\{(T_i^*)^{\wedge q}\colon i=1,\ldots, m\}$ is also irreducible.  Therefore there exists $J\in \Sigma_*$ such that
\begin{equation}
\label{e-tjw}
(T_J^*W)^{\wedge q}=(T_J^*)^{\wedge q} (W^{\wedge q})\not\subset H.
\end{equation}
Let $(p_1,\ldots, p_k)$ be the pivot position vector of $T_J^*W$ with respect to the ordered basis ${\bf v}(x)$ of $\R^d$.  By \eqref{e-tjw} and Lemma \ref{lem-2.5'}(ii),  $p_1\leq 1, \ldots, p_q\leq q$.  However it always holds that $1\leq p_1<\cdots <p_q$,  implying that $p_i\geq i$ for $1\leq i\leq q$. Hence $p_i=i$ for  $1\leq i\leq q$. By Proposition \ref{pro-key} and \eqref{e-theta},
\begin{align*}
\lim_{n\to \infty}\frac{1}{n}\log \varphi^s(T_{x|n}^*P_{T^*_JW})&=\sum_{j=1}^{\lfloor s\rfloor}\Lambda_{p_j} +(s-\lfloor s\rfloor)\Lambda_{p_{\lfloor s\rfloor+1}}\\
&= \sum_{j=1}^{\lfloor s\rfloor}\Lambda_j +(s-\lfloor s\rfloor)\Lambda_{\lfloor s\rfloor+1}\\
&=\theta.
\end{align*}
It follows that for every $x\in \Sigma'$,
$$\lim_{n\to \infty}\frac{1}{n}\log \psi^s_W(x|n)\geq \theta.$$
Hence applying Theorem \ref{thm:subadditive-VP} to the subadditive potential $\{\log \psi_W^s(\cdot|n)\}_{n=1}^\infty$ gives
$$
P({\bf T}, W,s)\geq h_{\mu}(\sigma)+\lim_{n\to \infty}\frac{1}{n}\int \log \psi^s_W(y|n)\; d\mu(y)\geq h_{\mu}(\sigma)+\theta\geq 0,
$$
where the last inequality follows from \eqref{e-hmu}. Hence by Lemma \ref{lem-Affd}(ii),
 $$\dim_{\rm AFF}({\bf T}, W)\geq s=\min\{k, \dim_{\rm AFF}({\bf T})\}.$$
 This, combined with  Lemma \ref{lem-Affd}(i), implies that $\dim_{\rm AFF}({\bf T}, W)=s$.
\end{proof}

To prove Theorem \ref{thm-1.1}(iii), we need the following elementary result.

\begin{lem}
\label{lem-4.1}
Let $W\in G(d,k)$ and $M\in {\rm GL}_d(\R)$. Then
\begin{equation}
\label{e-PWM}
P_WM=P_WM P_{M^*W},
\end{equation}
and $P_WM(M^*(W))=W$.
 Moreover, the mapping $P_WM\colon M^*(W)\to W$ is surjective and hence bi-Lipschitz. Consequently, for every Borel set $E\subset \R^d$,
\begin{equation}
\label{e-rmH}
\dim_{\rm H} P_W(M(E))=\dim_{\rm H} P_{M^*W}(E).
\end{equation}
\end{lem}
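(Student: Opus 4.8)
The plan is to verify the four assertions in turn, each reducing to elementary linear algebra together with the bi-Lipschitz invariance of Hausdorff dimension. First I would prove the identity $P_WM=P_WMP_{M^*W}$ by showing that $P_WM$ annihilates $(M^*W)^\perp$. Decomposing the identity as $I=P_{M^*W}+P_{(M^*W)^\perp}$, it suffices to check $P_WMP_{(M^*W)^\perp}=0$, i.e.\ that $M\big((M^*W)^\perp\big)\subset W^\perp$. For this, take $v\in\R^d$ with $v\perp M^*W$ and $w\in W$; then $\langle Mv,w\rangle=\langle v,M^*w\rangle=0$ since $M^*w\in M^*W$, which gives the desired inclusion and hence the identity.

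Next I would show that the restriction of the linear map $P_WM$ to the subspace $M^*(W)$ is a bijection onto $W$. Since $M\in{\rm GL}_d(\R)$, we have $\dim M^*(W)=\dim W=k$, so it is enough to prove injectivity. Suppose $u\in M^*(W)$ satisfies $P_WMu=0$; write $u=M^*w$ with $w\in W$, so that $Mu=MM^*w\in W^\perp$. In particular $0=\langle MM^*w,w\rangle=\|M^*w\|^2$, forcing $M^*w=0$ and hence $u=0$, as $M^*$ is invertible. Therefore $P_WM\colon M^*(W)\to W$ is a linear bijection between two $k$-dimensional spaces; being linear on a finite-dimensional space it is Lipschitz, and its inverse is also linear and hence Lipschitz, so $P_WM|_{M^*(W)}$ is bi-Lipschitz. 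In particular $P_WM(M^*(W))=W$.

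Finally, for a set $E\subset\R^d$, the first identity yields the set-theoretic equality $P_W(M(E))=\{P_WMP_{M^*W}x\colon x\in E\}=(P_WM)\big(P_{M^*W}(E)\big)$, and $P_{M^*W}(E)\subset M^*(W)$ because the range of the orthogonal projection $P_{M^*W}$ is $M^*W$. Applying the bi-Lipschitz invariance of Hausdorff dimension to the bi-Lipschitz bijection $P_WM|_{M^*(W)}$ then gives $\dim_{\rm H}P_W(M(E))=\dim_{\rm H}P_{M^*W}(E)$. I do not anticipate a serious obstacle; the only places requiring a little care are the correct use of the orthogonal-complement decomposition in the first step and ensuring that bi-Lipschitz invariance is invoked for the restriction of $P_WM$ to $M^*(W)$ rather than for the (non-injective) map $P_WM$ on all of $\R^d$.
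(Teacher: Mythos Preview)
Your proof is correct and follows essentially the same route as the paper: both establish the identity $P_WM=P_WMP_{M^*W}$ via the orthogonal decomposition $I=P_{M^*W}+P_{(M^*W)^\perp}$ and the observation $M\big((M^*W)^\perp\big)\subset W^\perp$, then deduce the Hausdorff dimension equality from bi-Lipschitz invariance applied to the restriction $P_WM|_{M^*(W)}$. The one minor difference is in proving that $P_WM|_{M^*(W)}$ is a bijection onto $W$: the paper derives surjectivity directly from the identity via $P_WM(M^*W)=P_WMP_{M^*W}(\R^d)=P_WM(\R^d)=W$, whereas you give an independent injectivity argument using $\langle MM^*w,w\rangle=\|M^*w\|^2$; both are valid since the source and target have equal dimension.
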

\begin{proof}
Let $I_d$ denote the identity map from $\R^d$ to itself. Clearly, $$P_{M^*W}+P_{(M^*W)^{\perp}}=I_d.$$
 Hence to prove \eqref{e-PWM}, it suffices to prove that
\begin{equation}
\label{e-perp}
P_WM P_{(M^*W)^{\perp}}=0.
\end{equation}
To see the above identity, let $x\in (M^*W)^{\perp}$. Then for any $y\in W$,
$$\langle M x, y\rangle=\langle x, M^*y\rangle=0.$$  It follows that
  $P_W(Mx)=0$. This proves \eqref{e-perp} and thus \eqref{e-PWM}.

Now, the equality $P_WM(M^*(W))=W$ follows directly from \eqref{e-PWM}. Indeed,
$$
P_WM(M^*(W))=P_WM P_{M^*W}(\R^d)=P_WM(\R^d)=P_W(\R^d)=W,
$$
where we use \eqref{e-PWM} in the second equality.
Since  $M^*(W)$ and $W$ have the same dimension, the mapping
$P_WM\colon M^*(W)\to W$ is linear and invertible,  so it is bi-Lipschitz.



Finally, let $E$ be a Borel subset of $\R^d$. By \eqref{e-PWM}, $P_W(M(E))=P_WM (P_{M^*W}(E))$.
Since $P_{M^*W}(E)\subset M^*(W)$ and the mapping $P_WM\colon M^*(W)\to W$ is bi-Lipschitz,
$$
\dim_{\rm H} P_W(M(E))=\dim_{\rm H} P_WM (P_{M^*W}(E))=\dim_{\rm H} P_{M^*W}(E),
$$
as desired.
\end{proof}

The following transversality result is also needed in the proof of  Theorem \ref{thm-1.1}(iii).

\begin{lem}
\label{lem-transversality}
Assume that $\|T_i\|<1/2$ for $1\leq i\leq m$. Let $\rho>0$. Then there exists a constant $c=c(\rho, T_1,\ldots, T_m)$ such that for each $r>0$, $W\in G(d,k)$, and distinct $x,y\in \Sigma$,
\begin{equation*}
\mathcal L^{md}\{\ba\in B_\rho\colon \|P_W\pi^\ba x-P_W\pi^\ba y\|<r\}\leq c \prod_{i=1}^k \min\left\{\frac{r}{\alpha_i(P_WT_{x\w y})},1\right\},
\end{equation*}
where $x\w y$ denotes the common initial segment of $x$ and $y$. In particular,
\begin{equation}
\label{e-transversality}
\mathcal L^{md}\{\ba\in B_\rho\colon \|P_W\pi^\ba x-P_W\pi^\ba y\|<r\}\leq  \frac{cr^k}{\varphi^k(P_WT_{x\w y})}.
\end{equation}

\end{lem}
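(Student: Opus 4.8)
The plan is to strip off the common initial block and reduce the whole estimate to a uniform transversality property of a single linear map. Put $I=x\wedge y$, $\ell=|I|$, and $\tilde x=\sigma^\ell x$, $\tilde y=\sigma^\ell y$; since $x\neq y$, the points $\tilde x,\tilde y\in\Sigma$ are distinct and their first symbols differ. From the series representation $\pi^\ba z=\sum_{n\geq 1}T_{z|(n-1)}a_{z_n}$ (with $T_{z|0}=\mathrm{Id}$) one has $\pi^\ba x=f_I^\ba(\pi^\ba\tilde x)$ and $\pi^\ba y=f_I^\ba(\pi^\ba\tilde y)$, hence $\pi^\ba x-\pi^\ba y=T_I(\pi^\ba\tilde x-\pi^\ba\tilde y)$. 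The map
\[
L\colon\R^{md}\to\R^d,\qquad L(\ba)=\pi^\ba\tilde x-\pi^\ba\tilde y,
\]
is \emph{linear} in $\ba$ (because $\pi^{\mathbf{0}}\equiv 0$), so that $P_W(\pi^\ba x-\pi^\ba y)=(P_WT_{x\wedge y})\,L(\ba)$.

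The hard part, and the crux of the whole argument, is the following uniform transversality estimate for $L$, which is precisely the phenomenon exploited by Falconer \cite{Fal88} and Solomyak \cite{Sol98}: under the hypothesis $\|T_i\|<1/2$ there are constants $0<c_1\leq c_2$, depending only on $T_1,\dots,T_m$, such that $\|L\|\leq c_2$ and every singular value of $L$ is $\geq c_1$, uniformly over all admissible pairs $\tilde x,\tilde y$. The upper bound is immediate from $\|L\|\leq\sum_{n\geq 1}(\|T_{\tilde x|(n-1)}\|+\|T_{\tilde y|(n-1)}\|)\leq 2/(1-\max_i\|T_i\|)$. For the lower bound, write $\tilde x=iu$ and $\tilde y=jv$ with $i\neq j$; the block of $L$ attached to the variable $a_i$ equals $\mathrm{Id}+E$ with $\|E\|\leq 2\max_i\|T_i\|/(1-\max_i\|T_i\|)$, which is $<1$ when $\max_i\|T_i\|<1/3$, and the improvement of the threshold to $1/2$ is Solomyak's more delicate argument \cite[Proposition 3.1]{Sol98} (compare \cite[Lemma 3.1]{Fal88}). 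Quoting this uniform transversality is the main obstacle; the remaining steps are a routine change of variables. It is at this stage that the uniformity over the pair $(x,y)$ is obtained, while the uniformity over $W$ is absorbed into the separate treatment of $P_WT_I$ below.

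Finally, granting the transversality, I would conclude as follows. Using the singular value decomposition $L=U\,[\Sigma\ 0]\,V^{\mathrm T}$ with $U\in\mathrm{O}(d)$, $V\in\mathrm{O}(md)$ and $\Sigma=\mathrm{diag}(\sigma_1,\dots,\sigma_d)$, $c_1\leq\sigma_d\leq\dots\leq\sigma_1\leq c_2$, the orthogonal substitution $\bb=V^{\mathrm T}\ba$ followed by Fubini over the $md-d$ coordinates untouched by $\Sigma$ yields, for every Borel set $S\subseteq\R^d$,
\[
\mathcal L^{md}\{\ba\in B_\rho\colon L(\ba)\in S\}\ \leq\ c_1^{-d}\,\mathcal L^{md-d}(B_\rho)\,\mathcal L^{d}\big(S\cap\overline{B(0,c_2\rho)}\big).
\]
Apply this with $S=\{z\in\R^d\colon\|P_WT_Iz\|<r\}$. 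In the orthonormal coordinates furnished by the singular value decomposition of $P_WT_I$, whose singular values are $\alpha_1(P_WT_I)\geq\dots\geq\alpha_k(P_WT_I)>0=\alpha_{k+1}(P_WT_I)=\dots$, the set $S$ is an infinite cylinder whose cross-section is the ellipsoid with semi-axes $r/\alpha_i(P_WT_I)$, $1\leq i\leq k$, and which is unbounded in the remaining $d-k$ directions; intersecting with a ball of radius $c_2\rho$ and enclosing the outcome in a coordinate box gives
\[
\mathcal L^{d}\big(S\cap\overline{B(0,c_2\rho)}\big)\ \leq\ C(\rho,T_1,\dots,T_m)\prod_{i=1}^{k}\min\Big\{\frac{r}{\alpha_i(P_WT_I)},\,1\Big\}.
\]
Combining the last two displays with $P_W(\pi^\ba x-\pi^\ba y)=(P_WT_{x\wedge y})L(\ba)$ proves the first inequality, with $c$ depending only on $\rho$ and $T_1,\dots,T_m$. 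The ``in particular'' statement then follows by bounding each factor $\min\{r/\alpha_i(P_WT_{x\wedge y}),1\}$ by $r/\alpha_i(P_WT_{x\wedge y})$ and invoking $\varphi^k(P_WT_{x\wedge y})=\prod_{i=1}^{k}\alpha_i(P_WT_{x\wedge y})$ (Lemma \ref{lem-F-3.1}(ii)).
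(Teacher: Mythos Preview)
Your proposal is correct and takes essentially the same approach as the paper, which gives no proof beyond the remark ``It is a slight and trivial modification of the proof of \cite[Lemma~5.2]{JPS07}.'' Your argument---factoring off the common prefix to write $P_W(\pi^\ba x-\pi^\ba y)=(P_WT_{x\wedge y})L(\ba)$, invoking the Falconer--Solomyak transversality for the uniform lower bound on $\alpha_d(L)$, and then the SVD/change-of-variables computation with $P_WT_I$ in place of $T_I$---is exactly the modification the paper has in mind, and the steps are carried out correctly.
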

\begin{proof}
It is a slight and  trivial modification of  the proof of \cite[Lemma 5.2]{JPS07}.
\end{proof}

\begin{proof}[Proof of Theorem \ref{thm-1.1}(iii)]
Let $W\in G(d,k)$. It was  proved by Morris \cite[Theorem 1]{Morris2023} that $$\overline{\dim}_{\rm B}P_W(K^\ba)\leq \dim_{\rm AFF}({\bf T}, W)$$
for every $\ba\in \R^{md}$. Below we assume that $\|T_i\|<1/2$ for all $1\leq i\leq m$. We first show that
$$\dim_{\rm H}P_W(K^\ba)\geq  \dim_{\rm AFF}({\bf T}, W)$$
for ${\mathcal L}^{md}$-a.e.~${\bf a}\in \R^{md}$.

Write $s=\dim_{\rm AFF}({\bf T}, W)$ and let $\mu$ be an ergodic equilibrium measure for the subadditive potential $\{\log \psi^s_W(\cdot|n)\}_{n=1}^\infty$.  By  Lemma \ref{lem-Affd}(iii),
\begin{equation}
\label{e-hmutheta}
h_\mu(\sigma)+\theta=P({\bf T}, W,s)\geq 0,
\end{equation}
where
$$
\theta:=\lim_{n\to \infty} \int \log \psi^s_W(x|n)\; d\mu(x).
$$

By Corollary \ref{cor-2.8}, there exists a measurable $A\subset \Sigma'$ with $\mu(A)>0$ and $J\in \Sigma_*$ such that for each $x\in A$,
$$
\lim_{n\to \infty}\frac{1}{n}\log \varphi^s(T_{x|n}^*P_{T_J^*W})=\lim_{n\to \infty}\frac{1}{n}\log \psi_W^s(x|n)=\theta.
$$
This, together with Proposition \ref{pro-key}, yields that for each $x\in A$,
$$
\sum_{j=1}^{\lfloor s \rfloor}\Lambda_{p_j(T^*_{J}W,x)}+(s-\lfloor s \rfloor)\Lambda_{p_{\lfloor s \rfloor+1}(T^*_{J}W,x)}=\theta,
$$
where $(p_1(T^*_{J}W,x), \ldots, p_k(T^*_{J}W,x))$ is the pivot position vector of $T_J^*W$ with respect to the ordered basis ${\bf v}(x)$ (see Theorem
 \ref{thm:oseledets} for the definition of   ${\bf v}(x)$).  Combining this with \eqref{e-hmutheta} and Lemma  \ref{lem-smuw} yields that for
 for each $x\in A$,
$$
S(\mu, T_J^*W, x)\geq s,
$$
where $S(\mu, T_J^*W, x)$ is defined as in \eqref{e-Smux}; see also \eqref{e-equi}.  Hence
$$\overline{S}(\mu, T_J^*W):=\esssup_{x\in {\rm spt}\mu} S(\mu,T_J^*W, x)\geq s.
$$
By Theorem \ref{thm-1.2}(iv) (in which we replace $W$ by $T_J^*W$), for  ${\mathcal L}^{md}$-a.e.~${\bf a}\in \R^{md}$.
$$
\udim{H} (P_{T_J^*W}\pi^\ba)_*\mu  = \overline{S}(\mu, T_J^*W)\geq s.
$$
Since $(P_{T_J^*W}\pi^\ba)_*\mu$ is supported on $P_{T^*_JW}(K^\ba)$, it follows that
\begin{equation}
\label{e-rmHP}
\dim_{\rm H}  P_{T^*_JW}(K^\ba)\geq s \quad \mbox{for  ${\mathcal L}^{md}$-a.e.~${\bf a}\in \R^{md}$.}
\end{equation}
Meanwhile,  by the self-affinity of $K^\ba$, we have  $K^\ba\supset f^{\ba}_J(K^{\ba})$.  Hence for each ${\bf a}\in \R^{md}$,
\begin{align*}
\dim_{\rm H}P_W(K^\ba)&\geq \dim_{\rm H} P_W(f^{\ba}_J(K^{\ba}))\\
&=\dim_{\rm H}P_W(T_J(K^{\ba}))\\
&=\dim_{\rm H}P_{T^*_JW}(K^{\ba}),
\end{align*}
where the last equality follows from \eqref{e-rmH} (in which we take $M=T_J$ and $E=K^\ba$).  Combining it with \eqref{e-rmHP} yields that
$$\dim_{\rm H}  P_{W}(K^\ba)\geq s \quad \mbox{for  ${\mathcal L}^{md}$-a.e.~${\bf a}\in \R^{md}$.}
$$

Next assume that \eqref{e-lebcondition} holds, that is, $P({\bf T}, W,k)>0$ by Proposition \ref{pro-equivalence}. Below, we will show that $\mathcal H^k(P_{W}(K^\ba))>0$ for ${\mathcal L}^{md}$-a.e.~${\bf a}\in \R^{md}$, by using Corollary \ref{cor-2.8} and adapting the proof of \cite[Proposition 4.4(b)]{JPS07}.

Let $\mu$ be an ergodic equilibrium measure for the  potential $\{\log \psi^k_W(\cdot|n)\}_{n=1}^\infty$. Then
$$
h_\mu(\sigma)+\Theta=P({\bf T}, W,k)>0,
$$
where $\Theta:=\lim_{n\to \infty}(1/n)\int \log \psi^k_W(x|n)\; d\mu(x)$.
By the Shannon-McMillan-Breiman theorem,
\begin{equation}
\label{e-SMB'}
\lim_{n\to \infty}\frac{1}{n}\log \mu([x|n])=-h_\mu(\sigma) \quad \mbox{ for $\mu$-a.e.~$x\in \Sigma$}.
\end{equation}
Meanwhile by Corollary \ref{cor-2.8}, there exist a measurable set $A\subset \Sigma$ with $\mu(A)>0$ and a word $J\in \Sigma_*$ such that
\begin{equation}
\label{e-psiw1'}
\lim_{n\to \infty}\frac{1}{n}\log \varphi^k(T_{x|n}^*P_{T_J^*W})=\lim_{n\to \infty}\frac{1}{n}\log \psi_W^k(x|n)=\Theta\quad \mbox{ for }x\in A.
\end{equation}

Let $0<\epsilon<(h_\mu(\sigma)+\Theta)/3$. By \eqref{e-SMB'} and \eqref{e-psiw1'}, there exist $N\in \N$ and $A_1\subset A$ with $\mu(A_1)>0$ such that for all $x\in A_1$ and $n\geq N$,
\begin{equation*}
\varphi^k(T_{x|n}^*P_{T_J^*W})\geq e^{n(\Theta-\epsilon)},\qquad \mu([x|n])\leq e^{-n(h_\mu(\sigma)-\epsilon)}.
\end{equation*}
Consequently,
\begin{equation}
\label{e-inequality}
\mu([x|n])\leq e^{-n\epsilon} \varphi^k(T_{x|n}^*P_{T_J^*W}) \quad \mbox{ for all $x\in A_1$ and $n\geq N$}.
\end{equation}
Define a Borel probability measure $\widetilde{\mu}$ on $\Sigma$ by
$$
\widetilde{\mu} (E)=\frac{\mu(E\cap A_1)}{\mu(A_1)} \quad \mbox{ for any Borel set $E\subset \Sigma$}.
$$
By \eqref{e-inequality}, there exists $C>0$ such that
\begin{equation}
\label{e-inequality'}
\widetilde{\mu}([I])\leq C e^{-n\epsilon} \varphi^k(T_{I}^*P_{T_J^*W}) \quad \mbox{ for all $I\in \Sigma_*$}.
\end{equation}

Write $\widetilde{W}=T_J^*(W)$.   Next we prove the absolute continuity of $\eta^\ba:=(P_{\widetilde{W}}\pi^\ba)_*\widetilde{\mu}$ with respect to the $k$-dimensional Lebesgue measure on $\widetilde{W}$ for ${\mathcal L}^{md}$-a.e.~$\ba$, by following the standard approaches in \cite{PeresSolomyak1996, JPS07}. Let $\rho>0$, and let $B_\rho$ denote the closed ball in $\R^{md}$ of radius $\rho$ centred at the origin.  It suffices to show that
$$
I_\rho:=\int_{B_\rho}\int_{\R^{md}}\liminf_{r\to 0}\frac{\eta^\ba(B(z,r))}{r^k}\;d\eta^\ba(z)d\ba<\infty.
$$
Applying Fatou's lemma and Fubini's theorem,
\begin{align*}
I_\rho &\leq \liminf_{r\to 0}\frac{1}{r^k} \int_{B_\rho}\int_\Sigma\int_\Sigma  {\bf 1}_{\{(x,y)\colon \|P_{\widetilde{W}}\pi^\ba(x)-P_{\widetilde{W}}\pi^\ba(y)\|\leq r\}}\; d\widetilde{\mu}(x)d\widetilde{\mu}(y)d\ba\\
&\leq \liminf_{r\to 0}\frac{1}{r^k} \int_\Sigma\int_\Sigma \mathcal L^{md}\left\{\ba\in B_\rho\colon \|P_{\widetilde{W}}\pi^\ba(x)-P_{\widetilde{W}}\pi^\ba(y)\|\leq r\right\}\; d\widetilde{\mu}(x)d\widetilde{\mu}(y)\\
&\leq c\int_\Sigma\int_\Sigma \frac{1}{\varphi^k(P_{\widetilde{W}}T_{x\w y})} \; d\widetilde{\mu}(x)d\widetilde{\mu}(y)
\qquad\qquad\mbox{(by \eqref{e-transversality})}\\
&\leq c\sum_{n=0}^\infty \sum_{I\in \Sigma_n} \frac{\widetilde{\mu}([I])^2} { \varphi^k(P_{\widetilde{W}} T_I)}\\
&\leq cC\sum_{n=0}^\infty e^{-n\epsilon}\qquad\qquad \mbox{(by \eqref{e-inequality'})}\\
&<\infty.
\end{align*}
Hence,  $\eta^\ba$ is absolutely continuous with respect to the Lebesgue measure on $\widetilde{W}=T_J^*(W)$ for ${\mathcal L}^{md}$-a.e.~$\ba\in \R^{md}$.  Since $\eta^\ba$ is supported on $P_{T^*_JW}(K^\ba)$, it follows that $\mathcal H^k(P_{T^*_JW}(K^\ba))>0$ for ${\mathcal L}^{md}$-a.e.~$\ba\in \R^{md}$.

Meanwhile,  by the self-affinity of $K^\ba$, we have  $K^\ba\supset f^{\ba}_J(K^{\ba})$.  It follows that for each ${\bf a}\in \R^{md}$,
\begin{align*}
\mathcal H^k(P_W(K^\ba))&\geq  \mathcal H^k( P_W(f^{\ba}_J(K^{\ba})))\\
&=\mathcal H^k( P_WT_J(K^{\ba}))\\
&=\mathcal H^k(P_WT_J P_{T^*_JW}(K^{\ba}))\qquad \qquad \mbox{(by \eqref{e-PWM})}\\
&=D\mathcal H^k(P_{T^*_JW}(K^{\ba})),
\end{align*}
where $D>0$ is a positive constant which depends on $P_WT_J$, and this follows from the fact that $P_WT_J\colon T^*_J(W)\to W$ is a bijective linear map (see Lemma \ref{lem-4.1}).
Hence $\mathcal H^k(P_{W}(K^\ba))>0$ for ${\mathcal L}^{md}$-a.e.~$\ba\in \R^{md}$.
This completes the proof of Theorem \ref{thm-1.1}(iii).
\end{proof}

Finally, we prove part (ii) of Theorem \ref{thm-1.1}.

\begin{proof}[Proof of Theorem \ref{thm-1.1}(ii)] Let $k\in \{1,\ldots, d-1\}$. We first prove that under an additional assumption that $\|T_i\|<1/2$ for all $1\leq i\leq m$,
\begin{equation}
     \label{e-AFFW*}
     \dim_{\rm AFF}({\bf T}, W)=\min\{k, \dim_{\rm AFF}({\bf T})\} \qquad \mbox{ for $\gamma_{d,k}$-a.e.~$W\in G(d,k)$}.
     \end{equation}

To see this, assume that $\|T_i\|<1/2$ for all $1\leq i\leq m$. By \cite[Theorem~5.3]{Fal88} and \cite[Proposition~3.1]{Sol98},
 $$
 \dim_{\rm H} K^\ba=\min\{d, \dim_{\rm AFF}({\bf T})\} \qquad \mbox{ for $\mathcal L^{md}$-a.e.~$\ba\in \R^{md}$}.
$$
This, together with the higher dimensional analog of Marstrand's projection theorem proved by Mattila \cite{Mat75}, implies that for $\mathcal L^{md}$-a.e.~$\ba\in \R^{md}$,
$$
\dim_{\rm H} P_W(K^\ba)=\min\{k, \dim_{\rm H} K^\ba\}=\min\{k, \dim_{\rm AFF}({\bf T})\}$$
for $\gamma_{d,k}$-a.e.~$W\in G(d,k)$.
Applying the Fubini theorem, we see that for $\gamma_{d,k}$-a.e.~$W\in G(d,k)$,
$$
\dim_{\rm H} P_W(K^\ba)=\min\{k, \dim_{\rm AFF}({\bf T})\} \qquad \mbox{ for $\mathcal L^{md}$-a.e.~$\ba\in \R^{md}$}.
$$
Combining this with Theorem \ref{thm-1.1}(iii) yields  \eqref{e-AFFW*}.

Next we consider the general case when $\|T_i\|<1$ for all $1\leq i\leq m$.  Take a large integer $n$ such that
$$
\|T_{I}\|<1/2 \qquad \mbox{for  all $I\in \Sigma_n$}.
$$
Define a tuple ${\bf T}^{(n)}$ of $d\times d$ matrices by ${\bf T}^{(n)}=(T_I)_{I\in \Sigma_n}$. As was proved above, \eqref{e-AFFW*} holds when ${\bf T}$ is replaced by ${\bf T}^{(n)}$. However, by performing a routine check using the definition (which we leave as an exercise for the reader), one finds that
$$
\dim_{\rm AFF}({\bf T}^{(n)})=\dim_{\rm AFF}({\bf T}) \quad \mbox{ and }\quad \dim_{\rm AFF}({\bf T}^{(n)}, W)=\dim_{\rm AFF}({\bf T}, W).
$$
This proves \eqref{e-AFFW*} for ${\bf T}$ in the general case.
\end{proof}

\section{More about $\dim_{\rm AFF}({\bf T}, W)$, $\overline{S}(\mu,{\bf T}, W)$ and $\underline{S}(\mu,{\bf T}, W)$ in some special cases}
\label{S-7}

In this section, we provide several results (Propositions \ref{pro-planar case}, \ref{pro-one-dimensional} and  \ref{pro-d=3}) on $\dim_{\rm AFF}({\bf T}, W)$ in the cases where $d=2$ or $d=3$,  or where $\dim W=1$. Additionally, we present one result (Proposition \ref{pro-planar case-measure}) concerning  $\overline{S}(\mu,{\bf T}, W)$ and $\underline{S}(\mu,{\bf T}, W)$ in the case where $d=2$.

Our first result provides a simple verifiable criterion for $\dim_{\rm AFF}({\bf T}, W)$ to be strictly less than  $\min\{1, \dim_{\rm AFF}({\bf T})\}$ in the case where $d=2$.
\begin{pro}
\label{pro-planar case}
Assume that $d=2$. Let $W\in G(2,1)$.
Then
\begin{equation*}
\label{e-cond1}
\dim_{\rm AFF}({\bf T}, W)<\min\{1, \dim_{\rm AFF}({\bf T})\}
\end{equation*}
 if and only if the following two properties hold:
\begin{itemize}
\item[(1)] $T_i^*W=W$ for all $1\leq i\leq m$;
\item[(2)]  Letting $a_i$ be the eigenvalue of $T_i^*$ corresponding to $W$, and setting $b_i=\det(T_i)/a_i$ for $1\leq i\leq m$, one has
$t<\min\{1,s\}$, where $s, t$ are the unique positive numbers so that
$$
\sum_{i=1}^m |a_i|^t=1,\qquad  \sum_{i=1}^m |b_i|^s=1.
$$
\end{itemize}
\end{pro}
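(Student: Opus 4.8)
The plan is to translate the condition $\dim_{\mathrm{AFF}}({\bf T},W) < \min\{1,\dim_{\mathrm{AFF}}({\bf T})\}$ into the pressure framework of Section~\ref{S-4}. Recall from Lemma~\ref{lem-Affd}(ii) that $\dim_{\mathrm{AFF}}({\bf T},W) = \sup\{s\in[0,1]\colon P({\bf T},W,s)\ge 0\}$, and from Proposition~\ref{pro-equivalence} that $P({\bf T},W,s)$ equals the growth rate of $\sum_{I\in\Sigma_n}\varphi^s(T_I^*P_W)$. When $d=2$ and $W\in G(2,1)$, for $s\in[0,1]$ the quantity $\varphi^s(T_I^*P_W)$ is simply $\|T_I^*P_W\|^s = \|P_{W}T_I\|^s$ (the operator $T_I^*P_W$ has rank $\le 1$, so its only nonzero singular value is its norm). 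So everything reduces to understanding the exponential growth rate of $\sum_{I\in\Sigma_n}\|P_WT_I\|^s$, compared with that of $\sum_{I\in\Sigma_n}\varphi^s(T_I)$, which governs $\dim_{\mathrm{AFF}}({\bf T})$. First I would dispose of the easy direction: if (1) and (2) hold, then $P_WT_I$ is multiplication by $\prod a_{i_j}$ along $W$, so $\|P_WT_I\| = \prod_j |a_{i_j}|$ exactly (no submultiplicativity loss), hence $\sum_{I\in\Sigma_n}\|P_WT_I\|^s = (\sum_i |a_i|^s)^n$, giving $P({\bf T},W,s) = \log\sum_i|a_i|^s$; its zero is the $t$ of (2), so $\dim_{\mathrm{AFF}}({\bf T},W)=\min\{1,t\}$. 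A short separate computation (using $\alpha_1(T_I)\alpha_2(T_I) = |\det T_I| = \prod|a_{i_j}b_{i_j}|$ and $\alpha_1(T_I)\ge \|P_WT_I\| = \prod|a_{i_j}|$, so $\alpha_2(T_I)\le\prod|b_{i_j}|$, with a matching lower bound up to a bounded factor from the fixed splitting $W\oplus W'$) shows $\dim_{\mathrm{AFF}}({\bf T})$ is determined by $\sum_i|a_i|^s$ and $\sum_i|b_i|^s$ jointly; in fact $\dim_{\mathrm{AFF}}({\bf T})\ge\min\{1,s\}$, and combined with $t<\min\{1,s\}$ we get $\dim_{\mathrm{AFF}}({\bf T},W) = \min\{1,t\} < \min\{1,\dim_{\mathrm{AFF}}({\bf T})\}$.

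For the harder converse, I would argue contrapositively: assuming (1) or (2) fails, show $\dim_{\mathrm{AFF}}({\bf T},W) = \min\{1,\dim_{\mathrm{AFF}}({\bf T})\}$. The case ``(1) fails'' is where Theorem~\ref{thm-1.1}(i)--(ii) machinery enters: if not all $T_i^*$ fix $W$, I would like to conclude that $\{T_i^*\}_{i=1}^m$ (equivalently $\{T_i\}_{i=1}^m$, by $q=1$) is irreducible — but this needs care, since $\{T_i^*\}$ could still have a common invariant line $W'\ne W$, i.e. be reducible with a different invariant direction. So I would split further: either $\{T_i\}_{i=1}^m$ is irreducible, in which case Theorem~\ref{thm-1.1}(i) (with $q=1=k$, noting $\ell\le 1$) gives the equality directly; or there is a common invariant line $W'$, and then one analyzes the cocycle $I\mapsto P_WT_I$ by writing each $T_i$ in the (fixed, $i$-independent) basis adapted to $W'$ — its action on a vector not in $W'$ has its $W$-component growing at the full rate $\alpha_1(T_I)$ eventually, because the non-$W'$ eigendirection is expanding relative to $W'$ along $\mu$-typical words, for an equilibrium measure $\mu$ of the appropriate potential (this is essentially the Oseledets pivot-position argument of Proposition~\ref{pro-key}). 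The case ``(1) holds but (2) fails'' is cleaner: then $a_i,b_i$ are as defined, $P({\bf T},W,s)=\log\sum_i|a_i|^s$ with zero $t$, and $\dim_{\mathrm{AFF}}({\bf T})=\max\{\min\{1,t\},\min\{1,s\},\dim\text{ determined when both }<1\}$; failure of $t<\min\{1,s\}$ means $t\ge\min\{1,s\}$, and a direct examination of the two-variable equation $\sum|a_i|^{\alpha_1}\cdots$ defining $\dim_{\mathrm{AFF}}({\bf T})$ shows it then coincides with $\min\{1,t\} = \dim_{\mathrm{AFF}}({\bf T},W)$.

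The main obstacle I anticipate is the sub-case of ``(1) fails'' where the family is reducible with an invariant line $W'\neq W$: here one cannot invoke irreducibility, and one must instead show by hand that projecting along $W$ does not lose dimension. Concretely, for the equilibrium measure $\mu$ of the potential $\{\log\psi_W^s(\cdot|n)\}$, I need $\lim_n\frac1n\log\|P_WT_{x|n}\| = \Lambda_1$ for $\mu$-a.e.\ $x$ (the top Lyapunov exponent of $x\mapsto T_{x_1}^*$), which by Proposition~\ref{pro-key} amounts to showing the pivot position vector of $W$ is $(1)$ $\mu$-a.e., i.e.\ $W \not\subset V_1(x)$ where $V_1(x)$ is the (codimension-one, one-dimensional here) Oseledets subspace for the non-top exponent. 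If $W$ were contained in $V_1(x)$ for a positive-measure set of $x$, then by $T^*$-equivariance of the filtration and an invariance/ergodicity argument (akin to the proof of Theorem~\ref{thm-1.2}(iii)) one would force $W$ to be a common invariant line of all $T_i^*$, i.e.\ (1) holds, contradicting our hypothesis. Once $\lim_n\frac1n\log\|P_WT_{x|n}\|=\Lambda_1$ is established $\mu$-a.e., the subadditive variational principle (Theorem~\ref{thm:subadditive-VP}) applied to $\{\log\psi_W^s(\cdot|n)\}$ together with $\psi_W^s(x|n)\ge\varphi^s(T_{x|n}^*P_W)=\|P_WT_{x|n}\|^s$ gives $P({\bf T},W,s)\ge h_\mu(\sigma)+s\Lambda_1$, which one checks equals $P({\bf T},s):=\lim_n\frac1n\log\sum_I\varphi^s(T_I^*) \ge 0$ at $s=\min\{1,\dim_{\mathrm{AFF}}({\bf T})\}$, forcing $\dim_{\mathrm{AFF}}({\bf T},W)\ge\min\{1,\dim_{\mathrm{AFF}}({\bf T})\}$; the reverse inequality is Lemma~\ref{lem-Affd}(i). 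Getting the equivariance/ergodicity step right — ruling out $W\subset V_1(x)$ on a positive-measure set without the irreducibility hypothesis — is the delicate point.
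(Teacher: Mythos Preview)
Your ``if'' direction is fine and essentially matches the paper. The ``only if'' direction has two genuine problems.

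First, you take $\mu$ to be an equilibrium measure for $\{\log\psi_W^s(\cdot|n)\}$ and then assert that $h_\mu(\sigma)+s\Lambda_1$ equals $P({\bf T},s)$ at $s=\min\{1,\dim_{\rm AFF}({\bf T})\}$. That equality would require $\mu$ to be an equilibrium measure for the \emph{full} potential $\{\log\varphi^s(T^*_{\cdot|n})\}$, not for $\psi_W^s$; with your choice you only get $h_\mu(\sigma)+s\Lambda_1\le P({\bf T},s)$ from the variational principle, which is the wrong direction. The fix is simply to take $\mu$ as an equilibrium for $\{\log\varphi^{s_*}(T^*_{\cdot|n})\}$ with $s_*=\min\{1,\dim_{\rm AFF}({\bf T})\}$; then $h_\mu(\sigma)+s_*\Lambda_1=P({\bf T},s_*)\ge 0$ holds by definition, and the inequality $P({\bf T},W,s_*)\ge h_\mu(\sigma)+s_*\Lambda_1$ (via the variational principle for $\psi_W^{s_*}$) is what you want.

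Second, and more seriously, your ``delicate'' step of ruling out $W\subset V_1(x)$ on a positive-measure set by an argument ``akin to the proof of Theorem~\ref{thm-1.2}(iii)'' does not work: that proof relies essentially on the supermultiplicativity of $\mu$ (through Lemma~\ref{lem-density}), and a generic equilibrium measure has no reason to be supermultiplicative. The paper sidesteps this entirely by exploiting the supremum over $J$ built into $\psi_W^s$: if (1) fails, the family $\{T_J^*W:J\in\Sigma_*\}$ spans $\R^2$, so it contains two distinct lines; since $V_1(x)$ is one-dimensional (when $\Lambda_1>\Lambda_2$), for \emph{every} $x\in\Sigma'$ at least one $T_J^*W$ lies outside $V_1(x)$, giving $\sup_J\Lambda_{p_1(T_J^*W,x)}=\Lambda_1$ pointwise. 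Proposition~\ref{pro-key} then yields $\lim_n\frac1n\log\psi_W^{s_*}(x|n)=s_*\Lambda_1$ everywhere on $\Sigma'$, with no ergodicity or invariance argument needed. The paper's own proof is in fact the direct (non-contrapositive) version of this: assume the strict inequality, take the $\varphi^{s_0}$-equilibrium at $s_0=\dim_{\rm AFF}({\bf T},W)$, compare $P({\bf T},s_0)>0=P({\bf T},W,s_0)$ through the variational principle, and read off from Proposition~\ref{pro-key} that $p_1(T_J^*W,x)=2$ for \emph{all} $J$ and $\mu$-a.e.\ $x$, which immediately forces $T_i^*W=W$.
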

\begin{proof} We first prove the ``if'' part of the proposition. Assume that  both (1) and (2) hold.  Since $W$ is $T_i^*$-invariant for all $i$, there exists $G\in {\rm GL}_2(\R)$ such that $GT_i^*G^{-1}$ is upper triangular and of the form
$$
\left(\begin{array}{cc} a_i & *\\
0 & b_i
\end{array}
\right)
$$
for each $1\leq i\leq m$.
A simple calculation shows that $\dim_{\rm AFF}({\bf T}, W)=\min\{1, t\}$.  Moreover, there is a closed formula for $\dim_{\rm AFF}({\bf T})$ (see \cite[Corollary 2.6]{FalconerMiao2007}), from which one can easily show that  $$\min\{1,\dim_{\rm AFF}({\bf T})\}= \min\{1,\max\{s, t\}\}.$$
Since $t<\min\{1,s\}$, we obtain that    $\dim_{\rm AFF}({\bf T}, W)<\min\{1, \dim_{\rm AFF}({\bf T})\}$.

Next we turn to the proof of the ``only if'' part. Assume that
\begin{equation}
\label{e-cond1}\dim_{\rm AFF} ({\bf T}, W)<\min\{1, \dim_{\rm AFF}({\bf T})\}.
\end{equation}
  Write $s_0=\dim_{\rm AFF} ({\bf T}, W)$. Let $\mu$ be an ergodic equilibrium measure for the subadditive potential $\{\log \varphi^{s_0}(T^*_{\cdot|n})\}_{n=1}^\infty$, and   let $\Lambda_1\geq \Lambda_2$ be the Lyapunov exponents of the cocycle $x\mapsto T_{x_1}^*$ with respect to $\mu$. Then
\begin{equation}
\label{e-hmus0}
h_\mu(\sigma)+s_0\Lambda_1=P({\bf T}, s_0)> 0.
\end{equation}
where $P({\bf T}, s_0):=\lim_{n\to \infty}(1/n)\log \left(\sum_{I\in \Sigma_n} \varphi^{s_0}(T_I^*)\right)$, and the second inequality follows from the assumption that $s_0< \min\{1, \dim_{\rm AFF}({\bf T})\}$.  Since $s_0<1$,  by
Lemma \ref{lem-Affd}(iii),
\begin{equation}
\label{e-hmus1}
P({\bf T}, W, s_0)=0,
\end{equation}
where $P({\bf T}, W, s_0)$ stands for the topological pressure of the subadditive potential $\{\log \psi_W^{s_0}(\cdot|n)\}_{n=1}^\infty$; see \eqref{e-psiw} and \eqref{e-ptws}. By the subadditive variational principle (see Theorem \ref{thm:subadditive-VP}),
$P({\bf T}, W, s_0)\geq h_\mu(\sigma)+\Theta$, where $\Theta:=\lim_{n\to \infty}(1/n)\int \log \psi_W^{s_0}(x|n)\;d\mu(x)$. Combining this with \eqref{e-hmus0} and \eqref{e-hmus1} yields that
$$
\Theta<s_0\Lambda_1.
$$
Meanwhile, since $\psi^s_W$ is submultiplicative (see  Lemma \ref{lem-subm}),  by the subadditive ergodic theorem,
$$
\lim_{n\to \infty}\frac1n\log \psi_W^{s_0}(x|n)=\Theta \quad \mbox{ for $\mu$-a.e.~$x\in \Sigma$}.
$$
 This, combined with  Proposition \ref{pro-key}, yields that
$$
\sup_{J\in \Sigma_*}s_0\Lambda_{p_1(T_J^*W,x)}=\Theta\quad \mbox{ for $\mu$-a.e.~$x\in \Sigma$},
$$
where $p_1(T_J^*W,x)$ denotes the pivot position vector of $T_J^*W$ with respect to the ordered basis ${\bf v}(x)=\{v_i(x)\}_{i=1}^2$ defined in Theorem \ref{thm:oseledets}.   Since $\Theta<s_0\Lambda_1$, it follows that $\Lambda_2<\Lambda_1$ and that for $\mu$-a.e.~$x\in \Sigma$,
$$p_1(T_J^*W,x)=2 \quad \mbox{ for all }J\in \Sigma_*,
$$
and consequently,  $T_J^*W={\rm span}\{v_2(x)\}$ for all $J\in \Sigma_*$.  This implies that $T_i^*W=W$ for all $1\leq i\leq m$.  Hence (1) holds. As was pointed out in the beginning of our proof, in this case, $\dim_{\rm AFF}({\bf T}, W)=\min\{1,t\}$ and $\min\{1, \dim_{\rm AFF}({\bf T})\}=\min\{1, \max\{s,t\}\}$. So the condition \eqref{e-cond1} implies that $t<\min\{1, s\}$. Hence (2) also holds.
\end{proof}

Our next result characterizes, in the planar case,  the circumstances under which exceptional phenomena occur for the projections of ergodic stationary measures.
\begin{pro}
\label{pro-planar case-measure}
Assume that $d=2$. Let $W\in G(2,1)$, and let $\mu$ be an ergodic $\sigma$-invariant measure on $\Sigma$. Let $\Lambda_1\geq  \Lambda_2$ be the Lyapunov exponents (counting multiplicity) of the cocycle $x\mapsto T_{x_1}^*$ with respect to $\mu$ (see Theorem \ref{thm:oseledets}). Set ${\mathcal A}=\{i\colon 1\leq i\leq m\mbox{ and } \mu([i])>0\}$. Then the following statements hold.
\begin{enumerate}
\item
$\overline{S}(\mu, {\bf T},W)<\min\{1, \dim_{\rm LY}(\mu, {\bf T})\}$
 if and only if all the following conditions are fulfilled:
\begin{enumerate}
\item $T_i^*W=W$ for all $i\in \mathcal A$;
\item For $i\in \mathcal A$, let $a_i$ be the eigenvalue of $T_i^*$ corresponding to $W$, and set $b_i=\det(T_i)/a_i$. Then
$$
\Lambda_2=\sum_{i\in \mathcal A}\mu([i])\log |a_i|<\Lambda_1= \sum_{i\in \mathcal A}\mu([i])\log |b_i|.
$$
\item $h_\mu(\sigma)>0$ and $h_\mu(\sigma)+\Lambda_2<0$.
\end{enumerate}
\medskip
\item $\overline{S}(\mu, {\bf T},W)\neq \underline{S}(\mu, {\bf T},W)$ if and only if all the following conditions are fulfilled:
\begin{enumerate}
\item $\Lambda_1>\Lambda_2$.
\item Let $\R^2=\bigoplus_{i=1}^2E_i(x)$, $x\in \Sigma'$, be the corresponding Oseledets splittings for the cocycle $x\mapsto T_{x_1}^*$ and $\mu$. Then
    $$
    0<\mu(\{x\in \Sigma'\colon E_2(x)=W\})<1.
    $$
\item $h_\mu(\sigma)>0$ and $h_\mu(\sigma)+\Lambda_2<0$.
\end{enumerate}
\end{enumerate}
\end{pro}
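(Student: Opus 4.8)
The plan is to reduce the whole statement to an explicit description of the function $x\mapsto S(\mu,{\bf T},W,x)$, which in the planar case $k=1$ takes at most two values. Fix the full-measure set $\Sigma'$ and the measurable basis ${\bf v}(x)=\{v_1(x),v_2(x)\}$ from Theorem \ref{thm:oseledets}, and for $x\in\Sigma'$ let $p_1(W,x)\in\{1,2\}$ be the pivot position of $W$ with respect to ${\bf v}(x)$. Since $\Gamma(s)=s\,\Lambda_{p_1(W,x)}$ for $s\in[0,1]$, Lemma \ref{lem-smuw} gives $S(\mu,{\bf T},W,x)=\sigma_{p_1(W,x)}$, where $\sigma_j:=1$ if $h_\mu(\sigma)+\Lambda_j\ge0$ and $\sigma_j:=-h_\mu(\sigma)/\Lambda_j$ otherwise (recall $\Lambda_1,\Lambda_2<0$ since the $T_i$ are contractions). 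Comparing with \eqref{e-Gs} and Definition \ref{de-Lyapunov}, $\sigma_1=\min\{1,\dim_{\rm LY}(\mu,{\bf T})\}$. A short monotonicity check shows $\sigma_1\ge\sigma_2$ always, with $\sigma_1>\sigma_2$ precisely when $\Lambda_1>\Lambda_2$, $h_\mu(\sigma)>0$ and $h_\mu(\sigma)+\Lambda_2<0$ — i.e. condition (c) together with $\Lambda_1>\Lambda_2$. Finally, if $\Lambda_1>\Lambda_2$ the Oseledets decomposition is $\R^2=E_1(x)\oplus E_2(x)$ with $\dim E_i(x)=1$ and $v_i(x)$ spanning $E_i(x)$, so $p_1(W,x)=2\iff W=E_2(x)$; if $\Lambda_1=\Lambda_2$ then $\sigma_1=\sigma_2$ regardless of the pivot. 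Consequently, when $\Lambda_1=\Lambda_2$ one has $\overline S=\underline S=\sigma_1=\sigma_2$, and when $\Lambda_1>\Lambda_2$ one has $\overline S(\mu,{\bf T},W)=\sigma_1$ if $\mu(\{x:W\ne E_2(x)\})>0$ and $=\sigma_2$ otherwise, while $\underline S(\mu,{\bf T},W)=\sigma_2$ if $\mu(\{x:W=E_2(x)\})>0$ and $=\sigma_1$ otherwise (measurability of the relevant sets coming from measurability of $x\mapsto E_2(x)$).

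Granting this description, part (2) is immediate: $\overline S\ne\underline S$ forces $\sigma_1>\sigma_2$ — equivalently (2)(a) and (2)(c) — and forces both values $\sigma_1,\sigma_2$ to be attained on sets of positive measure, i.e. $0<\mu(\{x:W=E_2(x)\})<1$, which is (2)(b); conversely (2)(a)--(c) yield $\sigma_1>\sigma_2$ with both values attained, so $\overline S=\sigma_1>\sigma_2=\underline S$. For part (1): since $\overline S\le\sigma_1=\min\{1,\dim_{\rm LY}(\mu,{\bf T})\}$ always, the strict inequality $\overline S<\min\{1,\dim_{\rm LY}(\mu,{\bf T})\}$ holds iff $\mu(\{x:W=E_2(x)\})=1$ and $\sigma_1>\sigma_2$; the second (which then forces $\Lambda_1>\Lambda_2$) is exactly condition (1)(c), so it only remains to identify the first with (1)(a)--(b).

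The heart of the argument is therefore the equivalence
\[
\mu\big(\{x\in\Sigma':\ \Lambda_1>\Lambda_2\ \text{and}\ E_2(x)=W\}\big)=1
\quad\Longleftrightarrow\quad
\text{(1)(a) and (1)(b).}
\]
For ``$\Rightarrow$'': the $T_{x_1}^*$-equivariance $T_{x_1}^*E_2(x)=E_2(\sigma x)$ of Theorem \ref{thm:oseledets}(2), together with $\sigma$-invariance of $\mu$, gives $T_{x_1}^*W=W$ for $\mu$-a.e.~$x$; as $\{x:T_{x_1}^*W=W\}$ is a union of cylinders and $\mathcal A$ is exactly the set of letters $i$ with $\mu([i])>0$, this forces $T_i^*W=W$ for all $i\in\mathcal A$, i.e. (1)(a). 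Then $a_i$ is defined for $i\in\mathcal A$, and for a fixed nonzero $v\in W$ one has $T_{x|n}^*v=\big(\prod_{j=1}^n a_{x_j}\big)v$ for $\mu$-a.e.~$x$, so Birkhoff's ergodic theorem gives $\tfrac1n\log\|T_{x|n}^*v\|\to\sum_{i\in\mathcal A}\mu([i])\log|a_i|$; by Remark \ref{rem-3.2}(ii) this limit equals $\Lambda_2$ because $W=E_2(x)$, and combining with $\sum_i d_i\lambda_i=\Lambda_1+\Lambda_2$ (Theorem \ref{thm:oseledets}(4)) applied to $\det T_{x|n}^*=\prod_j\det T_{x_j}$ — so $\Lambda_1+\Lambda_2=\sum_{i\in\mathcal A}\mu([i])\log|\det T_i|=\sum_{i\in\mathcal A}\mu([i])\log|a_ib_i|$ — yields $\Lambda_1=\sum_{i\in\mathcal A}\mu([i])\log|b_i|$; with $\Lambda_2<\Lambda_1$ (part of the hypothesis) this is (1)(b). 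For ``$\Leftarrow$'': (1)(b) gives $\Lambda_1>\Lambda_2$, hence the splitting $E_1\oplus E_2$ exists; (1)(a) makes $T_{x_1}^*W=W$ along $\mu$-a.e.~orbit, so the Lyapunov exponent of a fixed nonzero $v\in W$ equals $\sum_{i\in\mathcal A}\mu([i])\log|a_i|=\Lambda_2$ by (1)(b); but by Remark \ref{rem-3.2}(ii) this exponent is $\Lambda_1$ whenever $v\notin E_2(x)$ and $\Lambda_2$ when $v\in E_2(x)$, so since $\Lambda_1>\Lambda_2$ we must have $W=E_2(x)$ for $\mu$-a.e.~$x$.

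I expect this Birkhoff-plus-Oseledets identification of the fixed direction $W$ inside the Oseledets splitting to be the only genuinely non-bookkeeping step; everything else is the two-valued analysis of $S(\mu,{\bf T},W,x)$ and elementary comparisons of $\sigma_1$ and $\sigma_2$.
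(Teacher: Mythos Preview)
Your proof is correct and follows essentially the same approach as the paper's own proof: both reduce everything to the two-valued function $x\mapsto S(\mu,{\bf T},W,x)$ via Lemma~\ref{lem-smuw}, identify $p_1(W,x)=2$ with $W=E_2(x)$, and use the equivariance $T_{x_1}^*E_2(x)=E_2(\sigma x)$ to deduce (1)(a). Your write-up is in fact more complete than the paper's, which leaves the Birkhoff/determinant verification of the explicit formulas in (1)(b) and the full converse direction to the reader; you spell these out correctly.
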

\begin{proof} From Definition \ref{de-Lyapunov} and \eqref{e-Gs}, it is readily checked that
\begin{equation}
\label{e-8.41}
\min\{1, \dim_{\rm LY}(\mu, {\bf T})\}=\sup\{0\leq s\leq 1\colon h_\mu(\sigma)+s\Lambda_1\geq 0\}.
\end{equation}
Let ${\bf v}(x)=\{v_i(x)\}_{i=1}^2$, where $x\in \Sigma'$, be the ordered basis of $\R^2$ defined as in Theorem \ref{thm:oseledets}, and let $(p_i(W,x))_{i=1}^2$ denote the pivot position vector of $W$ with respect to $\{v_i(x)\}_{i=1}^2$. From Lemma \ref{lem-smuw}, we obtain that
\begin{equation}
\label{e-8.42}
\overline{S}(\mu,{\bf T}, W)=\esssup_{x\in \Sigma'}\;\sup\{0\leq s\leq 1\colon h_\mu(\sigma)+s\Lambda_{p_1(W,x)}\geq 0\}
\end{equation}
and
\begin{equation}
\label{e-8.43}
\underline{S}(\mu, {\bf T},W)=\essinf_{x\in \Sigma'}\;\sup\{0\leq s\leq 1\colon h_\mu(\sigma)+s\Lambda_{p_1(W,x)}\geq 0\}.
\end{equation}

By \eqref{e-8.41} and \eqref{e-8.42}, we see that $\overline{S}(\mu,{\bf T}, W)<\min\{1, \dim_{\rm LY}(\mu, {\bf T})\}$ if and only if
$p_1(W,x)=2$ for $\mu$-a.e.~$x\in \Sigma'$ and moreover
\begin{equation}
\label{e-8.44}\sup\{0\leq s\leq 1\colon h_\mu(\sigma)+s\Lambda_{2}\geq 0\}<\sup\{0\leq s\leq 1\colon h_\mu(\sigma)+s\Lambda_{1}\geq 0\}.
\end{equation}
Notice that \eqref{e-8.44} holds if and only if the following two conditions are satisfied: (i) $\Lambda_2<\Lambda_1$; (ii) $h_\mu(\sigma)>0$ and $h_\mu(\sigma)+\Lambda_2<0$. Meanwhile, the condition that $p_1(W,x)=2$ for $\mu$-a.e.~$x\in \Sigma'$ is equivalent to ${\rm span}\{v_2(x)\}=W$ for $\mu$-a.e.~$x\in \Sigma'$. Observe that if $\Lambda_1>\Lambda_2$, then ${\rm span}\{v_2(x)\}=E_2(x)$ for $\mu$-a.e.~$x\in \Sigma'$, where $\bigoplus_{i=1}^2E_i(x)$, with $x\in \Sigma'$, is  the associated Oseledets splitting for the cocycle $x\mapsto T_{x_1}^*$ with respect to $\mu$. Since $T_{x_1}E_2(x)=E_2(\sigma x)$ a.e., the condition that ${\rm span}\{v_2(x)\}=W$ for $\mu$-a.e.~$x\in \Sigma'$ implies that $T_i^*W=W$ for all $i\in \mathcal A$. Hence if $\overline{S}(\mu,{\bf T}, W)<\min\{1, \dim_{\rm LY}(\mu, {\bf T})\}$, then all the statements (a), (b) and (c) in part (1) hold. Conversely, if all the statements (a), (b) and (c) in part (1) hold, then it is direct to apply \eqref{e-8.41}-\eqref{e-8.42} to conclude  that $\overline{S}(\mu,{\bf T}, W)<\min\{1, \dim_{\rm LY}(\mu, {\bf T})\}$. This proves (1).

To see (2), by \eqref{e-8.42}-\eqref{e-8.43}, we see that $\overline{S}(\mu, {\bf T},W)>\underline{S}(\mu,{\bf T}, W)$ if and only if the following three conditions are satisfied: \begin{itemize}
\item[(i)] $\Lambda_1>\Lambda_2$;
\item[(ii)] $0<\mu\{x\in \Sigma'\colon p_1(W,x)=2\}<1$; and
\item[(iii)] \eqref{e-8.44} holds.
\end{itemize}
This is enough to conclude (2), since $p_1(W,x)=2$ is equivalent to ${\rm span}\{v_2(x)\}=W$, and in the case when $\Lambda_1>\Lambda_2$, one has $E_2(x)={\rm span}\{v_2(x)\}$ a.e.
\end{proof}

The following result provides a formula for $\dim_{\rm AFF}({\bf T}, W)$ in the case where $d\geq 2$ and $\dim W=1$.
\begin{pro}
\label{pro-one-dimensional}
Let $d\geq 2$ and  $W\in G(d,1)$. Then
$$
\dim_{\rm AFF}({\bf T}, W)=\min\{1, \dim_{\rm AFF} ({\bf T}_X)\},
$$
where $X={\rm span}\left(\bigcup_{J\in \Sigma_*}T^*_J(W)\right)$ and ${\bf T}_X:=(T_1^*|_X,\ldots, T_m^*|_X)$, in which $T_i^*|_X$ stands for the restriction of $T_i^*$ on $X$.
\end{pro}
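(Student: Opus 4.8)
The plan is to reduce the computation of $\dim_{\rm AFF}({\bf T},W)$ to the subadditive pressure $P({\bf T},W,s)$ and then, using that $\dim W=1$, to identify this pressure with the one computing $\dim_{\rm AFF}({\bf T}_X)$. Write $k=\dim W=1$ and fix a unit vector $w$ with $W={\rm span}\{w\}$. By Lemma \ref{lem-finite} (applied with $M_i=T_i^*$), the subspace $X$ is $T_i^*$-invariant for every $i$, so $S_i:=T_i^*|_X$ is a well-defined invertible linear operator on $X$, and $X={\rm span}\{T_J^*w\colon J\in\Sigma_*\}$ since each $T_J^*W$ is the line spanned by $T_J^*w$. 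Because $W$ is one-dimensional, for all $I,K\in\Sigma_*$ the operator $T_I^*P_{T_K^*W}$ has rank at most one, and for $s\in[0,1]$ one computes
\[
\varphi^s\left(T_I^*P_{T_K^*W}\right)=\left\|T_I^*P_{T_K^*W}\right\|^s=\left(\frac{\|T_{KI}^*w\|}{\|T_K^*w\|}\right)^s=\left\|(T_I^*|_X)\,v_K\right\|^s,
\]
where $v_K:=T_K^*w/\|T_K^*w\|\in X$ is a unit vector. Hence $\psi_W^s(I)=\sup_{K\in\Sigma_*}\|(T_I^*|_X)v_K\|^s$ for $s\in[0,1]$ (recall \eqref{e-psiw}).

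The next step is to show that $\psi_W^s(I)$ is comparable, uniformly in $I$, to $\varphi^s(T_I^*|_X)=\alpha_1(T_I^*|_X)^s$. The bound $\psi_W^s(I)\le\|T_I^*|_X\|^s$ is immediate since each $v_K$ is a unit vector of $X$. For the reverse bound, note that $\{v_K\colon K\in\Sigma_*\}$ spans $X$ (because $\{T_J^*w\}$ does), so one may fix words $K_1,\dots,K_e\in\Sigma_*$, where $e=\dim X$, for which $\{v_{K_1},\dots,v_{K_e}\}$ is a basis of $X$. There is then a constant $C>0$ such that every unit vector of $X$ has all coordinates of modulus at most $C$ in this basis, whence $\|A\|\le eC\max_{1\le j\le e}\|Av_{K_j}\|$ for every linear operator $A$ on $X$; taking $A=T_I^*|_X$ gives $\psi_W^s(I)\ge(eC)^{-s}\|T_I^*|_X\|^s$ for all $I\in\Sigma_*$ and $s\in[0,1]$. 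Since $T_I^*|_X=S_{i_n}\cdots S_{i_1}$ for $I=i_1\cdots i_n$, summing over $I\in\Sigma_n$, dividing by $n$, taking logarithms and letting $n\to\infty$ gives, for $s\in[0,1]$,
\[
P({\bf T},W,s)=\lim_{n\to\infty}\frac1n\log\sum_{I\in\Sigma_n}\psi_W^s(I)=\lim_{n\to\infty}\frac1n\log\sum_{i_1,\dots,i_n}\varphi^s(S_{i_1}\cdots S_{i_n})=:Q(s),
\]
where $P({\bf T},W,s)$ is as in \eqref{e-ptws} and the last expression, after relabelling the summation, is precisely the quantity defining $\dim_{\rm AFF}({\bf T}_X)$ through \eqref{e-aff}.

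To conclude, I would use that $Q$ is finite, continuous and strictly decreasing on $[0,\infty)$ — strict monotonicity because $\|S_i\|\le\|T_i\|<1$ forces $\alpha_1(S_J)<1$ for every $J$ — together with $Q(0)=\log m\ge 0$ and $Q(s)\to-\infty$, so that $\dim_{\rm AFF}({\bf T}_X)$ equals the unique $t_0\ge 0$ with $Q(t_0)=0$. By Lemma \ref{lem-Affd}(ii), $\dim_{\rm AFF}({\bf T},W)=\sup\{s\in[0,1]\colon P({\bf T},W,s)\ge0\}=\sup\{s\in[0,1]\colon Q(s)\ge0\}$. If $t_0\ge1$ then $Q\ge0$ throughout $[0,1]$, so this supremum is $1=\min\{1,t_0\}$; if $t_0<1$ then $Q\ge0$ exactly on $[0,t_0]$, so the supremum is $t_0=\min\{1,t_0\}$. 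In both cases $\dim_{\rm AFF}({\bf T},W)=\min\{1,\dim_{\rm AFF}({\bf T}_X)\}$, as desired.

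The point requiring the most care is the uniform (in $I$) two-sided comparison $\psi_W^s(I)\asymp\varphi^s(T_I^*|_X)$ in the second step: one needs that a single fixed finite subfamily $\{v_{K_1},\dots,v_{K_e}\}$ of $\{v_K\}$ controls the operator norm of $T_I^*|_X$ with a constant independent of $I$. Once this is established, the remaining arguments are routine bookkeeping with Lemmas \ref{lem-finite}, \ref{lem-Pws} and \ref{lem-Affd} and with the monotonicity of the subadditive pressure.
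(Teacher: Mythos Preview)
Your argument is correct and takes a genuinely different, more elementary route than the paper. The paper proves the inequality $\dim_{\rm AFF}({\bf T},W)\ge\min\{1,\dim_{\rm AFF}({\bf T}_X)\}$ by choosing an ergodic equilibrium measure $\mu$ for the potential $\{\log\varphi^{s_0}(T^*_{\cdot|n}|_X)\}$, then invoking the Oseledets filtration of the cocycle $x\mapsto T^*_{x_1}|_X$ to show that some $T_J^*W$ always escapes the first filtration step $\widetilde V_1(x)$; this gives $\psi_W^{s_0}(x|n)$ a growth rate at least $s_0\lambda_1$ and feeds back into the variational principle. The reverse inequality is obtained from $\varphi^s(T_I^*P_W)=\varphi^s(T_I^*|_XP_W)$ and Lemma~\ref{lem-Affd}(i).

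You bypass all of this ergodic machinery. The crux of your approach is the uniform two-sided comparison $\psi_W^s(I)\asymp\|T_I^*|_X\|^s$, which you establish by the purely linear-algebraic observation that a fixed basis $\{v_{K_1},\ldots,v_{K_e}\}$ of $X$ drawn from the orbit $\{v_K\}$ controls the operator norm with a constant independent of $I$. This yields $P({\bf T},W,s)=Q(s)$ identically on $[0,1]$, from which the result drops out by monotonicity. Your argument is shorter, self-contained, and avoids Proposition~\ref{pro-key}, Corollary~\ref{cor-2.8}, and Theorem~\ref{thm:subadditive-VP} entirely; the trade-off is that it exploits the rank-one structure of $P_{T_K^*W}$ in an essential way (so that $\varphi^s$ collapses to $\|\cdot\|^s$), whereas the paper's equilibrium-measure technique is the same template used throughout Sections~\ref{S-5}--\ref{S-7} for higher-rank projections.
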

\begin{proof}
Clearly, the subspace $X$ is $T_i^*$-invariant for all $1\leq i\leq m$. Write $$s_0=\min\{1, \dim_{\rm AFF} ({\bf T}_X)\}.$$
Then $P({\bf T}_X, s_0)\geq 0$, where $P({\bf T}_X, s_0)$ stands for the topological pressure of the subadditive potential $\{\log \varphi^{s_0}(T_{\cdot|n}^*|_X)\}_{n=1}^\infty$.

Let $\mu$ be an ergodic equilibrium measure for the potential $\{\log \varphi^{s_0}(T_{\cdot|n}^*|_X)\}_{n=1}^\infty$. Moreover let $\lambda_1>\cdots>\lambda_r$ be the distinct Lyapunov exponents of the cocycle $x\mapsto T_{x_1}^*|_X$ with respect to $\mu$ and let
$$
X=V_0(x)\supsetneqq \cdots \supsetneqq V_r(x),\quad x\in \Sigma',
$$
be the associated Oseledets filtration, where $\Sigma'$ is a $\sigma$-invariant Borel subset of $\Sigma$ with $\mu(\Sigma')=1$. By Theorem \ref{thm:subadditive-VP}, $$h_\mu(\sigma)+s_0\lambda_1= P({\bf T}_X, s_0)\geq 0.$$

For each $x\in \Sigma'$, since $X=V_0(x)\supsetneqq V_1(x)$, it follows that
\begin{equation}
\label{e-Tsupset}
\left(\bigcup_{J\in \Sigma_*} T_J^*(W)\right)\cap \left(V_0(x)\backslash V_1(x)\right)\neq \emptyset,
\end{equation}
or equivalently, $\bigcup_{J\in \Sigma_*} T_J^*(W)\not\subset V_1(x)$; otherwise, $$V_0(x)= {\rm span}\left(\bigcup_{J\in \Sigma_*}T^*_J(W)\right)\subset V_1(x),$$
leading to a contradiction. Hence by \eqref{e-Tsupset}, for each $x\in \Sigma'$,
\begin{align*}
\lim_{n\to \infty}\frac{1}{n}\log \psi_W^{s_0}(x|n)&\geq \sup_{J\in \Sigma_*} \lim_{n\to \infty} \frac1n \log \varphi^{s_0}(T_{x|n}^*P_{T_J^*W})\\
&=s_0\sup_{J\in \Sigma_*} \lim_{n\to \infty} \frac1n \log \|T_{x|n}^*P_{T_J^*W}\|\\
&=s_0\lambda_1,
\end{align*}
where $\psi_W^{s_0}$ is defined as in \eqref{e-psiw}.   Applying Theorem \ref{thm:subadditive-VP} to the subadditive potential
$\{\log \psi_W^{s_0}(\cdot|n)\}_{n=1}^\infty$
gives
$$
P({\bf T}, W, s_0)\geq h_\mu(\sigma)+ \lim_{n\to \infty} \frac{1}{n}\int \log \psi_W^{s_0}(x|n)\; d\mu(x)\geq  h_\mu(\sigma)+s_0\lambda_1\geq 0,
$$
where $P({\bf T}, W, s_0)$ is defined as in \eqref{e-ptws}. By Lemma \ref{lem-Affd}(ii), $\dim_{\rm AFF}({\bf T}, W)\geq s_0$. Meanwhile,  since $X$ is $T_i^*$-invariant for all $1\leq i\leq m$ and $W\subset X$, it follows that $$\varphi^s(T_I^*P_W)=\varphi^s(T_I^*|_XP_W) \quad \mbox{ for all $s\geq 0$ and $I\in \Sigma_*$},
$$ from which we obtain that  $\dim_{\rm AFF}({\bf T}, W)=\dim_{\rm AFF}({\bf T}|_X, W)$. Then applying Lemma \ref{lem-Affd}(i) to ${\bf T}|_X$,  we obtain the reverse direction $\dim_{\rm AFF}({\bf T}, W)\leq s_0$.
\end{proof}

Our last result in this section provides some necessary conditions for $\dim_{\rm AFF}({\bf T}, W)$ to be strictly less than  $\min\{\dim W, \dim_{\rm AFF}({\bf T})\}$ in the case where $d=3$.

\begin{pro}
\label{pro-d=3}
Let $d=3$ and $W\in G(3, k)$, where $k=1$ or $2$. Suppose that $\dim_{\rm AFF}({\bf T}, W)<\min\{k, \dim_{\rm AFF}({\bf T})\}$.  Then ${\bf T}$ is reducible. More precisely, one of the following scenarios occurs.
\begin{itemize}
\item[(i)] $k=1$,  and either $T_i^*W=W$ for all $1\leq i\leq m$, or $W$ is contained in a $2$-dimensional subspace $V$ of $\R^3$ such that $T_i^*V=V$ for all $1\leq i\leq m$;
\item [(ii)] $k=2$, and either $T_i^*W=W$ for all $1\leq i\leq m$, or $W$ contains a $1$-dimensional subspace $V$ of $\R^3$ such that $T_i^*V=V$ for all $1\leq i\leq m$.
\end{itemize}
\end{pro}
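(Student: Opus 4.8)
The plan is to mimic the proof of Proposition~\ref{pro-planar case}, combining the subadditive variational principle, Oseledets's theorem, and the pivot--position analysis of Proposition~\ref{pro-key}. Set $s_0=\dim_{\rm AFF}({\bf T},W)$. First I would observe $s_0>0$: since $\varphi^0\equiv 1$ we have $\psi_W^0\equiv 1$, hence $P({\bf T},W,0)=\log m>0$, so $s_0>0$ by Lemma~\ref{lem-Affd}(ii) together with the continuity in Lemma~\ref{lem-Pws}(ii). Thus $0<s_0<\min\{k,\dim_{\rm AFF}({\bf T})\}$. Let $\mu$ be an ergodic equilibrium measure for the subadditive potential $\{\log\varphi^{s_0}(T^*_{\cdot|n})\}_{n=1}^\infty$, let $\Lambda_1\ge\Lambda_2\ge\Lambda_3$ be the Lyapunov exponents of $x\mapsto T^*_{x_1}$ with respect to $\mu$, and let $\Sigma'$, $\{{\bf v}(x)\}_{x\in\Sigma'}$ be as in Theorem~\ref{thm:oseledets}. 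Since $s_0<\dim_{\rm AFF}({\bf T})$, the topological pressure $P({\bf T},s_0):=\lim_n\frac1n\log\sum_{I\in\Sigma_n}\varphi^{s_0}(T_I^*)$ is positive (cf.\ \eqref{e-aff} and Lemma~\ref{lem-phis}), and as $\mu$ is an equilibrium measure, $h_\mu(\sigma)+\theta=P({\bf T},s_0)$, where $\theta:=\lim_n\frac1n\int\log\varphi^{s_0}(T^*_{x|n})\,d\mu(x)=\sum_{j=1}^{\lfloor s_0\rfloor}\Lambda_j+(s_0-\lfloor s_0\rfloor)\Lambda_{\lfloor s_0\rfloor+1}$ by \eqref{e-Gs}. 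Since $s_0<k$, Lemma~\ref{lem-Affd}(iii) gives $P({\bf T},W,s_0)=0$, so applying Theorem~\ref{thm:subadditive-VP} to $\{\log\psi_W^{s_0}(\cdot|n)\}$ and to this $\mu$ yields $\Theta:=\lim_n\frac1n\int\log\psi_W^{s_0}(x|n)\,d\mu(x)\le-h_\mu(\sigma)<\theta$. By Lemma~\ref{lem-subm} and Kingman's theorem $\lim_n\frac1n\log\psi_W^{s_0}(x|n)=\Theta$ for $\mu$-a.e.\ $x$, so Proposition~\ref{pro-key} gives, for $\mu$-a.e.\ $x\in\Sigma'$,
\begin{equation*}
\sup_{J\in\Sigma_*}\Big(\sum_{j=1}^{\lfloor s_0\rfloor}\Lambda_{p_j(T^*_JW,x)}+(s_0-\lfloor s_0\rfloor)\Lambda_{p_{\lfloor s_0\rfloor+1}(T^*_JW,x)}\Big)=\Theta<\theta,
\end{equation*}
where $(p_1(V,x),\ldots,p_k(V,x))$ denotes the pivot position vector of $V$ with respect to ${\bf v}(x)$.

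Since any pivot position vector satisfies $p_j\ge j$ and the $\Lambda_i$ are non-increasing, each bracketed sum above is $\le\theta$, so the strict inequality forces that for $\mu$-a.e.\ $x$ and \emph{every} $J\in\Sigma_*$ the pivot vector of $T^*_JW$ relative to ${\bf v}(x)$ is not one that attains $\theta$. I would then carry out the (finite) case analysis in $\R^3$ according to $k\in\{1,2\}$, to $\lfloor s_0\rfloor\in\{0,1\}$, to whether or not $s_0$ is an integer, and to how many of $\Lambda_1,\Lambda_2,\Lambda_3$ coincide. The pattern $\Lambda_1=\Lambda_2=\Lambda_3$ is ruled out at once, since then every pivot sum equals $\theta$. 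For a plane $V$ in $\R^3$ the pivot vector relative to $\{v_1(x),v_2(x),v_3(x)\}$ is determined by which of $v_3(x)\in V$, $V={\rm span}\{v_2(x),v_3(x)\}$, $V\subset{\rm span}\{v_2(x),v_3(x)\}$ hold, and analogously for lines; this is the echelon/exterior-algebra description underlying Lemma~\ref{lem-2.5'}. Using it, the constraint translates, in each surviving case, into one of two alternatives valid for all $J$ and a.e.\ $x$: either $T^*_JW\subseteq V_1(x)$, the first nontrivial step of the Oseledets filtration of $x\mapsto T^*_{x_1}$ (this happens for $k=1$, and for $k=2$ when $s_0\le 1$ or $\Lambda_2=\Lambda_3$), or $T^*_JW\supseteq E_r(x)$, the bottom Oseledets space (this happens for $k=2$ when $1<s_0<2$ and $\Lambda_2>\Lambda_3$). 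These translations are legitimate because in every surviving case the exponent governing the relevant subspace has multiplicity one, so $V_1(x)$ (of dimension $d-d_1\le 2$), respectively $E_r(x)$ (of dimension $d_r=1$), is canonical; whenever the multiplicity would be larger, the required pivot configuration is impossible and nothing is to be proved.

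In the first alternative set $X={\rm span}\big(\bigcup_{J\in\Sigma_*}T^*_JW\big)$; by Lemma~\ref{lem-finite} this equals ${\rm span}\big(\bigcup_{|J|\le d-1}T^*_JW\big)$ and satisfies $T^*_iX=X$ for all $i$, while $W\subseteq X$ and, from $T^*_JW\subseteq V_1(x)$ valid a.e.\ $x$, $\dim X\le 2$. In the second alternative set $Y=\bigcap_{J\in\Sigma_*}T^*_JW$; then $T^*_iY=Y$ for all $i$ (as $T^*_i$ is invertible and $\{Ji:J\in\Sigma_*\}\subseteq\Sigma_*$), $Y\subseteq W$, and $Y\supseteq E_r(x)$ a.e., so $1\le\dim Y\le 2$. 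Either way one obtains a proper nonzero $T^*_i$-invariant subspace, so $\{T^*_i\}_{i=1}^m$, hence by Lemma~\ref{lem-irreducible} also $\{T_i\}_{i=1}^m$, is reducible; and its dimension yields the precise scenario. For $k=1$: $\dim X=1$ gives $T^*_iW=W$ for all $i$, and $\dim X=2$ gives $W\subseteq V:=X$ with $T^*_iV=V$, which is (i). For $k=2$: in the first alternative $X=W$ (both $2$-dimensional), so $T^*_iW=W$; in the second alternative $\dim Y=2$ forces $Y=W$, hence $T^*_iW=W$, while $\dim Y=1$ gives the invariant line $V:=Y\subseteq W$, which is (ii). The step I expect to be the main obstacle is the second one: carefully enumerating, for each degenerate Lyapunov pattern and for integer versus non-integer $s_0$, exactly which pivot configurations attain $\theta$, and checking that the surviving configuration always pins $T^*_JW$ into a genuinely basis-independent Oseledets subspace; once that is in place, the conclusion is soft.
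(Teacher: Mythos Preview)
Your proposal is correct and follows essentially the same route as the paper: pick an ergodic equilibrium measure for the relevant singular-value potential, compare $\theta$ (for $\varphi^{s_0}$) with $\Theta$ (for $\psi_W^{s_0}$) via the subadditive variational principle, and then use Proposition~\ref{pro-key} to force the pivot vectors of $T_J^*W$ into a restricted configuration for all $J$.

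Two minor differences are worth noting. First, you take $s_0=\dim_{\rm AFF}({\bf T},W)$, whereas the paper takes $s_0=\min\{k,\dim_{\rm AFF}({\bf T})\}$; the paper's choice makes the case split for $k=2$ simply ``$s_0\le 1$'' versus ``$s_0\in(1,2]$'' without needing to separately track Lyapunov multiplicities, so the bookkeeping is a bit lighter. Second, in the subcase $k=2$, $p_2=3$, you build the invariant subspace as $Y=\bigcap_{J\in\Sigma_*}T_J^*W$, which is canonical; the paper instead fixes a generic $x$ and takes $V=\operatorname{span}\big(\bigcup_{J}(T_J^*)^{-1}v_3(x)\big)$. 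Both constructions work, and your $Y$ is arguably cleaner since it does not reference $x$. For $k=1$ the paper shortcuts by invoking Proposition~\ref{pro-one-dimensional}, but your direct argument is the same computation unfolded.
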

\begin{proof}
We first consider the case where $k=1$.  Then part (i) follows directly from Proposition \ref{pro-one-dimensional}. Indeed, letting $X={\rm span}\left(\bigcup_{J\in \Sigma_*}T_J^*W\right)$, and given that $k=1$ and $\dim_{\rm AFF}({\bf T}, W)<\min\{1, \dim_{\rm AFF}({\bf T})\},$ it follows from  Proposition \ref{pro-one-dimensional} that  $X\neq \R^3$. Since $X$ is $T_i^*$-invariant, the conclusion in (i) follows.

In the remaining part of the proof,  we consider the case where $k=2$.  Write $$s_0=\min\{2, \dim_{\rm AFF}({\bf T})\}.$$
 Then  $P({\bf T}, s_0)\geq 0$, where $P({\bf T}, s_0)$ stands for the topological pressure of the sub-additive potential $\{\log \varphi^{s_0}(T_{\cdot|n}^*)\}_{n=1}^\infty$.

Let $\mu$ be an ergodic equilibrium measure for the potential $\{\log \varphi^{s_0}(T_{\cdot|n}^*)\}_{n=1}^\infty$. Let
$\Lambda_1\geq \Lambda_2\geq \Lambda_3$ be the Lyapunov exponents (counting multiplicity)  of the cocycle $x\mapsto T_{x_1}^*$ with respect to $\mu$, and let ${\bf v}(x)=\{v_i(x)\}_{i=1}^3$, where $x\in \Sigma'$, be the corresponding ordered basis of $\R^3$ given  in Theorem \ref{thm:oseledets}.
Below, we consider the following two cases separately: (a) $s_0\in [0,1]$;  (b) $s_0\in (1,2]$.

First assume $s_0\in [0,1]$. By Theorem \ref{thm:subadditive-VP},
\begin{equation}
\label{e-mulambda}
h_\mu(\sigma)+s_0\Lambda_1=P({\bf T}, s_0)\geq 0.
\end{equation}
 Meanwhile, by Proposition \ref{pro-key},
\begin{equation}
\label{e-mulambda1}
\lim_{n\to \infty} \frac{1}{n} \log \psi_W^{s_0}(x|n)=\Theta=s_0\sup_{J\in \Sigma_*} \Lambda_{p_1(T_J^*W,x)}
\end{equation}
for each $x\in \Sigma'$, where $\Theta=\lim_{n\to \infty} \frac{1}{n} \int \log \psi_W^{s_0}(x|n)\; d\mu(x)$. However, by assumption,  $\dim_{\rm AFF}({\bf T}, W)<s_0$. It follows from Lemma \ref{lem-Affd}(ii) that $P({\bf T}, W, s_0)<0$. Consequently, by Theorem \ref{thm:subadditive-VP}, $h_\mu(\sigma)+\Theta\leq P({\bf T}, W, s_0)<0$. This, combined with \eqref{e-mulambda} and \eqref{e-mulambda1}, yields that $\sup_{J\in \Sigma_*} \Lambda_{p_1(T_J^*W,x)}<\Lambda_1$ for $\mu$-a.e. $x\in \Sigma'$.  That is, for $\mu$-a.e.~$x\in \Sigma'$, $p_1(T_J^*W,x)\in \{2,3\}$ for all $J\in \Sigma_*$.  This implies that for $\mu$-a.e.~$x\in \Sigma'$,
$$
\bigcup_{J\in \Sigma_*} T_J^*W\subset {\rm span}\{v_2(x), v_3(x)\}.
$$
Since $\dim W=2$, it follows that $T_J^*W=W$ for all $J\in \Sigma_*$, and thus $W$ is $T_i^*$-invariant for all $1\leq i\leq m$.

Next assume that $s_0\in (1,2]$. Then, correspondingly, by  Theorem \ref{thm:subadditive-VP} and Proposition \ref{pro-key}, $$h_\mu(\sigma)+\Lambda_1+(s_0-1)\Lambda_2=P({\bf T}, s_0)\geq 0,$$
and
$$
\lim_{n\to \infty} \frac{1}{n} \log \psi_W^{s_0}(x|n)=\Theta=\sup_{J\in \Sigma_*} \Lambda_{p_1(T_J^*W,x)}+(s_0-1)\Lambda_{p_2(T_J^*W,x)},\quad x\in \Sigma'.
$$
Similarly, the assumption of $\dim_{\rm AFF}({\bf T}, W)<s_0$ implies that $$h_\mu(\sigma)+\Theta\leq P({\bf T}, W, s_0)<0,$$ and consequently,
$$
\sup_{J\in \Sigma_*} \Lambda_{p_1(T_J^*W,x)}+(s_0-1)\Lambda_{p_2(T_J^*W,x)}<\Lambda_1+(s_0-1)\Lambda_2
$$
for $\mu$-a.e.~$x\in \Sigma'$. Hence for $\mu$-a.e.~$x\in \Sigma'$,
$$
\left(p_1(T_J^*W,x), p_2(T_J^*W,x)\right)\neq (1,2) \quad \mbox{ for all }J\in \Sigma_*.
$$
This implies that $p_2(T_J^*W,x)=3$, and thus $v_3(x)\in T_J^*W$ for $\mu$-a.e.~$x\in \Sigma'$.  Consequently, for $\mu$-a.e.~$x\in \Sigma'$, $(T_J^*)^{-1}v_3(x)\in W$ for  all  $J\in \Sigma_*$.  Fix such a point $x$ and set
$$V={\rm span}\left(\bigcup_{J\in \Sigma_*}\left\{(T_J^*)^{-1}v_3(x)\right\}\right).
$$
It is easy to verify that $V\subset W$, and that $(T_J^*)^{-1}V=V$ for all $J\in \Sigma_*$;  equivalently, $T_J^*V=V$ for all $J\in \Sigma_*$. This completes the proof of (ii).
\end{proof}

\begin{rem}
In Proposition \ref{pro-d=3},  the conclusion that ${\bf T}$ is reducible follows alternatively  from Theorem \ref{thm-1.1}(i),  using the additional fact that when $d=3$, the tuple $\{T_i\}_{i=1}^m$ is irreducible if and only if $\{T_i^{\w 2}\}_{i=1}^m$ is irreducible (see, e.g., \cite[Lemma 3.3]{KaenmakiMorris2018} for a more general statement about this fact).
\end{rem}

\section{Examples for which $\overline{S}(\mu,{\bf T}, W)\neq \underline{S}(\mu,{\bf T}, W)$}
\label{S-8}

  In this section, we first provide an example (see Example \ref{not exact dimensional}) in which we construct a tuple ${\bf T}=(T_1,T_2, T_3)$ of $2\times 2$ antidiagonal matrices with norm $<1/2$,  a one-dimensional subspace $W$ of $\R^2$ and an ergodic $\sigma$-invariant measure $\mu$ on $\Sigma=\{1,2,3\}^\N$ such that $\overline{S}(\mu, {\bf T},W)\neq \underline{S}(\mu,{\bf T}, W)$. By Theorem \ref{thm-1.2}(iv), this implies that the projected measure $(P_W\pi^\ba)_*\mu$ is not exact dimensional for $\mathcal L^{6}$-a.e.~${\bf a}=(a_1, a_2, a_3)\in \R^6$.  This example was adapted from one previously constructed by the first author and Caiyun Ma \cite{FengMa2025}, who demonstrated that the orthogonal projection of an ergodic stationary measure, associated with a planar iterated function system (IFS) of similarities having a finite rotation group, may fail to be exact dimensional. Then, for a given finite tuple ${\bf T}$ of contracting antidiagonal  matrices,  we provide a criterion to determine  whether there exist an ergodic measure $\mu$ and a subspace $W\in G(2,1)$ such that $\overline{S}(\mu,{\bf T}, W)\neq \underline{S}(\mu, {\bf T},W)$; see Proposition \ref{pro-antidiagonal}.

   \begin{ex} \label{not exact dimensional}
         Define three $2\times 2$ matrices $T_1,T_2, T_3$ by

         $$T_{1} = \begin{pmatrix}
            0 & \frac{2}{5} \\[5pt]
            \frac{1}{5} & 0
        \end{pmatrix}, \qquad T_{2} = \begin{pmatrix}
            0 & \frac{2}{5} \\[5pt]
            \frac{1}{5} & 0
        \end{pmatrix}, \qquad T_{3} = \begin{pmatrix}
            0 & \frac{1}{5} \\[5pt]
            \frac{2}{5} & 0
        \end{pmatrix}. $$
               Let $W\subset \R^2$ be the $x$-axis, i.e. $ W = \{ (a,0): a\in \mathbb{R} \} $.
        Then $ P_{W} = \begin{pmatrix}
            1 & 0 \\ 0 & 0
        \end{pmatrix} $. Let $ \mu $ be the $({\bf p}, P)$-Markov measure on $ \Sigma $, where  $$ {\bf p}= (p_{1}, p_{2}, p_{3}) = \left(\frac{1}{4}, \frac{1}{4}, \frac{1}{2}\right) $$ and  $$ P = (p_{i,j})_{1\leq i, j \leq 3} =
        \begin{pmatrix}
            0 & 0 & 1 \\
            0 & 0 & 1 \\
            \frac{1}{2} & \frac{1}{2} & 0
        \end{pmatrix}.
        $$
        That is, $$\mu([x_1\ldots x_n])=p_{x_1}p_{x_1,x_2}\ldots p_{x_{n-1},x_n}$$ for each $n\geq 2$ and $x_1\ldots x_n\in \{1,2,3\}^n$.
        Then $\overline{S}(\mu,{\bf T}, W)\neq \underline{S}(\mu, {\bf T},W)$.
        \end{ex}

        \begin{proof}[{\it Justification}] It is easily checked that ${\bf p}P={\bf p}$ and that $P$ is positively irreducible (i.e.~for any $i,j$, there exists $n>0$ such that the $(i,j)$-entry of $P^n$ is positive). By \cite[Theorem 1.13]{Walters1982},  $\mu$ is an ergodic Markov measure.  It is well known (see e.g.~\cite[p.~103]{Walters1982} or \cite[p.~246]{Petersen1989}) that
        $$ h_\mu(\sigma) = - \sum_{ i,j  }p_{i}p_{i,j} \log p_{i,j} = \frac{\log 2}{2},$$
        and $\mu$ is supported on the Markov shift space
        $$
        \Omega=\left\{(x_n)_{n=1}^\infty\in \{1,2,3\}^\N\colon p_{x_n,x_{n+1}}>0 \mbox{ for all }n\geq 1\right\}.
        $$
                It is easy to check that $\Omega=\Omega_1\cup \Omega_2$, where
         \begin{align*}
         \Omega_{1} &  = \left\{ (x_{i})_{i=1}^{\infty}  :  x_{2i+1} \in \{1,2\} \mbox{ and }       x_{2i+2} = 3  \mbox{ for all }i \ge 0  \right\}, \\
        \Omega_{2} &  = \{ (x_{i})_{i=1}^{\infty}:  x_{2i+1} = 3 \mbox{ and } x_{2i+2}\in \{1, 2\}  \mbox{ for all }i\ge 0\}.
        \end{align*}
       Moreover,
       $$ \mu(\Omega_{1}) =\mu([13])+\mu([23])= \frac{1}{2}, \quad \mu(\Omega_{2})=\mu([31])+\mu([32]) =  \frac{1}{2}.$$

       Notice that
                        \begin{equation}
        \label{e-example}
            T_{1}T_{3} = T_{2} T_{3}
            = \begin{pmatrix}
               \frac{4}{25} & 0 \\[5pt]
                0 & \frac{1}{25}
            \end{pmatrix}
        \quad\mbox{
        and }\quad
            T_{3}T_{1} = T_{3} T_{2} = \begin{pmatrix}
                \frac{1}{25} & 0 \\[5pt]
                 0 & \frac{4}{25}
            \end{pmatrix}.
        \end{equation}
      A simple calculation using
    \eqref{e-example} yields that for each  $0 \leq s \leq 1 $,
               \begin{equation*}
            \lim_{n \rightarrow \infty} \frac{1}{n} \log \varphi^{s}( P_{W} T_{x|n} ) = \left\{
            \begin{array}{ll}
             s \log \left(2/5 \right) & \mbox{ if }x\in \Omega_1,\\
                                  s \log \left( 1/5 \right) & \mbox{ if }x\in \Omega_2.
                                \end{array}
                \right.
        \end{equation*}
    Moreover, it is direct to check that
    $$
   \lim_{n \rightarrow \infty} \frac{1}{n}\log \mu([x|n])=-\frac{\log 2}{2}=-h_\mu(\sigma)\qquad \mbox{ for all }x\in \Omega.
    $$
   By  the definitions \eqref{e-equi}-\eqref{e-Smux}, we have
    $$
    S(\mu,{\bf T}, W,x)=\left\{
            \begin{array}{cl}
              \dfrac{\log 2}{2\log (5/2)} & \mbox{ if }x\in \Omega_1,\\
            [10pt]
                   \dfrac{\log 2}{2\log (5)} & \mbox{ if }x\in \Omega_2.
                                \end{array}
                \right.
        $$
   Since $\mu$ is supported on $\Omega=\Omega_1\cup \Omega_2$, it follows that $$\overline{S}(\mu,{\bf T}, W)=\dfrac{\log 2}{2\log (5/2)}
   \quad \mbox{ and }\quad \underline{S}(\mu, {\bf T},W)=\dfrac{\log 2}{2\log (5)},
  $$
  so $\underline{S}(\mu,{\bf T}, W)\neq \overline{S}(\mu,{\bf T}, W)$.
\end{proof}

\begin{rem}
 One can check that the measure  $\mu$ constructed in Example \ref{not exact dimensional} is not ergodic with respect to $\sigma^2$. Actually in that example (or more generally, in the case that ${\bf T}$ is an arbitrary finite tuple of contracting antidiagonal $2\times 2$ real matrices), if  $\eta$ is a $\sigma$-invariant measure  that  is  ergodic with respect to $\sigma^2$, then $$\underline{S}(\eta, {\bf T},W')=\overline{S}(\eta,{\bf T}, W')\quad \mbox{ for all $W'\in G(2,1)$}.$$
 To see this, notice that $T_i$ is of the form
$$
\begin{pmatrix}
0 & c_i\\
d_i & 0
\end{pmatrix}
 $$
 for $i=1,2, 3$. Hence for $x\in \Sigma$ and $n\in \N$,
 $$
T_{x_1}T_{x_2}\cdots T_{x_{2n-1}}T_{x_{2n}}=\begin{pmatrix}
u_{2n}(x) & 0\\
0 & v_{2n}(x)
\end{pmatrix},
$$
with
$$
u_{2n}(x)=c_{x_1}d_{x_2}c_{x_3}d_{x_4}\cdots c_{x_{2n-1}}d_{x_{2n}},\; \quad v_{2n(x)}=d_{x_1}c_{x_2}d_{x_3}c_{x_4}\cdots d_{x_{2n-1}}c_{x_{2n}}.
$$
Since $\eta$ is ergodic with respect to $\sigma^2$, by the Birkhoff ergodic theorem, for $\eta$-a.e.~$x\in \Sigma$,
\begin{align*}
\lim_{n\to \infty}\frac{1}{n}\log u_{2n}(x)&=\sum_{i=1}^3\sum_{j=1}^3(\log (c_id_j))\eta([ij])\\
&=\sum_{i=1}^3\sum_{j=1}^3(\log (c_i)+\log (d_j))\eta([ij])\\
&=\sum_{i=1}^3(\log (c_id_i))\eta([i]).
\end{align*}
 Similarly, for $\eta$-a.e.~$x\in \Sigma$,  $\lim_{n\to \infty}\frac{1}{n}\log v_{2n}(x)=\sum_{i=1}^3(\log (c_id_i))\eta([i])$.
  This is,  the two Lyapunov exponents of the cocycle $x\to T_{x_1}^*$ with respect to $\eta$ are the same. Thus, by Proposition \ref{pro-planar case-measure}(2),  we have $\underline{S}(\eta, {\bf T},W')=\overline{S}(\eta,{\bf T}, W')$ for all $W'\in G(2,1)$.
\end{rem}

\begin{pro}
\label{pro-antidiagonal}
Let $m\geq 2$ and let ${\bf T}=(T_1,\ldots, T_m)$ be a tuple of $2\times 2$ real matrices of the form
$$
T_i=
\left(\begin{array}{cc}
0 & c_i\\
d_i & 0
\end{array}
\right),
 $$
 where $0<|c_i|, |d_i|<1$ for $i=1,\ldots, m$. Then the following two statements are equivalent.
 \begin{itemize}
\item[(i)] There exist distinct $i,j\in \{1,\ldots, m\}$ such that  $|c_i/d_i|\neq |c_j/d_j|$.
\item[(ii)] There exist an ergodic measure $\mu$ and a subspace $W\in G(2,1)$ such that $\overline{S}(\mu,{\bf T}, W)\neq \underline{S}(\mu,{\bf T}, W)$.
 \end{itemize}

\end{pro}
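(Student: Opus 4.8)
The plan is to deduce both implications from the characterization of the non-coincidence $\overline{S}\neq\underline{S}$ given in Proposition~\ref{pro-planar case-measure}(2), after recording one elementary fact: since each $T_i$ is antidiagonal, a product of two of them is diagonal. Concretely, for $x\in\Sigma$ and $\ell\ge1$,
\[
T_{x_1}\cdots T_{x_{2\ell}}=\mathrm{diag}\!\Big(\textstyle\prod_{k=1}^{\ell}c_{x_{2k-1}}d_{x_{2k}},\ \prod_{k=1}^{\ell}d_{x_{2k-1}}c_{x_{2k}}\Big),
\]
which is symmetric (hence equal to its transpose), while $T_{x|2\ell+1}$ is antidiagonal. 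Writing $g(i)=\log|c_i/d_i|$, the ratio of the two diagonal entries above has modulus $\exp\big(\sum_{k=1}^{\ell}(g(x_{2k-1})-g(x_{2k}))\big)$.

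For the implication (ii)$\Rightarrow$(i), I argue contrapositively. If $|c_i/d_i|=|c_j/d_j|$ for all $i,j$, i.e.\ $g$ is constant, then the displayed diagonal matrix has diagonal entries of equal modulus, so $\alpha_1(T_{x|2\ell})=\alpha_2(T_{x|2\ell})$ for all $x$ and all $\ell$. Dividing $\log\alpha_1(T_{x|2\ell})$ and $\log\alpha_2(T_{x|2\ell})$ by $2\ell$ and letting $\ell\to\infty$ — the limits exist by Oseledets's theorem and agree with the limits along all $n$ — gives $\Lambda_1=\Lambda_2$ for every ergodic $\sigma$-invariant measure $\mu$. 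Thus condition (a) of Proposition~\ref{pro-planar case-measure}(2) fails for every $\mu$, so $\overline{S}(\mu,{\bf T},W)=\underline{S}(\mu,{\bf T},W)$ for all such $\mu$ and all $W\in G(2,1)$; in particular (ii) fails.

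For (i)$\Rightarrow$(ii), fix distinct $i,j$ with $g(i)\neq g(j)$, a small parameter $q\in(0,1)$ to be chosen, and let $W$ be the $x$-axis. Let $\nu$ be the Borel probability measure on $\Sigma$ supported on $\{x:\ x_{2\ell-1}=i\text{ for all }\ell\ge1\}$ whose restriction to the even coordinates $(x_2,x_4,\dots)$ is the Bernoulli$(1-q,q)$ measure on $\{i,j\}^{\N}$, and put $\mu=\tfrac12(\nu+\sigma_*\nu)$. I will check: (1) $\nu$ is $\sigma^2$-ergodic (isomorphic mod $0$ to a Bernoulli shift via the even coordinates) and $\nu\perp\sigma_*\nu$ (their typical sequences have, respectively, all odd and all even coordinates equal to $i$), so $\mu$ is $\sigma$-ergodic and $h_\mu(\sigma)=\tfrac12 h_\nu(\sigma^2)=\tfrac12 H(q)>0$, where $H(q)=-q\log q-(1-q)\log(1-q)$. (2) Applying Birkhoff's theorem for $\sigma^2$ to the diagonal formula, the Lyapunov exponents of the cocycle $x\mapsto T_{x_1}^*$ with respect to $\mu$ (equivalently of $x\mapsto T_{x_1}$, since transposition preserves singular values) are $\Lambda_1=\tfrac12\max(A_1,A_2)$ and $\Lambda_2=\tfrac12\min(A_1,A_2)$, where
\[
A_1=\log|c_i|+(1-q)\log|d_i|+q\log|d_j|,\qquad A_2=(1-q)\log|c_i|+q\log|c_j|+\log|d_i|,
\]
and $A_1-A_2=q\,(g(i)-g(j))\neq0$; hence $\Lambda_1>\Lambda_2$, which is condition (a) of Proposition~\ref{pro-planar case-measure}(2). (3) On ${\rm spt}\,\nu$ the $x$-axis component of $T_{x|2\ell}^{*}=T_{x|2\ell}$ grows at exponential rate $\tfrac12A_1$ and the $y$-axis component at rate $\tfrac12A_2$, so one coordinate axis is the fast Oseledets line $E_1(x)$ and the other the slow line $E_2(x)$; on ${\rm spt}\,\sigma_*\nu$ the roles of $A_1$ and $A_2$ are swapped. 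Consequently $\{x:E_2(x)=W\}$ equals, up to a $\mu$-null set, one of ${\rm spt}\,\nu$ or ${\rm spt}\,\sigma_*\nu$, hence has $\mu$-measure $\tfrac12$; this is condition (b).

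The crux — and the only place the choice of $q$ matters — is condition (c): $h_\mu(\sigma)>0$ and $h_\mu(\sigma)+\Lambda_2<0$. The first holds for every $q\in(0,1)$. For the second, observe that $|c_i|,|d_i|\in(0,1)$ are fixed, so $A_1,A_2\to\log(|c_i||d_i|)$ as $q\to0^+$, whence $\Lambda_2\to\tfrac12\log(|c_i||d_i|)<0$, while $h_\mu(\sigma)=\tfrac12H(q)\to0$; therefore $h_\mu(\sigma)+\Lambda_2\to\tfrac12\log(|c_i||d_i|)<0$, and it suffices to fix $q$ small enough. With (a)--(c) established, Proposition~\ref{pro-planar case-measure}(2) yields $\overline{S}(\mu,{\bf T},W)\neq\underline{S}(\mu,{\bf T},W)$, proving (ii). I expect no substantial difficulty beyond this balancing of entropy against $\Lambda_2$: the ergodicity and entropy statement in (1) is standard for $\tfrac12(\nu+\sigma_*\nu)$ with $\nu$ $\sigma^2$-ergodic and $\nu\neq\sigma_*\nu$, and the computations in (2)--(3) are immediate from the explicit diagonal form of the even products.
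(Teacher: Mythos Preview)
Your proof is correct, and the implication (ii)$\Rightarrow$(i) is essentially identical to the paper's. For (i)$\Rightarrow$(ii), however, you take a genuinely different and more elementary route. The paper builds a subshift of finite type: it fixes two long words $A=(12)^N$ and $B=(12)^{N-1}21$, forms the $\sigma^{2N}$-invariant golden-mean shift $Y$ on $\{A,B\}$ forbidding $BB$, sets $X=\bigcup_{i=0}^{2N-1}\sigma^i(Y)$, and shows that for $N$ large every ergodic $\mu$ supported on $X$ with $h_\mu(\sigma)>0$ satisfies the three conditions of Proposition~\ref{pro-planar case-measure}(2). Your construction instead writes down a single explicit measure $\mu=\tfrac12(\nu+\sigma_*\nu)$ with $\nu$ deterministic on odd coordinates and Bernoulli$(1-q,q)$ on even coordinates, and tunes $q$ small to force $h_\mu(\sigma)+\Lambda_2<0$. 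Your approach is shorter and avoids the combinatorics of the subshift; the paper's approach buys a stronger conclusion (advertised in the introduction): it produces a compact $\sigma$-invariant set of positive topological entropy on which \emph{every} positive-entropy ergodic measure exhibits $\overline{S}\neq\underline{S}$, not just a single one. For the proposition as stated, your argument suffices.
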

\begin{proof}
The proof is based on Proposition \ref{pro-planar case-measure}(2).  Notice that for each $x\in \Sigma$ and $n\in \N$,
\begin{equation}
\label{e-e8.1}
T_{x|2n}={\rm diag}(u_{2n}(x), v_{2n}(x)),
\end{equation}
where
\begin{equation}
\label{e-e8.2}
\begin{split}
u_{2n}(x)&=c_{x_1}d_{x_2}c_{x_3}d_{x_4}\cdots c_{x_{2n-1}}d_{x_{2n}},\\
v_{2n}(x)&=d_{x_1}c_{x_2}d_{x_3}c_{x_4}\cdots d_{x_{2n-1}}c_{x_{2n}}.
\end{split}
\end{equation}

We first assume that $|c_1/d_1|=|c_2/d_2|=\ldots=|c_m/d_m|$.  Then $|u_{2n}(x)|=|v_{2n}(x)|$ for all $x\in \Sigma$ and $n\in \N$. Consequently, for every ergodic $\sigma$-invariant measure $\mu$ on $\Sigma$, the two Lyapunov exponents $\Lambda_1, \Lambda_2$ of the cocycle $x\mapsto T_{x_1}^*$ with respect to $\mu$ are equal.  By Proposition \ref{pro-planar case-measure}(2), $\overline{S}(\mu,{\bf T}, W)= \underline{S}(\mu, {\bf T},W)$ for every subspace $W\in G(2,1)$. This proves the direction (ii) $\Longrightarrow$ (i).

Next we assume that there exist distinct $i,j\in \{1,\ldots, m\}$ such that  $$|c_i/d_i|\neq |c_j/d_j|.$$ Without loss of generality, we may assume  $i=1$, $j=2$ and  $|c_1d_2|>|c_2d_1|$. Let $N$ be a positive integer. Define two words $A,B\in \{1,2\}^{2N}$ by
$$A=(12)^{N},\qquad B=(12)^{N-1}21.$$
Then define a $\sigma^{2N}$-invariant compact subset $Y$ of $\Sigma$ by
$$
Y=\{y=w_1w_2\ldots w_n\ldots\colon w_i\in \{A, B\} \mbox { and } w_iw_{i+1}\neq BB \mbox{ for all } i\geq 1\}.
$$
It is direct to check that the topological entropy of $Y$ with respect to $\sigma^{2N}$ satisfies
$$
h_{\rm top}(Y, \sigma^{2N})=\log \left((\sqrt{5}+1)/2\right);
$$
see e.g.~\cite[Theorem 7.13(ii)]{Walters1982}. Define
$$
X=\bigcup_{i=0}^{2N-1}\sigma^i(Y).
$$
It is easy to check that $\sigma(X)\subset X$ and $$h_{\rm top}(X,\sigma)=\frac{1}{2N}h_{\rm top}(Y, \sigma^{2N})=\frac{1}{2N}\log\left((\sqrt{5}+1)/2\right).$$
Below, we show that when $N$ is large enough, for every ergodic $\sigma$-invariant measure $\mu$ supported on $X$ with positive entropy,
it holds that $\overline{S}(\mu, W)\neq \underline{S}(\mu, W)$ when $W$ is either the $x$-axis or the $y$-axis.

From the constructions of $Y$ and $X$, we see that for each $x=(x_n)_{n=1}^\infty\in X$, either
\begin{align*}
&\liminf_{n\to \infty}\frac{1}{n}\#\{1\leq k\leq n\colon x_{2k-1}x_{2k}=12\}\geq \frac{N-1}{N} \quad \mbox{ and }\\
&\liminf_{n\to \infty}\frac{1}{n}\#\{1\leq k\leq n\colon x_{2k}x_{2k+1}=21\}\geq \frac{N-3}{N},
\end{align*}
or
\begin{align*}
&\liminf_{n\to \infty}\frac{1}{n}\#\{1\leq k\leq n\colon x_{2k-1}x_{2k}=21\}\geq \frac{N-3}{N} \quad \mbox{ and }\\
&\liminf_{n\to \infty}\frac{1}{n}\#\{1\leq k\leq n\colon x_{2k}x_{2k+1}=12\}\geq \frac{N-1}{N}.\end{align*}
Since $|c_1d_2|\neq |c_2d_1|$, it follows that when $N$ is large enough,
\begin{equation}
\label{e-twolimits}
\limsup_{n\to \infty} \frac{1}{n}\log |u_{2n}(x)|\neq    \limsup_{n\to \infty} \frac{1}{n}\log |v_{2n}(x)| \quad \mbox { for all }x\in X;
\end{equation}
as one of these two limits is close to $\log |c_1d_2|$, and the other one is close to $\log |c_2d_1|$.

Now suppose that $N$ is large enough so that \eqref{e-twolimits} holds. Let $\mu$ be an ergodic $\sigma$-invariant measure supported on $X$ such that $h_\mu(\sigma)>0$ (for instance, we may choose $\mu$ as an ergodic invariant measure on $X$ with maximal entropy).  Let  $\Lambda_1\geq \Lambda_2$ be the Lyapunov exponents of the cocycle $x\mapsto T_{x_1}^*$ with respect to $\mu$. By \eqref{e-e8.1} and \eqref{e-e8.2}, for $\mu$-a.e.~$x\in \Sigma$,
\begin{equation}
\label{e-either}
\begin{split}
\mbox{ either }\quad  \lim_{n\to \infty} \frac{1}{2n}\log |u_{2n}(x)|&=\Lambda_1,  \quad   \lim_{n\to \infty} \frac{1}{2n}\log |v_{2n}(x)|=\Lambda_2,\\
\mbox{ or }\quad
\lim_{n\to \infty} \frac{1}{2n}\log |u_{2n}(x)|&=\Lambda_2,  \quad   \lim_{n\to \infty} \frac{1}{2n}\log |v_{2n}(x)|=\Lambda_1.
\end{split}
\end{equation}
This, combined with \eqref{e-twolimits}, implies that  $\Lambda_1>\Lambda_2$.  Now write
$$\Omega=\left\{x\in \Sigma\colon \lim_{n\to \infty} \frac{1}{2n}\log |v_{2n}(x)|=\Lambda_2\right\}.$$
By \eqref{e-e8.2}, $\Omega=\sigma^{-2}\Omega$, and moreover, $v_{2n}(x)\approx u_{2n}(\sigma x)$ for each $x\in \Sigma$.  It follows from \eqref{e-either} that
$$\mu(\Omega\cap \sigma(\Omega))=0\quad \mbox{ and } \quad \mu(\Omega\cup \sigma(\Omega))=1,$$
which implies that $\mu(\Omega)=1/2$. Let $\R^2=\bigoplus_{i=1}^2E_i(x)$, $x\in \Sigma'$, be the associated Oseledets splittings for the cocycle $x\mapsto T_{x_1}^*$ and $\mu$. Let $W$ be the $y$-axis, i.e., $W=\{(0,y)\colon y\in \R\}$. Since $\mu(\Omega)=1/2$, we have
\begin{equation}
\label{e-e8.3}
\mu\left\{x\in \Sigma'\colon E_2(x)=W\right\}=\mu\left\{x\in \Sigma'\colon E_2(x)=W^\perp\right\}=\frac12.
\end{equation}

 Finally, notice that
$$
\Lambda_1\leq \log \left(\max\{|c_1|, \ldots, |c_m|, |d_1|, \ldots, |d_m|\}\right)=:\lambda<0.
$$
We may also require that $N$ is large enough so that
$$h_{\rm top}(X,\sigma)=\frac{1}{2N}\log\left((\sqrt{5}+1)/2\right)<-\lambda\leq -\Lambda_1<-\Lambda_2.
$$
Since $\mu$ is supported on $X$, we have $h_\mu(\sigma)\leq  h_{\rm top}(X,\sigma)$ (see e.g.~\cite[Theorem 8.6]{Walters1982}). It follows that
\begin{equation}
\label{e-e8.4}
h_\mu(\sigma)>0,\quad h_\mu(\sigma)+\Lambda_2<0.
\end{equation}
Since $\Lambda_1>\Lambda_2$, by \eqref{e-e8.3}, \eqref{e-e8.4} and Proposition \ref{pro-planar case-measure}(2), we conclude that
$$\overline{S}(\mu, {\bf T},W)\neq \underline{S}(\mu,{\bf T}, W),\quad \overline{S}(\mu, {\bf T},W^\perp)\neq \underline{S}(\mu,{\bf T}, W^\perp).$$
  This proves the direction (i) $\Longrightarrow$ (ii).
\end{proof}

\section{Final remarks}
\label{S-9}

In the section we give a few remarks.

In our main theorems, the assumption that $\|T_i\|<1/2$ for $1\leq i\leq m$ can be weaken to $\max_{i\neq j} (\|T_i\|+\|T_j\|)<1$. Indeed the first assumption is only used to guarantee the self-affine transversality condition (see Lemmas \ref{lem-2.5} and \ref{lem-transversality}). As pointed in \cite[Proposition 10.4.1]{BSS2023},  the second assumption is sufficient for the self-affine transversality condition.

In the special case where $T_i=\rho_iO_i$ for all $1\leq i\leq m$, with $0<\rho_i<1$ and $O_i$ being orthogonal, it is straightforward to verify that for each $W\in G(d,k)$ and every $s\in [0,k]$,  we have
$$
\varphi^s(P_WT_I)=(\rho_I)^s
$$
for $I\in \Sigma_*$. Furthermore,  for each ergodic $\sigma$-invariant measure $\mu$, the Lyapunov exponents $\Lambda_1,\ldots, \Lambda_m$ for the cocycle $x\mapsto T_{x_1}^*$ with respect to $\mu$ are all equal. Thus, by the definition of $\dim_{\rm AFF}({\bf T}, W)$ (see \eqref{e-aff}) and Lemma \ref{lem-smuw}, we have
$$\dim_{\rm AFF}({\bf T}, W)=\min\{\dim W, \dim_{\rm AFF}({\bf T})\},$$
and
$$\overline{S}(\mu,{\bf T}, W)=\underline{S}(\mu,{\bf T}, W)=\min\{\dim W, \dim_{\rm LY}(\mu, {\bf T})\}.$$
Therefore, in this case, there is no dimension drop regarding the projections of $K^\ba$ and $\pi^\ba_*\mu$ for almost all $\ba$ if $\rho_i<1/2$ for all $i$.

We remark that Example \ref{not exact dimensional} provides a negative answer to a question posed in \cite[p.~709]{Feng2023} whether every ergodic stationary measure associated with an affine IFS is dimension conserving with respect to $P_{W^\perp}$ for almost every subspace $W$ (with respect to the so-called Furstenberg measures); see the remark after \cite[Theorem 1.6]{Feng2023}.
\appendix

\section{Main notation and conventions}
\label{B}
For the reader's convenience, we summarize in Table \ref{table-1} the main notation and typographical conventions used in this paper.
\begin{table} [H]
\centering
\caption{Main notation and conventions}
\vspace{0.05 in}
\begin{footnotesize}
\begin{raggedright}
\begin{tabular}{p{1.5 in} p{4 in} }
\hline \rule{0pt}{3ex}
${\bf T}$ & A tuple $(T_1,\ldots, T_m)$ of invertible $d\times d$ real matrices with $\|T_i\|<1$\\
$T_{I}$ & $T_{i_1}\cdots T_{i_n}$ for $I=i_1\ldots i_n$\\
$T_{I}^*$ & $(T_I)^*$, where $*$ stands for transpose \\
$\{f_i^\ba\}_{i=1}^m$ & An IFS $\{T_ix+a_i\}_{i=1}^m$ on $\R^d$ with $\ba=(a_1,\ldots, a_m)$ (cf.~Section \ref{S1})\\
$K^\ba$ & The attractor of $\{f_i^\ba\}_{i=1}^m$\\
$(\Sigma, \sigma)$ & One-sided full shift over the alphabet $\{1,\ldots,m\}$\\
$\pi^\ba\colon \Sigma\to K^\ba$ & Coding map associated with  $\{f_i^\ba\}_{i=1}^m$ (cf.~Section \ref{S1})\\
$g_*\mu$ & Push-forward of $\mu$ by $g$, i.e.  $g_*\mu=\mu\circ g^{-1}$\\
$P_W$ & Orthogonal projection onto $W$\\
$\varphi^s$ & Singular value function (cf.~\eqref{e-singular})\\
$\lfloor s\rfloor$ & Integral part of $s$\\
$\dim_{\rm AFF}({\bf T})$ & Affinity dimension of ${\bf T}$ (cf.~Definition \eqref{e-aff})\\
$\dim_{\rm AFF}({\bf T}, W)$ &  (cf.~\eqref{e-affine})\\
${\rm Mat}_{d}(\R)$ & The set of $d\times d$ real matrices\\
$\mathcal H^s$ & $s$-dimensional Hausdorff measure\\
${\rm GL}_{d}(\R)$ & The set of invertible $d\times d$ real matrices\\
$G(d,k)$ & Grassmann manifold  of $k$-dimensional linear subspaces of $\R^d$\\
$\gamma_{d,k}$& The natural invariant measure on $G(d,k)$\\
$S_n(\mu,{\bf T},W,x)$ & (cf~\eqref{e-2.2}, \eqref{e-equi})\\
$S(\mu, {\bf T}, W,x)$ & (cf~\eqref{e-Smux})\\
$\overline{S}(\mu, {\bf T}, W)$, $\underline{S}(\mu, {\bf T}, W)$ & (cf.~\eqref{e-Gammamu})\\
$\dim_{\rm LY}(\mu, {\bf T})$ & Lyapunov dimension of $\mu$ w.r.t.~${\bf T}$  (cf.~Definition \ref{de-Lyapunov})\\
$P(\sigma, \{f_n\}_{n=1}^\infty)$ & Topological pressure of a subadditive potential $\{f_n\}_{n=1}^\infty$ on $(\Sigma,\sigma)$ (cf.~Section \ref{S-subadditive})\\
$h_\mu(\sigma)$ & Measure-theoretic entropy of $\mu$ w.r.t.~$\sigma$\\
$\alpha_i(A)$, $i=1,\ldots, d$ & The $i$-th singular value of a $d\times d$ matrix $A$ (cf.~Section \ref{S1})\\
$\alpha({\bf v})$ & Smallest angle generated by an ordered basis ${\bf v}$ of an ambient space (cf. Definition \ref{de-2.1})\\
$\Lambda_i$, $i=1,\ldots,d$ & The $i$-th Lyapunov exponent of the cocycle $x\mapsto T_{x_1}^*$ w.r.t.~$\mu$ (cf.~\eqref{e-Lambdai} or Theorem \ref{thm:oseledets})\\
$\psi_{W}^s$ & (cf. \eqref{e-psiw})\\
$P({\bf T}, W, s)$ & Topological pressure of the subadditive potential $\{\log \psi_W^s(\cdot|n)\}_{n=1}^\infty$\\
$P({\bf T}, s)$ & Topological pressure of the subadditive potential $\{\log \varphi^s(T^*_{\cdot|n})\}_{n=1}^\infty$\\
${\rm span}(E)$ & Smallest linear subspace of the ambient space  that contains
$E$\\
${\bf p}(W, {\bf v})$ & Pivot position vector of $W\in G(d,k)$ with respect to an ordered basis ${\bf v}$ of $\R^d$
 (cf.~Definition \ref{de-pivot})\\
\hline
\end{tabular}
\label{table-1}
\end{raggedright}
\end{footnotesize}
\end{table}

{\noindent \bf Acknowledgements}.
     The authors are grateful to Amir Algom and Xiong Jin for pointing out several related references, and to Julien Barral for helpful comments.  This research was partially supported by the General Research Fund grant (projects CUHK14303524 and CUHK14308423) from the
Hong Kong Research Grant Council, and by a direct grant for research from the Chinese University
of Hong Kong.

\end{document}